\title{Period differential equations  for the   families of $K3$ surfaces with $2$ parameters derived from  the reflexive polytopes}
\author{Atsuhira Nagano}
\def\bigzerou{\smash{\lower1.7ex\hbox{\b 0}}}
\newtheorem{thm}{Theorem}[section]
\newtheorem{df}{Definition}[section]
\newtheorem{lem}{Lemma}[section]
\newtheorem{prop}{Proposition}[section]
\newtheorem{rem}{Remark}[section]
\newtheorem{cor}{Corollary}[section]
\def\comment#1{{ }}
\begin{document}
\maketitle

\begin{abstract}
In this paper, we study the period mappings for the  families of $K3$ surfaces derived from the $3$-dimensional $5$-verticed reflexive polytopes. 
We determine the lattice structures, the  period differential equations and the projective monodromy groups.
 Moreover, we show that one of our period differential equations coincides with
  the unifomizing differential equation of the Hilbert modular orbifold for the field $\mathbb{Q}(\sqrt{5})$.
\end{abstract}

\footnote[0]{2000 Mathematics Subject Classification: 14J28, 14K99, 11F41,  33C70, 52B10}
\footnote[0]{Keywords:  $K3$ surfaces ;  Hilbert modular orbifolds ; period maps ; period differential equations ;  toric varieties}
\footnote[0]{Running head: $K3$ surfaces with $2$ parameters}
\setlength{\baselineskip}{14 pt}

\section*{Introduction}

A $K3$ surface $S$ is characterized by the condition $K_S=0$ and simply connectedness. 
It means that  a $K3$ surface is a $2$-dimensional Calabi-Yau manifold.
Batyrev \cite{Batyrev} introduced the notion of the reflexive polytope for the study of  the Calabi-Yau manifold.

In this article, we use the $3$-dimensional reflexive polytopes with at most terminal singularities.
These polytopes with $5$-vertices are given as 
\begin{align}
&P_0= 
\begin{pmatrix}
1&0&0&0&-1\\
0&1&0&0&-1\\
0&0&1&-1&-2\\
\end{pmatrix}, \label{polytope P}\\
P_1=
\begin{pmatrix}
1&0&0&-1&0\\
0&1&0&0&-1\\
0&0&1&-1&-1\\
\end{pmatrix},
\quad
&P_2=
\begin{pmatrix}
1&0&0&0&-1\\
0&1&0&-1&-1\\
0&0&1&-1&-1\\
\end{pmatrix},
\quad
P_3=
\begin{pmatrix}
1&0&0&-1&0\\
0&1&0&-1&0\\
0&0&1&0&-1\\
\end{pmatrix}, \label{polytope 1,2,3}\\
&P_4= 
\begin{pmatrix}
1&0&0&0&-1\\
0&1&0&0&-1\\
0&0&1&-1&-1\\
\end{pmatrix}, \label{polytope Ishige}
\end{align}
where the column vectors correspond to the coordinates of the vertices (see   \cite{Otsuka} or \cite{Kreuzer}).

Each polytope $P_j$  defines a $2$-parameter family $\mathcal{F}_j=\{S_j(\lambda,\mu)\}$ of $K3$ surfaces 
 given by the affine equation  
\begin{align}
\begin{cases}\label{equationsF_j}
S_0(\lambda,\mu)&:F_0(x,y,z) = xyz^2(x+y+z+1)+\lambda xyz +\mu=0,\\
S_1(\lambda,\mu)&:F_1(x,y,z)= xyz(x+y+z+1)+\lambda x +\mu y=0,\\
S_2(\lambda,\mu)&: F_2(x,y,z)=xyz(x+y+z+1)+\lambda x +\mu=0,\\
S_3(\lambda,\mu)&: F_3(x,y,z)=xyz(x+y+z+1)+\lambda z +\mu xy=0,\\
S_4(\lambda,\mu)&: F_4(x,y,z)=xyz(x+y+z+1)+\lambda xy +\mu =0.
\end{cases}
\end{align}

Recently, Ishige \cite{Ishige} has made a research on the family $\mathcal{F}_4$ derived from the polytope $P_4$ in (\ref{polytope Ishige}).
He made a computer aided approximation of a generator of the monodromy group of his differential equation. 
There,  he  noticed that his monodromy group is isomorphic to the extended Hilbert modular group for $\mathbb{Q}(\sqrt{2})$.

Inspired by Ishige's research,
we study the other families  $\mathcal{F}_j$ $(j=0,1,2,3)$.
Among them, the family $\mathcal{F}_0$ is especially interesting, for this is related to the Hilbert modular orbifold for the field $\mathbb{Q}(\sqrt{5})$.
In the classical  theory of the elliptic functions and the  Gauss hypergeometric differential equations,  the differential equation
$$
\lambda (1-\lambda) \frac{d^2 \eta}{d \lambda ^2} + (1-2\lambda) \frac{d \eta}{d \lambda} -\frac{1}{4} \eta=0
$$
has two remarkable properties.
This is a period differential equation for the family of elliptic curves 
$\{S(\lambda)\}$, with
$$
S(\lambda): y^2 =x(x-1)(x-\lambda ).
$$
Also, this  is the uniformizing differential equation of the orbifold $\mathbb{H}/\Gamma(2)$, where $\mathbb{H}$ is the upper half plane.
We can find an analogous story on our family $\mathcal{F}_0$: The period differential equation for $\mathcal{F}_0$ is the uniformizing differential equation of the Hilbert modular orbifold  for $\mathbb{Q}(\sqrt{5})$.

This paper is organized  as the following.

In Section 1, we give explicit defining equations for the families $\mathcal{F}_j$ $(j=0,1,2,3)$ of $K3$ surfaces and  elliptic fibrations for them.

 In Section 2,   we construct the period mappings  for our families. They are  multivalued analytic mappings from  Zariski open domains  in $\mathbb{P}^2(\mathbb{C})$ to $2$-dimensional domains of type $IV$.
 Then, we determine the Picard numbers for generic members of $\mathcal{F}_j$ $(j=0,1,2,3)$ (Theorem \ref{thm:P4Picard} and Theorem \ref{18theorem}).
 
In Section 3, we determine   the N\'{e}ron-Severi lattice 
 and  the transcendental lattice $A_j$ $(j=0,1,2,3)$ for  a generic member of $\mathcal{F}_j$ $(j=0,1,2,3)$ (Theorem \ref{latticeThm}).

In Section 4, we obtain the period differential equations for  $\mathcal{F}_j$ $(j=0,1,2,3)$ (Theorem \ref{thm:periodDE!}). 
They have the $4$-dimensional space of solutions. 

In Section 5, applying the Torelli theorem and the lattice theory, we prove that the projective monodromy group of the period differential equation   for $\mathcal{F}_j$ $(j=0,1,2,3)$ is equal to the  group $PO^+(A_j,\mathbb{Z})$ $(j=0,1,2,3)$ for the transcendental lattice $A_j$ $(j=0,1,2,3)$ (Theorem \ref{thm:monodromy} and Theorem \ref{monodromytheorem}).

In particular, the projective monodromy group of the period differential equation for $\mathcal{F}_0$ is isomorphic to the group $\langle PSL(2,\mathcal{O}),\tau \rangle$, where $\mathcal{O}$ is the ring of integers in $\mathbb{Q}(\sqrt{5})$, $PSL(2,\mathcal{O})$ is the Hilbert modular group and $\tau$ is the involution on $\mathbb{H}\times \mathbb{H}$ given by the coordinate exchange.

In Section 6, we show that our period differential equation for $\mathcal{F}_0$ coincides with the uniformizing differential equation of the Hilbert modular orbifold $(\mathbb{H}\times\mathbb{H})/\langle PSL(2,\mathcal{O}),\tau\rangle$ given by Sato \cite{Sato} under an explicit birational transformation (Theorem \ref{periodUDEThm}).

\section{A family of $K3$ surfaces and elliptic fibration}

A $3$-dimensional reflexive polytope with at most terminal singularities  is defined by the intersection of several half spaces
$$
a_j x + b_j y + c_j z \leq 1, \hspace{2mm} (a_j,b_j,c_j)\in\mathbb{Z}^3 \hspace{2mm} (j=1,\cdots,s) 
$$
in $\mathbb{R}^3$ with the conditions 
\par
{\rm (a)} every vertex is a lattice point,
\par
{\rm (b)} the origin is the unique inner lattice point,
\par
{\rm (c)} only the vertices are the lattice points on the boundary. \\
If a reflexive polytope satisfies the condition
\par
{\rm (d)} every face is a triangle and its 3 vertices generate the lattice,\\
it is called a Fano polytope.
Among the polytopes in (\ref{polytope P}), (\ref{polytope 1,2,3}) and (\ref{polytope Ishige}), $P_0, P_2,P_3$ and $P_4$ are the Fano polytopes.

\vspace{1.5mm}
Let us start from the polytope $P_0$  in (\ref{polytope P}).
We obtain a family of algebraic $K3$ surfaces from $P_0$ by the following canonical procedure (for detail, see \cite{Oda} Chapter 2):
\par
{\rm (i)} Make a toric 3-fold $X$ from the reflexive polytope $P_0$. This is a rational variety.
\par
{\rm (ii)} Take a divisor $D$ on $X$ that is linearly equivalent to $-K_X$.
\par
{\rm (iii)} Generically,  $D$ is represented by a $K3$ surface.

In this case, $D$ is given by
\begin{eqnarray}\label{lindiv}
a_1 + a_2 t_1 + a_3 t_2 + a_4 t_3 +a_5 \frac{1}{t_3} + a_6 \frac{1}{t_1 t_2 t_3 ^2} =0,
\end{eqnarray}
with complex parameters $a_1, \cdots, a_6$.
Every monomial in the left hand side corresponds to a lattice point in $P_0$.
Setting
\begin{eqnarray}\label{eq:para}
x=\frac{a_2 t_1}{a_1},\quad
y=\frac{a_3 t_2}{a_1},\quad
z=\frac{a_4 t_3}{a_1},\quad
\lambda =\frac{a_4 a_5}{a_1^2},\quad
\mu =\frac{a_2 a_3 a_4^2 a_6}{a_1^5},
\end{eqnarray}
 we obtain a family of $K3$ surfaces 
$\mathcal{F}_0=\{S_0 (\lambda , \mu )\}$ with two parameters $\lambda , \mu$ with
\begin{eqnarray}\label{eq:P4}
S_0(\lambda,\mu) : xyz^2(x+y+z+1)+\lambda xyz +\mu=0.
 \end{eqnarray}

In the same way, we obtain the corresponding families of  $K3$ surfaces 
$\mathcal{F}_j =\{ S_j (\lambda,\mu)\}$ for $P_j\hspace{1mm} (j=1,2,3)$ in (\ref{polytope 1,2,3})
 given by the affine equations
\begin{align}
& S_1(\lambda , \mu ) :x y z(x+y+z+1) + \lambda x + \mu y =0, \label{eq:P1}\\
& S_2(\lambda , \mu)  : x y z ( x + y+z+1) + \lambda x + \mu =0,\label{eq:P2}\\
& S_3(\lambda ,\mu ) : x y z (x +y+z+1) + \lambda z+ \mu x y =0.\label{eq:P3}
\end{align}

We give an elliptic fibration for every $K3$ surface. 
It is represented in the form
$$
 y^2 = 4 x^3 -g_2(z) x_3 -g_3(z),
$$
where $g_2$ ($g_3$, resp.) is a polynomial of $z$ with $5\leq {\rm deg} (g_2) \leq 8$ ($7 \leq {\rm deg} (g_3)  \leq  12$, resp.). 
In this paper, we call it the Kodaira normal form.
From the Kodaira normal form, we  obtain singular fibres of  elliptic fibration.

In this section, we give elliptic fibrations for our families $\mathcal{F}_j$ $(j=0,1,2,3)$ of $K3$ surfaces.
The  singular fibres of these fibration are given as  in  Table 1.

\begin{center}
\begin{tabular}{lcccc}
\toprule
Family  &$\mathcal{F}_0$& $\mathcal{F}_1$&$\mathcal{F}_2$&$\mathcal{F}_3$\\  
  \midrule
Singular Fibres &$I_3 + I_{15} + 6 I_1$ & $I_9+I_3^*+6I_1$&$ I_1^*+I_{11}+6I_1$ &$I_9+I_9+6I_1$\\
\bottomrule
\end{tabular}
\end{center} 
\begin{center}
{\bf Table 1.}
\end{center}

\subsection{Elliptic fibration for $\mathcal{F}_0$}
\begin{prop}
 {\rm (1)} The surface $S_0(\lambda,\mu)$ is birationally equivalent to the surface defined by the equation
 \begin{eqnarray}\label{eq:P4prekodaira}
y_1^2=4x_0^3+(\lambda ^2 +2\lambda z+ z^2 +2\lambda z^2 +2z^3+z^4)x_0^2+(-2\lambda \mu z-2\mu z^2 -2\mu z^3)x_0+\mu ^2 z^2.
\end{eqnarray}
This equation gives an elliptic fibration of  $S(\lambda,\mu)$ over $z$-sphere.

{\rm (2)} The elliptic surface given by  {\rm (\ref{eq:P4prekodaira})} has the holomorphic sections
\begin{align}
\begin{cases}\label{P4section}
Q: z\mapsto (x_0 ,y_1,z)=(0,\mu z ,z),\\
R: z\mapsto (x_0 ,y_1,z)=(0,-\mu z ,z).
\end{cases}
\end{align}
\end{prop}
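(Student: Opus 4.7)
The plan is to establish the birational equivalence in part (1) by an explicit chain of substitutions that displays $z$ as the base parameter of the elliptic fibration. The starting observation is that the defining equation of $S_0(\lambda,\mu)$ is quadratic in $y$ with leading coefficient $xz^2$; writing it as
\[ xz^2\,y^2 + xz\,K\, y + \mu = 0, \qquad K := xz+z^2+z+\lambda, \]
and completing the square via $Y_0 := 2xz^2 y + xzK$ produces the hyperelliptic quartic
\[ Y_0^{\,2} = x^2z^2 K^2 - 4\mu\,xz^2 \]
over the $z$-line. This is already an elliptic fibration in quartic form, and it carries the evident rational point $(x,Y_0)=(0,0)$ on each fibre, which will be transported to the point at infinity of the Weierstrass model.

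The key step is to effect that transport by the inversion $x_0 := -\mu/x$, which simultaneously collapses the quartic into a Weierstrass cubic. Substituting $x=-\mu/x_0$ into the quartic, multiplying through by $x_0^{\,4}/(\mu^2z^2)$, and setting $y_1 := x_0^{\,2}\,Y_0/(\mu z)$ should yield
\[ y_1^{\,2} = 4 x_0^{\,3} + L^2 x_0^{\,2} - 2\mu z L\, x_0 + \mu^2 z^2 = 4x_0^{\,3} + (L x_0 - \mu z)^2, \qquad L := z^2+z+\lambda. \]
Expanding $L^2$ and $2\mu zL$ then reproduces the coefficient polynomials $\lambda^2+2\lambda z+z^2+2\lambda z^2+2z^3+z^4$ and $-2\lambda\mu z-2\mu z^2-2\mu z^3$ in the stated equation. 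All substitutions are rational and invertible away from the loci $\{x=0\}\cup\{x_0=0\}$, so this proves the birational equivalence; the polynomial nature of the coefficients in $z$ confirms that the result is an elliptic fibration over the $z$-sphere.

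For part (2), substituting $x_0=0$ into the Weierstrass equation gives $y_1^{\,2} = \mu^2 z^2$, hence $y_1=\pm\mu z$. Because both expressions are polynomial in $z$, the two maps $z\mapsto(0,\pm\mu z,z)$ define everywhere-holomorphic sections $Q$ and $R$ of the fibration, as asserted.

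The computations in the first and third steps are routine; the only genuinely non-obvious ingredient is the specific inversion $x_0=-\mu/x$. It is forced by two requirements: the rational point $x=0$ of the quartic model must be sent to the infinity section of the Weierstrass model, and the constant term on the right-hand side must take the form $\mu^2 z^2$ rather than a polynomial in $\mu$ of higher degree. Once this substitution is discovered, correctness is a direct polynomial identification, and the main care needed is simply to verify that the four coefficient polynomials of the cubic match term by term.
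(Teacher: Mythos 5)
Your proposal is correct and follows essentially the same route as the paper: composing your substitutions (completing the square in $y$, then the inversion $x_0=-\mu/x$ with $y_1=x_0^2Y_0/(\mu z)$) yields exactly the paper's birational transformation $x=-\mu/x_0$, $y=\frac{-\lambda x_0-y_1+\mu z-x_0 z-x_0 z^2}{2x_0 z}$, which the paper simply states without derivation. Your verification of the sections in part (2) by setting $x_0=0$ matches the paper's (unstated) ``clear'' check.
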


\begin{proof}
(1) By the birational transformation
\begin{align*}
x = \frac{-\mu}{x_0} , \quad y=\frac{-\lambda x_0 -y_1  + \mu z -x_0 z - x_0 z^2 }{2 x_0 z} ,  
\end{align*}
(\ref{eq:P4}) is transformed to (\ref{eq:P4prekodaira}).

(2) This is clear.
\end{proof}

Set
\begin{eqnarray}\label{DivLambda}
\Lambda _0=\{(\lambda,\mu) \in \mathbb{C} ^2 | \lambda \mu (\lambda ^2(4\lambda -1)^3-2(2+25\lambda(20\lambda -1))\mu -3125\mu ^2)\not=0  \}.
\end{eqnarray}

\begin{prop}\label{prop1.2}
Suppose $(\lambda,\mu)\in \Lambda_0$. The elliptic surface given by {\rm (\ref{eq:P4prekodaira})} has the singular fibres of type $I_3$ over $z=0$, of type $I_{15}$ over $z=\infty$ and other six fibres of type $I_1$.
\end{prop}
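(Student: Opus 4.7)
The plan is to bring (\ref{eq:P4prekodaira}) into Kodaira normal form and then read off the fibre types from the orders of vanishing of $g_2,g_3,\Delta$ according to Kodaira's classification. Starting from
$$
y_1^2 = 4 x_0^3 + A(z)\, x_0^2 + B(z)\, x_0 + C(z),
$$
with $A(z)=\lambda^2+2\lambda z+z^2+2\lambda z^2+2z^3+z^4$, $B(z)=-2\mu z(\lambda+z+z^2)$ and $C(z)=\mu^2 z^2$, I would perform the translation $x_0 = X - A(z)/12$ to eliminate the $X^2$ term, obtaining an equation $y_1^2 = 4X^3 - g_2(z)X - g_3(z)$ with
$g_2 = \tfrac{1}{12}A^2 - B$ and $g_3 = -\tfrac{1}{216}A^3 + \tfrac{1}{6}AB - C$ (up to sign conventions). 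The discriminant $\Delta(z)=g_2(z)^3-27\, g_3(z)^2$ is then a polynomial in $z$ whose total degree should equal $24$ (the expected value for an elliptic K3 with section), and its factorisation into $z=0$, $z=\infty$ and the remaining zeros will determine the singular fibres.

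Next I would analyse each candidate singular point separately. At $z=0$, direct substitution shows $g_2(0)\ne 0$ and $g_3(0)\ne 0$ generically (these are nonzero polynomials in $\lambda,\mu$ that vanish only on loci excluded by $\lambda\mu\ne 0$ in $\Lambda_0$), while a short expansion gives $\mathrm{ord}_0(\Delta)=3$; by Kodaira's table this forces type $I_3$. At $z=\infty$ I change coordinates $w=1/z$ and rescale $(X,y_1,z)=(w^{-4}X', w^{-6}y_1', w^{-1})$ to put the equation into Weierstrass form near $w=0$; checking the resulting $g_2',g_3',\Delta'$ at $w=0$ (using $\deg g_2\le 8$, $\deg g_3\le 12$) shows $g_2'(0),g_3'(0)\ne 0$ and $\mathrm{ord}_0(\Delta')=24-\deg\Delta+(\text{cancellation from leading coefficient})=15$ generically, yielding $I_{15}$.

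Finally, for the remaining zeros of $\Delta$, a degree count gives $24-3-15=6$, so I need to show that the polynomial $\Delta(z)/z^3$, after removing the $I_{15}$ contribution at infinity, has six \emph{distinct} simple roots — equivalently, that its own discriminant is nonzero. A direct computation, which one can perform by resultant or by eliminating $z$ in the system $\Delta=\partial_z\Delta=0$, should produce (up to a nonzero monomial in $\lambda,\mu$) exactly the factor
$$
\lambda^2(4\lambda-1)^3 - 2\bigl(2+25\lambda(20\lambda-1)\bigr)\mu - 3125\mu^2
$$
appearing in the definition (\ref{DivLambda}) of $\Lambda_0$. Under the assumption $(\lambda,\mu)\in\Lambda_0$ this factor is nonzero, so the six residual roots are simple and each contributes a fibre of type $I_1$.

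The main obstacle is purely computational: obtaining $g_2,g_3,\Delta$ explicitly and then carrying out the elimination that shows the discriminant of the residual sextic factor is (up to units) the cubic–in–$\mu$ expression appearing in $\Lambda_0$. Once this identification is made, Kodaira's classification immediately yields the fibre types $I_3+I_{15}+6I_1$, and the total Euler number $3+15+6=24$ provides an independent sanity check consistent with the K3 condition.
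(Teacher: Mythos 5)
Your proposal follows essentially the same route as the paper's proof: depress the cubic to Kodaira normal form in the two affine charts $z$ and $1/z$, compute $g_2,g_3$ and the discriminant, read off $I_3$ at $z=0$ and $I_{15}$ at $z=\infty$ from the vanishing orders (with $g_2\neq 0$ there), and identify the discriminant of the residual sextic with the factor $\lambda ^2(4\lambda -1)^3-2(2+25\lambda(20\lambda -1))\mu -3125\mu ^2$ defining $\Lambda_0$, so the remaining six roots are simple and give $6I_1$. The only blemish is the coefficient in your shifted constant term (the translation $x_0=X-A/12$ gives $g_3=-\tfrac{1}{216}A^3+\tfrac{1}{12}AB-C$, not $\tfrac{1}{6}AB$), which does not affect the argument.
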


\begin{proof}
(\ref{eq:P4prekodaira}) is described in the Kodaira normal form
\begin{eqnarray}\label{eq:P4kodaira}
y_1 ^ 2 =4 x_1 ^ 3 - g_2(z) x_1 - g_3(z),       z \not=\infty,
\end{eqnarray}
with
\begin{align*}
\begin{cases}
&g_2(z)=\displaystyle\frac{1}{216}(18\lambda^4+432\lambda \mu z 
+72\lambda^3z(1+z) +108\lambda^2 z^2(1+z)^2\\
\vspace*{0.3cm}&\quad\quad\quad\quad\quad\quad\quad\quad\quad\quad\quad\quad\quad
+72\lambda z^3(1+z)^3+18z^2(1+z)(24\mu +z^2(1+z)^3)),\\
&g_3(z)=\displaystyle\frac{-1}{216}(\lambda^6+36\lambda^3  \mu z +6\lambda^5z(1+z )+108\lambda^2 \mu z^2(1+z)+15\lambda^4 z^2(1+z)^2\\
&\quad\quad\quad\quad\quad\quad
+108\lambda\mu z^3(1+z)^2
+20\lambda ^3z^3(1+z)^3+15\lambda^2 z^4 (1+z)^4 +6\lambda z^5 (1+z)^5\\
&\quad\quad\quad\quad\quad\quad\quad\quad\quad\quad\quad\quad\quad\quad\quad\quad\quad\quad
+z^2(216\mu^2 +36\mu z^2(1+z)^3 + z^4 (1+z)^6)),
\end{cases}
\end{align*}
and 
\begin{eqnarray}\label{eq:P4kodaira2}
y_2 ^ 2 =4 x_2 ^ 3 - h_2(z_1) x_2 - h_3(z_1),  z_1\not= \infty,     
\end{eqnarray}
with 
\begin{eqnarray}\label{eq:P4h_3}
\begin{cases}
\vspace*{0.3cm}
& h_2(z_1)=\displaystyle2\mu z_1^5(1+z_1+\lambda z_1^2)+\frac{1}{12}(1+z_1+\lambda z_1^2)^4, \\
& h_3(z_1)=\displaystyle -(\frac{1}{6} \mu z_1^5 (1+z_1+\lambda z_1 ^2)^3 +\frac{1}{216}(1+z_1+\lambda   z_1^2)^6+\mu ^2 z_1^{10}),\notag
\end{cases}
\end{eqnarray}
where $z_1=1/z$.
We have the discriminant of the right hand side of (\ref{eq:P4kodaira}) for $x_1$ ((\ref{eq:P4kodaira2}) for $x_2$, resp.):
\begin{align}\label{eq:P4discriminant}
\begin{cases}\vspace*{0.1cm}
&D_0=64\mu^3 z^3 (\lambda^3 +3\lambda^2 z +27\mu z +3\lambda z^2 +3\lambda^2 z^2 +z^3 +6 \lambda z^3 +3z^4 +3\lambda z^4 +3z^5 +z^6),\\
&D_\infty=64\mu^3 z_1^{15} (1+3z_1 +3z_1^2+3\lambda z_1^2+z_1^3 +6\lambda z_1^3 +3 \lambda z_1^4 +3\lambda ^2 z_1^4 +3\lambda ^2 z_1^5+27\mu z_1^5 +\lambda ^3z_1^6),
\end{cases}
\end{align}
respectively.

From these data, we obtain the required statement (see \cite{Kodaira2}).
\end{proof}

\begin{rem}\label{Lambda remark}
We have a parametrization
$$
\lambda (a)=\frac{(a-1)(a+1)}{5}, \quad \mu(a)=\frac{(2a-3)^3 (a+1)^2}{3125}
$$
of the locus $\lambda ^2(4\lambda -1)^3-2(2+25\lambda(20\lambda -1))\mu -3125\mu ^2=0$.
It is a rational curve.
In Section 4, we shall obtain the above $\Lambda_0$ as the complement of the singular locus of the period differential equation for $\mathcal{F}_0$
 in the $(\lambda,\mu)$-space.
\end{rem}

\begin{rem}
Let $\chi$ denote the Euler characteristic.
According to {\rm \cite{Kodaira2} Theorem 12.1}
(see also {\rm \cite{Shiga3}}),
 an elliptic fibred algebraic surface $S$ over $\mathbb{P}^1(\mathbb{C})$ is a $K3$ surface if and only if $\chi (S)=24$  provided $S$ is given in the Kodaira normal form. Due to this criterion and {\rm Proposition \ref{prop1.2}}, we can check directly  that  $S_0(\lambda,\mu)$ is a $K3$ surface for $(\lambda,\mu)\in\Lambda_0$.
\end{rem}

For $(\lambda,\mu)\in \Lambda_0$, let $O$ be the zero of the Mordell-Weil group  of sections of the elliptic fibration given by (\ref{eq:P4prekodaira}) over $\mathbb{C}(z)$ (see Section 3.2). $O$ is given by the set of the points at infinity on every fibre.
Let $Q$ and $R$ be the sections in (\ref{P4section}).
 $R$ is the inverse element of $Q$ in the Mordell-Weil group.
 Let $F$ be a general fibre of this fibration. 
Let $I_3=a_0+a_1+a_1'$ be the irreducible decomposition of the fibre at $z=0$ given as in Figure 1. 
 We may suppose $O\cap a_0 \not= \phi, Q\cap a_1\not= \phi$ and $R\cap a_1'\not=\phi$.
By the same way, let  $I_{15}= b_0 + b_1+\cdots+b_7+b_1'+\cdots+b_7'$ be the irreducible decomposition of the fibre at $z = \infty$ given as in  Figure 2. 
We may suppose $O\cap b_0 \not= \phi, Q\cap b_5\not= \phi$ and $R\cap b_5'\not=\phi$.

\begin{figure}[h]
\center
\includegraphics[scale=1]{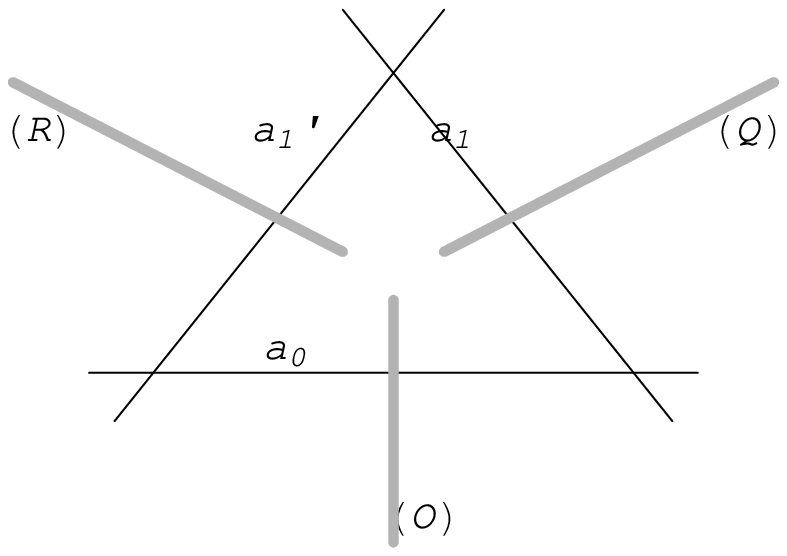}
\caption{}
\end{figure}

\begin{figure}[h]
\center
\includegraphics[scale=1]{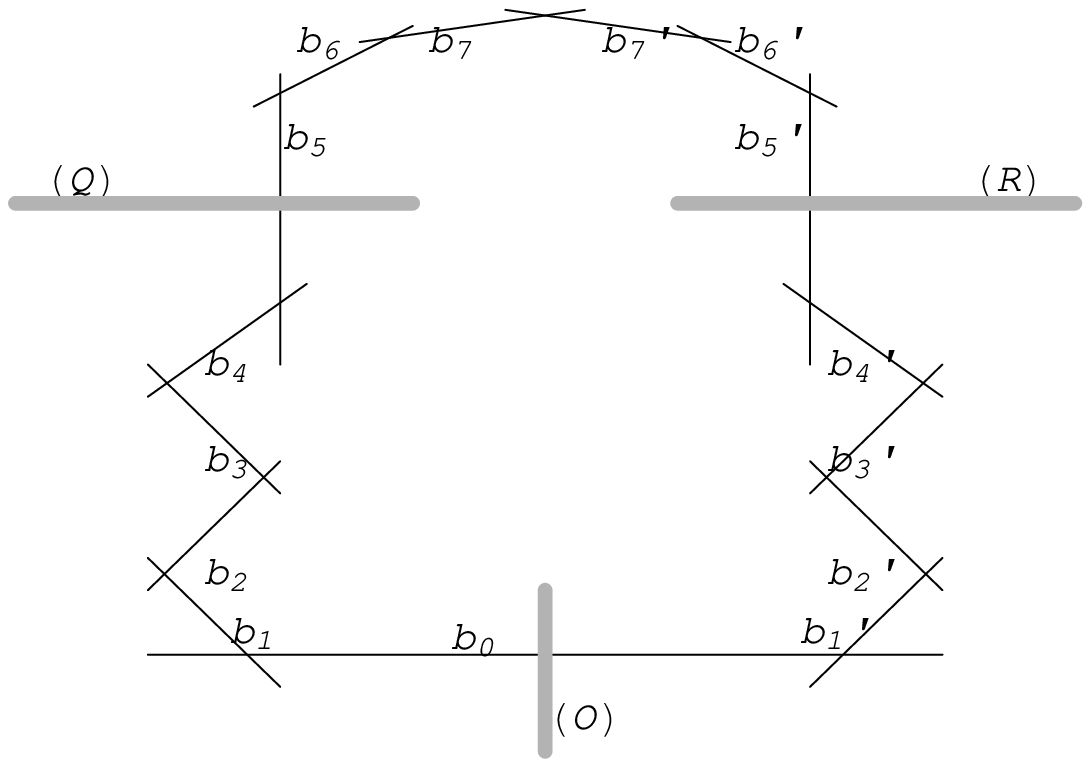}
\caption{}
\end{figure}

We set a sublattice $L_0=L_0 (\lambda, \mu) \subset H_2(S_0 (\lambda,\mu) ,\mathbb{Z})$ for $(\lambda,\mu)\in \Lambda_0$ by
\begin{eqnarray}\label{N-Sbasis}
L_0 (\lambda,\mu) =\langle b_1,b_2,b_3,b_4,b_5,Q,b_6,b_7,b'_1,b'_2,b'_3,b'_4,b'_5,R,b'_6,b'_7,F,O\rangle_{\mathbb{Z}}.
\end{eqnarray}

Set
\begin{eqnarray*}\label{A18}
A_{18}(-1)=
\underbrace{
\left(
\begin{array}{ccccccc}
-2 &1 & & &  &&\\
1&-2 &1& &&&\\
 & 1& -2&\ddots & &&\\
 & & & \ddots&&&\\
 & & &\ddots & -2&1&\\
 &&&&1&-2&1\\
 &&&&&1&-2
\end{array}
\right)}_{18} 
 \quad.
\end{eqnarray*}
Let 
$E_{i,j}$   $(1\leq i,j\leq 18) $ be the  matrix unit.
We obtain the corresponding intersection matrix $M_0$ for $L_0$:
\begin{align}
\label{matrix:P4preintersection}
M_0=&A_{18}(-1) +2E_{17,17}  -(E_{6,7}+E_{7,6})+(E_{5,7}+E_{7,5})-(E_{14,15}+E_{15,14})+(E_{13,15}+E_{15,13}) \notag \\
&-(E_{8,9}+E_{9,8}) -(E_{16,17}+E_{17,16})+(E_{6,17}+E_{17,6})+(E_{8,16}+E_{16,8})+(E_{14,17}+E_{17,14}).
\end{align}

We have 
\begin{eqnarray}\label{-5 matrix}
{\rm det} (M_0)=-5.
\end{eqnarray}
Therefore, the generators of $L_0$ are independent.

\subsection{Elliptic fibration for $\mathcal{F}_1$}
\begin{prop} 
The surface $ S_1(\lambda,\mu)$ is birationally equivalent  to the surface defined by the equation
\begin{eqnarray}\label{P1preKodaira}
z_1^2= y_1^3+ (\mu^2 + 2\mu x_1 + 
          x_1^2 - 4 x_1^3) y_1^2 +( -8\lambda \mu  x_1^3 - 
        8\lambda x_1^4)y_1+16 \lambda^2 x_1^6  .
  \end{eqnarray}
This equation gives an elliptic fibration of $ S_1(\lambda,\mu)$
with the holomorphic section
\begin{align}\label{P1section}
Q: x_1 \mapsto (x_1,y_1,z_1)=(x_1, 0, 4 \lambda x_1^3).
\end{align}
\end{prop}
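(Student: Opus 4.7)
The plan is to follow the proof strategy used for $\mathcal{F}_0$ in the preceding proposition: write down an explicit birational transformation from $S_1(\lambda,\mu)$ to the surface defined by (\ref{P1preKodaira}) and verify the identity by direct substitution.

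I would proceed as follows. The defining equation of $S_1(\lambda,\mu)$ is quadratic in $z$; completing the square via $u = 2xyz + xy(x+y+1)$ yields the Jacobi-quartic model
$$u^2 = x^2 y^2 (x+y+1)^2 - 4xy(\lambda x + \mu y),$$
which is quartic in $y$ over the $x$-line. Inverting the fiber coordinate ($Y = 1/y$, $V = u/y^2$) converts this into a cubic in $Y$, and the Weierstrass rescaling $W = -4\lambda x^2\, Y$, $Z = -4\lambda x^2\, V$ produces $Z^2 = W^3 + BW^2 + CW + D$ with $B = x^2(x+1)^2 - 4\mu x$, $C = -8\lambda x^4(x+1)$, $D = 16\lambda^2 x^6$. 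Matching these coefficients against those of (\ref{P1preKodaira}) forces the further change of base coordinate $x_1 = \mu/x$ together with the gauge rescaling $(W, Z) = (t^2 y_1, t^3 z_1)$ with $t = x^2/\mu$; under these operations the triple $(B, C, D)$ transforms precisely into $\bigl((\mu + x_1)^2 - 4x_1^3,\; -8\lambda x_1^3(\mu + x_1),\; 16\lambda^2 x_1^6\bigr)$. Composing the steps yields the explicit birational map
$$x_1 = \mu/x,\qquad y_1 = -4\lambda x_1^2/y,\qquad z_1 = x_1 y_1 (2z + x + y + 1),$$
and substituting these expressions into (\ref{P1preKodaira}) and clearing denominators returns a nonzero rational multiple of $F_1(x,y,z)$, establishing (1).

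Part (2) is then immediate: substituting $y_1 = 0$ into the right-hand side of (\ref{P1preKodaira}) gives $z_1^2 = 16\lambda^2 x_1^6 = (4\lambda x_1^3)^2$, so $x_1 \mapsto (x_1,\, 0,\, 4\lambda x_1^3)$ is a section of the fibration with polynomial, hence holomorphic, components in $x_1$. The only non-routine point in the proof of (1) is recognising the inversion $x_1 = \mu/x$ that reconciles the naive Weierstrass output of the Jacobi-quartic procedure with the normalisation of (\ref{P1preKodaira}); once this is observed the verification is a mechanical polynomial identity.
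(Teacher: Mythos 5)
Your proposal is correct and uses essentially the same method as the paper: exhibit an explicit birational change of variables and verify the Weierstrass-type equation by direct substitution (the paper simply writes down such a transformation, in the inverse direction $(x_1,y_1,z_1)\mapsto(x,y,z)$, and asserts the identity). Your map $x_1=\mu/x$, $y_1=-4\lambda x_1^2/y$, $z_1=x_1y_1(2z+x+y+1)$ is not literally the paper's transformation nor its inverse (the two differ by a birational automorphism of the fibration), but it does work — one checks $z_1^2-\mathrm{RHS}=\tfrac{64\lambda^2\mu^6}{x^7y^3}\,F_1(x,y,z)$ — and your treatment of the section $Q$ in (2) is exactly the routine verification the paper leaves implicit.
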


\begin{proof}
By the birational transformation 
\begin{align*}
x=-\frac{2 x_1^2 y_1}{-4 \lambda x_1^3 +\mu  y_1 + x_1 y_1 +z_1},
 y=\frac{y_1^2}{2 x_1 (-4 \lambda x_1^3 +\mu  y_1 + x_1 y_1 +z_1)},
 z=-\frac{-4 \lambda x_1^3 +\mu  y_1 + x_1 y_1 +z_1}{2 x_1 y_1},
\end{align*}
 {\rm(\ref{eq:P1})} is transformed to (\ref{P1preKodaira}).
  \end{proof}

(\ref{P1preKodaira}) gives an elliptic fibration for the surface $ S_1(\lambda,\mu)$.
Set
\begin{eqnarray}\label{Lambda_1}
\Lambda_1=\{(\lambda,\mu)\in\mathbb{C}^2| \lambda \mu (729\lambda^2 - 54\lambda(27\mu-1) +(1+27\mu)^2\not =0)\}.
\end{eqnarray}

\begin{prop}\label{propP1kodaira}
Suppose $(\lambda,\mu)\in\Lambda_1$. The elliptic surface given by {\rm(\ref{P1preKodaira})} has the singular fibres  of type $I_9$ over $x_1=0$, of type $I_3^{*}$ over $x_1=\infty$ and other six fibres of type $I_1$.
\end{prop}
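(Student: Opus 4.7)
The plan is to parallel the proof of Proposition \ref{prop1.2}. First, I would bring (\ref{P1preKodaira}) into a Kodaira normal form $y^2=4x^3-g_2(x_1)\,x-g_3(x_1)$ valid in a neighborhood of $x_1=0$. Since (\ref{P1preKodaira}) is already of the shape $z_1^2=y_1^3+p(x_1)\,y_1^2+q(x_1)\,y_1+r(x_1)$, with $p=\mu^2+2\mu x_1+x_1^2-4x_1^3$, $q=-8\lambda x_1^3(\mu+x_1)$ and $r=16\lambda^2 x_1^6$, this only requires the standard translation $y_1\mapsto y_1-\frac{1}{3}p(x_1)$ and a rescaling that sets the leading coefficient to $4$. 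To treat the fibre at $x_1=\infty$, I would then pass to $w=1/x_1$ and apply the weighted coordinate change $x\mapsto X/w^4$, $y\mapsto Y/w^6$, producing a second normal form $Y^2=4X^3-h_2(w)X-h_3(w)$ with $h_2(w)=w^8 g_2(1/w)$ and $h_3(w)=w^{12}g_3(1/w)$. Since $\deg g_2\le 6$ and $\deg g_3\le 9$ for this depressed cubic, both $h_2$ and $h_3$ are genuine polynomials in $w$.

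With both normal forms in hand, the argument reduces to reading off the vanishing orders of $g_2$, $g_3$ and the discriminant $\Delta=g_2^3-27g_3^2$ and comparing them against Kodaira's table \cite{Kodaira2}. At $x_1=0$ we have $p(0)=\mu^2\neq 0$ and $q(0)=r(0)=0$, so $g_2(0)=\frac{4}{3}\mu^4\neq 0$ and $g_3(0)=-\frac{8}{27}\mu^6\neq 0$; a direct expansion (using the symmetric form $p=(\mu+x_1)^2-4x_1^3$, $q=-8\lambda x_1^3(\mu+x_1)$, $r=(4\lambda x_1^3)^2$, which forces the low-order terms of $\Delta$ to cancel in pairs) should give $\mathrm{ord}_0(\Delta)=9$, matching Kodaira's criterion for $I_9$. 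At $w=0$ one similarly checks $\mathrm{ord}_0(h_2)=2$, $\mathrm{ord}_0(h_3)=3$ and $\mathrm{ord}_0(\Delta_\infty)=9$, which by Kodaira's table corresponds to $I_{9-6}^{*}=I_3^{*}$.

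The role of the set $\Lambda_1$ is to separate the remaining singular fibres from these two. After dividing the discriminant by $x_1^9$ (the contribution of $I_9$) and removing the vanishing at infinity accounting for $I_3^*$, what remains is a residual polynomial of degree $6$ in $x_1$; the polynomial $729\lambda^2-54\lambda(27\mu-1)+(1+27\mu)^2$ appearing in (\ref{Lambda_1}) should coincide, up to a nonzero constant, with the discriminant of this residual factor, so that on $\Lambda_1$ its six roots are simple and stay away from $\{0,\infty\}$. The condition $\lambda\mu\neq 0$ additionally rules out degeneration of the two special fibres. Each simple root produces a fibre of type $I_1$, and the Euler-characteristic check $9+9+6\cdot 1=24$ matches the K3 condition, confirming that no singular fibre has been missed.

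The main obstacle is the explicit symbolic computation and factorization of $\Delta(x_1)$: one must verify that $\mathrm{ord}_0(\Delta)=9$ exactly (not only $\ge 9$), that $\mathrm{ord}_\infty(\Delta_\infty)=9$, and that the residual degree-$6$ factor agrees up to a scalar with the factor appearing in (\ref{Lambda_1}). These are routine but lengthy polynomial manipulations; once carried out, the Kodaira classification \cite{Kodaira2} forces the fibre types and the proposition follows.
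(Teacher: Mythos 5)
Your overall plan is the same as the paper's: pass to the Kodaira normal form in the chart around $x_1=0$ and, after the weighted change of coordinates, in the chart around $x_1=\infty$, then read off the fibre types from the vanishing orders of $g_2,g_3$ and of the discriminant via Kodaira's table. Your local data are correct ($g_2(0)=\tfrac43\mu^4$, $g_3(0)=-\tfrac8{27}\mu^6$; $\mathrm{ord}\,h_2=2$, $\mathrm{ord}\,h_3=3$, $\mathrm{ord}\,\Delta_\infty=9$ at $w=0$, giving $I_3^*$ whenever $\lambda\neq0$). The gap is in the step where you attribute the control of the six remaining discriminant zeros to the set (\ref{Lambda_1}). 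The discriminant factors as $D_0=256\lambda^2x_1^9\,R(x_1)$ with $R$ of degree $6$ and $R(0)=\lambda\mu^3-\mu^4=\mu^3(\lambda-\mu)$ (this constant term is visible in the displayed $D_0$). Hence $\mathrm{ord}_0(D_0)=9$ \emph{exactly} iff $\mu\neq0$ and $\lambda\neq\mu$; your assertion that ``the condition $\lambda\mu\neq0$ additionally rules out degeneration of the two special fibres'' is false, and along $\lambda=\mu$ (not excluded by (\ref{Lambda_1}); e.g.\ $\lambda=\mu=1$ satisfies $729-1404+784=109\neq0$) one of the six simple zeros collides with $x_1=0$ and the fibre there becomes $I_{10}$, with only five $I_1$'s left. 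Moreover, the identification you propose to verify — that $729\lambda^2-54\lambda(27\mu-1)+(1+27\mu)^2$ equals the discriminant of the residual sextic up to a nonzero constant — cannot hold: that discriminant is homogeneous of degree $10$ in the coefficients of $R$, which themselves have degree up to $4$ in $(\lambda,\mu)$, so it is of much higher degree than the quadratic; and in any case a multiple-root condition on $R$ never detects a root escaping to $x_1=0$ or $x_1=\infty$ — those are the separate conditions $R(0)\neq0$ (which is exactly the $\lambda\neq\mu$, $\mu\neq 0$ issue above) and nonvanishing of the leading coefficient $-16$ (automatic).

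So, carried out literally, your argument establishes the stated fibre configuration only away from an extra divisor inside $\Lambda_1$ (at least $\lambda=\mu$, plus the genuine multiple-root locus of $R$), not for all $(\lambda,\mu)\in\Lambda_1$. To be fair, the paper's own proof has the same blind spot: it merely records $g_2,g_3,h_2,h_3$ and the two discriminants and asserts the conclusion, even though its displayed $D_0$ already exhibits the constant term $\lambda\mu^3-\mu^4$; the proposition is accurate for generic $(\lambda,\mu)\in\Lambda_1$ but not on all of $\Lambda_1$ as written (compare Proposition \ref{prop1.2}, where the analogous constant term is controlled by the defining inequalities of $\Lambda_0$). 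Within your write-up, however, the two specific claims you flag for verification would fail if checked, so the ``routine but lengthy'' step is not merely lengthy — as formulated it does not close.
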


\begin{proof}
(\ref{P1preKodaira}) is described in the Kodaira normal form
\begin{eqnarray}\label{P1Kodaira}
z_2 ^ 2 = 4 y_2 ^ 3 - g_2(x_1) y_2 - g_3(x_1),    \quad   x_1 \not=\infty,
\end{eqnarray}
with
\begin{align*}
\begin{cases}
\vspace*{0.2cm}
&g_2(x_1)=\displaystyle-4\Big( -\frac{\mu^4}{3} -\frac{4\mu^3 x_1}{3} - 2\mu^2 x_1^2 -\frac{4\mu x_1^3}{3} - 8\lambda\mu x_1^3 +\frac{8\mu^2 x_1^3}{3}- \frac{x_1^4}{3}-8 \lambda x_1^4 + \frac{16 \mu x_1^4}{3} + \frac{8 x_1^5}{3}- \frac{16 x_ 1^6}{3}\Big),\\
\vspace*{0.2cm}
&g_3(x_1)=\displaystyle-4\Big( \frac{2 \mu ^6}{27} +\frac{4 \mu^5 x_1}{9} + \frac{10 \mu ^4 x_1^2}{9}+\frac{40 \mu^3 x_1^3}{27} +\frac{8 \lambda \mu^3 x_1^3}{3}-\frac{8 \mu^4 x_1^3}{9}+\frac{10\mu^2 x_1^4 }{9} \\
\vspace*{0.2cm}
&\displaystyle
\quad\quad\quad\quad\quad\quad\quad\quad+8\lambda \mu^2 x_1^4 - \frac{32 \mu^3 x_1^4}{9} +\frac{4\mu x_1^5}{9} +8\lambda \mu x_1^5
- \frac{16\mu^2 x_1^5}{3}+\frac{2 x_1^6}{27}+\frac{8\lambda x_1^6 }{3}+16 \lambda^2 x_1^6 - \frac{32 \mu x_1^6}{9}\\
&\displaystyle
\quad\quad\quad\quad\quad\quad\quad\quad
 -\frac{32 \lambda \mu x_1^6}{3} +\frac{32\mu^2 x_1^6 }{9}- \frac{8 x_1^7}{9} -\frac{32 \lambda x_1^7}{3} +\frac{64 \mu x_1^7}{9} +\frac{32x_1^8}{9}- \frac{128 x_1^9}{27}\Big),
\end{cases}
\end{align*}
and
\begin{eqnarray}\label{P1Kodaira2}
z_3 ^ 2 = 4y_3 ^ 3 - h_2(x_2) y_3 - h_3(x_2),    \quad   x_2 \not=\infty,
\end{eqnarray}
with
\begin{align*}
\begin{cases}
\vspace*{0.2cm}
&h_2(x_2)=\displaystyle -4\Big( -\frac{16 x_2^2}{3}+\frac{8 x_2^3}{3} -\frac{x_2^4}{3} -8\lambda x_2^4 +\frac{16 \mu x_2^4}{3}-\frac{4\mu x_2^5}{3}- 8\lambda \mu x_2^5 + \frac{8 \mu^2 x_2^5}{3}-2 \mu^2 x_2^6 -\frac{4\mu^3  x_2^7}{3}-\frac{\mu^4 x_2^8}{3}\Big),\\
\vspace*{0.2cm}
&\displaystyle h_3(x_2)=-4\Big(-\frac{128 x_2^3}{27}+\frac{32 x_2^4}{9} -\frac{8x_2^5}{9}-\frac{32 \lambda x_2^5}{3}+\frac{64 \mu x_2^5}{9}+ \frac{2 x_2^6}{27} +\frac{8 \lambda x_2^6}{3} +16\lambda^2 x_2^6 -\frac{32 \mu x_2^6 }{9} \\
\vspace*{0.2cm}
&\displaystyle \quad\quad\quad\quad\quad\quad\quad\quad-\frac{32 \lambda \mu x_2^6}{3}+ \frac{32 \mu^2 x_2^6}{9} +\frac{4\mu x_2^7}{9}+8\lambda \mu x_2^7 -\frac{16\mu^2 x_2^7}{3}+\frac{10 \mu^2 x_2^8}{9}+8\lambda \mu^2 x_2^8 -\frac{32 \mu^4 x_2^8}{9}\\
\vspace*{0.2cm}
&\displaystyle \quad\quad\quad\quad\quad\quad\quad\quad +\frac{40 \mu^3 x_2^{11}}{27}+\frac{8 \lambda \mu^3 x_2^9}{3} -\frac{8\mu^4 x_2^9}{9} +\frac{10\mu^4 x_2^{10}}{9} + \frac{4\mu^5 x_2^{11}}{9}+\frac{2 \mu^6 x_2^{12}}{27}\Big),
\end{cases}
\end{align*}
where $x_1=1/x_2$. We have the discriminant of the right hand side of (\ref{P1Kodaira}) for $y_1$ ((\ref{P1Kodaira2}) for $y_2$, resp.):
\begin{align*}
\begin{cases}
&D_0=256\lambda^2 x_1^9(\lambda \mu^3 - \mu^4 + 3 \lambda 
      \mu^2 x_1 - 4 \mu^3 x_1 + 3 \lambda \mu x_1^2 - 6 \mu^2 x_1^2 + \lambda x_1^3 + 
        27 \lambda^2 x_1^3 \\
 &\quad\quad\quad\quad\quad\quad\quad\quad\quad
         - 4 \mu x_1^3 - 36 \lambda \mu x_1^3 + 8 \mu^2 
        x_1^3 - x_1^4 - 36 \lambda x_1^4 + 16 \mu 
          x_1^4 + 8 x_1^5 - 16 x_1^6),\\
&D_\infty=256 \lambda^2 x_2^9(-16 + 8 x_2 - x_2^2 - 36 \lambda x_2^2 + 16 
        \mu x_2^2 + \lambda x_2^3 + 27 \lambda^2 x_2^3 - 4 \mu x_2^3 - 36 
        \lambda \mu x_2^3 \\
&        \quad\quad\quad\quad\quad\quad\quad\quad\quad\quad
 8 \mu^2 x_2^3 + 3 \lambda \mu x_2^4 - 6 \mu^2 
          x_2^4 + 3 \lambda \mu^2 x_2^5 - 4 \mu^3 x_2^5 + \lambda \mu^3 x_2^6 - 
        \mu^4 x_2^6).
        \end{cases}
\end{align*}
From these deta, we obtain the required statement.
\end{proof}

The elliptic fibration given by (\ref{P1preKodaira}) is illustrated in  Figure 3.

\begin{figure}[h]
\center
\includegraphics[scale=0.9]{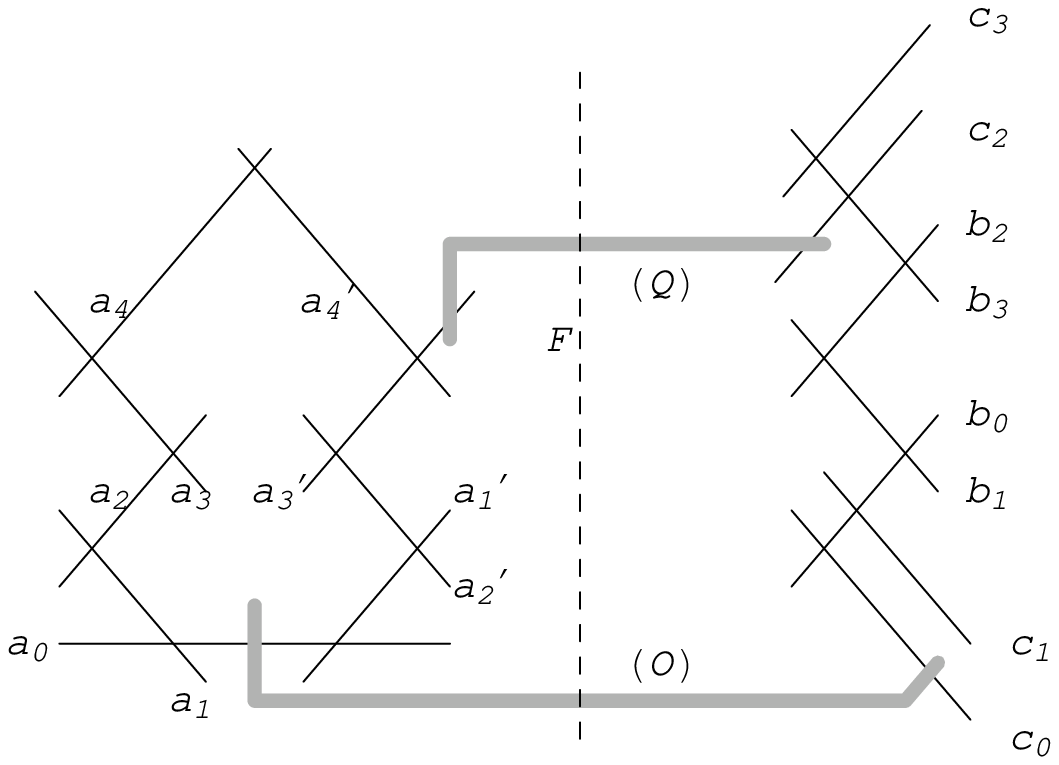}
\caption{}
\end{figure}

For this fibration, let $O$ be the zero of the Mordell-Weil group,  $Q$ be  the section in (\ref{P1section}) and $F$ be a general fibre.
Note that $Q\cap a_3\not = \phi$ at $x_1=0$ and $Q\cap c_2 \not =\phi$ at $x_1 = \infty$.
Set
\begin{eqnarray}\label{L1definition}
L_1=\langle a_1,a_2,a_3,a_4,a_4',a_3',a_2',a_1',c_1,b_0,b_1,b_2,b_3,c_2,c_3,O,Q,F  \rangle_\mathbb{Z}.
\end{eqnarray}
We have the following intersection matrix $M_1$ for $L_1$:
\begin{align}\label{intersection L1} \notag
M_1=&A_{18}(-1)
 -(E_{8,9}+E_{9,8})-(E_{14,15}+E_{15,14})+(E_{13,15}+E_{15,13})
+(E_{3,17}+E_{17,3})\\ 
&+(E_{14,17}+E_{17,14})-(E_{16,17}+E_{17,16})+(E_{16,18}+E_{18,16})-(E_{15,16}+E_{16,15})+2E_{18,18}.
\end{align}
We have ${\rm det}(M_1)=-9$.
Therefore, the generators of $L_1$ are independent.

\subsection{Elliptic fibration for $\mathcal{F}_2$}
\begin{prop}
The surface $ S_2(\lambda,\mu)$ is birationally equivalent  to the surface defined by the equation
\begin{eqnarray}\label{P2preKodaira}
z_1^2=x_1^3  + (-4\lambda y + 
    y^2 + 2 y^3 + y^4) x_1^2 + (-8\mu y^3 - 8\mu  y^4) x_1 + 16\mu ^2 y^4.
\end{eqnarray}
This equation gives an elliptic fibration of $ S_2(\lambda,\mu)$ with the holomorphic section
\begin{align}\label{P2section}
Q: y\mapsto (x_1,y,z_1)=(0,y,4\mu y^2)
\end{align}
\end{prop}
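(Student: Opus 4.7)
The plan is to adapt the strategy used for $\mathcal{F}_0$ and $\mathcal{F}_1$: treat $F_2 = 0$ as an elliptic pencil over the $y$-line, rewrite it as a quadratic in the fiber variable $z$, complete the square, and then apply a standard inversion/Nagell transformation at a visible rational point to reach the Weierstrass form (\ref{P2preKodaira}).

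Explicitly, I will rewrite
$$F_2(x,y,z) = xy\,z^{2} + xy(x+y+1)\,z + (\lambda x + \mu) = 0$$
and substitute $w = 2xyz + xy(x+y+1)$; completing the square yields the quartic model
$$w^{2} = x^{2} y^{2}(x+y+1)^{2} - 4xy(\lambda x + \mu)$$
over the $y$-line. Because this quartic vanishes at $x=0$ with linear term $-4\mu y \cdot x$, the point $(x,w) = (0,0)$ is a smooth rational point; I invert at it by setting $X = 1/x$, $W = w/x^{2}$, which produces the cubic
$$W^{2} = -4\mu y\,X^{3} + [y^{2}(y+1)^{2} - 4\lambda y]\,X^{2} + 2y^{2}(y+1)\,X + y^{2}.$$
Rescaling by the leading coefficient $e_{0} = -4\mu y$, that is setting $x_{1} = e_{0} X$ and $z_{1} = e_{0} W$, converts the cubic into exactly the target (\ref{P2preKodaira}). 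Solving for $(x,z)$ gives the explicit birational map
$$x = \frac{-4\mu y}{x_{1}}, \qquad z = \frac{z_{1} + 4\mu y^{2} - y(y+1)\, x_{1}}{2 y\, x_{1}},$$
with $y$ left fixed, and I verify part (1) by substituting these expressions into $F_2 = 0$ and checking the resulting polynomial identity in $x_{1}, y, z_{1}, \lambda, \mu$.

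For part (2), I substitute $x_{1} = 0$ into the right-hand side of (\ref{P2preKodaira}): only the constant term $16\mu^{2} y^{4}$ survives, so $z_{1} = \pm 4\mu y^{2}$, and the branch $z_{1} = +4\mu y^{2}$ yields the asserted section $Q$. The main obstacle is not conceptual but computational, namely identifying the correct rescaling $e_{0} = -4\mu y$ in the Nagell step; this choice is essentially forced by the coefficient of $x$ in the quartic near $(0,0)$, and the analogy with the $\mathcal{F}_1$ derivation—whose quadratic-in-$z$ form differs from $F_2 = 0$ only by replacing $\mu$ with $\mu y$ in the constant term—makes it unambiguous that the resulting cubic has the integral shape in (\ref{P2preKodaira}).
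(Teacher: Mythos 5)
Your proposal is correct and amounts to the same argument as the paper: the paper simply exhibits the birational change of variables and checks it, while you derive that same map by completing the square in $z$ and applying the standard quartic-to-Weierstrass inversion at the rational point over $x=0$, ending with $x=-4\mu y/x_1$ and a $z$-formula that agrees with the paper's (up to the harmless involution $z_1\mapsto -z_1$ of the Weierstrass form, and an apparent typo in the paper's printed transformation). The rescaling by $e_0=-4\mu y$ and the resulting coefficients all check out, as does the section $Q:(x_1,z_1)=(0,4\mu y^2)$.
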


\begin{proof}
 By the birational transformation 
\begin{align*}
x=\frac{x_1^2}{2 y (x_1 y - 4 \mu y^2 + x_1 y + z_1)}, z=-\frac{x_1 y - 4 \mu y^2 + x_1 y + z_1}{2 x_1 y},
\end{align*}
(\ref{eq:P2}) is transformed to (\ref{P2preKodaira}).
\end{proof}

(\ref{P2preKodaira}) gives an elliptic fibration for $ S_2(\lambda,\mu)$. Set
\begin{eqnarray}\label{Lambda_2}
\Lambda_2=\{(\lambda,\mu)\in\mathbb{C}^2| \lambda\mu (\lambda ^2(1+27\lambda )^2-2\lambda \mu(1+189\lambda )+(1+576\lambda )\mu ^2-256\mu ^3)\not = 0  \}.
\end{eqnarray}

\begin{prop}\label{propP2kodaira}
Suppose $(\lambda,\mu)\in\Lambda_2$. The elliptic surface given by {\rm(\ref{P2preKodaira})} has  the singular fibres of type $I_1^{*}$ over $y=0$,   of type $I_{11}$ over $y=\infty$ and other six fibres of type $I_1$. 
\end{prop}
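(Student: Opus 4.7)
The plan is to follow the same template used in the proofs of Propositions \ref{prop1.2} and \ref{propP1kodaira}: put the equation (\ref{P2preKodaira}) into Kodaira normal form at both ends of the $y$-sphere, compute the discriminant, and then read off the fibre types from Kodaira's classification \cite{Kodaira2}.

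First I would eliminate the $x_1^2$-term in (\ref{P2preKodaira}) by the standard Tschirnhaus substitution $x_1 = x_1' - \tfrac{1}{3}(-4\lambda y + y^2 + 2y^3 + y^4)$, which brings the right-hand side into the Weierstrass form
\[
z_1^2 = 4 x_1'^{\,3} - g_2(y)\,x_1' - g_3(y),
\]
after the usual rescaling. (More precisely, one first divides the leading coefficient back to $4$ as in the preceding propositions.) A parallel substitution after setting $y_2 = 1/y$ will give an analogous Kodaira normal form
\[
z_2^2 = 4 x_2^3 - h_2(y_2)\,x_2 - h_3(y_2)
\]
in the chart around $y = \infty$. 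These are just polynomial manipulations of the same flavour as in the proof of Proposition \ref{propP1kodaira}.

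Next I would compute the discriminants $D_0 = g_2(y)^3 - 27 g_3(y)^2$ and $D_\infty = h_2(y_2)^3 - 27 h_3(y_2)^2$. The claim is that $D_0$ vanishes to order $7$ at $y=0$ with $\mathrm{ord}_0(g_2) \geq 2$ and $\mathrm{ord}_0(g_3) = 3$, which by Kodaira's table gives $I_1^{*}$; and that $D_\infty$ vanishes to order $11$ at $y_2 = 0$ with $g_2(y_2), g_3(y_2)$ not vanishing there, which gives $I_{11}$. The remaining factor of $D_0$ (equivalently, the polynomial in $\lambda,\mu$ appearing in the defining condition of $\Lambda_2$ in (\ref{Lambda_2})) encodes the locus where extra singular fibres collide; away from this locus the residual six zeros of the discriminant are simple, yielding six $I_1$ fibres.

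The main obstacle is purely computational: the polynomials $g_2, g_3, h_2, h_3$ and the discriminants are rather long, and one must identify the factor
\[
\lambda^2(1+27\lambda)^2 - 2\lambda\mu(1+189\lambda) + (1+576\lambda)\mu^2 - 256\mu^3
\]
of (\ref{Lambda_2}) as the coefficient controlling the collision of the generic $I_1$ fibres. Once this factorization of the discriminant is verified and the orders of vanishing of $g_2,g_3,\Delta$ at $y=0$ and $y=\infty$ are confirmed to satisfy Kodaira's criteria (namely $(\mathrm{ord}\, g_2,\mathrm{ord}\, g_3,\mathrm{ord}\,\Delta) = (\geq 2,3,7)$ for $I_1^{*}$ and $(0,0,11)$ for $I_{11}$), the conclusion follows. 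The total contribution to the Euler number is $7 + 11 + 6 = 24$, which is a consistent check that $S_2(\lambda,\mu)$ is a $K3$ surface.
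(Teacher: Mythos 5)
Your proposal is correct and follows essentially the same route as the paper: reduce (\ref{P2preKodaira}) to Kodaira normal form in the charts $y\neq\infty$ and $y_1=1/y$, compute the discriminants (which indeed factor as $-256\mu^2 y^7(\cdots)$ and $-256\mu^2 y_1^{11}(\cdots)$ with residual sextic factors whose simple roots, guaranteed by the defining condition of $\Lambda_2$, give the six $I_1$ fibres), and read off $I_1^*$ and $I_{11}$ from Kodaira's table. The orders of vanishing you list and the Euler-number check $7+11+6=24$ match the paper's data.
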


\begin{proof}
(\ref{P2preKodaira}) is described in the Kodaira normal form
\begin{eqnarray}\label{P2Kodaira}
z_2 ^ 2 = 4x_2 ^ 3 - g_2(y) x_2 - g_3(y),    \quad   y \not=\infty,
\end{eqnarray}
with
\begin{align*}
\begin{cases}
\vspace*{0.2cm}
&\displaystyle g_2(y)=-4\Big(-\frac{16\lambda^2 y^2}{3}+\frac{8\lambda y^3}{3}-8\mu y^3 -\frac{y^4}{3}+\frac{16\lambda y^4}{3}-8\mu y^4 -\frac{4y^5}{3}+\frac{8\lambda y^5}{3}-2y^6-\frac{4y^7}{3}-\frac{y^8}{3}\Big),\\
\vspace*{0.2cm}
&\displaystyle g_3(y)=-4\Big(-\frac{128 \lambda ^3 y^3}{27}+\frac{32\lambda^2 y^4}{9}-\frac{32 \lambda \mu y^4}{3} +16 \mu^2 y^4 - \frac{8\lambda y^5 }{9} +\frac{64 \lambda^2 y^5 }{9}+\frac{8\mu y^5}{3} -\frac{32 \lambda \mu y^5}{3}+\frac{2y^6}{27}-\frac{32\lambda y^6}{9} \\
&\displaystyle\quad\quad
+\frac{32\lambda^2 y^6}{9}+8\mu y^6 +\frac{4y^7}{9} -\frac{16\lambda y^7}{3}+8\mu y^7 +\frac{10 y^8}{9}-\frac{32 \lambda y^8}{9} +\frac{8\mu y^8}{3} +\frac{40 y^9}{27}-\frac{8\lambda y^9}{9} +\frac{10 y^{10}}{9}+\frac{4 y^{11}}{9} +\frac{2 y^{12}}{27}\Big),
\end{cases}
\end{align*}
and
\begin{eqnarray}\label{P2Kodaira2}
z_3 ^ 2 = 4x_3 ^ 3 - h_2(y_1) x_3 - h_3(y_1),    \quad   y_1 \not=\infty,
\end{eqnarray}
with
\begin{align*}
\begin{cases}
\vspace*{0.2cm}
&\displaystyle h_2(y_1)=-4\Big(-\frac{1}{3} -\frac{4 y_1 }{3} -2y_1^2 -\frac{4y_1^2}{3}+\frac{8\lambda y_1^3}{3}-\frac{y_1^4}{3} +\frac{16\lambda y_1^4}{3}-8\mu y_1^4 +\frac{8\lambda y_1^5}{3} -8\mu y_1^5 -\frac{16\lambda^2 y_1^6}{3}\Big),\\
\vspace*{0.2cm}
&\displaystyle h_3(y_1)=-4\Big(\frac{2}{27} +\frac{4 y_1 }{9} +\frac{10 y_1^2}{9} +\frac{40 y_1^3}{27} -\frac{8 \lambda y_1^3}{9} +\frac{10 y_1^4}{9} - \frac{32 \lambda y_1^4}{9} +\frac{8\mu y_1^4}{3} +\frac{4 y_1^5}{9} -\frac{16\lambda y_1^5}{3} +8\mu y_1^5\\
\vspace*{0.2cm}
&\displaystyle    \quad\quad\quad\quad\quad\quad\quad\quad
+\frac{2 y_1^6}{27} -\frac{32 \lambda y_1^6}{9} +\frac{32 \lambda^2 y_1^6}{9} +8\mu y_1^6 -\frac{8\lambda y_1^7}{9}  +\frac{64 \lambda^2 y_1^7}{9} +\frac{8\mu y_1^9}{3}\\
\vspace*{0.2cm}
&\displaystyle    \quad\quad\quad\quad\quad\quad\quad\quad\quad\quad 
- \frac{32 \lambda \mu y_1^7}{3}+\frac{32 \lambda^2 y_1^8}{9} -\frac{32\lambda \mu y_1^8}{3} +16 \mu^2 y_1^8-\frac{128 \lambda^3 y_1^9}{27}\Big),
\end{cases}
\end{align*}
where $y=1/y_1$. 
We have the discriminant of the right hand side of (\ref{P2Kodaira}) for $x_2$((\ref{P2Kodaira2}) for  $x_3$, resp.):
\begin{align*}
\begin{cases}
&D_0=-256\mu^2 y^7(16\lambda ^3 - 8\lambda^2 y + 36 
      \lambda \mu y - 27\mu^2 y +
       \lambda  y^2 - 16\lambda^2 y^2 - \mu y^2 + 36\lambda\mu y^2 + 4\lambda y^3\\
 &  \quad\quad\quad\quad\quad\quad\quad\quad\quad\quad\quad\quad
    - 8\lambda^2 y^3 - 3\mu y^3 + 6\lambda y^4 - 3 \mu  y^4 + 4 \lambda y^5 - 
       \mu y^5 + \lambda y^6), \\
&D_\infty=-256\mu ^2 y_1^{11}(\lambda + 4\lambda  y_1 - \mu y_1 + 6\lambda  
      y_1^2 - 3\mu y_1^2 + 4\lambda y_1^3 - 8\lambda^2 
        y_1^3 - 3\mu  y_1^3 + \lambda y_1^4 - 16\lambda^2 y_1^4\\
        &\quad\quad\quad\quad\quad\quad\quad\quad\quad\quad\quad\quad
                 - \mu y_1^4 + 36\lambda \mu y_1^4 - 8\lambda^2 y_1^5 + 36\lambda\mu y_1^5 - 27\mu^2 y_1^5 + 16\lambda^3 y_1^6).
\end{cases}
\end{align*}
From these data, we obtain the required statement.
\end{proof}

The elliptic fibration given by (\ref{P2preKodaira}) is illustrated in  Figure 4.

\begin{figure}[h]
\center
\includegraphics[scale=0.9]{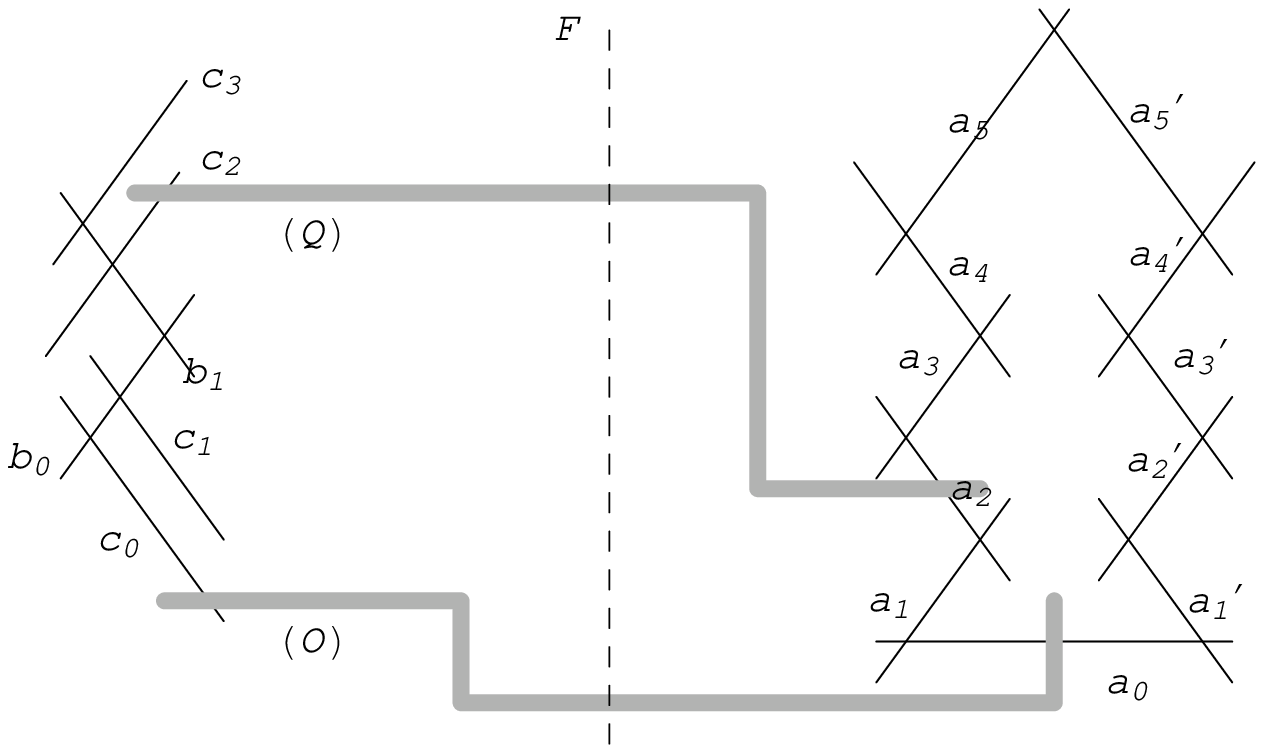}
\caption{}
\end{figure}

For this fibration, let $O$ be the Mordell-Weil group,  $Q$ be the section in (\ref{P2section}) and $F$ be a general fibre.  Note $Q\cap a_2\not =\phi$ and $Q\cap c_2\not=\phi$.
Set
\begin{eqnarray} \label{L2definition}
L_2=\langle  a_1,a_2,a_3,a_4,a_5,a_5',a_4',a_3',a_2',a_1',c_1,b_0,b_1,c_2,c_3,O,Q,F \rangle_\mathbb{Z}.
\end{eqnarray}
We have the following intersection matrix $M_2$ for $L_2$:
\begin{align}\label{intersection L2}\notag
M_2=& A_{18}(-1) -(E_{10,11}+E_{11,10})-(E_{15,16}+E_{16,15})+(E_{4,17}+E_{17,4})+(E_{14,17}+E_{17,4})\\ 
& -(E_{14,15}+E_{15,14})
+(E_{13,15}+E_{15,13})-(E_{16,17}+E_{17,16})+(E_{16,18}+E_{18,16})+2E_{18,18}.
\end{align}
We have ${\rm det}(M_2)=-9$.

\subsection{Elliptic fibrations for $\mathcal{F}_3$}
\begin{prop}
 The surface $ S_3(\lambda,\mu)$ is birationally equivalent  to the surface defined by the equation
\begin{eqnarray}\label{P3preKodaira}
y_1^2=
z_1^3  + (\lambda^2 + 2 \lambda x_1 + x_1^2 -4\mu x_1^2 -4 x_1^3) z_1^2+ 16 \mu x_1^5 z_1 .
\end{eqnarray}
This equation gives an elliptic fibration of $ S_3(\lambda,\mu)$ with the holomorphic sections
\begin{align}\label{P3section}
\begin{cases}
Q: z_1 \mapsto (x_1,y_1,z_1)=(x_1,4 \mu x_1^2 (x_1+\lambda),4 x_1^2 \mu),\\
O':z_1 \mapsto (x_1,y_1,z_1)=(x_1,0,0).
\end{cases}
\end{align}
The section $O'$ satisfies $2O'=O$.
\end{prop}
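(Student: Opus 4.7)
The plan is to proceed as in Propositions 1.5 and 1.7: exhibit an explicit birational transformation from (\ref{eq:P3}) to (\ref{P3preKodaira}), and then verify the stated properties of the sections by direct substitution. The birational map should take the form $(x,y,z) \mapsto (x_1, y_1, z_1)$ with $x_1$ playing the role of the base coordinate of the elliptic fibration. Guided by the analogous transformations in the earlier cases, I would employ an Ansatz in which two of $x,y,z$ are expressed as rational functions of $(x_1,y_1,z_1)$ while the third is identified with $x_1$ (or a simple rational function of it), and then determine the coefficients by matching the resulting cubic in the fiber coordinate with (\ref{P3preKodaira}). Once such an Ansatz succeeds, transforming (\ref{eq:P3}) into (\ref{P3preKodaira}) becomes a direct algebraic check, and the rationality of the inverse follows.

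Next I would verify the sections. For $Q$, substituting $z_1 = 4\mu x_1^2$ and $y_1 = 4\mu x_1^2(x_1+\lambda)$ into the right-hand side of (\ref{P3preKodaira}) gives $16\mu^2 x_1^4(x_1+\lambda)^2$, which matches $y_1^2$; so $Q$ lies on the surface. For $O'$, both sides of (\ref{P3preKodaira}) vanish at $(y_1,z_1)=(0,0)$, so $O'$ is also a section.

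For the identity $2O' = O$ in the Mordell-Weil group, I observe that the fiber equation (\ref{P3preKodaira}) factors as $y_1^2 = z_1\bigl(z_1^2 + P(x_1)z_1 + 16\mu x_1^5\bigr)$ with $P(x_1) = \lambda^2+2\lambda x_1+x_1^2-4\mu x_1^2-4x_1^3$. On a smooth fiber this is a Weierstrass cubic with the zero section $O$ at infinity, and the affine point $(y_1,z_1)=(0,0)$ satisfies $y_1=0$, the classical characterization of the $2$-torsion locus on such a curve. Therefore $O'$ has order $2$, giving $2O' = O$.

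The main obstacle is the first step: producing the birational transformation. The correction terms $\lambda z$ and $\mu xy$ in $F_3$ are supported on disjoint monomials, so the natural elliptic pencil is less immediately visible than in the families $\mathcal{F}_1$ and $\mathcal{F}_2$, where one of the added terms is linear in a single variable. Once the correct substitution is identified, however, the remaining verifications are routine algebraic computations.
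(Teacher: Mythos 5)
There is a genuine gap: the heart of this proposition is the birational equivalence between $S_3(\lambda,\mu)$ and the surface (\ref{P3preKodaira}), and you never establish it. You describe an Ansatz and explicitly concede that "the main obstacle is the first step: producing the birational transformation," but a plan to look for a substitution is not a proof that one exists; everything else in your argument (the sections, the torsion) presupposes this step. The paper's proof consists precisely of exhibiting the map:
$$
x=\frac{2 x_1^2 (4\mu x_1^2 - z_1)}{y_1 + \lambda z_1+ x_1 z_1},\qquad
y=\frac{y_1 + \lambda z_1 + x_1 z_1 }{2 x_1 (4 \mu x_1^2 - z_1)},\qquad
z=-\frac{z_1( 4\mu x_1^2 - z_1)}{2 x_1 (y_1 + \lambda z_1 + x_1 z_1)},
$$
and checking that it carries $F_3=0$ to (\ref{P3preKodaira}). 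Note moreover that your Ansatz, in which one of $x,y,z$ is essentially identified with the base coordinate $x_1$, would not find this map: here $x_1 = xy$ (and in the analogous case of $\mathcal{F}_1$ the base is $xz$), i.e.\ the elliptic parameter is a monomial in the original variables, which is exactly the less obvious feature you flag when observing that the correction terms $\lambda z$ and $\mu x y$ sit on "disjoint" monomials. Without either the explicit substitution or some structural argument producing the elliptic pencil, the birational equivalence claim remains unproven.

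The parts you do carry out are correct and in fact more explicit than the paper: the substitution check that $Q$ lies on (\ref{P3preKodaira}) (the right-hand side indeed reduces to $16\mu^2 x_1^4(x_1+\lambda)^2$), the observation that $O'=(y_1,z_1)=(0,0)$ is a section, and the standard $2$-torsion argument ($y_1=0$ on a Weierstrass cubic with zero constant term, so $2O'=O$) are all fine, given the fibration. So the proposal would become a complete proof once the missing transformation is supplied, but as it stands the decisive step is absent.
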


\begin{proof}
 By the birational transformation
\begin{align*}
x=\frac{2 x_1^2 (4\mu x_1^2 - z_1)}{y_1 + \lambda z_1+ x_1 z_2},
y=\frac{y_1 + \lambda z_1 + x_1 z_1 }{2 x_1 (4 \mu x_1^2 - z_1)},
z=-\frac{z_1( 4\mu x_1^2 - z_1)}{2 x_1 (y_1 + \lambda z_1 + x_1 z_1)},
\end{align*}
(\ref{eq:P3}) is transformed to (\ref{P3preKodaira}).
\end{proof}

(\ref{P3preKodaira}) gives an elliptic fibration for $ S_3(\lambda,\mu)$. Set
\begin{eqnarray} \label{Lambda_3}
\Lambda_3=\{(\lambda,\mu)\in\mathbb{C}^2|\lambda\mu (729\lambda ^2-(4\mu -1)^3+54\lambda (1+12\mu ))\not =0\}.
\end{eqnarray}

\begin{prop}
Suppose $(\lambda,\mu)\in\Lambda_3$. The elliptic surface given by {\rm(\ref{P3preKodaira})} has the singular fibres of type $I_{10}$ over $z=0$,   of type $I^*_{2}$ over $z=\infty$ and other six fibres of type $I_1$. 
\end{prop}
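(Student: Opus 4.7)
The plan is to mimic Proposition \ref{prop1.2} and Proposition \ref{propP1kodaira}: first convert (\ref{P3preKodaira}) into Kodaira normal form on each of the two standard affine charts of the base $\mathbb{P}^1$, then read off the types of singular fibres from the orders of vanishing of $g_2$, $g_3$ and the discriminant via Kodaira's classification \cite{Kodaira2}.

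Concretely, (\ref{P3preKodaira}) has the shape $y_1^2 = z_1\bigl(z_1^2 + A(x_1)\, z_1 + B(x_1)\bigr)$ with $A(x_1) = \lambda^2 + 2\lambda x_1 + x_1^2 - 4\mu x_1^2 - 4 x_1^3$ and $B(x_1) = 16\mu x_1^5$. Completing the cube in $z_1$ by the substitution $z_1 \mapsto X - A(x_1)/3$ converts this into a Weierstrass equation
\[
y_1^2 = 4X^3 - g_2(x_1)\, X - g_3(x_1), \qquad x_1 \neq \infty,
\]
with $g_2, g_3 \in \mathbb{C}[\lambda, \mu][x_1]$. Setting $x_2 = 1/x_1$ and rescaling $(X, y_1)$ by the appropriate powers of $x_2$ produces a second Kodaira form $y_2^2 = 4X^3 - h_2(x_2) X - h_3(x_2)$ valid near $x_1 = \infty$. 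I would then compute the discriminants $D_0 = g_2^3 - 27 g_3^2$ and $D_\infty = h_2^3 - 27 h_3^2$, factor each, and apply Kodaira's tables.

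What remains to be checked is that: at $x_1 = 0$ both $g_2$ and $g_3$ are non-vanishing while $D_0$ vanishes to order $10$ (forcing type $I_{10}$); at $x_2 = 0$ the rescaled $h_2, h_3$ vanish to orders $\geq 2$ and $\geq 3$ respectively, while $D_\infty$ vanishes to order exactly $8$ (forcing type $I_2^*$); and the residual factor of $D_0$ after stripping off $x_1^{10}$ is a sextic in $x_1$ whose six roots are simple and disjoint from $\{0, \infty\}$, producing the six $I_1$ fibres. I expect the discriminant of this residual sextic to be a scalar multiple of the cubic factor $729\lambda^2 - (4\mu - 1)^3 + 54\lambda(1+12\mu)$ appearing in (\ref{Lambda_3}), while the additional $\lambda$ and $\mu$ factors in the definition of $\Lambda_3$ precisely exclude collisions between the $I_1$ fibres and the fibres at $x_1 = 0$ and $x_1 = \infty$. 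The main obstacle is choosing the rescaling exponent at $x_1 = \infty$ so that the second Weierstrass model is minimal; with the wrong exponent a non-minimal $I_2^*$ fibre could be misread as a type $I_n$ or $IV^*$ fibre. Once the correct normalisation is in hand, the argument reduces to a direct polynomial calculation together with an application of \cite{Kodaira2}.
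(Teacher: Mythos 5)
Your plan is essentially the paper's own proof: it likewise completes the cube in $z_1$ to obtain the Kodaira normal forms (\ref{P3Kodaira1}) and (\ref{P3Kodaira2}) on the two charts, computes the discriminants $D_0=-256\mu^3x_1^{10}(\lambda^4+\cdots)$ and $D_\infty=-256\mu^2x_2^{8}(16+\cdots)$, and reads off the fibres $I_{10}$, $I_2^*$ and six $I_1$ from Kodaira's table, with $\Lambda_3$ guaranteeing that the residual sextic has simple roots away from $0$ and $\infty$. One sharpening of your criterion at $x_1=\infty$: the conditions ${\rm ord}(h_2)\geq 2$, ${\rm ord}(h_3)\geq 3$, ${\rm ord}(D_\infty)=8$ are also satisfied by a fibre of type $IV^*$, so you must verify the exact orders ${\rm ord}(h_2)=2$ and ${\rm ord}(h_3)=3$ (the explicit expansions give leading terms $\tfrac{64}{3}x_2^2$ and $\tfrac{512}{27}x_2^3$), which is a matter of exact vanishing orders rather than of minimality of the model.
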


\begin{proof}
(\ref{P3preKodaira}) is described in the Kodaira normal form
\begin{eqnarray}\label{P3Kodaira1}
y_2^2 = 4z_2^3 - g_2(x_1) z_2 -g_3(x_1), \quad x_1\not = \infty,
\end{eqnarray}
with
\begin{align*}
\begin{cases}
\vspace*{0.2cm}
&\displaystyle g_2(x_1)=-4\Big(-\frac{\lambda^4}{3}-\frac{4\lambda^3 x_1}{3} - 2\lambda^2 x_1^2 +\frac{8\lambda^2 \mu x_1^2}{3}-\frac{4\lambda x_1^3 }{3}+\frac{8 \lambda ^2 x_1^3}{3}+\frac{16 \lambda\mu x_1^3}{3}\\
\vspace*{0.2cm}
& \displaystyle
\quad\quad\quad\quad\quad\quad\quad
-\frac{x_1^4}{3}+\frac{16 \lambda x_1^4}{3}+\frac{8\mu x_1^4}{3}- \frac{16\mu^2 x_1^4}{3}+\frac{8 x_1^5}{3}+\frac{16\mu x_1^5}{3}-\frac{16 x_1^6}{3}
 \Big),\\
 \vspace*{0.2cm}
 &\displaystyle
 g_3(x_1)=-4\Big(\frac{2 \lambda^6 }{27}+ \frac{4\lambda^5 x_1}{9}+\frac{10\lambda^4 x_1^2}{9}-\frac{8\lambda^4 \mu x_1^2}{9}+\frac{40 \lambda ^3 x_1^3}{27}-\frac{8\lambda^4 x_1^3}{9}-\frac{32 \lambda^3 \mu x_1^3 }{9}  +\frac{10\lambda^2 x_1^4}{9}-\frac{32\lambda^3 x_1^4}{9} \\
  \vspace*{0.2cm}
  &\displaystyle
   \quad\quad
  -\frac{16 \lambda^2 \mu  x_1^4}{3}+\frac{32 \lambda^2 \mu^2 x_1^4}{9}+\frac{4 \lambda x_1^5}{9}-\frac{16 \lambda^2 x_1^5}{3}-\frac{32 \lambda \mu x_1^5}{9}+ \frac{16 \lambda ^2 \mu x_1^5}{9}+\frac{64 \lambda\mu^2 x_1^5}{9}+\frac{2 x_1^6}{27}-\frac{32 \lambda x_1^6}{9} +\frac{32 \lambda ^2 x_1^6}{9}\\
  \vspace*{0.2cm}
  &\displaystyle
   \quad\quad
-\frac{8\mu x_1^6}{9}+\frac{32 \lambda \mu x_1^6}{9}+\frac{32 \mu^2 x_1^6}{9}-\frac{128 \mu^3 x_1^6}{27}-\frac{8 x_1^7}{9}+\frac{64 \lambda x_1^7}{9}+\frac{16\mu x_1^7}{9}+\frac{64 \mu^2 x_1^7}{9}+\frac{32 x_1^8}{9}+\frac{64 \mu x_1^8}{9}-\frac{128 x_1^9}{27}
  \Big), 
\end{cases}
\end{align*}
and 
\begin{eqnarray}\label{P3Kodaira2}
y_3^2 =4 z_3^3 - h_2(x_2) z_3 -h_3(x_2), \quad x_2\not = \infty,
\end{eqnarray}
with
\begin{align*}
\begin{cases}
\vspace*{0.2cm}
&\displaystyle h_2(x_2)=-4\Big(-\frac{16 x_2^2}{3}+\frac{8 x_2^3}{3}+\frac{16\mu x_2^3}{3}-\frac{x_2^4}{3}+\frac{16\lambda x_2^4}{3}+\frac{8\mu x_2^4}{3}-\frac{16\mu^2 x_2^4}{3}-\frac{4\lambda x_2^5}{3}
\\
\vspace*{0.2cm}
& \displaystyle
\quad\quad\quad\quad\quad\quad\quad\quad\quad\quad\quad\quad\quad\quad\quad
+\frac{8\lambda^2 x_2^5 }{3}+\frac{16 \lambda \mu x_2^5}{3}-2\lambda^2 x_2^6+\frac{8\lambda^2 \mu x_2^6}{3}-\frac{4 \lambda^3 x_2^7}{3}-\frac{\lambda^4 x_2^8}{3},
\Big),\\
 \vspace*{0.2cm}
 &\displaystyle
 h_3(x_2)=-4\Big(-\frac{128 x_2^3}{27}+\frac{32 x_2^4}{9}+\frac{64 \mu x_2^4}{9}-\frac{8 x_2^5}{9}+\frac{64 \lambda x_2^5}{9}
 +\frac{16 \mu x_2^5}{9}+\frac{64\mu^2 x_2^5}{9}
 +\frac{2 x_2^6}{27}-\frac{32 \lambda x_2^6}{9}+\frac{32 \lambda^2 x_2^6}{9}-\frac{8\mu x_2^6}{9}
 \\
  \vspace*{0.2cm}
  &\displaystyle
   \quad\quad\quad
  +\frac{32 \lambda \mu x_2^6}{9}+\frac{32 \mu^2 x_2^6}{9}-\frac{128 \mu^3 x_2^6}{27}+\frac{4 \lambda x_2^7}{9} - \frac{16\lambda^2 x_2^7 }{3}- \frac{32 \lambda^2 \mu^2 x_2^7}{9}+\frac{16\lambda^2 \mu x_2^7}{9}-\frac{32 \lambda^3 x_2^8}{9}+\frac{64\lambda \mu^2 x_2^7}{9}  \\
  \vspace*{0.2cm}
  &\displaystyle
   \quad\quad\quad
 +\frac{10\lambda^2x_2^8}{9}-\frac{16\lambda^2\mu x_2^8}{3}
 +\frac{40 \lambda^3 x_2^9}{27}
  -\frac{8\lambda^4 x_2^9}{9} - \frac{32 \lambda^3 \mu x_2^9}{9}+\frac{10 \lambda^4 x_2^{10}}{9}-\frac{8\lambda^4 \mu x_2^{10}}{9}+\frac{4\lambda^5 x_2^{11}}{9}
+\frac{2 \lambda^6 x_2^{12}}{9}
  \Big), 
\end{cases}
\end{align*}
where $x_1=1/x_2$. We have the discriminant of the right hand side of (\ref{P3Kodaira1}) for $z_2$ ((\ref{P3Kodaira2}) for $z_3$ resp):

\begin{align*}
\begin{cases}
&D_0=-256 \mu^3 x_1^{10} (\lambda^4 +4\lambda^3 x_1 +6\lambda^2 x_1^2 -8\lambda^2 \mu x_1^2+4\lambda x_1^3-8\lambda^2 x_1^3 -16\lambda\mu x_1^3\\
&\quad\quad\quad\quad\quad\quad\quad\quad\quad\quad\quad\quad\quad\quad
+x_1^4-16\lambda x_1^4 -8\mu x_1^4 +16\mu^2 x_1^4 - 8x_1^5 -32 \mu x_1^5 +16 x_1^6 
),\\
&D_\infty=-256 \mu^2  x_2^8 (16-8 x_2 -32 \mu x_2+x_2^2 -16 \lambda x_2^2 -8\mu x_2^2+16\mu^2 x_2^2\\
&\quad\quad\quad\quad\quad\quad\quad\quad\quad\quad\quad\quad\quad\quad
+4\lambda x_2^3-8\lambda^2 x_2^3 -16\lambda\mu x_2^3+6\lambda^2 x_2^4 -8\lambda^2 \mu x_2^4 +4\lambda^3 x_2^5 +\lambda^4 x_2^6).
\end{cases}
\end{align*}
From these data, we obtain the required statement.
\end{proof}

The elliptic fibration given by (\ref{P3preKodaira}) is illustrated in  Figure 5.

\begin{figure}[h]
\center
\includegraphics[scale=0.9]{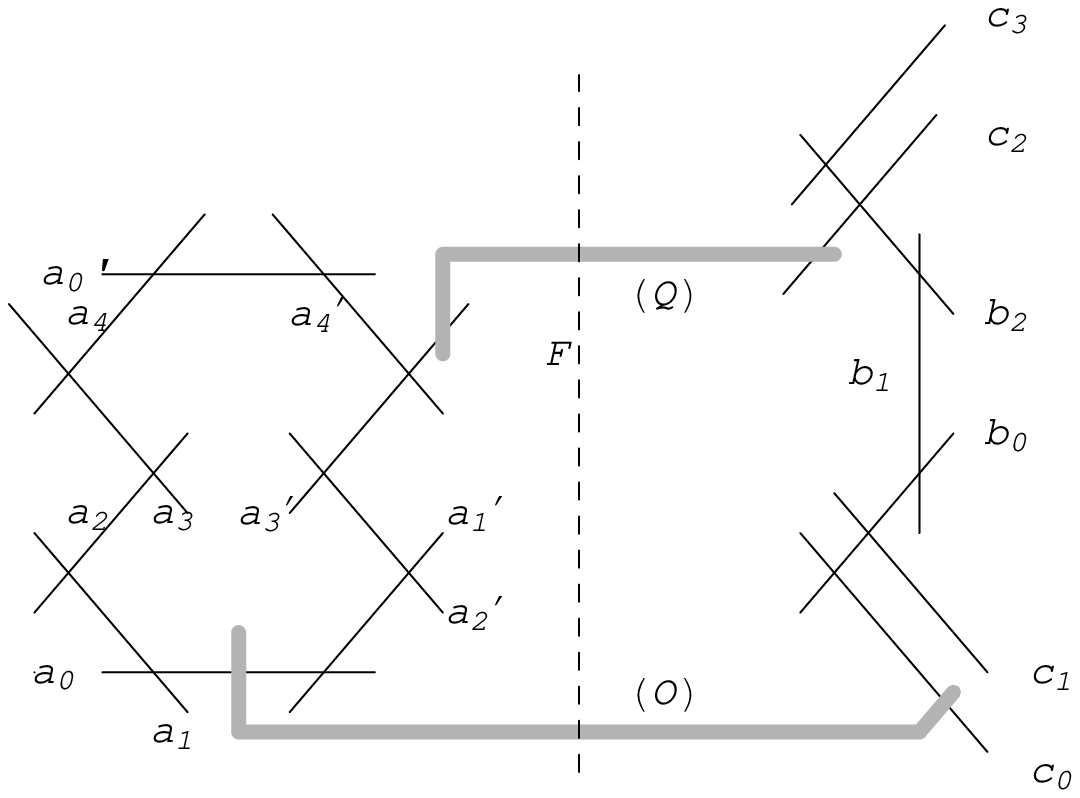}
\caption{}
\end{figure}

For this fibration,  let $O$ be the zero of the Mordell-Weil group, $Q
$ be the section in (\ref{P3section}) and $F$ be a general fibre.
Set
\begin{eqnarray} \label{L3'def}
L'_3=\langle a_1,a_2,a_3,a_4,a_0', a_4',a_3',a_2',a_1',c_1,b_0,b_1,b_2,c_2,c_3,O,F,Q
 \rangle_\mathbb{Z}.\end{eqnarray}
We have ${\rm det}(L_3')=-36$.

\vspace{5mm}

We need another elliptic fibration.

\begin{prop}
 The surface $ S_3(\lambda,\mu)$ is birationally equivalent  to the surface defined by the equation
\begin{eqnarray}\label{P3preKodairaanother}
y_1^2=x_1^3  + (\mu^2 + 2\mu z + z^2 + 2\mu z^2 + 
        2 z^3 + z^4)x_1^2 + (-8\lambda \mu  z^3 - 8\lambda  z^4 - 8\lambda  z^5)x_1+ 16\lambda^2 z^6.
\end{eqnarray}
This equation gives an elliptic fibration of $ S_3(\lambda,\mu)$ 
with the holomorphic sections
\begin{align}\label{P3sectionanother}
\begin{cases}
&Q_0: z \mapsto (x_1,y_1,z)=(0, 4 \lambda z^3, z),\\
&R_0:z\mapsto  (x_1,y_1,z)=(0, -4 \lambda z^3, z).
\end{cases}
\end{align}
\end{prop}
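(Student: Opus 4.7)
The plan is to follow the same pattern as the preceding propositions in this section: exhibit an explicit birational transformation between $\{F_{3}(x,y,z)=0\}$ and the surface defined by \eqref{P3preKodairaanother}, and then verify the sections $Q_{0}, R_{0}$ by direct substitution.

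To discover the transformation, I would regard $F_{3}=xyz(x+y+z+1)+\lambda z+\mu xy$ as a quadratic polynomial in $y$, namely
\[
F_{3}=(xz)\,y^{2}+x(\mu+z+z^{2}+xz)\,y+\lambda z,
\]
set $u=\mu+z+z^{2}$, and complete the square to obtain
\[
\bigl(2xzy+x(u+xz)\bigr)^{2}=x^{2}(u+xz)^{2}-4\lambda xz^{2}.
\]
Observing that the target right hand side of \eqref{P3preKodairaanother} factors as $x_{1}^{3}+(ux_{1}-4\lambda z^{3})^{2}$, a scaling comparison singles out the monomial change of variable $x=-4\lambda z^{2}/x_{1}$; a short expansion then shows that under this substitution the quartic $x^{2}(u+xz)^{2}-4\lambda xz^{2}$ becomes $(16\lambda^{2}z^{4}/x_{1}^{4})$ times the cubic in the right hand side of \eqref{P3preKodairaanother}. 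Setting
\[
y_{1}=\frac{x_{1}^{2}}{4\lambda z^{2}}\bigl(2xzy+x(u+xz)\bigr)
\]
and simplifying yields $y_{1}=4\lambda z^{3}-x_{1}(\mu+z+z^{2}+2yz)$, which inverts to express $y$ as a rational function of $(x_{1},y_{1},z)$. Substituting $(x,y)$ back into $F_{3}$ and clearing denominators then produces \eqref{P3preKodairaanother} identically.

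Once the equation is established, the section assertion is immediate: setting $x_{1}=0$ in \eqref{P3preKodairaanother} collapses the right hand side to $16\lambda^{2}z^{6}$, whose square roots are $\pm 4\lambda z^{3}$, giving exactly $Q_{0}$ and $R_{0}$. The only non-mechanical step in the whole argument is the initial guess of the substitution $x=-4\lambda z^{2}/x_{1}$; everything after that reduces to a polynomial identity that can be checked by direct expansion or with a computer algebra system.
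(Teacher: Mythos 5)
Your proposal is correct and follows essentially the same route as the paper: the substitution you arrive at, $x=-4\lambda z^{2}/x_{1}$ together with $y_{1}=4\lambda z^{3}-x_{1}(\mu+z+z^{2}+2yz)$, inverts to exactly the birational transformation the paper writes down, and your completing-the-square computation and the $x_{1}=0$ check of the sections are valid. The only difference is that you also explain how the transformation is discovered, whereas the paper simply states it and asserts the verification.
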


\begin{proof}
By the birational transformation
\begin{align*}
x=-\frac{4\lambda z^2}{x_1'},y=\frac{-\mu x_1' -y_1' -x_1' z - x_1' z^2 +4\lambda z^3}{2 x_1' z},
\end{align*}
(\ref{eq:P3}) is transformed to (\ref{P3preKodairaanother}).
\end{proof}

\begin{prop}\label{propP3kodaira}
Suppose $(\lambda,\mu)\in\Lambda_3$. The elliptic surface given by {\rm(\ref{P3preKodairaanother})} has the singular fibres of type $I_9$ over $z=0$,   of type $I_{9}$ over $z=\infty$ and other six fibres of type $I_1$. 
\end{prop}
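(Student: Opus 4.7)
The proof will follow the same template as Propositions~\ref{propP1kodaira} and~\ref{propP2kodaira} and the preceding proposition of this subsection: cast (\ref{P3preKodairaanother}) into the Kodaira normal form on each of the two standard affine charts of the base $z$-sphere, compute the two discriminants, and apply Kodaira's classification of singular fibres.

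First, denote the coefficient of $x_1^2$ in (\ref{P3preKodairaanother}) by $P(z) = \mu^2 + 2\mu z + z^2 + 2\mu z^2 + 2z^3 + z^4$ and perform the Tschirnhaus shift $x_1 \mapsto x_1 - P(z)/3$, absorbing factors by rescaling $y_1$, to obtain an equation
\[
y_2^2 = 4 x_2^3 - g_2(z)\, x_2 - g_3(z), \qquad z \neq \infty,
\]
with $g_2,g_3 \in \mathbb{C}[z,\lambda,\mu]$ of $z$-degrees at most $8$ and $12$ respectively. In the second chart $z_1 = 1/z$, the standard rescaling $(x_2,y_2)\mapsto(z_1^{-4}x_2,z_1^{-6}y_2)$ produces a second polynomial Weierstrass equation $y_3^2 = 4 x_3^3 - h_2(z_1) x_3 - h_3(z_1)$ with $h_2(z_1) = z_1^8 g_2(1/z_1)$ and $h_3(z_1) = z_1^{12} g_3(1/z_1)$.

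Second, I would expand the discriminants $D_0(z) = g_2(z)^3 - 27 g_3(z)^2$ and $D_\infty(z_1) = h_2(z_1)^3 - 27 h_3(z_1)^2$ (a mechanical but lengthy step, as in the earlier propositions). At $z=0$ the reduced cubic in (\ref{P3preKodairaanother}) is $x_1^2(x_1 + \mu^2)$ which has a node; since the constant terms of $g_2,g_3$ involve nonzero powers of $\mu$, we have $\operatorname{ord}_0(g_2) = \operatorname{ord}_0(g_3) = 0$, and a direct check shows $\operatorname{ord}_0(D_0) = 9$, so Kodaira's table yields an $I_9$ fibre at $z=0$. The symmetric computation at $z_1 = 0$, using that the leading coefficient of $P(z)$ is $1 \neq 0$ and the leading term $16\lambda^2 z^6$ of the constant term has nonzero leading coefficient on $\Lambda_3$, yields a second $I_9$ fibre at $z=\infty$.

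Finally, I would factor $D_0(z) = (\text{const}) \cdot z^9 \cdot R(z,\lambda,\mu)$ and identify the discriminant of $R$ in $z$, up to a nonzero factor of $\lambda^a \mu^b$, with the polynomial $729\lambda^2 - (4\mu-1)^3 + 54\lambda(1+12\mu)$ appearing in the definition (\ref{Lambda_3}) of $\Lambda_3$. Hence for $(\lambda,\mu) \in \Lambda_3$ the polynomial $R$ has six simple roots distinct from $0$ and $\infty$, producing six additional fibres of type $I_1$, and the corresponding factorisation of $D_\infty(z_1)$ recovers the same six roots under $z = 1/z_1$. The main obstacle is purely computational: one must track that the shifted Weierstrass coefficients $g_2,g_3$ and their analogues $h_2,h_3$ do not acquire additional vanishing at $z=0$ or $z_1=0$ on $\Lambda_3$, which would push the Kodaira type from $I_9$ into something more singular ($I_n^*$ or worse); once this is verified, the stated fibre configuration follows at once from Kodaira's table.
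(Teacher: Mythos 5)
Your proposal is correct and follows essentially the same route as the paper: reduce (\ref{P3preKodairaanother}) to the Kodaira normal form on the two affine charts of the $z$-sphere, compute the discriminants $D_0$ and $D_\infty$ (which in the paper come out as $256\lambda^3 z^9(\mu^3+\cdots)$ and $256\lambda^3 z_1^9(1+\cdots)$, confirming your order-$9$ vanishing and the non-vanishing of $g_2,g_3$ at the two cusps), and read off the fibre types $I_9+I_9+6I_1$ from Kodaira's table, with $\Lambda_3$ exactly guaranteeing six simple residual roots.
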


\begin{proof}
(\ref{P3preKodairaanother}) is described in the Kodaira normal form
\begin{eqnarray}\label{P3Kodaira3}
{y}_{2} ^ 2 = 4{x}_{2} ^ 3 - g_2(z){ x} _{2}- g_3(z),    \quad   z \not=\infty,
\end{eqnarray}
with
\begin{align*}
\begin{cases}
\vspace*{0.2cm}
&\displaystyle g_2(z)=-4\Big(-\frac{\mu^4}{3}-\frac{4\mu^3 z}{3} -2 \mu^2 z^2 - \frac{4\mu^3 z^2}{3}-\frac{4\mu z^3}{3} -8\lambda \mu z^3 -4 \mu^2 z^3 -\frac{z^4}{3}-8\lambda z^4\\
\vspace*{0.2cm}
&\displaystyle  \quad\quad\quad \quad\quad\quad \quad\quad\quad
 -4 \mu z^4- 2\mu^2 z^4- \frac{4z^5}{3}-8\lambda z^5 -4 \mu z^5 -2 z^6 -\frac{4 \mu z^6}{3} -\frac{4 z^7}{3}-\frac{z^8}{3}\Big) ,\\
\vspace*{0.2cm}
&\displaystyle g_3(z)=-4\Big(\frac{2\mu ^6}{27} +\frac{4 \mu ^5 z}{9} +\frac{10 \mu ^4 z^2}{9}+\frac{4 \mu ^5 z^2}{9}+ \frac{40 \mu^3 z^3}{27}+\frac{8 \lambda \mu^3 z^3}{3}+ \frac{20 \mu^4 z^3}{9}+ \frac{10 \mu^2 z^4}{9}\\\vspace*{0.2cm}
&\displaystyle \quad +8\lambda \mu^2 z^4 +\frac{40 \mu^3 z^4}{9}+ \frac{10 \mu^4 z^4}{9} +\frac{4 \mu z^5}{9} +8 \lambda\mu z^5 +\frac{40 \mu^2 z^5}{9} +8\lambda \mu^2 z^5 +\frac{40 \mu^3 z^5}{9} +\frac{2 z^6}{27} \\
&\displaystyle \quad\vspace*{0.2cm}
+ \frac{8 \lambda z^6}{3} +16 \lambda^2 z^6+\frac{20 \mu z^6}{9}+16 \lambda \mu z^6
 +\frac{20 \mu ^2 z^6}{3}+\frac{40 \mu^3 z^6}{27}+ \frac{4 z^7}{9}+8\lambda z^7 
 +\frac{40\mu z^7}{9}+8\lambda\mu z^7\\
 &\displaystyle \quad\quad\quad
  + \frac{10 z^8}{9} +8 \lambda z^8
  +\frac{40 \mu z^8}{9}+\frac{10 \mu^2 z^8}{9}+\frac{40 z^9}{27}+\frac{8\lambda z^9}{3}+\frac{20 \mu z^9}{9}+\frac{10 z^{10}}{9}+\frac{4\mu z^{10}}{9}+\frac{4 z^{11}}{9}+\frac{2 z^{12}}{27}\Big),
\end{cases}
\end{align*}
and
\begin{eqnarray}\label{P3Kodaira4}
{y}_3 ^ 2 = 4{x}_3 ^ 3 - h_2(z_1) {x}_3 - h_3(z_1),    \quad   z_1 \not=\infty,
\end{eqnarray}
with
\begin{align*}
\begin{cases}
\vspace*{0.2cm}
&\displaystyle h_2(z_1)=-4\Big(-\frac{1}{3}-\frac{4z_1}{3} -2 z_1 - \frac{4\mu z_1^2}{3}-\frac{4z_1^3}{3} -8\lambda z_1^3 -4 z_1^3 -\frac{z_1^4}{3}-8\lambda z_1^4\\
\vspace*{0.2cm}
&\displaystyle  \quad\quad\quad \quad\quad\quad \quad\quad\quad\quad\quad\quad
 -4\mu z_1^4 - 2\mu^2 z_1^2 -\frac{4\mu z_1^5}{3}-8\lambda\mu z_1^5-4\mu ^2 z_1^5 -2\mu^2 z_1^6 -\frac{4\mu ^3 z_1^6}{3}-\frac{4\mu^3 z_1^7}{3}-\frac{\mu^4 z_1^8}{3}\Big) ,\\
\vspace*{0.2cm}
&\displaystyle h_3(z_1)=-4\Big(\frac{2}{27}+\frac{4z_1}{9}+\frac{10 z_1^2}{9}+\frac{4\mu z_1^2}{9}+\frac{40 z_1^3}{27}+\frac{8\lambda z_1^3}{3}+\frac{20\mu z_1^3}{9}+\frac{10 z_1^4}{9}+8\lambda z_1^4\\
\vspace*{0.2cm}
&\displaystyle \quad\quad
\quad
 +\frac{40 \mu z_1^4}{9}+\frac{10 \mu^2 z_1^4}{9}+\frac{4 z_1^5}{9}+8\lambda z_1^5+\frac{40 \mu z_1^5}{9}+8\lambda\mu z_1^5+\frac{40 \mu^2 z_1^5}{9}+\frac{2 z_1^6}{27}
+\frac{8\lambda z_1^6}{3}+16\lambda^2 z_1^6 
\\
&\displaystyle \quad\quad\quad\vspace*{0.2cm}+\frac{20 \mu z_1^6}{9}+16 \lambda \mu z_1^6 +\frac{20 \mu^2 z_1^6}{3} + \frac{40 \mu^3 z_1^6}{27}
+\frac{4\mu z_1^7}{9}+8\lambda \mu^2 z_1^7 +\frac{40\mu^3 z_1^7}{9}+\frac{10 \mu^2 z_1^8}{9}
+8\lambda \mu^2 z_1^8\\
&\displaystyle \quad\quad\quad
+\frac{10 \mu^4 z_1^8}{9} +\frac{40 \mu^3 z_1^8}{9}+\frac{40 \mu^3 z_1^9}{27}+\frac{8\lambda \mu^3 z_1^9}{3}+\frac{20 \mu^4 z_1^9 }{9}+\frac{10\mu^4 z_1^{10}}{9}+\frac{4\mu^5 z_1^{10}}{9}+\frac{4\mu^5 z_1^{11}}{9}+\frac{2 \mu^6 z_1^{12}}{27} \Big),
\end{cases}
\end{align*}
where $z=1/z_1$
We have the discriminant of the right hand side of (\ref{P3Kodaira3}) for ${x'}_2$((\ref{P3Kodaira4})  for ${x'}_3$, resp.):
\begin{align*}
\begin{cases}
&D_0=256 \lambda^3 z^9 (\mu^3 + 3\mu ^2 z +3 \mu z^2 + 3 \mu^2 z^2 +z^3 +27 \lambda z^3 +6\mu z^3 +3z^4 + 3\mu z^4 +3z^5 +z^6),\\
&D_\infty=256 \lambda^3 z_1^9(1 +3 z_1 + 3 z_1^2 +3 \mu z_1^2 + z_1^3+ 27 \lambda z_1^3 +6\mu z_1^3 + 3\mu z_1^4 +3\mu^2 z_1^4 + 3\mu^2 z_1^5+\mu^3 z_1^6).
\end{cases}
\end{align*}
From these data, we obtain the required statement.
\end{proof}

This fibration is illustrated in  Figure 6.

\begin{figure}[h]
\center
\includegraphics[scale=0.9]{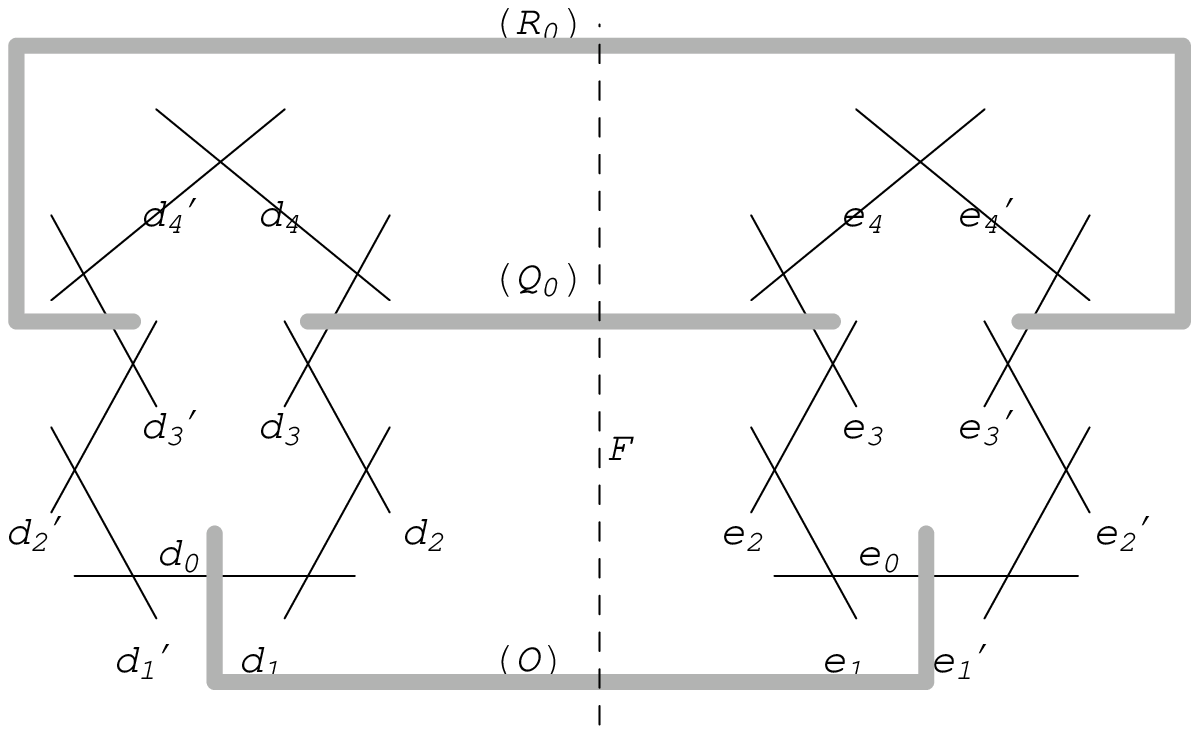}
\caption{}
\end{figure}

For this fibration, let $O$ be the zero of the Mordell-Weil group,  $Q_0$ and $R_0$  be the sections in (\ref{P3sectionanother}) and $F$ be a general fibre.
Set 
\begin{eqnarray}\label{L3definition}
L_3 =\langle d_1,d_2,d_3,d_4,d_4',d_3',d_2',d_1',e_1,e_2,e_3,e_4,e_3',e_2',O,Q_0,R_0,F \rangle_\mathbb{Z}.
\end{eqnarray}
We have the following intersection matrix $M_3$ for $L_3$:
\begin{align}\label{intersection L3}\notag
M_3=&A_{18}(-1) +2E_{18,18}
 - (E_{8,9}+E_{9,8}) - (E_{14,15}+E_{15,14})- (E_{12,13}+E_{13,12})\\
\notag&+(E_{3,16}+E_{16,3})+(E_{6,17}+E_{17,6})+(E_{11,16}+E_{16,11})+(E_{13,17}+E_{17,13})\\
&-(E_{15,16}+E_{16,15})+(E_{15,18}+E_{18,15})+(E_{16,18}+E_{18,16})-(E_{16,17}+E_{17,16})\end{align}
We have ${\rm det}(M_3)=-9$.

\section{The Picard numbers}

For a general $K3$ surface $S$,  the homology group  $H_2(S,\mathbb{Z})$ is a free $\mathbb{Z}$-module of rank $22$.
The intersection form of $H_2(S,\mathbb{Z})$ is given by 
\begin{eqnarray}\label{K3intersection}
E_8(-1)\oplus E_8(-1)\oplus U\oplus U\oplus U,
\end{eqnarray}
where 
\begin{align*}
&E_8(-1)=\left(
\begin{array}{cccccccc}
-2 & 1 & & & & & & \\
1 & -2 & 1 & & &{O} & & \\
 & 1 & -2 & 1 & & & & \\
 & &1&-2&1& & & \\
 & & &1 &-2&1&1&  \\
 & & & & 1&-2 &0 &  \\
 & & { O}& & 1&0 &-2 &1 \\
 & & & & & & 1&-2 
\end{array} 
\right), \quad\quad\quad
U=\begin{pmatrix}
0&1\\
1&0
\end{pmatrix}.
\end{align*}
Let ${\rm NS} (S)$,  the N\'{e}ron-Severi lattice of $S$, denote the sublattice in $H_2(S,\mathbb{Z})$ generated by the divisors on $S$.
 We call the rank of ${\rm NS}(S)$ the Picard number.
The orthogonal complement  ${\rm Tr}(S)={\rm NS}(S)^\perp$ in $H_2(S,\mathbb{Z})$ is called the transcendental lattice of $S$.
Let $(C \cdot D)$ be  the intersection number of $2$-cycles $C$ and $D$  on $S$.

In this section, 
we define the period mappings and determine the Picard numbers for our families. 
We state the precise argument only for the case of the family $\mathcal{F}:=\mathcal{F}_0$ of the $K3$ surfaces $S(\lambda,\mu):=S_0(\lambda,\mu)$.

\subsection{S-marked $K3$ surfaces}

The lattice $L:=L_0$ in (\ref{N-Sbasis}) is contained in ${\rm NS}(S(\lambda,\mu))$ and of rank $18$. So we have

\begin{prop}\label{prop:Picardineq}
$$
{\rm  rank \enspace NS}(S(\lambda ,\mu )) \geq  18.
$$
\end{prop}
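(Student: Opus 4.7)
The plan is to observe that every one of the eighteen generators of $L_0$ listed in (\ref{N-Sbasis}) is represented by an algebraic cycle on $S(\lambda,\mu)=S_0(\lambda,\mu)$, so the inclusion $L_0\subset \mathrm{NS}(S(\lambda,\mu))$ holds automatically; the rank inequality then follows from the nondegeneracy of the intersection form on $L_0$ computed in (\ref{-5 matrix}).

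More concretely, I would argue as follows. First, by Proposition \ref{prop1.2}, for $(\lambda,\mu)\in\Lambda_0$ the elliptic fibration (\ref{eq:P4prekodaira}) has reducible singular fibres of type $I_3$ over $z=0$ and of type $I_{15}$ over $z=\infty$. The classes $b_1,\dots,b_7,b_1',\dots,b_7'$ appearing in (\ref{N-Sbasis}) are the irreducible components of these fibres (other than the components $a_0,b_0$ met by the zero section), and they are divisors on the elliptic surface. Likewise, $Q$ and $R$ are the holomorphic sections in (\ref{P4section}), $O$ is the zero section (the divisor of points at infinity on each fibre), and $F$ is the class of a general fibre; all of these are algebraic cycles. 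Hence each generator lies in $\mathrm{NS}(S(\lambda,\mu))$, and consequently
\[
L_0(\lambda,\mu)\;\subset\;\mathrm{NS}(S(\lambda,\mu)).
\]

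Next I would invoke (\ref{-5 matrix}), which states that the Gram matrix $M_0$ of the eighteen generators, computed via the fibre-component intersection rules and the standard formulae $(F\cdot F)=0$, $(F\cdot\sigma)=1$, $(\sigma\cdot\sigma)=-2$ for any section $\sigma$, satisfies $\det(M_0)=-5\neq 0$. Since the intersection pairing of $H_2(S(\lambda,\mu),\mathbb{Z})$ restricts to a nondegenerate form on the $\mathbb{Z}$-span of these classes, the generators are $\mathbb{Q}$-linearly independent, so $L_0(\lambda,\mu)$ has rank exactly $18$.

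Combining these two facts gives
\[
\operatorname{rank}\mathrm{NS}(S(\lambda,\mu))\;\geq\;\operatorname{rank} L_0(\lambda,\mu)\;=\;18,
\]
which is the assertion. No serious obstacle is anticipated: the only substantive input is the identification of each generator as an algebraic class (already supplied by the explicit construction in Section 1.1) together with the previously computed determinant (\ref{-5 matrix}). The inequality is, essentially, a bookkeeping consequence of the elliptic-fibration data in Table 1.
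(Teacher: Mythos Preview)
Your proposal is correct and matches the paper's argument exactly: the paper simply remarks that $L_0\subset\mathrm{NS}(S(\lambda,\mu))$ (since all eighteen generators are algebraic) and has rank $18$ by the determinant computation (\ref{-5 matrix}), from which the inequality follows. You have spelled out precisely the details the paper leaves implicit.
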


We have also

\begin{prop}\label{prop:primitive}
$L$ is a primitive sublattice of $H_2(S(\lambda,\mu),\mathbb{Z})$.
\end{prop}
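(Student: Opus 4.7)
The plan is to use a short discriminant argument built on the computation $\det(M_0)=-5$ recorded in (\ref{-5 matrix}): a finite-index overlattice of a rank-$18$ sublattice changes the discriminant by a perfect square, and since $5$ is square-free, no proper overlattice of $L$ inside $H_2(S(\lambda,\mu),\mathbb{Z})$ can exist.

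Concretely, I would first introduce the saturation (primitive closure)
$$
\widetilde{L} := \{\, v \in H_2(S(\lambda,\mu),\mathbb{Z}) \mid n\,v \in L \text{ for some } n \in \mathbb{Z}_{>0}\,\}.
$$
By construction, $\widetilde{L}$ is the smallest primitive sublattice of $H_2(S(\lambda,\mu),\mathbb{Z})$ containing $L$, and it spans the same $\mathbb{Q}$-subspace as $L$; hence $\widetilde{L}$ has rank $18$, and $k := [\widetilde{L}:L]$ is a positive integer.

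Next, I would compare Gram matrices. Choosing any $\mathbb{Z}$-basis of $\widetilde{L}$ and writing the generators of $L$ from (\ref{N-Sbasis}) as integer combinations of it yields a transition matrix of determinant $\pm k$, so the standard change-of-basis formula gives $\det(M_0) = k^2 \cdot \det(\widetilde{L})$. Since $\det(\widetilde{L}) \in \mathbb{Z}$ and $|\det(M_0)| = 5$ by (\ref{-5 matrix}), we conclude that $k^2$ divides $5$; as $5$ is square-free, this forces $k=1$. Therefore $L = \widetilde{L}$, which is precisely the primitivity of $L$.

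No substantial obstacle arises here: the determinant computation is already in place in Section~1, the independence of the listed generators is guaranteed by the same computation, and the remainder is the standard divisibility argument for lattice saturations. The argument would apply verbatim to $L_1, L_2, L_3$ in Sections~1.2--1.4 (with discriminants $-9, -9, -9$) provided one further checks, on each of those, that $9$ does not factor as $k^2$ for any admissible $k>1$ compatible with the embedding — a point I would need to revisit lattice-theoretically there, but which is not an issue for $L_0$ since $5$ is prime.
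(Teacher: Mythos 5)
Your proof is correct and is essentially the paper's own argument: the paper's proof of Proposition \ref{prop:primitive} is exactly the observation that $\det(L)=-5$ is square-free, hence $L$ is primitive, and your saturation/index computation $\det(M_0)=k^2\det(\widetilde{L})$ is just the standard spelling-out of that one-line argument.
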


\begin{proof}
 By  (\ref{-5 matrix}),  we have ${\rm det}(L)=-5$.
 It does not contain any square factor. So $L$ is primitive.
 \end{proof}

\begin{df}\label{Definition2.2}
 For a $K3$ surface $S(\lambda,\mu)$ $((\lambda,\mu)\in \Lambda)$, set 
\begin{align*}
\begin{cases}
&\gamma_5=b_1,\hspace{1mm}\gamma_6=b_2,\hspace{1mm}\gamma_7=b_3,\hspace{1mm}\gamma_8=b_4,\hspace{1mm}\gamma_9=b_5,\hspace{1mm}\gamma_{10}=Q,\\
&\gamma_{11}=b_6,\hspace{1mm}\gamma_{12}=b_7,\hspace{1mm}\gamma_{13}=b_1',\hspace{1mm}\gamma_{14}=b_2',\hspace{1mm}\gamma_{15}=b_3',\hspace{1mm}\gamma_{16}=b_4',\\
&\gamma_{17}=b_5',\hspace{1mm}\gamma_{18}=R,\hspace{1mm}\gamma_{19}=b_6',\hspace{1mm}\gamma_{20}=b_7',\hspace{1mm}\gamma_{21}=O,\hspace{1mm}\gamma_{22}=F,
\end{cases}
\end{align*}
given by {\rm (\ref{N-Sbasis})}.
Let $\check{S}=S(\lambda_0,\mu_0)$ be a reference surface for a fixed point $(\lambda_0,\mu_0)\in \Lambda=\Lambda_0$. Set  $\check{L}=L(\lambda_0,\mu_0) \subset H_2(\check{S},\mathbb{Z})$.
We define a S-marking $\psi$ of $S(\lambda,\mu)$ to be an isomorphism
$
\psi :H_2(S(\lambda,\mu ),\mathbb{Z})\rightarrow \check{L}
$
with the property that $\psi^{-1}(\gamma _j) = \gamma _j$ for $5\leq j\leq 22$. We call the pair $(S(\lambda,\mu),\psi)$ an S-marked $K3$ surface.
\end{df}

By the above definition, a S-marking $\psi$ has the  property:
\begin{align*}
&\psi ^{-1}(F)=F, \hspace{1mm}\psi^{-1}(O)=O, \hspace{1mm}\psi^{-1}(Q)=Q,\hspace{1mm} \psi^{-1}(R)=R,\\
&\psi^{-1}(b_j)=b_j, \hspace{1mm}\psi^{-1}(b'_j)=b'_j  {\rm \quad} (1\leq j\leq 7).
\end{align*}

\begin{df}\label{df:iso,equi}
Two $S$-marked $K3$ surfaces $(S,\psi )$ and $(S',\psi')$ are said to be 
isomorphic if there is a biholomorphic mapping $f:S\rightarrow S'$ with 
$$
\psi' \circ f_{\ast} \circ \psi^{-1} ={\rm id}_{H_2(\check{S},\mathbb{Z})}.
$$
Two $S$-marked $K3$ surfaces $(S,\psi )$ and $(S',\psi')$ are said to be 
equivalent if there is a biholomorphic mapping $f:S\rightarrow S'$ with 
$$
\psi' \circ f_{\ast} \circ \psi^{-1}|_{ \check{L}}  ={\rm id} _{\check{L}}.
$$
\end{df}

By Proposition \ref{prop:primitive}, the basis $\{\gamma_5,\cdots,\gamma_{22}\}$ of $L(\subset H_2(S(\lambda,\mu),\mathbb{Z}))$  is extended to a basis 
\begin{eqnarray}\label{basisL}
\{\gamma_1,\gamma_2,\gamma_3,\gamma_4,\gamma_5,\cdots, \gamma_{22}\}
\end{eqnarray}
 of $H_2(S(\lambda,\mu),\mathbb{Z})$.
Let $\{\gamma_1^*, \cdots , \gamma_{22}^*\}$ be the dual basis of $\{\gamma_1,\cdots,\gamma_{22}\}$ with respect to the intersection form (\ref{K3intersection}). 
Set
\begin{equation} \label{df:L''}
L_t=\langle\gamma^*_1,\gamma^*_2,\gamma^*_3,\gamma^*_4\rangle_\mathbb{Z} \subset H_2(S(\lambda,\mu),\mathbb{Z}).
\end{equation}
We have  $L_t = L^\perp $.

\subsection{Period mapping}
First, we state the definition of the period mapping for general $K3$ surfaces.

For a $K3$ surface $S$,
there exists unique holomorphic 2-form $\omega$ up to a constant factor.
 Let $\{\gamma_1,\cdots \gamma_{22}\}$ be a basis of $H_2(S,\mathbb{Z})$.
$$
\eta' = \Big(\int_{\gamma_1} \omega:\cdots : \int_{\gamma_{22}} \omega \Big)\in \mathbb{P}^{21}(\mathbb{C})
$$ 
is called a period of $S$. 
Let $\{\gamma_1,\cdots,\gamma_r\}$ be a basis of ${\rm Tr}(S)$. Note that 
\begin{eqnarray}\label{neronseverivanish}
\int _\gamma \omega =0, \quad\quad({}^\forall \gamma \in {\rm NS}(S)). 
\end{eqnarray}
The period $\eta'$ is  reduced to 
$$
\eta = \Big(\int_{\gamma_1} \omega:\cdots:\int_{\gamma_r} \omega\Big)\in \mathbb{P}^{r-1}(\mathbb{C}).
$$
We note that ${\rm NS}(S)$ is a lattice of signature $(1,\cdot)$ and ${\rm Tr}(S)$ is a lattice of the signature $(2,\cdot)$.

\begin{df}\label{df:periodmap}
Let $\check{S}=S(\lambda_0,\mu_0)$ be the reference surface. Take a small neighborhood $\delta $ 
of $(\lambda_0,\mu_0)$ in $\Lambda$ so that we have a local topological  trivialization 
$$
\tau : \{ S(\lambda ,\mu)|(\lambda ,\mu)\in \delta \} \rightarrow \check{S} \times \delta .
$$
Let $p: \check{S} \times \delta \rightarrow \check{S}$ be the canonical projection, and set $r=p\circ \tau$.
Then, 
$$r'(\lambda,\mu)=r|_{S(\lambda,\mu)}$$
 gives a deformation of surfaces.
We note that $r'$ preserves the lattice $L$.
Take an S-marking $\check{\psi} $ of $\check{S}$.  We obtain the $S$-markings of $S(\lambda,\mu)$ 
by $\psi =\check{\psi}\circ r'_*$ for $(\lambda ,\mu )\in \delta$.
We define the local period mapping $\Phi=\Phi_0 :\delta \rightarrow \mathbb{P}^3(\mathbb{C})$ by
\begin{eqnarray}
&&
\Phi ((\lambda ,\mu))=\Big(\int_{\psi^{-1}(\gamma_1)} \omega: \ldots: \int_{\psi^{-1}(\gamma_4)} \omega \Big),
\end{eqnarray}
where $\gamma_1,\cdots,\gamma_4 \in L$ are given by {\rm (\ref{basisL})}.
We define the multivalued period mapping $\Lambda \rightarrow \mathbb{P}^3(\mathbb{C})$ by making the 
analytic continuation of $\Phi$ along any arc starting from $(\lambda_0,\mu_0)$ in $\Lambda$.
\end{df}

In general, we have the Riemann-Hodge relation for the period:
\begin{align*}
\eta' M {}^t\eta'=0, \quad\quad\quad
\eta' M {}^t\bar{\eta'}>0,
\end{align*} 
where $M$ is the intersection matrix $(\gamma_j^*\cdot\gamma_k^*)_{1\leq j,k\leq 22}$.

For our case,
according to the relation (\ref{neronseverivanish}),  the Riemann-Hodge relation is reduced to

\begin{align}
\eta A {} ^t \eta
=0,\quad\quad\quad
\eta A {}^t\bar{\eta } >0,
\end{align}
where
\begin{align*}
A=
(\gamma_j^* \cdot \gamma_k^*)_{1\leq j,k\leq 4}
\end{align*}
and
$$
\eta=\Big(
\int_{\psi^{-1}(\gamma _1)} \omega: \int_{\psi^{-1}(\gamma_2)} \omega: \int_{\psi^{-1}(\gamma_3 )} \omega: \int_{\psi^{-1}(\gamma _4)}\omega
\Big).
$$

\begin{rem}
In {\rm Theorem \ref{thm:intersection}},
we shall show  that the above matrix $A$ is given by
\begin{eqnarray*}
A=A_0=\begin{pmatrix}
0&1&0&0\\
1&0&0&0\\
0&0&2&1\\
0&0&1&-2
\end{pmatrix}.
\end{eqnarray*}
\end{rem}

Set 
$$
\mathcal{D}=\mathcal{D}_0=\{\xi=(\xi_1:\xi_2:\xi_3:\xi_4) \in\mathbb{P}^3(\mathbb{C}) \text{ }|\text{ } \xi A {}^t\xi =0, \xi A {}^t \bar{\xi} >0\}.
$$
We have $\Phi(\Lambda)\subset\mathcal{D}$. Note that $\mathcal{D}$ is composed of two connected components.
Let $\mathcal{D}^+$ be the  component where  $(1:1:-\sqrt{-1}:0)$ is a point of $\mathcal{D}^+$.  And let $\mathcal{D}^-$ be the other component.

\begin{df}\label{Def:monodromy}
The fundamental group $\pi _1(\Lambda,*)$ acts on the $\mathbb{Z}$-module $\langle\psi^{-1}(\gamma_1),\cdots,\psi^{-1}(\gamma_4)\rangle_\mathbb{Z}$. So, it induces the action on $\mathcal{D}$. This action induces a group of  projective linear transformations which is a subgroup of $PGL(4,\mathbb{Z})$. We call it the projective monodromy group of the period mapping $\Phi:\Lambda\rightarrow\mathcal{D}$.
\end{df}

\subsection{The Picard number}

By the Riemann-Roch Theorem for surfaces and the Serre duality, we have the following lemma:

\begin{lem}\label{lem:K3fibre}
Let $S$ be a $K3$ surface with the elliptic fibration $\pi : S \rightarrow \mathbb{P}^1(\mathbb{C})$ and  $F$ be a fixed general fibre. Then, $\pi$ is the unique elliptic fibration up to ${\rm Aut}(\mathbb{P}^1(\mathbb{C}))$ which has $F$ as a general fibre. 
\end{lem}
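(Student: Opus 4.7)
The plan is to identify $\pi$, up to an automorphism of $\mathbb{P}^1(\mathbb{C})$, with the morphism associated to the complete linear system $|F|$; since $|F|$ depends only on the divisor class of $F$, the uniqueness of $\pi$ will follow immediately.

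First, I would compute $h^0(S,\mathcal{O}_S(F))$ using Riemann-Roch and Serre duality, as the hint suggests. Since $F$ is a general fibre of the elliptic fibration $\pi$, two distinct fibres are disjoint, so $F \cdot F = 0$. On the $K3$ surface $S$ with $K_S = 0$, Riemann-Roch gives
$$
\chi(\mathcal{O}_S(F)) = \frac{F \cdot F}{2} + 2 = 2,
$$
while Serre duality yields $h^2(\mathcal{O}_S(F)) = h^0(\mathcal{O}_S(-F)) = 0$ because $F$ is a nonzero effective divisor. Therefore $h^0(\mathcal{O}_S(F)) \geq 2$, and the $1$-parameter family of fibres of $\pi$ sits inside $|F|$, so $\dim |F| \geq 1$. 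Combining these bounds with the fact that $|F|$ is composed of a pencil whose generic element is the irreducible curve $F$ forces $\dim |F| = 1$ exactly.

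Next, I would examine the morphism $\varphi_{|F|} : S \to \mathbb{P}(H^0(S,\mathcal{O}_S(F))^*) \cong \mathbb{P}^1(\mathbb{C})$. Since distinct fibres of $\pi$ are disjoint, for every point $p \in S$ some member of $|F|$ avoids $p$, so $|F|$ is base point free. Every fibre of $\varphi_{|F|}$ is then a divisor linearly equivalent to $F$. A Stein factorization argument, or directly the observation that $\pi$ has connected fibres each belonging to $|F|$, shows that $\varphi_{|F|}$ and $\pi$ induce the same decomposition of $S$; hence $\pi$ coincides with $\varphi_{|F|}$ followed by some element of ${\rm Aut}(\mathbb{P}^1(\mathbb{C}))$.

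Finally, suppose $\pi' : S \to \mathbb{P}^1(\mathbb{C})$ is a second elliptic fibration having $F$ as a general fibre. Then every fibre of $\pi'$ is linearly equivalent to $F$ and thus belongs to $|F|$, so $\pi'$ also factors through $\varphi_{|F|}$. The resulting morphism $\mathbb{P}^1(\mathbb{C}) \to \mathbb{P}^1(\mathbb{C})$ must have degree one, because both $\pi$ and $\pi'$ recover the irreducible curve $F$ as a connected general fibre, which forces $\pi = \pi'$ up to ${\rm Aut}(\mathbb{P}^1(\mathbb{C}))$. The main (rather mild) technical hurdle is establishing $\dim |F| = 1$ rather than something larger; all other steps are formal consequences of the universal property of the map attached to a base point free pencil on a $K3$ surface.
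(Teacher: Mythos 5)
Your proposal is correct and is essentially the paper's own argument: the paper deduces the lemma exactly from the Riemann--Roch theorem and Serre duality, namely that $|F|$ is a base point free pencil on the $K3$ surface whose associated morphism recovers any elliptic fibration admitting $F$ as a general fibre. The one step you leave implicit, $\dim|F|=1$, is closed by noting that a member $D\in|F|$ passing through a point of $F$ must satisfy $D=F$ (since $F$ is irreducible and $D\cdot F=F^{2}=0$), so the subsystem of members through a point of $F$ is zero-dimensional and hence $\dim|F|\leq 1$.
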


\begin{df}
Let $(S_1,\pi_1,\mathbb{P}^1(\mathbb{C}))$ and  $(S_2,\pi_2,\mathbb{P}^1(\mathbb{C}))$ be  two elliptic surfaces. If  there exist a biholomorphic mapping $f : S_1\rightarrow S_2$ and $\varphi \in {\rm Aut}(\mathbb{P}^1(\mathbb{C}))$ such that $\varphi \circ \pi_1 = \pi_2 \circ f$, we say  $(S_1,\pi_1,\mathbb{P}^1(\mathbb{C}))$ and $(S_2,\pi_2,\mathbb{P}^1(\mathbb{C}))$ are isomorphic as elliptic surfaces.
\end{df}

For an elliptic surface given by the Kodaira normal form $y^2 = 4 x^3 -g_2(z)x -g_3(z)$, we  define the $j$-invariant  {\rm ( see \cite{Kodaira2} Section 7)}:
\begin{eqnarray}\label{eq:j-function}
j(z) = \frac{g^3_2(z)}{g^3_2(z) -27g^2_3(z)} \in \mathbb{C}(z).
\end{eqnarray}
From the definition, we have

\begin{prop}\label{prop:j-inv}
Let $(S_1,\pi_1 ,\mathbb{P}^1(\mathbb{C}))$ and $(S_2, \pi_2,\mathbb{P}^1(\mathbb{C}))$ be two elliptic surfaces given by the Kodaira normal forms.
Let $j_1(z)$ and $j_2(z)$ be  the corresponding  $j$-invariants of the Kodaira normal forms, respectively. If $(S_1,\pi_1 ,\mathbb{P}^1(\mathbb{C}))$ and  $(S_2, \pi_2,\mathbb{P}^1(\mathbb{C}))$ are isomorphic,  then there exists $\varphi \in {\rm Aut}(\mathbb{P}^1(\mathbb{C}))$ such that  $\pi_1 ^{-1}(p)$ and $\pi_2^{-1}(\varphi(p))$ are the fibres of the same type for any $p\in\mathbb{P}^1(\mathbb{C})$ and $j_2\circ \varphi = j_1$.
\end{prop}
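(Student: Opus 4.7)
The proof will proceed by unpacking the definition of isomorphism of elliptic surfaces and then applying two classical facts: (i) Kodaira's classification distinguishes singular fibre types by biholomorphic invariants of the fibre, and (ii) the $j$-invariant classifies smooth fibres as elliptic curves up to isomorphism.

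First, since $(S_1,\pi_1,\mathbb{P}^1(\mathbb{C}))$ and $(S_2,\pi_2,\mathbb{P}^1(\mathbb{C}))$ are isomorphic elliptic surfaces, by definition there exist a biholomorphism $f:S_1\to S_2$ and $\varphi\in{\rm Aut}(\mathbb{P}^1(\mathbb{C}))$ such that $\pi_2\circ f=\varphi\circ\pi_1$. For each $p\in\mathbb{P}^1(\mathbb{C})$, this relation forces $f$ to restrict to a biholomorphism $f_p:\pi_1^{-1}(p)\xrightarrow{\sim}\pi_2^{-1}(\varphi(p))$ between the scheme-theoretic fibres. In particular, the two fibres are isomorphic as complex analytic curves (with their multiplicities and reduced structure).

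Next I would invoke Kodaira's classification of singular fibres of elliptic fibrations (\cite{Kodaira2}): each fibre type ($I_n$, $I_n^*$, $II$, $III$, $IV$, $II^*$, $III^*$, $IV^*$) is uniquely determined by the biholomorphism class of the fibre, e.g. by the number of irreducible components, their intersection (dual) graph, and the multiplicities. Hence $\pi_1^{-1}(p)$ and $\pi_2^{-1}(\varphi(p))$ are of the same type for every $p$.

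Finally, for every $p$ in the (Zariski-dense) open set where both fibres are smooth, $\pi_1^{-1}(p)$ and $\pi_2^{-1}(\varphi(p))$ are isomorphic elliptic curves, so their $j$-invariants coincide: $j_1(p)=j_2(\varphi(p))$. Both $j_1$ and $j_2\circ\varphi$ are rational functions on $\mathbb{P}^1(\mathbb{C})$ (by (\ref{eq:j-function})), and two rational functions that agree on a Zariski-dense set are equal, yielding $j_2\circ\varphi=j_1$ globally. The only subtle point — really the ``main obstacle'' — is ensuring that the restriction $f_p$ is genuinely a fibre-preserving biholomorphism onto the correctly scaled Weierstrass model; this is automatic from $\pi_2\circ f=\varphi\circ\pi_1$ applied pointwise, since the $j$-invariant is intrinsic to the abstract elliptic curve and does not depend on the chosen Weierstrass presentation.
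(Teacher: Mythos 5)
Your argument is correct and is exactly the elaboration the paper has in mind: the paper states this proposition with no written proof (``From the definition, we have\dots''), treating it as immediate from the definition of isomorphism of elliptic surfaces, Kodaira's classification of fibre types, and the intrinsic nature of the $j$-invariant. Your fibre-wise restriction of $f$, the type comparison via Kodaira's list, and the density argument for $j_2\circ\varphi=j_1$ supply precisely those standard details, so there is nothing to correct.
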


For $(\lambda,\mu)\in\Lambda$, let
$$
\pi: S(\lambda,\mu)\rightarrow \mathbb{P}^1(\mathbb{C})=(\text{$z$-sphere})
$$
be the canonical elliptic fibration given by (\ref{eq:P4prekodaira}). 

\begin{lem}\label{lem:elliptic1-1}
Suppose $(\lambda_1,\mu_1 ) ,(\lambda_2 ,\mu_2 )\in \Lambda$. If $(S(\lambda_1,\mu_1),\pi_1 ,\mathbb{P}^1(\mathbb{C}))$ is isomorphic to $(S(\lambda_2,\mu_2),\pi _2,\mathbb{P}^1(\mathbb{C}))$ as elliptic surfaces, then it holds $(\lambda _1,\mu_1)=(\lambda_2,\mu_2)$.
\end{lem}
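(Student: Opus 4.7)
The plan is to invoke Proposition \ref{prop:j-inv} to obtain $\varphi \in \mathrm{Aut}(\mathbb{P}^1(\mathbb{C}))$ intertwining the two fibrations with matching fiber types at corresponding points, and then to exploit the explicit list of singular fibers given in Proposition \ref{prop1.2} to pin down $\varphi$ and the parameters.

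First I would use Proposition \ref{prop1.2}: each fibration $\pi_i$ has a unique singular fiber of type $I_3$ (over $z=0$) and a unique fiber of type $I_{15}$ (over $z=\infty$), all other singular fibers being of type $I_1$. Because $\varphi$ preserves fiber types, it must send $0\mapsto 0$ and $\infty\mapsto\infty$, so $\varphi(z)=cz$ for some $c\in\mathbb{C}^{\ast}$. The remaining six $I_1$ fibers of $\pi_i$ sit precisely at the six roots of the monic degree six polynomial factor $P_{\lambda_i,\mu_i}(z)$ of $D_0$ that can be read off from (\ref{eq:P4discriminant}),
\[
P_{\lambda,\mu}(z)=z^6+3z^5+3(1+\lambda)z^4+(1+6\lambda)z^3+3\lambda(1+\lambda)z^2+(3\lambda^2+27\mu)z+\lambda^3.
\]
The condition $(\lambda_i,\mu_i)\in\Lambda_0$ from (\ref{DivLambda}) is precisely what guarantees that $P_{\lambda_i,\mu_i}$ has six simple roots, none of which is $0$ (since $P_{\lambda,\mu}(0)=\lambda^3\neq 0$), so the unordered set of $I_1$ locations determines $P_{\lambda_i,\mu_i}$ completely.

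Since $\varphi:z\mapsto cz$ bijects the $I_1$ locations of $\pi_1$ with those of $\pi_2$, the two monic degree six polynomials satisfy the identity
\[
P_{\lambda_2,\mu_2}(cz)=c^6\,P_{\lambda_1,\mu_1}(z).
\]
Comparing the coefficient of $z^5$ gives $3c^5=3c^6$, forcing $c=1$. With $c=1$ the polynomials agree coefficient by coefficient: the $z^3$-coefficient yields $1+6\lambda_1=1+6\lambda_2$, so $\lambda_1=\lambda_2$; the $z$-coefficient then yields $3\lambda_1^2+27\mu_1=3\lambda_2^2+27\mu_2$, so $\mu_1=\mu_2$. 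This gives $(\lambda_1,\mu_1)=(\lambda_2,\mu_2)$.

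The main step requiring care is verifying that the last factor in the definition of $\Lambda_0$ in (\ref{DivLambda}) is, up to a scalar, the discriminant of $P_{\lambda,\mu}(z)$ in $z$, so that $(\lambda_i,\mu_i)\in\Lambda_0$ really forces simple roots and hence the polynomial reconstruction from the $I_1$ locations is legitimate; this is consistent with the statement of Proposition \ref{prop1.2}, where the six $I_1$ fibers lie over six distinct points in $\Lambda_0$. Once this is granted, everything else is a mechanical comparison of coefficients.
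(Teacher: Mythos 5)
Your proposal is correct and follows essentially the same route as the paper's proof: invoke Proposition \ref{prop:j-inv} and Proposition \ref{prop1.2} to force $\varphi(z)=cz$, then compare coefficients of the degree-six factor of the discriminant (\ref{eq:P4discriminant}) to get $c=1$ and hence $(\lambda_1,\mu_1)=(\lambda_2,\mu_2)$. Your version simply makes explicit the coefficient comparison (the $z^5$, $z^3$ and $z$ terms) that the paper leaves as ``observing the coefficients,'' and your appeal to Proposition \ref{prop1.2} already suffices to know the six $I_1$ fibres lie over six distinct simple roots, so the discriminant verification you flag is not actually needed.
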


\begin{proof}
Let $f :S_1 \rightarrow  S_2$ be the biholomorphic mapping which gives the equivalence of elliptic surfaces. According to Proposition \ref{prop:j-inv}, there exists $\varphi \in \text{Aut}(\mathbb{P}^1(\mathbb{C}))$ which satisfies $\varphi \circ \pi _1=\pi _2 \circ f$. By Proposition \ref{prop1.2},  
we have $\pi_j^{-1}(0)=I_3$ and $\pi_j^{-1}(\infty)=I_{15}$ ($j=1,2$). 
So, $\varphi$ has the form $\varphi : z\mapsto az$ with some $a\in \mathbb{C}-{0}$.
Let $D_0(z;\lambda_j,\mu_j)$ $(j=1,2)$ be the discriminant. 
From (\ref{eq:P4discriminant}), we have
\begin{eqnarray*}
\frac{D_0(z; \lambda_j, \mu_j)}{64 \mu_j ^3 z^3} =\lambda_j^3 +3\lambda_j^2 z +27\mu_j z +3\lambda_j z^2 +3\lambda_j^2 z^2 +z^3 +6 \lambda_j z^3 +3z^4 +3\lambda_j z^4 +3z^5 +z^6 \hspace{3.5mm}  (j=1,2).
\end{eqnarray*} 
The six roots of $D_0(z; \lambda_1, \mu_1)/64 \mu_1 ^3 z^3$ ($D_0(z; \lambda_2, \mu_2)/64 \mu_2 ^3 z^3$, resp.) give the six images of  singular fibres of type $I_1$ of $S(\lambda_1,\mu_1)$ ($S(\lambda_2,\mu_2)$, resp.). The roots of $D_0(z;\lambda_1,\mu_1)/64 \mu_1^3 z^3$ are sent by $\varphi$ to those of  $D_0(z;\lambda_2,\mu_2)/64 \mu_2^3 z^3$.
Observing the coefficients of $D_0(z;\lambda_1,\mu_1)$ and $D_0(z;\lambda_2,\mu_2)$, we obtain that $a=1$.
Therefore, we have $(\lambda_1,\mu_1)=(\lambda_2,\mu_2)$. 
\end{proof}

\begin{prop}\label{lem:S-marked}
Two S-marked $K3$ surfaces $(S(\lambda_1,\mu_1) ,\psi_1)$ and $(S(\lambda_2,\mu_2), \psi_2) $ are equivalent if and only if there exists an isomorphism of elliptic surfaces between
$(S(\lambda_1,\mu_1),\pi_1,\mathbb{P}^1(\mathbb{C}))$ and $ (S(\lambda_2,\mu_2),\pi_2,\mathbb{P}^1(\mathbb{C}))$.
\end{prop}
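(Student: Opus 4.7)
The plan is to derive both directions from Lemma \ref{lem:K3fibre} (uniqueness of the elliptic fibration associated with a fixed general fibre class) together with the rigidity contained in the proof of Lemma \ref{lem:elliptic1-1}, which forces the base automorphism $\varphi\in\mathrm{Aut}(\mathbb{P}^1(\mathbb{C}))$ to be the identity.

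For the ``only if'' direction, suppose $(S(\lambda_1,\mu_1),\psi_1)$ and $(S(\lambda_2,\mu_2),\psi_2)$ are equivalent via a biholomorphism $f:S_1\to S_2$. Since the general fibre class $F$ lies in $\check{L}$, the identity $\psi_2\circ f_*\circ\psi_1^{-1}|_{\check{L}}=\mathrm{id}_{\check{L}}$ forces $f_*[F_1]=[F_2]$. Hence $\pi_2\circ f$ and $\pi_1$ are two elliptic fibrations on $S_1$ sharing the same general fibre class, and Lemma \ref{lem:K3fibre} then supplies $\varphi\in\mathrm{Aut}(\mathbb{P}^1(\mathbb{C}))$ with $\pi_2\circ f=\varphi\circ\pi_1$, which is precisely the definition of an isomorphism of elliptic surfaces.

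For the ``if'' direction, assume an isomorphism of elliptic surfaces $f$ with $\varphi\circ\pi_1=\pi_2\circ f$. Because the singular fibres are of types $I_3$ at $z=0$ and $I_{15}$ at $z=\infty$ by Proposition \ref{prop1.2}, $\varphi$ must have the form $z\mapsto az$, and comparing the coefficients of the discriminant in (\ref{eq:P4discriminant}) exactly as in the proof of Lemma \ref{lem:elliptic1-1} yields $a=1$ and $(\lambda_1,\mu_1)=(\lambda_2,\mu_2)$. Thus $f$ preserves every fibre of the fibration. On a smooth fibre the remaining ambiguity of $f$ consists of a translation by an element of the Mordell--Weil group possibly composed with the involution $P\mapsto -P$. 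Composing $f$ with a suitable such automorphism of $S_2$, we may arrange $f_*(O)=O$ and $f_*(Q)=Q$, which also entails $f_*(R)=R$. Once $O$, $Q$, $R$ are matched, the labelled irreducible components $a_0,a_1,a_1'$ and $b_0,b_1,\ldots,b_7,b_1',\ldots,b_7'$ are singled out by their incidences with these sections (Figures 1 and 2), so $f_*$ fixes each of them and the class $F$, giving $\psi_2\circ f_*\circ\psi_1^{-1}|_{\check{L}}=\mathrm{id}_{\check{L}}$.

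The main obstacle is the bookkeeping in the ``if'' direction: one must verify that the fibrewise ambiguity of $f$, described by the Mordell--Weil translations together with the fibrewise negation $P\mapsto-P$, can be absorbed inside the automorphism group of the elliptic surface so that the induced action on the basis (\ref{N-Sbasis}) of $\check{L}$ becomes the identity; the matching of $O$, $Q$, $R$ is the crucial step, after which the adjacency pattern in Figures 1 and 2 determines the action on the remaining components.
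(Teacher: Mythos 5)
Your proposal is correct, and the direction ``equivalent $\Rightarrow$ isomorphic as elliptic surfaces'' is exactly the paper's argument: the marking fixes the fibre class $F$, so $\pi_1$ and $\pi_2\circ f$ share a general fibre and Lemma \ref{lem:K3fibre} produces the base automorphism. For the converse the paper simply states that sufficiency is clear; the quick route behind that remark is that an isomorphism of elliptic surfaces forces $(\lambda_1,\mu_1)=(\lambda_2,\mu_2)$ by Lemma \ref{lem:elliptic1-1}, after which one may take $f=\mathrm{id}$, and $\psi_2\circ f_*\circ\psi_1^{-1}|_{\check{L}}=\mathrm{id}_{\check{L}}$ holds automatically because, by the very definition of an S-marking, both $\psi_1$ and $\psi_2$ fix each of the classes $\gamma_5,\dots,\gamma_{22}$ — no adjustment of the given isomorphism is needed. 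Your heavier normalization of $f$ (translating by a Mordell--Weil section and possibly composing with the fibrewise inversion until $O$, $Q$, $R$ are preserved, then recovering the components $b_j,b_j'$ from their incidences, where fixing $b_0$, $b_5$, $b_5'$ indeed kills the reflection of the $I_{15}$ cycle) is also valid, but it silently uses that the only fibrewise automorphisms fixing the zero section are $\pm 1$; this is true here because the $j$-invariant is non-constant (the fibration has $I_1$ fibres), and once $f$ fixes $O$ and $Q$ it is in fact the identity, so the extra bookkeeping buys nothing beyond what the parameter identification already gives.
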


\begin{proof}
The sufficiency is clear. We prove the necessity. 
Let $(\lambda_1,\mu_1),(\lambda_2,\mu_2)\in \Lambda$. Suppose the equivalence of S-marked $K3$ surfaces
$$
(S(\lambda _1,\mu_1) ,\psi_1)\simeq (S(\lambda_2 ,\mu _2),\psi_2).
$$
Then, there exists a biholomorphic mapping $f : S(\lambda_1,\mu_1)\rightarrow  S(\lambda_2,\mu_2)$ such that $\psi_2\circ f_*\circ \psi_1^{-1} | _{L}=id_{L}$. Especially, for general fibres $F_1 \in \text{Div}(S_1)$ and $F_2 \in \text{Div}(S_2)$, we have  $f_*(F_1)=F_2$.

So, $S(\lambda_2,\mu_2)$ has two elliptic fibrations $\pi_2$ and $\pi_1 \circ f^{-1}$ which have a general fibre $F_2$. According to Lemma \ref{lem:K3fibre},  it holds
$$
\pi_2 = \pi_1 \circ f^{-1}
$$
up to $\text{Aut}(\mathbb{P}^1(\mathbb{C}))$.
\end{proof}

\begin{cor}\label{prop:S-marked}
Let $(\lambda_1,\mu_1)$ and $(\lambda_2,\mu_2)$ be in $\Lambda$. 
Two S-marked $K3$ surfaces $(S(\lambda_1,\mu_1), \psi_1)$ and $(S(\lambda_2,\mu_2), \psi_2)$ are equivalent  if and only if 
$(\lambda_1,\mu_1)=(\lambda_2,\mu_2)$.
\end{cor}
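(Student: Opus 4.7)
The plan is to obtain this corollary by chaining the two immediately preceding results, Proposition \ref{lem:S-marked} and Lemma \ref{lem:elliptic1-1}, both of which have already been established. No new geometric input is required at this stage; the corollary is essentially a clean reformulation of Proposition \ref{lem:S-marked} in terms of the parameters $(\lambda,\mu)$.

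For the sufficiency direction (``if''), if $(\lambda_1,\mu_1)=(\lambda_2,\mu_2)$ then $S(\lambda_1,\mu_1)$ and $S(\lambda_2,\mu_2)$ are literally the same surface with the same elliptic fibration and the same distinguished cycles $\gamma_5,\ldots,\gamma_{22}$. Taking $f=\mathrm{id}$ and noting that $\psi_j$ is required to send each $\gamma_k$ $(5\leq k\leq 22)$ to itself in $\check{L}$, the composition $\psi_2\circ f_{\ast}\circ\psi_1^{-1}$ acts as the identity on $\check{L}$, so Definition \ref{df:iso,equi} is satisfied.

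For the necessity direction (``only if''), suppose that $(S(\lambda_1,\mu_1),\psi_1)$ and $(S(\lambda_2,\mu_2),\psi_2)$ are equivalent as S-marked $K3$ surfaces. By Proposition \ref{lem:S-marked}, this equivalence yields an isomorphism of elliptic surfaces between $(S(\lambda_1,\mu_1),\pi_1,\mathbb{P}^1(\mathbb{C}))$ and $(S(\lambda_2,\mu_2),\pi_2,\mathbb{P}^1(\mathbb{C}))$. Applying Lemma \ref{lem:elliptic1-1} then forces $(\lambda_1,\mu_1)=(\lambda_2,\mu_2)$, completing the argument.

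There is no genuine obstacle at this stage: the real work has already been carried out in Lemma \ref{lem:elliptic1-1} (where the explicit discriminant $D_0(z;\lambda,\mu)$ was used to pin down the parameters from the positions of the six $I_1$ fibres, after the automorphism of $\mathbb{P}^1(\mathbb{C})$ was cut down to $z\mapsto az$ by the $I_3$/$I_{15}$ data) and in Proposition \ref{lem:S-marked} (where the uniqueness of the elliptic fibration sharing a prescribed general fibre $F$, namely Lemma \ref{lem:K3fibre}, was combined with the fact that an S-marking fixes $\gamma_{22}=F$). The corollary therefore follows as an immediate syllogism from these two results.
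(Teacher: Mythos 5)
Your argument is correct and is exactly the paper's route: the corollary is deduced by combining Proposition \ref{lem:S-marked} with Lemma \ref{lem:elliptic1-1}, the sufficiency direction being immediate. Nothing further is needed.
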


\begin{proof}
From the proposition and Lemma \ref{lem:elliptic1-1}, we obtain the required statement.
\end{proof}

\begin{thm}\label{thm:Torelli}{\rm (The local Torelli theorem for S-marked $K3$ surfaces)}
Let $\delta\subset\Lambda$ be a sufficiently small neighborhood of $(\lambda_0,\mu_0)$, and $(\lambda_1,\mu_1),$ $(\lambda_2,\mu_2)\in\delta$.
Suppose $\Phi(\lambda_1,\mu_1)=\Phi(\lambda_2,\mu_2)$, then there exists an isomorphism of S-marked $K3$ surfaces $(S(\lambda_1,\mu_1),\psi_1)\simeq (S(\lambda_2,\mu_2),\psi_2)$.
\end{thm}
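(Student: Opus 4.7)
The plan is to reduce to the classical local Torelli theorem for marked $K3$ surfaces (Burns--Rapoport, Piatetski-Shapiro--Shafarevich). First, using Proposition \ref{prop:primitive}, I extend the fixed basis $\{\gamma_5,\dots,\gamma_{22}\}$ of $\check L$ to a full basis $\{\gamma_1,\dots,\gamma_{22}\}$ of $H_2(\check S,\mathbb{Z})$ as in (\ref{basisL}). The topological trivialization $\tau$ of Definition \ref{df:periodmap} then promotes each S-marking $\psi_i$ of $S(\lambda_i,\mu_i)$ ($i=1,2$) canonically to a full marking $\tilde\psi_i:H_2(S(\lambda_i,\mu_i),\mathbb{Z})\to H_2(\check S,\mathbb{Z})$, and these full markings depend continuously on the parameters as $(\lambda,\mu)$ varies over the small neighborhood $\delta$.

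Next, I consider the full period vectors
\[
\eta_i=\Bigl(\int_{\tilde\psi_i^{-1}(\gamma_1)}\omega_i:\cdots:\int_{\tilde\psi_i^{-1}(\gamma_{22})}\omega_i\Bigr)\in\mathbb{P}^{21}(\mathbb{C})
\]
of the marked $K3$ surfaces $(S(\lambda_i,\mu_i),\tilde\psi_i)$, where $\omega_i$ is a nonzero holomorphic $2$-form on $S(\lambda_i,\mu_i)$. For every $j$ with $5\leq j\leq 22$ we have $\gamma_j\in\check L\subset \mathrm{NS}(S(\lambda_i,\mu_i))$, so by (\ref{neronseverivanish}) the corresponding period vanishes. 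Hence $\eta_i$ is determined by its first four coordinates, which by construction coincide with $\Phi(\lambda_i,\mu_i)$. The hypothesis $\Phi(\lambda_1,\mu_1)=\Phi(\lambda_2,\mu_2)$ therefore forces $\eta_1=\eta_2$ in $\mathbb{P}^{21}(\mathbb{C})$.

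Finally, I invoke the classical local Torelli theorem on the small neighborhood $\delta$: the equality of full periods of two marked $K3$ surfaces yields a biholomorphism $f:S(\lambda_1,\mu_1)\to S(\lambda_2,\mu_2)$ with $\tilde\psi_2\circ f_{\ast}\circ\tilde\psi_1^{-1}=\mathrm{id}_{H_2(\check S,\mathbb{Z})}$. Restricting to $\check L$ gives $\psi_2\circ f_{\ast}\circ\psi_1^{-1}|_{\check L}=\mathrm{id}_{\check L}$, which is exactly the equivalence of S-marked $K3$ surfaces from Definition \ref{df:iso,equi}. The main potential obstacle is ensuring that the chosen extension $\{\gamma_1,\dots,\gamma_4\}$ gives a coherent family of full markings to which the classical local Torelli applies; this is handled precisely by shrinking to $\delta$ so that the trivialization $\tau$ is available and the $\tilde\psi_i$ vary continuously with the parameters.
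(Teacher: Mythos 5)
The paper itself states Theorem \ref{thm:Torelli} without proof, as a standard Torelli-type fact, so there is no internal argument to compare against; judged on its own terms, your reduction is the right one but the decisive step hides a genuine gap. Your first two steps are fine: the S-markings of Definition \ref{df:periodmap} are already full markings coming from the trivialization, and since $\psi_i^{-1}(\gamma_j)=\gamma_j\in L\subset \mathrm{NS}(S(\lambda_i,\mu_i))$ for $5\le j\le 22$, relation (\ref{neronseverivanish}) kills those eighteen coordinates, so equality of $\Phi$ does force equality of the full period points in $\mathbb{P}^{21}(\mathbb{C})$. The gap is in the final invocation. The statement you attribute to Burns--Rapoport and Pjatecki\u{i}-\v{S}apiro--\v{S}afarevi\v{c} --- that equality of full periods of two marked $K3$ surfaces by itself yields a biholomorphism $f$ with $\tilde\psi_2\circ f_*\circ\tilde\psi_1^{-1}=\mathrm{id}$ --- is false in general: the strong Torelli theorem needs the additional hypothesis that the Hodge isometry $\tilde\psi_2^{-1}\circ\tilde\psi_1$ carry a K\"ahler (ample) class to a K\"ahler (ample) class; without it, equal periods only give an isomorphism of the surfaces up to sign and reflections in $(-2)$-classes (this is the non-Hausdorffness of the marked moduli space). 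The smallness of $\delta$ must therefore be used in an essential way, and your closing remark points at the wrong obstacle: coherence of the markings is automatic from the trivialization, while the K\"ahler-cone condition is what actually needs an argument.

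Two standard ways to close the gap: (i) fix $h\in L$ ample on $\check S$; since $h$ is algebraic on every fibre and ampleness is open in the family, $h$ remains ample on all $S(\lambda,\mu)$ for $(\lambda,\mu)$ in a shrunken $\delta$, so $\tilde\psi_2^{-1}\circ\tilde\psi_1$ maps an ample class to an ample class, hence the K\"ahler chamber onto the K\"ahler chamber, and the strong Torelli theorem in the form of Theorem \ref{thm:p-marked Torelli} (3) (\cite{Pj-S}) produces the desired $f$ inducing the identity on $H_2(\check S,\mathbb{Z})$; or (ii) avoid global Torelli altogether: map the family over $\delta$ into the Kuranishi deformation of $\check S$ (which is universal because $H^0(\check S,T_{\check S})=0$), transport the markings through this universal family, and use honest local Torelli --- injectivity of the period map on the Kuranishi space --- to conclude that the two parameter values hit the same Kuranishi point, hence give isomorphic marked surfaces. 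A small presentational point: once you have $\tilde\psi_2\circ f_*\circ\tilde\psi_1^{-1}=\mathrm{id}_{H_2(\check S,\mathbb{Z})}$ you already have the isomorphism of S-marked surfaces required by Definition \ref{df:iso,equi}; restricting to $\check L$ only records the weaker equivalence, so you should not downgrade the conclusion, since the theorem asserts an isomorphism.
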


We have

\begin{thm}\label{thm:P4Picard}
For a generic point $(\lambda , \mu ) \in \Lambda$, we have
$$
{\rm rank \enspace NS}(S(\lambda , \mu))=18.
$$
\end{thm}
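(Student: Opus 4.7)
By Proposition \ref{prop:Picardineq} we have $\rho(\lambda,\mu):={\rm rank\ NS}(S(\lambda,\mu))\geq 18$ everywhere on $\Lambda$, so it suffices to establish the matching upper bound $\rho\leq 18$ at a generic point. The strategy is to combine the local Torelli theorem (Theorem \ref{thm:Torelli}) with the orthogonality $\int_\gamma\omega=0$ for $\gamma\in{\rm NS}$ from (\ref{neronseverivanish}), using the dimension of the image of the period map $\Phi$ as the controlling quantity.

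First I would invoke Theorem \ref{thm:Torelli} together with Corollary \ref{prop:S-marked}: on any sufficiently small neighborhood $\delta$ of a point $(\lambda_0,\mu_0)\in\Lambda$ the period map $\Phi\colon\delta\to\mathcal{D}$ is injective. Since $\dim_\mathbb{C}\delta=2$ and $\mathcal{D}$ is cut out by a single quadric in $\mathbb{P}^3$ intersected with an open positivity condition (so $\dim_\mathbb{C}\mathcal{D}=2$), the image $\Phi(\delta)$ is a $2$-dimensional analytic subset of $\mathcal{D}$.

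Next I would argue by contradiction. Suppose $\rho\geq 19$ on some nonempty open subset $U\subset\Lambda$. Using the local topological trivialization of the family and the $S$-markings of Definition \ref{Definition2.2}, sublattices of ${\rm NS}(S(\lambda,\mu))$ transfer to sublattices of the reference lattice $H_2(\check S,\mathbb{Z})$. The set of such sublattices of rank $\geq 19$ containing $\check L$ is countable, so a single sublattice $N\supsetneq\check L$ must satisfy $N\subset{\rm NS}(S(\lambda,\mu))$ for all $(\lambda,\mu)$ in some nonempty open subset $U'\subset U$. Picking $\gamma'\in N\setminus\check L$ and expanding $\gamma'=\sum_{j=1}^{22}c_j\gamma_j$ in the basis (\ref{basisL}) with integer $c_j$, one must have $(c_1,c_2,c_3,c_4)\neq(0,0,0,0)$. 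Since $\gamma_j\in\check L\subset{\rm NS}$ for $j\geq 5$ and $\gamma'\in{\rm NS}$, the relation (\ref{neronseverivanish}) applied on $U'$ yields the nontrivial linear relation
$$
\sum_{j=1}^{4}c_j\int_{\psi^{-1}(\gamma_j)}\omega=0
$$
valid throughout $U'$. Hence $\Phi(U')$ is contained in a hyperplane section of $\mathcal{D}$ and has complex dimension at most $1$, contradicting the $2$-dimensionality of $\Phi(\delta)$ for any $\delta\subset U'$.

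This forces $\rho\leq 18$ off a countable union of proper analytic subvarieties of $\Lambda$, i.e.\ at a generic point, and combining with Proposition \ref{prop:Picardineq} gives $\rho=18$ generically. The main obstacle is essentially organizational: producing, from the hypothesis ``$\rho\geq 19$ on an open set'', a fixed ambient class $\gamma'\in H_2(\check S,\mathbb{Z})$ that lies in ${\rm NS}(S(\lambda,\mu))$ on a smaller open subset. Once this bookkeeping step is handled via the countability of sublattices together with the $S$-marking framework of Section 2.1, the dimensional contradiction with local Torelli closes the argument with no further calculation.
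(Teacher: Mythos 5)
Your proposal is correct and follows essentially the same route as the paper: the lower bound from Proposition \ref{prop:Picardineq}, and then a contradiction between the injectivity of $\Phi$ furnished by Corollary \ref{prop:S-marked} and Theorem \ref{thm:Torelli} (which makes $\Phi(\delta)$ two-dimensional) and the dimension drop forced by an extra algebraic class. The only difference is that you spell out, via countability of sublattices and the hyperplane relation $\sum_{j=1}^{4}c_j\int_{\psi^{-1}(\gamma_j)}\omega=0$, the step the paper states without detail, namely that higher Picard rank on $\delta$ would prevent $\Phi(\delta)$ from containing any open subset of $\mathcal{D}$.
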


\begin{proof}
By Proposition \ref{prop:Picardineq}, we already have 
${\rm rank  \enspace NS}(S(\lambda,\mu))\geq 18$.   
Let $\delta$ be a small neighborhood of $(\lambda,\mu)$. 
Suppose we have ${\rm rank \enspace  NS}(S(\lambda',\mu'))>18$ for all $(\lambda',\mu')\in \delta$. Then, $\Phi(\delta)$ cannot contain any open set of $\mathcal{D}$.
By  Corollary \ref{prop:S-marked} and  Theorem \ref{thm:Torelli}, the period mapping is injective.
This is a contradiction.
\end{proof}

\begin{cor}\label{cor:dim}
The $\mathbb{C}$-vector space generated by the germs of holomorphic functions 
$$
\int_{\psi^{-1} (\gamma_1)}\omega, \cdots, \int_{\psi^{-1}(\gamma_4)} \omega
$$
is $4$-dimensional.
\end{cor}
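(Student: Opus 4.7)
The plan is to derive this as a direct consequence of Theorem \ref{thm:P4Picard}, by observing that the four integrals in question are precisely the homogeneous coordinates of the local period map $\Phi:\delta \to \mathcal{D}\subset \mathbb{P}^3(\mathbb{C})$ from Definition \ref{df:periodmap}. A nontrivial $\mathbb{C}$-linear relation $\sum_{j=1}^{4} c_j \int_{\psi^{-1}(\gamma_j)}\omega \equiv 0$ on a neighborhood $\delta$ of $(\lambda_0,\mu_0)$ is equivalent to the statement that $\Phi(\delta)$ is contained in the proper projective hyperplane $H=\{\xi \in \mathbb{P}^3(\mathbb{C}) \mid \sum c_j \xi_j = 0\}$.

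The next step is to note that $\mathcal{D}$ is a $2$-dimensional complex manifold, since it is the open subset $\{\xi A {}^t\bar\xi >0\}$ of the smooth quadric hypersurface $\{\xi A {}^t\xi=0\}\subset \mathbb{P}^3(\mathbb{C})$. Hence $H\cap \mathcal{D}$ is a proper closed analytic subset of $\mathcal{D}$ and contains no open subset of $\mathcal{D}$.

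I would then invoke the key contradiction argument already used in the proof of Theorem \ref{thm:P4Picard}. If $\Phi(\delta)\subset H\cap\mathcal{D}$, then $\Phi(\delta)$ would fail to contain any open subset of $\mathcal{D}$. Combined with the injectivity of $\Phi$ on small neighborhoods (which follows from Corollary \ref{prop:S-marked} together with the local Torelli theorem, Theorem \ref{thm:Torelli}), the same reasoning as in the proof of Theorem \ref{thm:P4Picard} would force ${\rm rank\ NS}(S(\lambda',\mu'))>18$ for every $(\lambda',\mu')\in \delta$, contradicting the theorem. Therefore no such linear relation can exist at a generic point, and the four germs span a $4$-dimensional $\mathbb{C}$-vector space.

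There is no real obstacle here, as the corollary is essentially a repackaging of the injectivity-plus-dimension-count argument already established: the substantive content lies entirely in Theorem \ref{thm:P4Picard} and the local Torelli theorem. The only point requiring mild care is being explicit that $\mathcal{D}$ has complex dimension $2$, so that the dimension count cleanly obstructs containment in any projective hyperplane.
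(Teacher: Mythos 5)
Your argument is correct in substance, but it takes a more explicit route than the paper: the paper's entire proof of Corollary \ref{cor:dim} is the one-line remark that ${\rm rank\ Tr}(S(\lambda,\mu))=22-18=4$ by Theorem \ref{thm:P4Picard}, so the hyperplane-and-dimension count you spell out is precisely what that terse statement leaves implicit. Your reduction is the right one: a nontrivial relation $\sum_j c_j\int_{\psi^{-1}(\gamma_j)}\omega\equiv 0$ on $\delta$ means $\Phi(\delta)\subset H\cap\mathcal{D}$, and $H\cap\mathcal{D}$ is an analytic subset of the $2$-dimensional manifold $\mathcal{D}$ of dimension at most $1$, hence contains no open subset of $\mathcal{D}$. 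One sentence of yours is off, though: you cannot conclude ``by the same reasoning as in the proof of Theorem \ref{thm:P4Picard}'' that ${\rm rank\ NS}(S(\lambda',\mu'))>18$ for all $(\lambda',\mu')\in\delta$ --- that proof runs the implication in the opposite direction (excess Picard rank implies the image contains no open set), and the absence of an open image does not force extra algebraic classes. The correct way to close your argument is the step the paper itself leaves tacit in Theorem \ref{thm:P4Picard}: injectivity of $\Phi$ on $\delta$ (Corollary \ref{prop:S-marked} together with Theorem \ref{thm:Torelli}), combined with invariance of domain for an injective holomorphic map between the $2$-dimensional complex manifolds $\delta$ and $\mathcal{D}$, shows that $\Phi(\delta)$ is open in $\mathcal{D}$, which directly contradicts $\Phi(\delta)\subset H\cap\mathcal{D}$. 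With that repair your proof is complete, and what it buys is a fully explicit justification of the corollary, in contrast with the paper's bare appeal to the rank of the transcendental lattice.
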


\begin{proof}
It is clear, for  the rank of the transcendental lattice ${\rm Tr}(S(\lambda,\mu))$ is $22-18=4$.
\end{proof}

\vspace{5mm}

We can determine the Picard number of the family $\mathcal{F}_j$ $(j=1,2,3)$ by the same method.
Recall the lattice $L_1$ ($L_2,L_3,$ resp.) 
in (\ref{L1definition}) ((\ref{L2definition}), (\ref{L3definition}), resp.)
for $\mathcal{F}_1$ ($\mathcal{F}_2,\mathcal{F}_3, resp.$). 
Set $j\in \{1,2,3\}$.
Let $\{\gamma_1^*,\cdots,\gamma_{22}^*\}$ be a basis of $H_2(S_j(\lambda,\mu),\mathbb{Z})$ such that we have $\langle \gamma_1^*,\cdots,\gamma_4^*\rangle_\mathbb{Z} =L_j^\perp$.
Take a dual basis $\{\gamma_1,\cdots,\gamma_{22}\}$ of $H_2(S_j(\lambda,\mu),\mathbb{Z})$, namely it holds $(\gamma_j\cdot\gamma_k^*)=\delta_{jk}$ $(1\leq j,k \leq22)$. 
By the same procedure as for $\mathcal{F}_0$,
we define the multivalued analytic period mapping $\Phi_j:\Lambda_j \rightarrow \mathcal{D}_j$ given by
$$
(\lambda,\mu) \mapsto \Big(\int_{\gamma_1} \omega_j :\cdots:\int_{\gamma_4} \omega_j \Big),
$$
where $\omega_j$ is the unique holomorphic $2$-form on $S_j(\lambda,\mu)$ up to a constant factor and 
$\mathcal{D}_j$ is the domain of type $IV$ defined by the intersection matrix $(\gamma_j^*\cdot\gamma_k^*)_{1\leq j,k\leq4}$.
Moreover,  we have  the Kodaira normal forms of the elliptic fibrations (\ref{P1preKodaira}), (\ref{P2preKodaira}) and (\ref{P3preKodairaanother}) (these forms appear in the proofs of Proposition \ref{propP1kodaira}, \ref{propP2kodaira} and \ref{propP3kodaira}). 
Observing the coefficients of these forms, we can prove the  lemmas corresponding to Lemma \ref{lem:elliptic1-1}.
Therefore, we obtain the following theorem.

\begin{thm}\label{18theorem}
The Picard number of a generic member of the families $\mathcal{F}_j$ $(j=1,2,3)$ are equal to 18.
\end{thm}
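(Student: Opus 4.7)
The plan is to mirror the proof of Theorem \ref{thm:P4Picard} for each family $\mathcal{F}_j$ $(j=1,2,3)$ separately. All of the structural ingredients are already set up in the excerpt: a rank-$18$ lattice $L_j$ of algebraic classes (see (\ref{L1definition}), (\ref{L2definition}), (\ref{L3definition})) yielding the lower bound $\mathrm{rank}\,\mathrm{NS}(S_j(\lambda,\mu))\geq 18$; the period mapping $\Phi_j:\Lambda_j\to\mathcal{D}_j$ into a $2$-dimensional domain of type $IV$; and the Kodaira normal forms with explicit discriminants provided in the proofs of Propositions \ref{propP1kodaira}, \ref{propP2kodaira}, \ref{propP3kodaira}. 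The concluding argument is identical to the $\mathcal{F}_0$ case: if the generic Picard number were strictly greater than $18$, then the image of a small open neighborhood in $\Lambda_j$ under $\Phi_j$ would lie in a proper subvariety of the $2$-dimensional target $\mathcal{D}_j$, contradicting the injectivity of $\Phi_j$ supplied by the local Torelli theorem for $S$-marked $K3$ surfaces.

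The essential new work, and the main obstacle, is the analogue of Lemma \ref{lem:elliptic1-1} for each family: if $(S_j(\lambda_1,\mu_1),\pi_1,\mathbb{P}^1(\mathbb{C}))$ and $(S_j(\lambda_2,\mu_2),\pi_2,\mathbb{P}^1(\mathbb{C}))$ are isomorphic as elliptic surfaces, then $(\lambda_1,\mu_1)=(\lambda_2,\mu_2)$. By Proposition \ref{prop:j-inv} any such isomorphism produces a base change $\varphi\in\mathrm{Aut}(\mathbb{P}^1(\mathbb{C}))$ that permutes singular fibres while preserving Kodaira types. For $\mathcal{F}_1$ the two special fibres are of types $I_9$ and $I_3^*$, and for $\mathcal{F}_2$ they are of types $I_1^*$ and $I_{11}$. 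Since the two special types are distinct in each case, $\varphi$ must fix the two special points individually and hence has the form $t\mapsto at$. Matching coefficients of the residual degree-$6$ factor of $D_0$ from the proofs of Propositions \ref{propP1kodaira} and \ref{propP2kodaira} between parameters $(\lambda_1,\mu_1)$ and $(\lambda_2,\mu_2)$ under the substitution $t\mapsto at$ then forces $a=1$ and $(\lambda_1,\mu_1)=(\lambda_2,\mu_2)$, exactly as in the proof of Lemma \ref{lem:elliptic1-1}.

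The subtle case is $j=3$, since the second fibration (\ref{P3preKodairaanother}) has $I_9$ fibres at both $z=0$ and $z=\infty$, so $\varphi$ could a priori be $z\mapsto az$ or $z\mapsto a/z$. For $\varphi(z)=az$ the argument proceeds exactly as above, comparing coefficients of the degree-$6$ factor of $D_0$ from Proposition \ref{propP3kodaira}. The swap case $\varphi(z)=a/z$ requires the six $I_1$ fibre locations $\{z_i(\lambda_1,\mu_1)\}$ to coincide with $\{a/z_i(\lambda_2,\mu_2)\}$ as a set, which imposes nontrivial algebraic relations on $(\lambda_1,\mu_1,\lambda_2,\mu_2,a)$ via the symmetric functions of the roots, carving out a proper subvariety of $\Lambda_3\times\Lambda_3$. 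Hence for generic parameters only $\varphi(z)=az$ can occur, and the uniqueness follows as before.

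With the analogue of Lemma \ref{lem:elliptic1-1} in hand, the analogues of Proposition \ref{lem:S-marked}, Corollary \ref{prop:S-marked}, and the local Torelli Theorem \ref{thm:Torelli} for $\mathcal{F}_j$ follow by the same formal reasoning as for $\mathcal{F}_0$. Combining the lower bound $\mathrm{rank}\,\mathrm{NS}(S_j(\lambda,\mu))\geq 18$ with the contradiction argument outlined in the first paragraph then yields Theorem \ref{18theorem}.
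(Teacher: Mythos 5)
Your proposal is correct and follows essentially the same route as the paper: the paper proves Theorem \ref{18theorem} by reusing the $\mathcal{F}_0$ argument verbatim (rank-$18$ sublattice for the lower bound, period map plus local Torelli for the upper bound), with the only new ingredient being the analogues of Lemma \ref{lem:elliptic1-1} obtained by ``observing the coefficients'' of the Kodaira normal forms of the fibrations (\ref{P1preKodaira}), (\ref{P2preKodaira}), (\ref{P3preKodairaanother}), exactly as you do. Your extra discussion of the $z\mapsto a/z$ ambiguity for $\mathcal{F}_3$ (where both special fibres are of type $I_9$) is a point the paper silently glosses over, and your generic-exclusion of the swap case is enough for the generic-Picard-number statement, though it could be handled more cleanly by noting that the degree-$6$ factor of the discriminant in Proposition \ref{propP3kodaira} is invariant under $z\mapsto \mu/z$, so a swap isomorphism can be composed with this symmetry and reduced to the case $z\mapsto az$.
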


\section{The N\'{e}ron-Severi lattices}
For our further study, 
we need the explicit lattice structures of the N\'{e}ron-Severi lattices and those of the transcendental lattices.
  In this section, we show the following theorem.

\begin{thm}\label{latticeThm}
The intersection matrices of N\'{e}ron-Severi lattices ${\rm NS}$
and the transcendental lattices ${\rm Tr} $
of a generic member of  $\mathcal{F}_j$ $(j=0,1,2,3)$
are given
 as in {\rm Table 2}.
\begin{center}
\begin{tabular}{llcc}
\toprule
Polytope& Family & {\rm NS} &{\rm Tr}  \\
\midrule
\vspace{2mm}
$P_0$& $\mathcal{F}_0$ & $E_8(-1)\oplus E_8(-1)\oplus \begin{pmatrix}2&1\\ 1&-2\end{pmatrix}$  & \quad\quad$U\oplus \begin{pmatrix} 2&1\\1&-2 \end{pmatrix}=:A_0$ \\
\vspace{2mm}
$P_1$&$\mathcal{F}_1$  & $E_8(-1)\oplus E_8(-1)\oplus \begin{pmatrix}0&3\\3&0\end{pmatrix}$ &\quad\quad $U \oplus \begin{pmatrix}0&3\\ 3&0 \end{pmatrix}=:A_1$ \\
\vspace{2mm}
$P_2$&$\mathcal{F}_2$  & $E_8(-1)\oplus E_8(-1)\oplus \begin{pmatrix}0&3\\3&2\end{pmatrix}$ & \quad\quad$U \oplus \begin{pmatrix}0&3\\ 3&-2 \end{pmatrix}=:A_2$ \\
$P_3$&$\mathcal{F}_3$  & $E_8(-1)\oplus E_8(-1)\oplus \begin{pmatrix}0&3\\3&-2\end{pmatrix}$ & \quad\quad $U \oplus \begin{pmatrix}0&3\\ 3&2 \end{pmatrix}=:A_3$ \\
\bottomrule
\end{tabular}
\end{center}
\begin{center}
{\bf Table 2.}
\end{center}
\end{thm}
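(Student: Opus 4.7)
The plan is to proceed in three stages: show that each sublattice $L_j$ from Section 1 fills out the full Néron--Severi lattice of a generic member, identify the resulting $\mathrm{NS}_j$ by Nikulin's uniqueness theorem for even indefinite lattices, and finally recover $\mathrm{Tr}_j$ as the orthogonal complement of $\mathrm{NS}_j$ in the K3 lattice.

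\textbf{Stage 1: $L_j=\mathrm{NS}(S_j(\lambda,\mu))$.} Since the Picard number equals $18$ by Theorems \ref{thm:P4Picard} and \ref{18theorem}, we have $L_j\subseteq \mathrm{NS}(S_j(\lambda,\mu))$ with finite index $n_j$ satisfying $n_j^{2}\cdot|\det \mathrm{NS}|=|\det L_j|$. For $j=0$, $|\det L_0|=5$ is square-free so $n_0=1$ as in Proposition \ref{prop:primitive}. For $j=1,2,3$ the determinants are $-9$, so $n_j\in\{1,3\}$, and we must exclude $n_j=3$. The case $n_j=3$ would force $\mathrm{NS}$ to be even, indefinite and unimodular of signature $(1,17)$, hence isomorphic to $E_8(-1)^{\oplus 2}\oplus U$. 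I would rule this out by applying the Shioda--Tate framework to each elliptic fibration of Section 1: the trivial lattice $U\oplus\bigoplus_v T_v$ is determined by Table 1, and the Mordell--Weil group is controlled by the explicit sections $Q$ (and $Q_0$, $R_0$ for $\mathcal{F}_3$). A height-pairing computation on $Q$ for $\mathcal{F}_1,\mathcal{F}_2$ (where $\mathrm{MW}$ has rank $1$) together with a direct verification that $Q_0$ is a $3$-torsion section for $\mathcal{F}_3$ pin down $|\det\mathrm{NS}|=9$, yielding $L_j=\mathrm{NS}$.

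\textbf{Stage 2: identification of $\mathrm{NS}_j$.} Each $L_j$ is even, indefinite, of signature $(1,17)$, with discriminant group of order $5$ or $9$, so $\ell(A_{L_j})\le 2$ and the hypothesis $\mathrm{rank}(L_j)=18\ge\ell(A_{L_j})+2$ of Nikulin's uniqueness theorem is comfortably satisfied. I compute the discriminant form $q_{L_j}\colon A_{L_j}\to\mathbb{Q}/2\mathbb{Z}$ directly by inverting the Gram matrix $M_j$ in (\ref{matrix:P4preintersection}), (\ref{intersection L1}), (\ref{intersection L2}), (\ref{intersection L3}) and restricting to a set of generators of $L_j^{*}/L_j$. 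The candidate lattice $E_8(-1)^{\oplus 2}\oplus K_j$ has the same signature, and since $E_8(-1)$ is unimodular its discriminant form coincides with that of the $2\times 2$ block $K_j$, which is immediately read off from the inverse of the displayed Gram matrix. Matching the two discriminant forms gives $\mathrm{NS}(S_j(\lambda,\mu))\cong E_8(-1)^{\oplus 2}\oplus K_j$.

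\textbf{Stage 3: identification of $\mathrm{Tr}_j$.} Because $\mathrm{NS}_j$ is primitively embedded in the unimodular K3 lattice $\Lambda=E_8(-1)^{\oplus 2}\oplus U^{\oplus 3}$, Nikulin's theory of primitive embeddings into unimodular lattices gives $\mathrm{Tr}_j\cong\mathrm{NS}_j^{\perp}\subset\Lambda$ with signature $(2,2)$ and discriminant form $q_{\mathrm{Tr}_j}=-q_{\mathrm{NS}_j}$. The rank $4$ and $\ell(A_{\mathrm{Tr}_j})\le 2$ place us at the boundary of Nikulin's uniqueness hypothesis $4\ge\ell(A_{\mathrm{Tr}_j})+2$, which is admissible. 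Inverting the Gram matrix of the candidate $A_j$ in Table 2 shows $q_{A_j}=-q_{\mathrm{NS}_j}$, so $\mathrm{Tr}_j\cong A_j$. The main obstacle is Stage 1 for $j=1,2,3$: ruling out the index-$3$ overlattice requires a careful Mordell--Weil and height-pairing analysis of the explicit elliptic fibrations, which is more delicate than the squarefree-determinant argument that settles $j=0$. Once primitivity is secured, Stages 2 and 3 reduce to discriminant-form computations with $2\times 2$ matrices.
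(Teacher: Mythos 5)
Your Stages 2 and 3 are fine and constitute a genuine alternative to the paper's method: instead of exhibiting explicit unimodular base changes $U_j$ with ${}^tU_jM_jU_j=E_8(-1)^{\oplus2}\oplus K_j$ and reading off the complement inside $E_8(-1)^{\oplus2}\oplus U^{\oplus3}$, you invoke Nikulin's uniqueness criterion (rank $\geq \ell(A)+2$, satisfied at rank $18$ and, at the boundary, at rank $4$) and match discriminant forms; that buys conceptual brevity at the cost of the explicit coordinates the paper later reuses. The problem is Stage 1, which you yourself identify as the main obstacle, and there the outline has concrete gaps rather than mere brevity.

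For $\mathcal{F}_3$, the specific check you propose cannot close the argument. For the fibration (\ref{P3preKodairaanother}) the Mordell--Weil rank is $0$ and the trivial lattice has determinant $\pm 81$ (two $I_9$ fibres), so $|\det \mathrm{NS}|=81/|E(\mathbb{C}(z))_{tor}|^2$; verifying that $Q_0$ is $3$-torsion only gives $|\det\mathrm{NS}|\in\{9,1\}$, which is exactly the dichotomy you started from (a $3$-torsion section exists in both the index-$1$ and the hypothetical unimodular scenario, where the torsion would have order $9$). You must either bound the torsion group itself (excluding $\mathbb{Z}/9$ and $(\mathbb{Z}/3)^2$), or do what the paper does: work with the other fibration (\ref{P3preKodaira}), whose lattice $L_3'$ in (\ref{L3'def}) has determinant $-36$ and a $2$-torsion section $O'$, rule out a section $R_1$ with $3R_1=Q$ there, conclude $[\mathrm{NS}:L_3']=2$ and hence $|\det\mathrm{NS}|=9$, and only then deduce $L_3=\mathrm{NS}$. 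For $\mathcal{F}_1,\mathcal{F}_2$, "a height-pairing computation on $Q$" likewise understates what is needed: $h(Q)$ only reproduces $\det L_j=-9$, which is already known; the index-$3$ case corresponds to $3$-torsion or to a section $R$ with $3R\equiv Q$, and both must be excluded. Torsion is easy (no admissible contribution sum reaches $4$), but the divisibility is not a one-line minimum-height estimate: for $\mathcal{F}_1$ the value $h(Q)/9=1/36$ is numerically attainable as $4-\tfrac{20}{9}-\tfrac{7}{4}$ with specializations compatible with $3\,sp(R)=sp(Q)$ at both the $I_9$ and the $I_3^*$ fibre, so one must also use the pairing $\langle R,Q\rangle$ (which forces $(R\cdot Q)<0$) — equivalently, the paper's argument in Lemmas \ref{T1lemma} and \ref{L1lemma}: pin down the components met by $R_1$ via the specialization homomorphism, argue $(R_1\cdot O)=(R_1\cdot Q)=0$, and show the Gram determinant of $\langle L_1,R_1\rangle$ is nonzero, contradicting Picard number $18$ from Theorem \ref{18theorem}. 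Until this analysis (and its $\mathcal{F}_2$, $\mathcal{F}_3$ analogues) is actually carried out, Stage 1 is not established, and Stages 2--3 have nothing to apply to.
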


\begin{rem}\label{mirrorRem}
Koike {\rm \cite{Koike}}  made a research on the   families of $K3 $ surfaces  derived from the dual polytopes of 3-dimensional Fano polytopes.
The polytopes $P_0$, $P_2$ and $P_3$ in our notation  are the  Fano polytopes.
Due to Koike, we have  N\'{e}ron-Severi lattices for  the dual polytopes $P_0^\circ$, $P_2^\circ$ and $P_3^\circ$ (given by  {\rm Table 3}).

 \begin{center}
\begin{tabular}{lcc}
\toprule
Dual Polytope  &{\rm NS} &{\rm Tr}\\
\midrule 
\vspace{2mm}
$P_0^\circ$ & $\begin{pmatrix} 2&1\\ 1&-2\end{pmatrix}$ & \quad\quad $U\oplus E_8(-1)\oplus E_8(-1)\oplus \begin{pmatrix}2&1\\ 1&-2\end{pmatrix}$\\
\vspace{2mm}
$P_2^\circ$ & $\begin{pmatrix} 0&3\\ 3&-2\end{pmatrix}$ & \quad\quad $U\oplus E_8(-1)\oplus E_8(-1)\oplus \begin{pmatrix}0&3\\ 3&2\end{pmatrix}$\\
$P_3^\circ$ & $\begin{pmatrix} 0&3\\ 3&2\end{pmatrix}$ & \quad\quad $U\oplus E_8(-1)\oplus E_8(-1)\oplus \begin{pmatrix}0&3\\ 3&-2\end{pmatrix}$\\
\bottomrule
\end{tabular}
\end{center}
\begin{center}
{\bf Table 3.}
\end{center}
 {\rm    Table 3} and {\rm  Table 2} support   the mirror symmetry  conjecture for the reflexive polytopes $P_0$, $P_2$ and $P_3$.
\end{rem}

\begin{rem}
According to the above theorem, 
a generic member of  $\mathcal{F}_j$ $(j=0,1,2,3)$ has  
the Shioda-Inose structure.
{\rm (}see Morrison {\rm \cite{Morrison}, Theorem 6.3} {\rm )}.
\end{rem}

\begin{rem}
 The N\'{e}ron-Severi lattices  of $K3$ surfaces with non-symplectic involutions are studied    by  Nikulin {\rm\cite{Nikulin}}.
All of our cases are  not contained in his results.
The lattice structures of  $95$ weighted projective $K3$ surfaces given by M. Reid are studied by Belcastro  {\rm \cite{Belcastro}}. 
Our lattice of $\mathcal{F}_0$  coincides with No.30 and No.86 in her list.
Our lattices of $\mathcal{F}_1, \mathcal{F}_2$ and $\mathcal{F}_3$ are not contained in her results, neither. 
\end{rem}

\subsection{Proof for the case $P_0$}
We  prove Theorem \ref{latticeThm} for the case $P_0$ in a naive way.
Recall the lattice $L_0$ in (\ref{N-Sbasis}).
By Theorem \ref{thm:P4Picard}, for generic $(\lambda,\mu)\in\Lambda_0$, $${\rm dim}( {\rm NS}(S_0(\lambda,\mu))) = 18 ={\rm dim}(L_0).$$
According to Proposition \ref{prop:primitive}, we have $(L\otimes_\mathbb{Z} \mathbb{Q})\cap{\rm NS}(S_0(\lambda,\mu)) =L_0$.
Hence, we have 
$$
{\rm NS}(S_0(\lambda,\mu))=L_0
$$
for generic $(\lambda,\mu)\in\Lambda_0$.

\begin{lem}\label{thm:intersection}
The lattice $L_0$ is isomorphic to the lattice given by the intersection matrix
\begin{eqnarray*}\label{matrix:P4neronseveri}
M'_0=E_8(-1)\oplus E_8(-1)\oplus \begin{pmatrix} 2 &1\\1&-2\end{pmatrix},
\end{eqnarray*}
and its orthogonal complement  is given by
\begin{eqnarray*}\label{eq:P4transcendental}
A_0=U \oplus
\begin{pmatrix}2 &1 \\
 1&-2
 \end{pmatrix}.
\end{eqnarray*}
\end{lem}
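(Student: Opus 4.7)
The plan is to identify $L_0$ through its signature, rank, and discriminant form, then invoke Nikulin's uniqueness theorem for even indefinite lattices.

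First, I would read off from $M_0$ in (\ref{matrix:P4preintersection}) that $L_0$ is an even lattice of rank $18$; applying the Hodge index theorem to $\mathrm{NS}(S_0(\lambda,\mu))$ gives signature $(1,17)$. The determinant $\det(M_0)=-5$ from (\ref{-5 matrix}) shows that the discriminant group $A_{L_0}=L_0^{\ast}/L_0$ is cyclic of order five. The candidate $M'_0=E_8(-1)^{\oplus 2}\oplus T$, with $T=\begin{pmatrix}2&1\\1&-2\end{pmatrix}$, has the same signature $(1,17)$ and determinant $-5$ (since $E_8(-1)$ is unimodular), so its discriminant form is that of $T$: on the generator $v=\tfrac{1}{5}(2e_1+e_2)$ of $A_T$ one has $q_T(v)\equiv\tfrac{2}{5}\pmod{2\mathbb{Z}}$.

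Next I would compute $q_{L_0}$ on a generator of $A_{L_0}\cong\mathbb{Z}/5\mathbb{Z}$ and compare it to $q_T$. The natural sublattice $\langle O,F\rangle\oplus A_{14}\oplus A_2\subset L_0$ built from the zero section, a general fibre, and the non-identity components of the $I_{15}$ and $I_3$ fibres has determinant $1\cdot 15\cdot 3 =45$; since $|\det L_0|=5$, $L_0$ is an overlattice of index three in this sublattice, corresponding to a three-torsion section in the Mordell--Weil group. Using this overlattice description, one writes down an explicit generator of $A_{L_0}$ and evaluates its self-pairing modulo $2\mathbb{Z}$; the output lies in the orbit $\{\tfrac{2}{5},-\tfrac{2}{5}\}\subset\tfrac{1}{5}\mathbb{Z}/2\mathbb{Z}$ (rather than $\{\tfrac{4}{5},-\tfrac{4}{5}\}$), which is precisely the orbit of $q_T$ under the $\mathbb{Z}/5\mathbb{Z}$-automorphism group. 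Hence $q_{L_0}\simeq q_T$. Since $\mathrm{rank}\,L_0=18\geq\ell(A_{L_0})+2=3$, Nikulin's uniqueness theorem (\cite{Nikulin}, Corollary 1.13.3) yields $L_0\simeq M'_0$.

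For the orthogonal complement in the unimodular K3 lattice $H_2(S,\mathbb{Z})\cong U^{\oplus 3}\oplus E_8(-1)^{\oplus 2}$, primitivity of $L_0$ (Proposition \ref{prop:primitive}) implies that $L_0^{\perp}$ is an even lattice of signature $(3-1,19-17)=(2,2)$ with discriminant form $-q_{L_0}$. The multiplication-by-$2$ automorphism of $\mathbb{Z}/5\mathbb{Z}$ satisfies $q_T(2v)=4\cdot\tfrac{2}{5}=\tfrac{8}{5}\equiv-\tfrac{2}{5}\pmod{2\mathbb{Z}}$, so $-q_T\simeq q_T$ and thus $-q_{L_0}\simeq q_T$. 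The candidate $A_0=U\oplus T$ has signature $(2,2)$, rank $4\geq\ell(A_0)+2=3$, and discriminant form $q_T$; a second application of Nikulin's theorem gives $L_0^{\perp}\simeq A_0$.

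The main obstacle is the explicit identification of $q_{L_0}$ on a generator of $A_{L_0}$: one must carry the index-three overlattice relation through the intersection pairing on $M_0$ and pin down a rational value modulo $2\mathbb{Z}$ accurately enough to distinguish the two isomorphism classes of non-degenerate quadratic forms on $\mathbb{Z}/5\mathbb{Z}$. The checks of signature, rank, and evenness are immediate from $M_0$, and Nikulin's theorem then packages the matching of $L_0$ and $L_0^{\perp}$ with the prescribed lattices.
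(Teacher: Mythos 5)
Your proposal is correct, but it proves the lemma by a genuinely different route than the paper. The paper's proof is purely computational and constructive: it exhibits an explicit unimodular matrix $U$ with ${}^tU M_0 U=M'_0$, so the isometry $L_0\simeq E_8(-1)\oplus E_8(-1)\oplus T$, $T=\left(\begin{smallmatrix}2&1\\1&-2\end{smallmatrix}\right)$, is given by a concrete base change, and $A_0$ is then read off by comparing $M'_0$ with the $K3$ lattice $E_8(-1)\oplus E_8(-1)\oplus U\oplus U\oplus U$. You instead identify $L_0$ through its genus — signature $(1,17)$, cyclic discriminant group of order $5$, discriminant form computed via the index-three overlattice of $\langle O,F\rangle\oplus A_{14}(-1)\oplus A_2(-1)$ glued by the $3$-torsion section — and then apply Nikulin's uniqueness theorem twice, the second time using $q_{L_0^{\perp}}\simeq -q_{L_0}$ and $-q_T\simeq q_T$. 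The step you flag as the main obstacle does come out as you predict: with $H\subset A_{A_2(-1)}\oplus A_{A_{14}(-1)}$ the isotropic order-three gluing subgroup, the class of $3w$ ($w$ a generator of $A_{A_{14}(-1)}$) survives to $H^{\perp}/H$ and has $q(3w)=-\tfrac{126}{15}\equiv\tfrac{8}{5}\equiv-\tfrac{2}{5}\pmod{2\mathbb{Z}}$, which lies in the square class of $\tfrac{2}{5}$, so $q_{L_0}\simeq q_T$ independently of the choice of gluing. One point you should make explicit: the $I_3$-components $a_1,a_1'$ are not among the generators in (\ref{N-Sbasis}), so the inclusion $\langle O,F\rangle\oplus A_{14}(-1)\oplus A_2(-1)\subset L_0$ rests on the identification ${\rm NS}(S_0(\lambda,\mu))=L_0$ obtained just before the lemma from Theorem \ref{thm:P4Picard} and Proposition \ref{prop:primitive} (alternatively, compute the discriminant form directly from $M_0$). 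As for what each approach buys: the paper's matrix computation is elementary, needs no classification theory, and produces the isometry itself (the same template is used for $\mathcal{F}_1,\mathcal{F}_2,\mathcal{F}_3$), whereas your argument is conceptual, explains why the answer must be $E_8(-1)^{\oplus2}\oplus T$ with complement $U\oplus T$, treats the orthogonal complement cleanly without inspection of the ambient lattice, and avoids the search for a unimodular transformation — at the price of invoking Nikulin's theory and of the delicate distinction between the two quadratic forms on $\mathbb{Z}/5\mathbb{Z}$.
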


\begin{proof}
Let $M_0$ be the matrix given in (\ref{matrix:P4preintersection}). 
Set
\begin{eqnarray}\label{unitv}
r_j={}^t (0,\cdots,0,\overbrace{1}^{{\rm j-th}},0,\cdots,0)\qquad(1\leq j\leq18)
\end{eqnarray}
and 
\begin{align*}
\begin{cases}
&v_{16}={}^t (-1,-2,-3,-4,-5,-2,-4,-3,-1,-2,-3,-4,-5,-2,-4,-2,1,1),\\
&v_{17}={}^t (5,10,15,20,25,13,17,9,1,2,3,4,5,3,3,1,1,-3),\\
&v_{18}={}^t (-2,-4,-6,-8,-10,-6,-6,-2,0,0,0,0,0,-1,1,2,-2,1).
\end{cases}
\end{align*}
Set
$$
U=(r_1,r_2,r_3,r_4,r_5,r_6,r_7,r_8,r_9,r_{10},r_{11},r_{12},r_{13},r_{14},r_{15},v_{16},v_{17},v_{18}).
$$
This is an unimodular matrix.
Then, we have ${}^t U M_0 U =M'_0$.
By observing $E_8(-1)\oplus E_8(-1) \oplus U \oplus U \oplus U$ and $M'_0$, we obtain the matrix $A_0$.
\end{proof}

Therefore, we obtain Theorem \ref{latticeThm} for $P_0$.

\subsection{The Mordell-Weil group of sections}

In the case  $\mathcal{F}_j$ $(j=1,2,3)$, we have $|{\det(L_j)}|=9$, which is a square number.
For this reason,  we cannot apply the method in the previous subsection to the case $\mathcal{F}_j$ $(j=1,2,3)$. 
To determine the lattices for $\mathcal{F}_j$ $(j=1,2,3)$, we use the theory of the Mordell-Weil lattices due to T. Shioda.
For detail, see \cite{Shioda} and \cite{ShiodaNote}.

Let $S$ be a compact complex surface and $C$ be  a  algebraic curve. Let  $\pi : S \rightarrow C$ be an elliptic fibration with sections.
For generic $v\in C$, the fibre $\pi^{-1}(v)$ is an elliptic curve.
In the following, we assume that the elliptic fibration $\pi:S\rightarrow C$ has singular fibres.
$\mathbb{C}(C)$ denotes the field of meromorphic functions on $C$. If $C=\mathbb{P}^1(\mathbb{C})$, the field $\mathbb{C}(C)$ is isomorphic to the field $\mathbb{C}(t)$ of rational functions.

Here, $E(\mathbb{C}(C))$ denotes the Mordell-Weil group of sections of $\pi:S\rightarrow C$.
For all $P\in E(\mathbb{C}(C))$ and $v\in C$, 
we have $(P\cdot \pi^{-1}(v))=1$.
Note that the section $P$ intersects  an irreducible component with multiplicity $1$ of every fibre $\pi^{-1}(v)$.
Let $O$ be the zero of the group $E(\mathbb{C}(C))$. The section $O$ is given by the set of the points at infinity on every generic fibre.

Set 
$$
R = \{ v\in C|  \pi^{-1}(C) {\rm \enspace is \enspace a \enspace singular \enspace fibre \enspace of \enspace} \pi \}.
$$
For all $v\in R$, we have
\begin{align}\label{singcomp}
\pi^{-1}(v)=\Theta_{v,0}+ \sum_{j=1}^{m_v-1} \mu _{v,j} \Theta_{v,j},
\end{align}
where $m_v$ is the number of irreducible components of $\pi^{-1}(v)$, $\Theta_{v,j} \hspace{1mm} (j=0,\cdots,m_v -1)$ are irreducible components with multiplicity $\mu_{v,j}$ of $\pi^{-1}(v)$,
and $\Theta_{v,0}$ is the component with $\Theta_{v,0} \cap O \not= \phi$.

Let $F$ be a generic fibre of $\pi$.
Set
$$
T=\langle F, O, \Theta_{v,j} | v\in R, 1\leq j \leq m_v-1   \rangle_{\mathbb{Z}} \subset {\rm NS}(S).
$$
We call $T$ the trivial lattice for $\pi$.
For $P\in E(\mathbb{C}(C))$, $(P) \in {\rm NS}(S)$ denotes the corresponding element.  

\begin{thm} \label{Shiodatheorem} {\rm (Shioda \cite{Shioda}, see also \cite{ShiodaNote} Theorem (3$\cdot$10))}\\
{\rm (1)} The Mordell-Weil group $E(\mathbb{C}(C))$ is a finitely generated Abelian group.\\
{\rm (2)} The N\'{e}ron-Severi group ${\rm NS} (S)$ is a finitely generated Abelian group and torsion free.  \\
{\rm (3)} We have the isomorphism of groups
$
E(\mathbb{C}(C)) \simeq {\rm NS}(S)/T $
given by
$$
 P\mapsto (P)  \hspace{2mm}{\rm mod} \hspace{1mm} T.
$$
\end{thm}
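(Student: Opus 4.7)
The plan is to treat (3) as the central assertion and to deduce (1) from it together with (2); statement (2) is essentially classical, since $\mathrm{NS}(S)$ embeds in $H^2(S,\mathbb{Z})$ via the first Chern class and is finitely generated by the N\'eron--Severi theorem, and in our K3 setting $H^2(S,\mathbb{Z})\cong\mathbb{Z}^{22}$ is torsion-free, so $\mathrm{NS}(S)$ inherits torsion-freeness.

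The heart of (3) is the map $\varphi:E(\mathbb{C}(C))\to \mathrm{NS}(S)/T$ defined by $P\mapsto [(P)] \bmod T$. I would first verify the \emph{homomorphism property}, i.e.\ that
\[
(P+Q)-(P)-(Q)+(O)\in T
\]
for every pair of sections $P,Q$: restricted to the generic fibre $E_\eta$, this expression vanishes by the very definition of the elliptic group law, hence as a divisor on $S$ it is supported on components of singular fibres together with a multiple of the fibre class $F$, so it lies in $T$. For \emph{injectivity}, suppose $(P)=aF+bO+\sum_{v,j}c_{v,j}\Theta_{v,j}\in T$; intersecting with $F$ forces $b=1$ since $(P\cdot F)=1$, so $(P)-(O)$ is vertical; restricting to $E_\eta$ then shows that $P$ and $O$ are linearly equivalent on the generic fibre, and since a genus-one curve with a rational basepoint is isomorphic to its own Jacobian via Abel--Jacobi, this forces $P=O$ in $E(\mathbb{C}(C))$.

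For \emph{surjectivity}, given $[D]\in \mathrm{NS}(S)/T$ I would restrict $D$ to $E_\eta$ and subtract a suitable multiple of $(O)$ to reduce to the degree-zero case; the identification $E_\eta\simeq\mathrm{Jac}(E_\eta)$ then exhibits this restricted class as $[(P)-(O)]$ for a unique section $P\in E(\mathbb{C}(C))$, so $D-(P)$ restricts to zero on $E_\eta$, whence its support lies in the singular fibres and in $F$, giving $D\equiv (P)\bmod T$. Finally, (1) follows at once from the isomorphism in (3) together with (2), since $E(\mathbb{C}(C))\cong\mathrm{NS}(S)/T$ becomes a quotient of a finitely generated abelian group. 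The main technical obstacle I anticipate is precisely the surjectivity step: passing from ``$D-(P)$ restricts to zero on $E_\eta$'' to ``$D-(P)\in T$ in $\mathrm{NS}(S)$'' requires controlling the kernel of the restriction map $\mathrm{Pic}(S)\to\mathrm{Pic}(E_\eta)$, i.e.\ identifying it with the subgroup generated by vertical divisors; this is carried out cleanly by the Leray spectral sequence for $\pi$ (or equivalently by the theory of the relative Picard scheme and the N\'eron model of $E_\eta$), and it is where the elliptic-fibration structure really enters the proof.
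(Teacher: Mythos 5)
This statement is quoted in the paper as a known theorem of Shioda (with references to \cite{Shioda} and \cite{ShiodaNote}); the paper gives no proof of it, so there is no internal argument to compare against. Your sketch reconstructs the standard proof and is essentially sound: the homomorphism/injectivity/surjectivity scheme for $P\mapsto (P)\bmod T$, with (1) deduced from (2) and (3), is exactly how the result is proved in the cited sources. Two caveats. First, in the homomorphism step the divisor $(P+Q)-(P)-(Q)+(O)$ does \emph{not} vanish on $E_\eta$, nor is it vertical as a divisor on $S$; what is true is that its restriction to $E_\eta$ is \emph{principal} (by the group law via Abel--Jacobi), so you need there the very same lemma you defer to the surjectivity step, namely that a class restricting to zero in ${\rm Pic}(E_\eta)$ is, modulo linear equivalence, vertical and hence lies in $T$. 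That lemma is also more elementary than you suggest: since $\mathbb{C}(S)$ is the function field of $E_\eta$ over $\mathbb{C}(C)$, a function on $E_\eta$ realizing the principal restriction extends to a rational function on $S$, and subtracting its divisor leaves a vertical divisor; no Leray spectral sequence or N\'eron-model machinery is required, and one only has to observe that every fibre class (smooth or the full singular fibre $\Theta_{v,0}+\sum_j\mu_{v,j}\Theta_{v,j}$) is algebraically equivalent to $F$, so vertical classes lie in $T$. Second, your argument for (2) (torsion-freeness of ${\rm NS}(S)$ via $H_2(S,\mathbb{Z})\cong\mathbb{Z}^{22}$) is specific to $K3$ surfaces, whereas the theorem as stated covers arbitrary elliptic surfaces with section and singular fibres, where Shioda's torsion-freeness argument is genuinely different; for the $K3$ families actually used in this paper your restriction is harmless, and it also quietly removes the ${\rm Pic}^0(S)$ issue (well-definedness of restriction from ${\rm NS}(S)$ to ${\rm Pic}(E_\eta)$) that the general case has to address.
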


We set $\hat{T}=(T \otimes _{\mathbb{Z}}  \mathbb{Q}) \cap {\rm NS}(S)$ for the trivial lattice $T$.

\begin{cor}\label{torsionCor} {\rm(\cite{Shioda}, see also \cite{ShiodaNote} Proposition (3$\cdot $11))}\\
{\rm (1)} 
$$
{\rm rank} (E(\mathbb{C}(C))) = {\rm rank (NS}(S))-2- \sum_{v\in R} (m_v -1).
$$\\
{\rm (2)}  Let $E(\mathbb{C}(C))_{tor}$ be the torsion part of $E (\mathbb{C}(C))$. Then,
$$
E (\mathbb{C} (C))_{tor} \simeq \hat{T}/T.
$$
\end{cor}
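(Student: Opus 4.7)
The plan is to deduce both statements directly from Shioda's isomorphism in Theorem \ref{Shiodatheorem} (3), namely $E(\mathbb{C}(C))\simeq \mathrm{NS}(S)/T$, together with the torsion-freeness in (2) of the same theorem.

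First I would prove (1) by computing the rank of the trivial lattice $T$. The lattice $T$ is generated by the generic fibre class $F$, the zero section $O$, and the non-identity components $\Theta_{v,j}$ for $v\in R$ and $1\le j\le m_v-1$. I would verify these generators are $\mathbb{Z}$-independent by inspecting the intersection pairing: $F$ has $F\cdot F=0$ and $F\cdot O=1$ while $F\cdot\Theta_{v,j}=0$ for $j\ge 1$; the $\Theta_{v,j}$ with $v$ fixed span a negative-definite sublattice of type given by the extended Dynkin diagram of $\pi^{-1}(v)$ after removing $\Theta_{v,0}$, hence are independent; and components attached to distinct singular fibres are orthogonal. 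This gives
\begin{equation*}
\mathrm{rank}(T)=2+\sum_{v\in R}(m_v-1).
\end{equation*}
Combining with the isomorphism $E(\mathbb{C}(C))\simeq \mathrm{NS}(S)/T$ and the fact that $\mathrm{NS}(S)$ is finitely generated and torsion free, one has $\mathrm{rank}(E(\mathbb{C}(C)))=\mathrm{rank}(\mathrm{NS}(S))-\mathrm{rank}(T)$, which yields the desired formula.

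Next, for (2), I would observe the general lattice-theoretic fact: for any sublattice $T\subset L$ of a free $\mathbb{Z}$-module $L$, the torsion part of $L/T$ coincides with $\widehat{T}/T$, where $\widehat{T}=(T\otimes_{\mathbb{Z}}\mathbb{Q})\cap L$ is the primitive closure of $T$ in $L$. Indeed, $\overline{x}\in L/T$ is torsion iff $nx\in T$ for some $n>0$ iff $x\in T\otimes\mathbb{Q}$, and since $x\in L$ this is equivalent to $x\in\widehat{T}$. Applying this to $L=\mathrm{NS}(S)$ and the trivial lattice $T$, and transporting via the isomorphism of Theorem \ref{Shiodatheorem} (3), I obtain
\begin{equation*}
E(\mathbb{C}(C))_{tor}\simeq (\mathrm{NS}(S)/T)_{tor}=\widehat{T}/T,
\end{equation*}
as claimed.

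The main obstacle is the rank computation for $T$, specifically the linear independence of $F$, $O$, and the $\Theta_{v,j}$ ($j\ge 1$). The rest is essentially formal, once Shioda's isomorphism and the torsion-freeness of $\mathrm{NS}(S)$ are granted. I would not expect any further subtlety, since $F$ is detected by pairing with $O$, $O$ by self-intersection $-2$ versus zero for $F$, and the singular-fibre components by negative definiteness of each $\mathrm{II}$, $\mathrm{III}$, $\mathrm{IV}$, or $I_n^{(*)}$ configuration minus its identity component.
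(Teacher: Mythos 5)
Your proposal is correct: the rank count for $T$ via the block intersection form $\langle F,O\rangle\oplus\bigoplus_v(\text{Dynkin part})$ and the identification of $(\mathrm{NS}(S)/T)_{tor}$ with $\hat{T}/T$ are both sound, and together with Theorem \ref{Shiodatheorem} they give both assertions. The paper itself offers no proof here — it simply cites Shioda — and your argument is precisely the standard derivation found in the cited source, so there is nothing to reconcile.
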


\vspace{3mm}
Set 
$$
E(\mathbb{C}(C))^0 =\{ P\in E(\mathbb{C}(C))| P \cap \Theta _{v,0}\not = \phi \enspace {\rm for \enspace  all\enspace } v\in R\}.
$$
We have 
\begin{eqnarray}\label{narrow}
E(\mathbb{C}(C))^0 \subset E(\mathbb{C}(C))/E(\mathbb{C}(C))_{tor}
\end{eqnarray}
(see \cite{Shioda}, see also \cite{ShiodaNote} Section 5).

Let $v\in R$. Under the notation {\rm (\ref{singcomp})}, we set
$$
(\pi^{-1} (v))^\sharp = \bigcup_{0\leq j \leq m_v -1,\hspace{0.5mm} \mu_{v,j}=1} \Theta _{v,j}^\sharp,
$$
where $\Theta _{v,j}^\sharp = \Theta _{v,j}-\{ {\rm singular\enspace points\enspace of\enspace} \pi^{-1}(v)\}$.
Set $m_v^{(1)}=\sharp \{j|0\leq j\leq m_v-1 ,\hspace{0.7mm} \mu_{v,j}=1 \}$.

\begin{thm} {\rm (\cite{Neron}, \cite{Kodaira2}, see also \cite{ShiodaNote} Section 7)}
Let $v\in R$. The set $(\pi^{-1}(v))^\sharp$ has a canonical group structure.
\end{thm}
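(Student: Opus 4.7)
The strategy is to identify $(\pi^{-1}(v))^\sharp$ with the fiber at $v$ of the N\'eron model of the generic elliptic curve of the fibration, and then transfer the intrinsic group-scheme structure.

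Since $\pi: S \to C$ is an elliptic fibration with a section $O$, the generic fiber $E := \pi^{-1}(\eta)$ (where $\eta$ is the generic point of $C$) is an elliptic curve over the function field $\mathbb{C}(C)$, with $O$ serving as origin. By a theorem of N\'eron, there exists a smooth separated group scheme $\mathcal{N} \to C$ of finite type, unique up to unique isomorphism, whose generic fiber is $E$ and which satisfies the \emph{N\'eron mapping property}: every $C$-morphism from a smooth $C$-scheme whose generic behavior factors through $E$ extends uniquely to a morphism into $\mathcal{N}$.

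The next step is to compare $\mathcal{N}$ with the smooth locus $S^{\mathrm{sm}} \subset S$ of $\pi$. Over each $v \in R$, the locus where $\pi$ fails to be smooth consists exactly of the singular points of the reducible fiber $\pi^{-1}(v)$ together with the multiplicity-$>1$ components, so by the definitions we have $S^{\mathrm{sm}} \cap \pi^{-1}(v) = (\pi^{-1}(v))^\sharp$. Using the minimality of $S$ as an elliptic surface (no $(-1)$-curves contained in fibers, which is guaranteed because our fibrations come from Kodaira normal forms) together with the N\'eron mapping property applied to both directions, one obtains a canonical isomorphism $S^{\mathrm{sm}} \cong \mathcal{N}$ of group schemes over $C$. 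Restricting the group law of $\mathcal{N}$ to the fiber at $v$ then endows $(\pi^{-1}(v))^\sharp = \mathcal{N}_v$ with a canonical group structure extending the addition on the nearby smooth fibers.

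The main obstacle is establishing the isomorphism $S^{\mathrm{sm}} \cong \mathcal{N}$: one must verify that every local section of $\pi$ avoids the non-smooth locus, which in turn rests on the resolution-theoretic input that the minimal proper regular model over each local ring at $v \in R$ is precisely the Kodaira fiber under consideration. An alternative, more elementary route is a case-by-case verification via Kodaira's classification of the ten fiber types $I_n, I_n^*, II, III, IV, II^*, III^*, IV^*$: for each type one explicitly describes $(\pi^{-1}(v))^\sharp$ as an extension of a finite abelian component group by either $\mathbb{G}_m$ (multiplicative types $I_n, I_n^*$) or $\mathbb{G}_a$ (additive types), checking in each case that the resulting group law is compatible with the addition on the generic fiber. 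Both approaches yield the same canonical structure.
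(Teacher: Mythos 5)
The paper does not prove this theorem itself; it is quoted from N\'eron, Kodaira and Shioda's notes, and your argument is exactly the standard one found there: identify $(\pi^{-1}(v))^\sharp$ with the special fibre of the N\'eron model via the smooth locus of the (relatively minimal) elliptic fibration, or equivalently check the group structure type by type through Kodaira's classification, which is also what underlies the Remark following the theorem ($\mathbb{C}^\times\times\mathbb{Z}/b\mathbb{Z}$ for $I_b$, $\mathbb{C}\times(\text{finite group})$ for $I_b^*$). Your sketch is correct and takes essentially the same route as the cited sources, with the identification $S^{\mathrm{sm}}\cong\mathcal{N}$ correctly flagged as the only substantive input.
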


\begin{rem}
Especially, for the singular fibre $\pi^{-1}(v)$ of type $I_b\hspace{1mm} (b\geq1)$, we have
$$
(\pi^{-1}(v))^\sharp \simeq \mathbb{C}^\times \times (\mathbb{Z}/b\mathbb{Z}).
$$ 
For the singular fibre $\pi^{-1} (v)$ of type $I^*_b\hspace{1mm} (b\geq0)$, we have
\[
(\pi^{-1}(v))^\sharp \simeq
\begin{cases}
\mathbb{C} \times (\mathbb{Z}/4\mathbb{Z}) &(b \in 2\mathbb{Z}+1), \cr
\mathbb{C} \times (\mathbb{Z}/2\mathbb{Z})^2&(b \in 2\mathbb{Z}).\cr
\end{cases}
\]
\end{rem}

For each $v\in C$, we introduce the mapping
$$
sp_v : E(\mathbb{C}(C))\rightarrow (\pi^{-1}(v))^\sharp : P\mapsto P\cap \pi^{-1}(v).
$$
Note that 
\begin{align*}
P\cap \pi^{-1}(v)= (x,a) \in \begin{pmatrix}\mathbb{C}^\times \\ \mathbb{C}\end{pmatrix}\times {\rm \{finite\enspace group\}}
\end{align*}
(see \cite{ShiodaNote} Section 7).
We call $sp_v$ the specialization mapping.

\begin{thm} {\rm (\cite{ShiodaNote} Section 7)}
For all $v\in C$, the specialization mapping 
\begin{align*}
sp_v:P\mapsto (x,a)\in
 \begin{pmatrix}\mathbb{C}^\times\\
 \mathbb{C}
 \end{pmatrix}
 \times{\rm \{finite \enspace group\}}
\end{align*}
is a homomorphism of groups.
\end{thm}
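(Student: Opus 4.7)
The plan is to deduce that $sp_v$ is a group homomorphism from the fact that both its source and target arise as the generic-fiber rational points and the $v$-fiber, respectively, of a single commutative group scheme over $C$, namely the Néron model of the generic fiber. By definition, $E(\mathbb{C}(C))$ is the group of $\mathbb{C}(C)$-rational points of the generic fiber $E_\eta$ of $\pi$, an elliptic curve over $\mathbb{C}(C)$ whose addition is given by a morphism $m \colon E_\eta \times_{\mathbb{C}(C)} E_\eta \to E_\eta$; the standard chord-tangent description coincides with the addition coming from this morphism.

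The key geometric tool is the Néron model. Let $\mathcal{E}^\sharp \subset S$ be the open subscheme obtained by deleting from each singular fiber $\pi^{-1}(v)$ the components of multiplicity $\geq 2$ together with the singular points of $\pi^{-1}(v)$; then the fiber of $\mathcal{E}^\sharp$ over $v$ is exactly $(\pi^{-1}(v))^\sharp$. The classical theorem of Néron (together with Kodaira's analysis of singular fibers) asserts that $\mathcal{E}^\sharp \to C$ is a smooth commutative group scheme with generic fiber $E_\eta$, and that the group law $m$ extends uniquely to a $C$-morphism
\begin{equation*}
\widetilde m \colon \mathcal{E}^\sharp \times_C \mathcal{E}^\sharp \to \mathcal{E}^\sharp
\end{equation*}
inducing on every fiber $(\pi^{-1}(v))^\sharp$ precisely the group structure described in the preceding theorem. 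The universal property implies furthermore that every section $P \in E(\mathbb{C}(C))$ extends uniquely to a morphism $\bar P \colon C \to \mathcal{E}^\sharp$, and by construction $sp_v(P) = P \cap \pi^{-1}(v) = \bar P(v)$.

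Given $P, Q \in E(\mathbb{C}(C))$, the section $P + Q$ in the Mordell--Weil group corresponds, by functoriality, to the composition $\widetilde m \circ (\bar P, \bar Q) \colon C \to \mathcal{E}^\sharp$. Evaluating at $v$ yields
\begin{equation*}
sp_v(P+Q) \;=\; \widetilde m\bigl(\bar P(v), \bar Q(v)\bigr) \;=\; sp_v(P) + sp_v(Q),
\end{equation*}
where the addition on the right is the one on the algebraic group $(\pi^{-1}(v))^\sharp$. This is precisely the required homomorphism property.

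The main obstacle lies in justifying the existence and the group-scheme properties of $\mathcal{E}^\sharp$, i.e.\ checking that the group-law morphism really extends across singular fibers and restricts fiber-wise to the group law on $(\pi^{-1}(v))^\sharp$. In the generality stated, this is a nontrivial consequence of the Néron universal property. However, for the families under study only singular fibers of types $I_n$ and $I_n^*$ arise, and for these the explicit descriptions of $(\pi^{-1}(v))^\sharp$ recorded in the preceding remark ($\mathbb{C}^\times \times \mathbb{Z}/n\mathbb{Z}$ respectively $\mathbb{C} \times \mathbb{Z}/4\mathbb{Z}$ or $\mathbb{C} \times (\mathbb{Z}/2\mathbb{Z})^2$) allow one to verify the compatibility of the group laws by direct inspection of local coordinates on $S$ near each singular fiber, without invoking the full Néron theorem.
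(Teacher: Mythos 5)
The paper does not prove this statement at all: it is imported verbatim from Shioda's lecture notes (and ultimately N\'eron--Kodaira), with only a citation and no argument. Your N\'eron-model proof is precisely the standard argument behind that citation --- the smooth multiplicity-one locus $\mathcal{E}^\sharp\subset S$ is a group scheme over $C$ extending the generic-fiber group law, sections land in it, and $sp_v$ is evaluation of the extended section at $v$, hence a homomorphism --- so your proposal is correct and matches the intended source; the only caveat, which you acknowledge yourself, is that the existence of the extended group law $\widetilde m$ restricting fiberwise to the structure on $(\pi^{-1}(v))^\sharp$ is exactly the nontrivial content of the N\'eron/Kodaira theorem being cited, so either invoke it outright or carry out the local verification for the $I_n$ and $I_n^*$ fibers that you sketch in your last paragraph.
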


\begin{rem}\label{sprem}
Especially for the singular fibre $\pi^{-1}(v)$ of type $I_b$ ($I^*_b$, resp.), the projection of $sp_v$
$$
E(\mathbb{C}(C))\rightarrow (\mathbb{Z}/b\mathbb{Z}) \quad((\mathbb{Z}/4\mathbb{Z}) {\rm \enspace or \enspace} (\mathbb{Z}/2\mathbb{Z})^2,  {\rm\enspace resp.} )
$$
is a homomorphism of groups.
\end{rem}

\subsection{Proof for the case $P_1$}

Recall  the elliptic fibration given by (\ref{P1preKodaira}) and   Figure 3.

The trivial lattice for this fibration is
$$
T_1=\langle a_1,a_2,a_3,a_4,a_4',a_3',a_2',a_1',c_1,b_0,b_1,b_2,b_3,c_2,c_3,O,F  \rangle_\mathbb{Z}.
$$
Let $Q$ be  the section in (\ref{P1section}).
From (\ref{L1definition}), we have
\begin{eqnarray*}
L_1=\langle Q, T_1\rangle_\mathbb{Z}.
\end{eqnarray*}
This is a subgroup of ${\rm NS}(S_1(\lambda,\mu))$.
According to Theorem \ref{18theorem}
and Theorem \ref{Shiodatheorem} (3), we obtain
$$
{\rm NS}(S_1(\lambda,\mu))\otimes_\mathbb{Z} \mathbb{Q}=L_1\otimes_\mathbb{Z}\mathbb{Q}.
$$
We obtain also
\begin{eqnarray}\label{NSspan}
{\rm NS}(S_1(\lambda,\mu)) = (\langle Q\rangle_\mathbb{Q} \cap {\rm NS}(S_1(\lambda,\mu))) +\hat{T_1}
\end{eqnarray}
for generic $(\lambda,\mu)\in \Lambda_1$.
Since ${\rm det}(L_1)=-9$, we deduce that
\begin{eqnarray}\label{P1 =3}
[{\rm NS}(S_1(\lambda,\mu)):L_1]=1 \quad{\rm or} \quad[{\rm NS}(S_1(\lambda,\mu)):L_1]=3.
\end{eqnarray}
In the following, we prove
$$
[{\rm NS}(S_1(\lambda,\mu)):L_1]=1. 
$$

\begin{lem}\label{T1lemma}
For generic $(\lambda,\mu)\in \Lambda_1$,
 $\hat{T_1} = T_1.$
\end{lem}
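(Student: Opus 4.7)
The plan is to use Shioda's theorem to reduce the primitivity statement $\hat T_1 = T_1$ to the vanishing of the Mordell--Weil torsion of the elliptic fibration (\ref{P1preKodaira}), then to control this torsion via the specialization maps at the singular fibres, and finally to rule out the small number of surviving possibilities by direct inspection of the Weierstrass equation.

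Concretely, Corollary \ref{torsionCor} (2) gives a canonical isomorphism $\hat T_1/T_1 \simeq E(\mathbb{C}(x_1))_{tor}$, where $E$ is the generic fibre of the elliptic fibration (\ref{P1preKodaira}). The inclusion (\ref{narrow}) says that the narrow Mordell--Weil group $E(\mathbb{C}(x_1))^0$ is torsion-free, so the specialization maps at the singular fibres embed $E(\mathbb{C}(x_1))_{tor}$ into the product of their component groups. By Proposition \ref{propP1kodaira}, the singular fibres of (\ref{P1preKodaira}) consist of one $I_9$, one $I_3^*$ and six $I_1$'s, whose component groups are $\mathbb{Z}/9\mathbb{Z}$, $\mathbb{Z}/4\mathbb{Z}$ (see Remark \ref{sprem}, since $b=3$ is odd) and the trivial group respectively. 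Hence $E(\mathbb{C}(x_1))_{tor}\hookrightarrow \mathbb{Z}/9\mathbb{Z}\oplus \mathbb{Z}/4\mathbb{Z}$, so a nontrivial torsion subgroup must contain an element of order $2$ or of order $3$.

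It remains to rule out $2$- and $3$-torsion sections of (\ref{P1preKodaira}). A $2$-torsion section has $z_1=0$ and $y_1=\phi(x_1)\in\mathbb{C}(x_1)$ a root of the cubic in $Y$ extracted from (\ref{P1preKodaira}); a short analysis of leading terms in $x_1$ forces $\phi$ to be a polynomial of degree exactly $3$ whose leading coefficient is $0$ or $4$, and substituting $\phi=\alpha x_1^3+\beta x_1^2+\gamma x_1+\delta$ and matching coefficients of successive powers of $x_1$ yields an over-determined system that is inconsistent for generic $(\lambda,\mu)\in\Lambda_1$. For $3$-torsion, one sets up the third division polynomial of the short Weierstrass model of (\ref{P1preKodaira}) and verifies it has no root in $\mathbb{C}(x_1)$; a cleaner alternative is to specialize $(\lambda,\mu)$ to a single numerical point of $\Lambda_1$ and to use the injectivity of the specialization homomorphism on Mordell--Weil torsion to transfer the triviality at that point back to generic $(\lambda,\mu)$. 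I expect the main obstacle to be the $3$-torsion step, since the third division polynomial of (\ref{P1preKodaira}) is cumbersome and the component-group bound still permits a $\mathbb{Z}/3\mathbb{Z}$ coming from the $I_9$ fibre; the specialization-to-a-numerical-member trick seems the cleanest route around this computation.
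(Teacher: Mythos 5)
Your reduction is correct as far as it goes, and it takes a genuinely different route from the paper's: identifying $\hat T_1/T_1$ with the Mordell--Weil torsion via Corollary \ref{torsionCor} and embedding that torsion into $\mathbb{Z}/9\mathbb{Z}\oplus\mathbb{Z}/4\mathbb{Z}$ through the specialization maps at the $I_9$ and $I_3^*$ fibres is valid, and it correctly reduces the lemma to excluding $2$- and $3$-torsion sections of (\ref{P1preKodaira}). The gap is that neither exclusion is actually carried out. The $2$-torsion elimination is only asserted ("an over-determined system that is inconsistent"); note that the paper never even faces this case, because $\det(L_1)=-9$ forces $[{\rm NS}(S_1(\lambda,\mu)):L_1]\in\{1,3\}$, and $\hat T_1/T_1$ injects into ${\rm NS}(S_1(\lambda,\mu))/L_1$ (since $Q$ has infinite order modulo $T_1\otimes\mathbb{Q}$), so $[\hat T_1:T_1]\in\{1,3\}$ and $2$-torsion is excluded for free. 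More seriously, the $3$-torsion step --- which you yourself flag as the main obstacle --- is left open: the division-polynomial route is not executed, and the "specialize to a numerical member" alternative is not yet an argument. Injectivity of specialization runs from the (geometric) generic fibre of the family to a special member, not from one special member to another; to exploit triviality at a single numerical $(\lambda_0,\mu_0)$ you would still have to (i) certify torsion-freeness there, which is itself a nontrivial computation, and (ii) identify the torsion of a very general member with that of the geometric generic fibre before transferring. As written, the decisive step of the lemma is missing.

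For comparison, the paper finishes purely lattice-theoretically, and your component-group analysis nearly sets this up. If $[\hat T_1:T_1]=3$, there is a section $R_0$ with $3R_0=O$; by Remark \ref{sprem} and (\ref{narrow}) it must meet the identity component of the $I_3^*$ fibre (as $\mathbb{Z}/4\mathbb{Z}$ has no $3$-torsion) and a component of order $3$ of the $I_9$ fibre, which one may take to be $a_3$. Then $\langle T_1,R_0\rangle_\mathbb{Z}$ has rank $17$, because $(R_0)$ lies in $T_1\otimes\mathbb{Q}$, while its Gram matrix has determinant $-72(1+k+k^2)\neq 0$ for $k=(R_0\cdot O)\in\mathbb{Z}$ --- a contradiction requiring no division polynomials. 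Replacing your unexecuted Weierstrass computations by this determinant argument, together with the index bound above that disposes of $2$-torsion, would close the gap.
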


\begin{proof}
From (\ref{NSspan}) and (\ref{P1 =3}), we have $\hat{T_1}=T_1$ or $[\hat{T_1}:T_1]=3$.
We assume $[\hat{T_1}:T_1]=3$.
Then, according to Corollary \ref{torsionCor} (2),
\begin{eqnarray}\label{torsionisomorph}
E(\mathbb{C}(x_1))_{tor} \simeq \hat{T_1}/T_1 \simeq \mathbb{Z}/3\mathbb{Z}.
\end{eqnarray}
Therefore there exists $R_0\in E(\mathbb{C}(x_1))_{tor}$ such that $3R_0 = O$.
By Remark \ref{sprem} and (\ref{narrow}), 
we suppose  that $R_0\cap a_3\not = \phi$ at $x_1 =0$ 
and $R_0 \cap c_0\not = \phi$ at $x_1=\infty$. 
Put $(R_0\cdot O)=k\in \mathbb{Z}$.
Set $\bar{T_1}=\langle T_1, R_1  \rangle_\mathbb{Z}$.
 By calculating the intersection matrix,  we have
\begin{eqnarray}\label{rank18hatP1}
{\rm det}(\bar{T_1})=-72(1+k+k^2)\not=0.
\end{eqnarray}
On the other hand,   due to (\ref{torsionisomorph}),  we have ${\rm rank}(\bar{T_1})=17$ .
 So it follows
$
{\rm det}(\bar{T_1})=0.
$
This contradicts (\ref{rank18hatP1}).
\end{proof}

By the above lamma, we have
\begin{eqnarray}\label{trivialP1}
{\rm NS}(S_1(\lambda,\mu))=(\langle Q\rangle_\mathbb{Q} \cap {\rm NS}(S_1(\lambda,\mu)))+ T_1.
\end{eqnarray}

\begin{lem} \label{L1lemma}
For generic $(\lambda,\mu)\in \Lambda_1$,
 ${\rm NS}(S_1(\lambda,\mu))=L_1$.
\end{lem}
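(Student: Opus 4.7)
The plan is to argue by contradiction, following the template of the proof of Lemma \ref{T1lemma}. Suppose $[\mathrm{NS}(S_1(\lambda,\mu)):L_1]=3$. First I determine the structure of the Mordell-Weil group: by Lemma \ref{T1lemma} together with Corollary \ref{torsionCor}(2), $E(\mathbb{C}(x_1))$ is torsion-free, and by Corollary \ref{torsionCor}(1) combined with the singular fibre data from Proposition \ref{propP1kodaira}, its rank equals $18-2-(9-1)-(8-1)=1$. Thus $E(\mathbb{C}(x_1))\simeq \mathbb{Z}$, and by Theorem \ref{Shiodatheorem}(3) together with (\ref{trivialP1}), the assumed index forces the existence of a section $P\in E(\mathbb{C}(x_1))$ with $Q=3P$, where $P$ generates the Mordell-Weil group.

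Next I extract geometric constraints on $P$ using the specialization homomorphisms at the two reducible fibres. At the $I_9$ fibre over $x_1=0$, the component group is $\mathbb{Z}/9\mathbb{Z}$, and since $Q$ meets the component $a_3$, which corresponds to $3\in\mathbb{Z}/9\mathbb{Z}$, the equation $3\,sp_0(P)\equiv 3\pmod{9}$ restricts $sp_0(P)$ to $\{1,4,7\}$, leaving three possible components of $I_9$ that $P$ can meet. At the $I_3^*$ fibre over $x_1=\infty$, the component group is $\mathbb{Z}/4\mathbb{Z}$; since $3$ is invertible modulo $4$, $sp_\infty(P)$ is uniquely determined by $sp_\infty(Q)$, pinning down the component of $I_3^*$ hit by $P$.

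Finally, with $(P\cdot P)=-2$ (as $P$ is a smooth rational curve on a $K3$ surface) and $(P\cdot O)=k\in\mathbb{Z}_{\geq 0}$ to be determined, I form the rank-$18$ overlattice $\overline{L_1}:=\langle T_1,P\rangle_\mathbb{Z}$ and compute its intersection determinant as a polynomial in $k$ in each of the three cases for $sp_0(P)$. Since $P$ generates $E(\mathbb{C}(x_1))$ modulo torsion and $\hat{T_1}=T_1$, one has $\overline{L_1}=\mathrm{NS}(S_1(\lambda,\mu))$; by (\ref{P1 =3}) this forces $\det(\overline{L_1})=-1$. The contradiction is obtained by verifying that no $k\in\mathbb{Z}_{\geq 0}$ satisfies this equation in any of the three cases. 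The main obstacle is this case-by-case determinant computation with an $18\times 18$ matrix, which is direct but tedious; it requires careful bookkeeping of the intersection of $P$ with the components of $T_1$, as dictated by the specialization analysis, as well as verifying that the assumption $k\geq 0$ cannot be relaxed (sections have nonnegative intersection with $O$ on an elliptic K3 in Kodaira normal form).
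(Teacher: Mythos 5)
Your reduction to a generator $P$ of the Mordell--Weil group with $3P=Q$ (using Lemma \ref{T1lemma}, Corollary \ref{torsionCor} and the rank count $18-2-8-7=1$) and your specialization analysis (three admissible components $a_1,a_4,a_7$ of the $I_9$ fibre, and the far simple component $c_3$ of the $I_3^*$ fibre) are correct and agree with the paper. The gap is in the final step: it is \emph{not} true that $\det\langle T_1,P\rangle_{\mathbb{Z}}\neq -1$ for every admissible choice. By Shioda's height formula (equivalently, by the direct $18\times 18$ computation you propose), $\det\langle T_1,P\rangle=-\lvert\det T_1\rvert\cdot h(P)=-36\,h(P)$ with $h(P)=4+2k-\mathrm{contr}_0(P)-\mathrm{contr}_\infty(P)$, $k=(P\cdot O)$, and $\mathrm{contr}_\infty(P)=7/4$. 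In the case $P\cap a_4\neq\emptyset$ one has $\mathrm{contr}_0(P)=20/9$, hence $\det\langle T_1,P\rangle=-72k-1$, which equals $-1$ exactly when $k=0$. So in the case ($P$ meets $a_4$, $(P\cdot O)=0$) your determinant criterion is satisfied and no contradiction arises; the case-by-case check you defer to ``tedious bookkeeping'' would in fact fail there.

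The reason is that the lattice $\langle T_1,P\rangle$ never sees the intersection number $(P\cdot Q)$, and it is precisely this number that is over-determined by the assumption: the relation $(Q)\equiv 3(P)\ \mathrm{mod}\ T_1$ forces, in the residual case above, $(P\cdot Q)=-1$, which is impossible for two distinct sections. This is how the paper's proof closes the case: it adjoins $R_1=P$ to $L_1$ (which contains $Q$), justifies $(R_1\cdot O)=(R_1\cdot Q)=0$ geometrically (neither $Q$ nor $2Q$ meets $O$), and then the Gram determinant of the $19$ generators is $-30\neq 0$ in the $a_4$ case (and $12$, $6$ in the others), contradicting Picard number $18$ from Theorem \ref{18theorem}. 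To repair your argument you must add a constraint linking $P$ to $Q$: either show that the forced value of $(P\cdot Q)$ is negative in the surviving case, or follow the paper and compute the determinant of $\langle L_1,P\rangle$ with $(P\cdot Q)=0$.
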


\begin{proof}
It is sufficient to prove $[{\rm NS}(S_1(\lambda,\mu)):L_1]=1$. We assume $[{\rm NS}(S_1(\lambda,\mu)):L_1]=3$.
By (\ref{trivialP1}), there exists  $R_1 \in E(\mathbb{C}(x_1))$ such that $3R_1 =Q$.
According to Remark \ref{sprem},
$$
(R_1 \cdot c_3)=1, \quad {\rm at \enspace } x_1=\infty
$$
and
\begin{align*}
\begin{cases}
(R_1 \cdot a_1)=1, \\
{\rm or}\\
(R_1 \cdot a_4)=1,\\
{\rm or}\\
(R_1 \cdot a_7)=1,
\end{cases}
\quad {\rm at\enspace} x_1 =0.
\end{align*}
We  assume $(R_1 \cdot O)=0$, for $Q$ in (\ref{P1section}) does not intersect $O$.  
By the addition theorem for elliptic curves, we have $2Q$ and we can check $2Q$ does not intersect $O$. 
If we have $p\in R_1\cap Q$, then it holds $R_1|_p=Q|_p$.
By the assumption, we have $(3R_1)|_p=Q|_p$.
It means that $2Q\cap O\not=\phi$.
But, it is not the case.
So,  we suppose $(R_1 \cdot Q)=0$ also.
Set $\tilde{L_1}=\langle L_1,R_1\rangle_\mathbb{Z}$.
By calculating the intersection matrix, we have
\begin{align}\label{rankP1tilde}
{\rm det}(\tilde{L_1})=
\begin{cases}
12 & ({\rm if \enspace}(R_1\cdot a_1)=1),\cr
-30 & ({\rm if \enspace}(R_1\cdot a_4)=1),\cr
6& ({\rm if \enspace }(R_1\cdot a_7)=1).
\end{cases}
\end{align}
On the other hand,  we have ${\rm rank}(\tilde{L_1})=18$ from Theorem \ref{18theorem}. 
Hence, we obtain 
$
{\rm det}(\tilde{L_1})=0.
$
This contradicts  (\ref{rankP1tilde}). 
Therefore, we have $[{\rm NS}(S_1(\lambda,\mu)):L_1 ]=1$.
\end{proof}

\begin{lem}\label{thm:intersectionP_1}
The   lattice $L_1$ is isomorphic to the lattice given by the intersection matrix  
\begin{eqnarray*}\label{matrix:P1neronseveri}
E_8(-1)\oplus E_8(-1) \oplus
\begin{pmatrix} 0&3 \\
  3&0 
 \end{pmatrix},
\end{eqnarray*}
and  its orthogonal complement is given by the intersection matrix
\begin{eqnarray*}\label{eq:P1transcendental}
A_1=U\oplus
\begin{pmatrix}
0 &3 \\
 3&0 
 \end{pmatrix} .
\end{eqnarray*}
\end{lem}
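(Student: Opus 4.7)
The approach I would take follows the proof of Lemma \ref{thm:intersection} for the $P_0$ case exactly. First I would write out the $18\times 18$ matrix $M_1$ given by (\ref{intersection L1}) and construct an explicit integer unimodular $18\times 18$ matrix $U_1$, analogous to the transformation built in the $P_0$ proof from the columns $r_1,\ldots,r_{15},v_{16},v_{17},v_{18}$, such that ${}^tU_1 M_1 U_1$ is the block-diagonal matrix $E_8(-1)\oplus E_8(-1)\oplus\begin{pmatrix}0&3\\3&0\end{pmatrix}$. The first $E_8(-1)$ can be assembled from the $I_3^*$ fibre components $c_1,b_0,b_1,b_2,b_3,c_2,c_3$ (which form an affine $D_7$ in Figure 3) together with the zero section $O$ and appropriate small integer corrections by the general fibre $F$; the second $E_8(-1)$ should come from seven of the eight components of the $I_9$ chain together with $Q$, $O$ and $F$; the remaining two generators, after a further small integer change of basis, yield a $2\times 2$ block of determinant $-9$ which can be normalised to $\begin{pmatrix}0&3\\3&0\end{pmatrix}$, consistent with $\det M_1=-9$.

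For the orthogonal complement, Lemma \ref{L1lemma} gives $L_1={\rm NS}(S_1(\lambda,\mu))$, so $L_1$ sits primitively inside $H_2(S_1(\lambda,\mu),\mathbb{Z})\cong E_8(-1)^{\oplus 2}\oplus U^{\oplus 3}$. Consequently ${\rm Tr}(S_1(\lambda,\mu))=L_1^{\perp}$ equals the orthogonal complement of the rank-$2$ block $\begin{pmatrix}0&3\\3&0\end{pmatrix}$ inside $U^{\oplus 3}$. Writing standard generators $e_i,f_i$ of the three copies of $U$ with $(e_i\cdot f_j)=\delta_{ij}$, $e_i^2=f_i^2=0$, one can realise the block primitively via $v_1=e_1$, $v_2=3f_1+e_2$; a direct linear-algebra computation then shows the orthogonal complement is spanned by $e_1-3f_2,\ e_2,\ e_3,\ f_3$, with Gram matrix $\begin{pmatrix}0&-3\\-3&0\end{pmatrix}\oplus U$, which after flipping the sign of one generator is isometric to $U\oplus\begin{pmatrix}0&3\\3&0\end{pmatrix}=A_1$.

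The main obstacle will be the first step, i.e.\ producing the explicit $U_1$. The intersection pattern recorded in (\ref{intersection L1}) couples an $A_8$ chain, a $D_7$-configuration, two sections and the generic fibre in a combinatorially nontrivial way, and disentangling this into two disjoint $E_8$ root bases plus a rank-$2$ remainder requires careful choices of integer combinations (especially involving $F$ and $O$ to close up the affine Dynkin diagrams, in the same spirit as the Kond\=o-type constructions used for Shioda--Inose K3 surfaces). By Nikulin's theorem on primitive embeddings of indefinite even lattices into unimodular ones, such a $U_1$ is guaranteed to exist and the complement is determined up to isometry; the entire labour is in exhibiting one specific unimodular transformation and then reading off $A_1$, exactly as in the $P_0$ case.
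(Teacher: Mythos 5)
Your overall strategy is the same as the paper's: exhibit an explicit integer unimodular matrix $U_1$ with ${}^tU_1M_1U_1=E_8(-1)\oplus E_8(-1)\oplus\begin{pmatrix}0&3\\3&0\end{pmatrix}$, and then read off the transcendental lattice as the orthogonal complement of a primitive copy of the rank-two block inside $U^{\oplus 3}$; your complement computation with $v_1=e_1$, $v_2=3f_1+e_2$, giving $\begin{pmatrix}0&-3\\-3&0\end{pmatrix}\oplus U\cong A_1$, is correct and is exactly the (terse) observation the paper makes after its change of basis. The gap is in the first half: the entire content of the lemma is the explicit $U_1$ --- the paper's proof consists of writing down four specific integer vectors $v_{15}^{(1)},\dots,v_{18}^{(1)}$, assembling them with standard unit vectors into a unimodular matrix, and verifying the matrix identity --- and you do not produce it. Your description of how the two $E_8(-1)$'s ``should'' be built from the $I_3^*$ and $I_9$ components together with $O$, $Q$ and $F$ is a plausible heuristic, not a verification.

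The fallback you offer does not close this hole. Nikulin's results do give (the numerical criteria hold here) uniqueness of a primitive embedding of ${\rm NS}$ into the K3 lattice, so the complement is determined once the isometry class of $L_1$ is known; but they do not tell you that $L_1$ is isometric to $E_8(-1)^{\oplus 2}\oplus\begin{pmatrix}0&3\\3&0\end{pmatrix}$ in the first place. Signature $(1,17)$ and $\det M_1=-9$ do not determine the lattice: the discriminant group could a priori be $\mathbb{Z}/9\mathbb{Z}$ rather than $(\mathbb{Z}/3\mathbb{Z})^2$ (compare $\begin{pmatrix}2&1\\1&-4\end{pmatrix}$, which is even of determinant $-9$ with cyclic discriminant group), and even in the $(\mathbb{Z}/3\mathbb{Z})^2$ case the discriminant quadratic form must be matched before uniqueness in the genus can be invoked. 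So either you exhibit the explicit $U_1$ (as the paper does), or you compute the discriminant form of $L_1$ and argue via genus theory --- either way, the computation you postpone is precisely the proof.
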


\begin{proof}
Let $M_1$ be the intersection matrix in (\ref{intersection L1}).
Set
\begin{align*}
\begin{cases}
&v_{15}^{(1)}={}^t (0,0,0,0,0,0,0,0,0,0,0,0,0,1,0,-1,0,-1),\\
&v_{16}^{(1)}={}^t (11,22,33,26,19,12,5,-2,2,4,6,8,10,7,5,1,18,-4),\\
&v_{17}^{(1)}={}^t (8,16,24,19,14,9,4,-1,2,4,6,8,10,7,5,-1,13,-5),\\
&v_{18}^{(1)}={}^t (91,182,273,214,155,96,37,-22,18,36,54,72,90,63,45,0,150,-36).
\end{cases}
\end{align*}
Recall the vectors in (\ref{unitv}).
Set
$$
U_1=(r_7,r_6,r_5,r_4,r_3,r_{17},r_2,r_1,r_9,r_{10},r_{11},r_{12},r_{13},r_{15},v_{15}^{(1)},v_{16}^{(1)},v_{17}^{(1)},v_{18}^{(1)}).
$$
This is an unimodular matrix.
We have 
$$
{}^t U_1 M_1 U_1 =E_8(-1)\oplus E_8(-1) \oplus \begin{pmatrix}0&3\\ 3&0 \end{pmatrix}.
$$
\end{proof}

Therefore, we obtain Theorem \ref{latticeThm} for $P_1$.

\subsection{Proof for the case $P_2$}

The elliptic fibration given by (\ref{P2preKodaira}) is illustrated in  Figure 4.

The trivial lattice for this fibration is
$$
T_2=\langle  a_1,a_2,a_3,a_4,a_5,a_5',a_4',a_3',a_2',a_1',c_1,b_0,b_1,c_2,c_3,O,F\rangle_\mathbb{Z}.
$$
Let $Q$ be the section in (\ref{P2section}).  
From (\ref{L2definition}), we have
\begin{eqnarray*}
L_2=\langle Q,T_2\rangle_\mathbb{Z}.
\end{eqnarray*}
This is a subgroup of ${\rm NS}(S_2(\lambda,\mu))$.
 As in the case $\mathcal{F}_1$, so we obtain
$$
{\rm NS}(S_2(\lambda,\mu))=(\langle Q\rangle_\mathbb{Q} \cap {\rm NS}(S_2(\lambda,\mu)))+\hat{T_2}
$$
for generic $(\lambda,\mu)\in \Lambda_2$. 
Since ${\rm det}(L_2)=-9$, we have
\begin{eqnarray}\label{P2 =3}
[{\rm NS}(S_2(\lambda,\mu)):L_2]=1 {\rm \enspace or \enspace} [{\rm NS}(S_2(\lambda,\mu)):L_2]=3.
\end{eqnarray}
In the following, we prove $[{\rm NS}(S_2(\lambda,\mu)):L_2]=1$.

\begin{lem}\label{T2lemma}
For generic $(\lambda,\mu)\in \Lambda_2$,   $\hat{T_2}=T_2$.
\end{lem}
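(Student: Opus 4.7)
The plan is to proceed in strict parallel with the proof of Lemma \ref{T1lemma}, exploiting the specific form of the singular fibres of the elliptic fibration (\ref{P2preKodaira}).

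First, because $L_2 \subseteq \mathrm{NS}(S_2(\lambda,\mu))$ coincides with it over $\mathbb{Q}$, the decomposition $\mathrm{NS}(S_2(\lambda,\mu)) = (\langle Q\rangle_{\mathbb{Q}} \cap \mathrm{NS}) + \hat{T_2}$ combined with $\det(L_2) = -9$ forces $[\hat{T_2}:T_2] \in \{1,3\}$. I would argue by contradiction: assume $[\hat{T_2}:T_2] = 3$. By Corollary \ref{torsionCor}(2), this gives $E(\mathbb{C}(y))_{\mathrm{tor}} \simeq \hat{T_2}/T_2 \simeq \mathbb{Z}/3\mathbb{Z}$, so there exists a section $R_0 \in E(\mathbb{C}(y))$ with $3R_0 = O$.

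Next, apply the specialization homomorphism of Remark \ref{sprem} at the two bad fibres. At $y=0$ the fibre is of type $I_1^*$ with component group $\mathbb{Z}/4\mathbb{Z}$, and at $y=\infty$ the fibre is of type $I_{11}$ with component group $\mathbb{Z}/11\mathbb{Z}$. Since $\gcd(3,4)=\gcd(3,11)=1$, the only element of order dividing $3$ in either group is $0$. Therefore $R_0$ must intersect the identity component of each of these singular fibres (and of course the unique component of each $I_1$ fibre). Unlike the case of Lemma \ref{T1lemma}, there is no branching into cases: the incidences of $R_0$ with all singular-fibre components of $T_2$ are forced to be zero.

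Now set $k = (R_0 \cdot O) \in \mathbb{Z}$ and form the overlattice $\bar{T_2} = \langle T_2, R_0\rangle_{\mathbb{Z}}$. Since $R_0$ is a smooth rational curve on a $K3$ surface, $(R_0 \cdot R_0) = -2$, and $(R_0 \cdot F) = 1$ for the generic fibre $F$. By the preceding paragraph, every other entry of the new row/column of the $18 \times 18$ Gram matrix of $\bar{T_2}$ is zero except for $(R_0 \cdot O) = k$. Expanding the determinant, one obtains an expression of the form $\det(\bar{T_2}) = C \cdot p(k)$, where $p(k)$ is a polynomial with no integer root (analogous to the factor $1+k+k^2$ appearing in (\ref{rank18hatP1})). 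On the other hand, $R_0 \in \hat{T_2}$ means $(R_0)$ is a $\mathbb{Q}$-linear combination of elements of $T_2$, so $\bar{T_2}$ has rank $17$ and its $18 \times 18$ Gram matrix must have vanishing determinant. This contradicts $\det(\bar{T_2}) \neq 0$, completing the proof.

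The main obstacle is the explicit determinant computation; however, because the new column has only three nonzero entries and the $F$-row of the Gram matrix of $T_2$ also has a single nonzero entry (namely $F \cdot O = 1$), a double cofactor expansion reduces it to a manageable $16 \times 16$ determinant that depends on $k$ only through an additive shift, yielding the promised polynomial $p(k)$ without integer roots.
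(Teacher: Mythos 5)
Your route is genuinely different from the paper's. The paper disposes of this lemma in one line: having already established in (\ref{P2 =3}) that the relevant index can only be $1$ or $3$, it computes $\det(T_2)=-44$ and notes that an index-$3$ overlattice $\hat{T_2}\supset T_2$ would force $\det(T_2)=9\det(\hat{T_2})$, i.e.\ $9\mid 44$, which is absurd. You instead transplant the Mordell--Weil/specialization argument of Lemma \ref{T1lemma}: produce a $3$-torsion section $R_0$ with $3R_0=O$, use Remark \ref{sprem} to pin down which fibre components it meets, adjoin it to $T_2$, and derive a contradiction from the rank. Your observation that $\gcd(3,4)=\gcd(3,11)=1$ forces $R_0$ to meet the identity component of both the $I_1^*$ and the $I_{11}$ fibre is correct and indeed removes the case analysis present in the $\mathcal{F}_1$ proof; in fact, at that point you could already finish, since by (\ref{narrow}) a section meeting every identity component lies in $E(\mathbb{C}(y))^0$, which has no torsion, so a nontrivial $3$-torsion section of this kind cannot exist. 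Still, the determinant route stays within the paper's toolkit and is legitimate in principle.

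However, the decisive step of your argument rests on a false claim. Since $R_0$ meets only identity components, its only nonzero pairings with the generators of $T_2$ are $(R_0\cdot F)=1$ and $(R_0\cdot O)=k$, and $\{F,O\}$ splits off $T_2$ as an orthogonal summand with Gram matrix $\begin{pmatrix}0&1\\ 1&-2\end{pmatrix}$. The bordered-determinant formula then gives
$$
\det(\bar{T_2})=\det(T_2)\bigl(-2-(2+2k)\bigr)=\pm\,88\,(k+2),
$$
a \emph{linear} polynomial in $k$ which does have an integer root, namely $k=-2$. So the promised analogue of the factor $1+k+k^2$ ``with no integer root'' does not exist here, and integrality of $k$ alone yields no contradiction. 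The gap is easy to close: $R_0$ and $O$ are distinct irreducible curves, so $k=(R_0\cdot O)\ge 0$ and hence $\det(\bar{T_2})\neq 0$, which contradicts $\operatorname{rank}(\bar{T_2})=17$ as you intended. But as written the crucial nonvanishing is asserted rather than proved, and the asserted reason is incorrect; either add the $k\ge 0$ observation, or simply use the paper's divisibility argument with $\det(T_2)=-44$, which settles the lemma at once.
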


\begin{proof}
Because  we have ${\rm det}(T_2)=-44$ and (\ref{P2 =3}), it follows $\hat{T_2}=T_2$.
\end{proof}

Therefore, we obtain
\begin{eqnarray} \label{trivialP2}
{\rm NS}(S_2(\lambda,\mu))=(\langle Q \rangle_\mathbb{Q}\cap{\rm NS}(S_2(\lambda,\mu)))+T_2.
\end{eqnarray}

\begin{lem}
For generic $(\lambda,\mu)\in \Lambda_2$,  ${\rm NS}(S_2(\lambda,\mu))=L_2$.
\end{lem}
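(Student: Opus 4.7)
The plan is to adapt the argument of Lemma \ref{L1lemma} to the present setting. By (\ref{P2 =3}) and Lemma \ref{T2lemma}, the only alternative to ${\rm NS}(S_2(\lambda,\mu))=L_2$ is $[{\rm NS}(S_2(\lambda,\mu)):L_2]=3$; I will argue that this second possibility leads to a contradiction. Assuming it, (\ref{trivialP2}) forces the existence of a section $R_1\in E(\mathbb{C}(y))$ with $3R_1=Q$.

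The first step will be to determine how $R_1$ can meet the reducible fibres via the specialization maps of Remark \ref{sprem}. At $y=\infty$ the fibre is of type $I_{11}$ with component group $\mathbb{Z}/11\mathbb{Z}$, and the equation $3\,sp_\infty(R_1)=sp_\infty(Q)$ has a unique solution since $\gcd(3,11)=1$; this pins down the sole component of the $I_{11}$-fibre met by $R_1$. At $y=0$ the fibre is of type $I_1^*$ with component group $\mathbb{Z}/4\mathbb{Z}$, and $\gcd(3,4)=1$ similarly isolates a unique simple component. Together these fully determine $(R_1\cdot\theta)$ for every generator $\theta$ of the trivial lattice $T_2$. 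Parallel to Lemma \ref{L1lemma}, I will then reduce to the case $(R_1\cdot O)=(R_1\cdot Q)=0$: the first vanishing because $R_1\cap O\neq\emptyset$ combined with $3R_1=Q$ would force $Q\cap O\neq\emptyset$, contradicting (\ref{P2section}); the second by the same principle, since $R_1\cap Q\neq\emptyset$ would force $2Q\cap O\neq\emptyset$, which can be ruled out by a direct fibrewise addition on the elliptic curve (\ref{P2preKodaira}).

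The final step forms $\tilde L_2=\langle L_2,R_1\rangle_\mathbb{Z}$ and examines its $19\times 19$ Gram matrix, obtained by augmenting $M_2$ in (\ref{intersection L2}) by a single row and column recording the intersections just computed together with the self-intersection $(R_1\cdot R_1)=-2$ of a section of an elliptic $K3$ surface. Since Theorem \ref{18theorem} gives $\mathrm{rank}\,{\rm NS}(S_2(\lambda,\mu))=18$, the $19$ generators of $\tilde L_2$ must be $\mathbb{Z}$-linearly dependent, and hence this Gram determinant has to vanish. The anticipated contradiction is that the direct calculation yields a nonzero integer, forcing $[{\rm NS}(S_2(\lambda,\mu)):L_2]=1$.

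The main obstacle is precisely this final determinant calculation. In the $\mathcal{F}_1$ case of Lemma \ref{L1lemma}, the component group $\mathbb{Z}/9\mathbb{Z}$ is not coprime to $3$ and one has to handle three parallel subcases; here the coprimality of $3$ with both $11$ and $4$ collapses the analysis to a single case, which should keep the cofactor expansion relatively clean but still demands a concrete verification that nothing conspires to produce zero.
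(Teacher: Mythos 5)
Your proposal follows essentially the same route as the paper's proof: assume $[{\rm NS}(S_2(\lambda,\mu)):L_2]=3$, produce $R_1$ with $3R_1=Q$, use the specialization homomorphisms at the $I_{11}$ and $I_1^*$ fibres (where coprimality of $3$ with $11$ and $4$ leaves the single case, namely $(R_1\cdot a_3')=(R_1\cdot c_3)=1$), argue $(R_1\cdot O)=(R_1\cdot Q)=0$ since $Q$ and $2Q$ avoid $O$, and contradict ${\rm rank}=18$ via the nonvanishing Gram determinant of $\tilde L_2$. The only piece you leave unexecuted is the determinant itself, which the paper computes to be ${\rm det}(\tilde L_2)=-38\neq 0$, confirming your anticipated contradiction.
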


\begin{proof}
We assume $[{\rm NS}(S_2(\lambda,\mu)):L_2]=3$. From (\ref{trivialP2}), there exists $R_1 \in E(\mathbb{C}(y))$ such that $3R_1 =Q$.
According to Remark \ref{sprem}, we obtain $(R_1 \cdot a_3')=1$ and $(R_1 \cdot c_3)=1$. Because the section $Q$ in (\ref{P2section})  and the section $2Q$ do not intersect $O$, we have  $(R_1\cdot O)=0$ and $(R_1 \cdot Q)=0$.
Set $\tilde{L_2}=\langle L_2,R_1\rangle_\mathbb{Z}$. Calculating its intersection matrix, we have ${\rm det}(\tilde{L_2})=-38$. As in the proof of  Lemma \ref{L1lemma},  this  contradicts to Theorem \ref{18theorem}.
\end{proof}

\begin{lem}\label{thm:intersectionP_2}
The lattice  $L_2$ is isomorphic to the lattice given by the following intersection matrix 
\begin{eqnarray*}\label{matrix:P2neronseveri}
 E_8(-1)\oplus E_8(-1) \oplus
\begin{pmatrix}0&3 \\
 3&2  
  \end{pmatrix},
\end{eqnarray*}
and its orthogonal complement  is given by the intersection matrix
\begin{eqnarray*}\label{eq:P2transcendental}
A_2=
U\oplus
\begin{pmatrix}
0 &3 \\
 3&-2 
 \end{pmatrix}.
\end{eqnarray*}
\end{lem}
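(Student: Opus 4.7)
The strategy is to imitate the proof of Lemma \ref{thm:intersectionP_1} verbatim in outline: exhibit an explicit unimodular integer change of basis diagonalizing the intersection form $M_2$ into two $E_8(-1)$ blocks plus a residual rank-$2$ block, then extract the transcendental lattice as the orthogonal complement inside the K3 lattice.

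\textbf{Step 1 (setup).} Read off $M_2$ from (\ref{intersection L2}). Since we have already established ${\rm NS}(S_2(\lambda,\mu)) = L_2$ for generic $(\lambda,\mu) \in \Lambda_2$, it suffices to identify $L_2$ as an abstract lattice. Note that $\det M_2 = -9$, matching the claimed $\det\bigl(E_8(-1)^{\oplus 2} \oplus \bigl(\begin{smallmatrix}0&3\\3&2\end{smallmatrix}\bigr)\bigr) = -9$.

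\textbf{Step 2 (block-diagonalization).} Construct an $18 \times 18$ unimodular integer matrix $U_2 \in GL(18,\mathbb{Z})$ whose columns express a new $\mathbb{Z}$-basis of $L_2$ in terms of the generators listed in (\ref{L2definition}). Most columns can be taken to be the standard unit vectors $r_j$ as in (\ref{unitv}); only a handful of columns $v_k^{(2)}$ need to be nontrivial integer combinations of the fibre components $a_j, a_j', c_k, b_k$, the section $Q$, the zero $O$ and the general fibre $F$. These are to be chosen so that
\[
{}^t U_2\, M_2\, U_2 \;=\; E_8(-1) \oplus E_8(-1) \oplus \begin{pmatrix}0 & 3\\ 3 & 2\end{pmatrix}.
\]
The two $E_8(-1)$ blocks are to be built geometrically from the fibre diagram of Figure 4: one $E_8$-configuration is manufactured from the long $A_{10}$-chain of the $I_{11}$-fibre at $y=\infty$ together with the section $Q$ and the zero $O$; the other from the $D_5$-type Dynkin diagram of the $I_1^*$-fibre at $y=0$ combined with $F$ and residual components from the $I_{11}$-chain. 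The remaining two basis vectors then span the residual rank-$2$ block of discriminant $-9$. Verification of the identity ${}^t U_2 M_2 U_2 = E_8(-1) \oplus E_8(-1) \oplus \bigl(\begin{smallmatrix}0&3\\3&2\end{smallmatrix}\bigr)$ is a direct computation.

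\textbf{Step 3 (orthogonal complement).} Since $L_2 = {\rm NS}(S_2(\lambda,\mu))$ sits primitively in $H_2(S_2(\lambda,\mu),\mathbb{Z}) \cong E_8(-1)^{\oplus 2} \oplus U^{\oplus 3}$, and the two $E_8(-1)$ summands of $L_2$ can be identified with those of the ambient K3 lattice, the problem reduces to computing the orthogonal complement of a primitive embedding of $\bigl(\begin{smallmatrix}0&3\\3&2\end{smallmatrix}\bigr)$ in $U^{\oplus 3}$. An explicit embedding into $U^{\oplus 3}$ with basis $(e_i,f_i)$, for example $u_1 = e_1 + e_2 - e_3$ and $u_2 = e_1 + f_1 - 2 f_3$, has the required Gram matrix, and a short linear-algebra calculation yields a rank-$4$ orthogonal complement isomorphic to $U \oplus \bigl(\begin{smallmatrix}0&3\\3&-2\end{smallmatrix}\bigr) = A_2$. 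The match of the absolute discriminants (both equal to $9$) is a sanity check consistent with Nikulin's theory of primitive embeddings into unimodular lattices.

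\textbf{Main obstacle.} The technical bottleneck is Step 2: the existence of $U_2$ is guaranteed by general lattice theory, but writing it down requires a guided combinatorial guess for the nontrivial columns. The key subtlety, compared with the $P_1$ case, is to arrange the off-diagonal entries so that the residual block is exactly $\bigl(\begin{smallmatrix}0&3\\3&2\end{smallmatrix}\bigr)$ rather than either of its siblings $\bigl(\begin{smallmatrix}0&3\\3&0\end{smallmatrix}\bigr)$ (the lattice appearing for $P_1$) or $\bigl(\begin{smallmatrix}0&3\\3&-2\end{smallmatrix}\bigr)$ (the one appearing for $P_3$). All three lattices have discriminant $-9$, so the distinction is detected only by the parity of the self-intersection of the second residual generator, and this has to be controlled explicitly through the choice of $v_{18}^{(2)}$.
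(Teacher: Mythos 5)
Your overall strategy is exactly the paper's: produce an explicit unimodular matrix $U_2\in GL(18,\mathbb{Z})$ with ${}^tU_2 M_2 U_2=E_8(-1)\oplus E_8(-1)\oplus\bigl(\begin{smallmatrix}0&3\\3&2\end{smallmatrix}\bigr)$ and then read off the orthogonal complement inside the $K3$ lattice; the paper does this by writing down four non-trivial columns $v_{14}^{(2)},v_{15}^{(2)},v_{17}^{(2)},v_{18}^{(2)}$ and checking the matrix identity. The genuine gap is that you never carry out this step, which is the entire content of the lemma. You correctly identify the subtlety---the three candidate residual blocks $\bigl(\begin{smallmatrix}0&3\\3&0\end{smallmatrix}\bigr)$, $\bigl(\begin{smallmatrix}0&3\\3&2\end{smallmatrix}\bigr)$, $\bigl(\begin{smallmatrix}0&3\\3&-2\end{smallmatrix}\bigr)$ all have determinant $-9$ yet are pairwise non-isomorphic---but you then leave it unresolved: no columns of $U_2$ are exhibited, and no invariant of $L_2$ is computed that would single out $\bigl(\begin{smallmatrix}0&3\\3&2\end{smallmatrix}\bigr)$. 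Your fallback, that ``the existence of $U_2$ is guaranteed by general lattice theory,'' is not correct as stated: matching signature and determinant does not determine an even indefinite lattice. What would suffice is computing the discriminant quadratic form of $M_2$ (the discriminant group is $\mathbb{Z}/9$ for the claimed block but $(\mathbb{Z}/3)^2$ for the $P_1$-type block, and the two $\pm2$ blocks carry mutually non-isomorphic forms of order $9$) and then invoking Nikulin's uniqueness of indefinite even lattices in their genus; but that computation is concrete work of the same order as finding $U_2$, and you have deferred it. Your geometric recipe for assembling the two $E_8(-1)$ blocks from the $I_{11}$- and $I_1^*$-fibres is also only a sketch (and, for what it is worth, the paper's actual $U_2$ mixes components of both fibres with $Q$, $O$ in each block), so nothing in the proposal certifies that the residual block is $\bigl(\begin{smallmatrix}0&3\\3&2\end{smallmatrix}\bigr)$ rather than one of its siblings.

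Your Step 3, by contrast, is sound and even somewhat more explicit than the paper, which merely observes the complement: the vectors $u_1=e_1+e_2-e_3$, $u_2=e_1+f_1-2f_3$ do span a primitive sublattice of $U^{\oplus3}$ with Gram matrix $\bigl(\begin{smallmatrix}0&3\\3&2\end{smallmatrix}\bigr)$, and its complement is $U\oplus\bigl(\begin{smallmatrix}0&3\\3&-2\end{smallmatrix}\bigr)$. Strictly speaking, to transfer this model computation to the actual embedding ${\rm NS}(S_2(\lambda,\mu))\subset H_2(S_2(\lambda,\mu),\mathbb{Z})$ you should either invoke uniqueness of the primitive embedding or argue via the discriminant form of the complement, but the paper is equally brief on this point, so the substantive deficiency remains the missing construction of $U_2$ (equivalently, of the discriminant form of $L_2$) in Step 2.
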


\begin{proof}
Let $M_2$ be the intersection matrix in (\ref{intersection L2}).
Set
\begin{align*}
\begin{cases}
&v_{14}^{(2)}={}^t (5,4,15,26,13,10,8,6,4,2,12,24,36,30,18,-4,24,-8),\\
&v_{15}^{(2)}={}^t (1,-2,3,8,1,0,0,0,0,0,6,12,18,15,9,0,12,1),\\
&v_{17}^{(2)}={}^t (56,13,162,311,120,100,80,60,40,20,170,340,510,425,255,-28,340,-56),\\
&v_{18}^{(2)}={}^t (27,6,80,154,60,50,40,30,20,10,84,168,252,210,126,-14,168,-28).
\end{cases}
\end{align*}
Recall the vectors in (\ref{unitv}).
Set
$$
U_2=(r_3,r_4,r_{17},r_{14},r_{13},r_{15},r_{12},r_{11},r_{10},r_{9},r_{8},r_7,r_6,v_{14}^{(2)},v_{15}^{(2)},r_{16},v_{17}^{(2)},v_{18}^{(2)}).
$$
This is an unimodular matrix.
We have 
$$
{}^t U_2 M_2 U_2 =E_8(-1)\oplus E_8(-1) \oplus \begin{pmatrix}0&3\\ 3&2 \end{pmatrix}.
$$
\end{proof}

Therefore, we obtain Theorem \ref{latticeThm} for $P_2$.

\subsection{Proof for the case $P_3$}

The elliptic fibration given by (\ref{P3preKodaira}) is illustrated in  Figure 5.

The trivial lattice for this fibration is
$$
T_3=\langle a_1,a_2,a_3,a_4,a_0', a_4',a_3',a_2',a_1',c_1,b_0,b_1,b_2,c_2,c_3,O,F
 \rangle_\mathbb{Z}.
$$
Let $Q$ be the section in (\ref{P3section}). 
From (\ref{L3'def}), we see
$$
L_3'=\langle  Q,T_3\rangle_\mathbb{Z}.
$$
This is a subgroup of ${\rm NS}(S_3(\lambda,\mu))$ and
we have ${\rm det}(L_3')=-36$.
Moreover,  the section $O'$ in (\ref{P3section}) is a $2$-torsion section  for this elliptic fibretion.
Due to Corollary \ref{torsionCor},  $[\hat{T_3}:T_3]$ is divided by $2$.
Hence, we have
\begin{eqnarray}\label{P3 =6}
[{\rm NS}(S_3(\lambda,\mu)):L_3']=2 {\rm \enspace or \enspace} [{\rm NS}(S_3(\lambda,\mu)):L_3']=6.
\end{eqnarray}

\begin{lem} \label{trivialP3}
For generic $(\lambda,\mu)\in \Lambda_3$,  $[\hat{T_3}:T_3]=2$.
\end{lem}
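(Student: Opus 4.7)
The plan is to compute $|\det(T_3)|$ directly from the types of the singular fibres and then to exploit the identity
$|\det(T_3)|=[\hat{T_3}:T_3]^2\cdot|\det(\hat{T_3})|$,
which is valid because $\hat{T_3}$ is a finite-index overlattice of $T_3$ of the same rank $17$. Combined with the integrality of $|\det(\hat{T_3})|$, this identity gives an \emph{upper} bound on the index; the $2$-torsion section $O'$ already in hand supplies the matching \emph{lower} bound.

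By construction, $T_3$ decomposes as $U\oplus A_9(-1)\oplus D_6(-1)$: the hyperbolic plane $U=\langle F,O\rangle_{\mathbb{Z}}$ arises from the general fibre and the zero section, the summand $A_9(-1)$ is generated by the nine non-identity components $a_1,a_2,a_3,a_4,a_0',a_4',a_3',a_2',a_1'$ of the $I_{10}$ fibre, and $D_6(-1)$ is generated by the six non-identity components $c_1,b_0,b_1,b_2,c_2,c_3$ of the $I_2^*$ fibre. The standard discriminants of these root lattices yield $|\det(T_3)|=1\cdot 10\cdot 4=40$.

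Therefore $[\hat{T_3}:T_3]^2$ divides $40$, and since the only positive integers $n$ with $n^2\mid 40$ are $n=1$ and $n=2$, we obtain $[\hat{T_3}:T_3]\leq 2$. On the other hand, the section $O'$ in {\rm (\ref{P3section})} satisfies $2O'=O$, so by {\rm Corollary \ref{torsionCor} (2)} it represents a non-zero element of order $2$ in $E(\mathbb{C}(x_1))_{tor}\simeq \hat{T_3}/T_3$, giving $[\hat{T_3}:T_3]\geq 2$. Combining both bounds yields $[\hat{T_3}:T_3]=2$.

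The only non-routine ingredient is the identification of $T_3$ with $U\oplus A_9(-1)\oplus D_6(-1)$ and the attendant computation $|\det(T_3)|=40$; this is a direct consequence of Kodaira's classification of singular fibres and the well-known discriminants of the root lattices $A_9$ and $D_6$. One could alternatively try to bound $E(\mathbb{C}(x_1))_{tor}$ via the specialization maps $sp_v$, which embed it into $\mathbb{Z}/10\oplus(\mathbb{Z}/2)^2$ of order $40$, but that embedding alone is too weak to exclude indices such as $4$ or $10$; it is the lattice-theoretic constraint $n^2\mid 40$ that closes the gap.
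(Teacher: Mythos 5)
Your proof is correct and follows essentially the same route as the paper: the paper likewise combines the determinant of $T_3$ (stated there, with an immaterial sign, as $\det(T_3)=-40$), the constraint that the square of the index $[\hat{T_3}:T_3]$ must divide $|\det(T_3)|=40$, and the $2$-torsion section $O'$ for the lower bound. Your explicit decomposition $T_3\simeq U\oplus A_9(-1)\oplus D_6(-1)$ simply makes transparent the determinant computation that the paper asserts without detail.
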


\begin{proof}
We have ${\rm det}(T_3)=-40$. From (\ref{P3 =6}), we obtain $[\hat{T_3}:T_3]=2$.
\end{proof}

\begin{lem}
For generic $(\lambda,\mu)\in\Lambda_3$,  $[{\rm NS}(S_3(\lambda,\mu)):L_3']=2$.
\end{lem}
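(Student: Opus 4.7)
The plan is to adapt the argument of Lemma \ref{L1lemma}, ruling out the case $[{\rm NS}(S_3(\lambda,\mu)):L_3']=6$ by producing a bogus ``third-root'' section and deriving a numerical contradiction from its intersection matrix.

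First I note that by Lemma \ref{trivialP3} the extra index $[\hat{T_3}:T_3]=2$ is contributed exclusively by the 2-torsion section $O'$ coming from Corollary \ref{torsionCor}(2). Since $L_3'=\langle Q,T_3\rangle_\mathbb{Z}$ and $O'$ is not a $\mathbb{Z}$-combination of $Q$ and $T_3$ (its class is a $2$-torsion class in $\hat{T_3}/T_3$, whereas $Q$ contributes to the non-torsion part), we immediately get $[{\rm NS}(S_3(\lambda,\mu)):L_3']\geq 2$. So in view of (\ref{P3 =6}) it only remains to exclude the value $6$.

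Assume for contradiction that the index is $6$. By Theorem \ref{Shiodatheorem}(3), the quotient ${\rm NS}(S_3(\lambda,\mu))/T_3\simeq E(\mathbb{C}(x_1))$, and the index-$6$ assumption, combined with the appearance of the $2$-torsion $O'$, produces a section $R_1\in E(\mathbb{C}(x_1))$ with $3R_1\equiv Q+\varepsilon O' \pmod{T_3}$ for some $\varepsilon\in\{0,1\}$. Applying the specialization homomorphisms $sp_0$ and $sp_\infty$ (Remark \ref{sprem}) at the singular fibres $I_{10}$ over $z=0$ and $I_2^*$ over $z=\infty$, I can read off from the known images of $Q$ and $O'$ exactly which components $R_1$ is allowed to meet: at most a short list of irreducible components $\Theta_{0,j}$ and $\Theta_{\infty,k}$. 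Furthermore, since neither $Q$ nor $2Q$ meets $O$, arguing as in the second half of the proof of Lemma \ref{L1lemma} shows $(R_1\cdot O)=0$ and $(R_1\cdot Q)=0$ (and, using that $O'$ is disjoint from $O$, $(R_1\cdot O')=0$).

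For each admissible case obtained from the specialization analysis, I form $\tilde{L_3'}=\langle L_3',R_1\rangle_\mathbb{Z}$ and compute its Gram determinant using the known intersection matrix of $L_3'$ together with the explicit intersection numbers of $R_1$ with the generators of $T_3$, $Q$ and $O'$. The expected outcome — parallel to (\ref{rankP1tilde}) — is that $\det(\tilde{L_3'})\neq 0$ in every case. On the other hand, by Theorem \ref{18theorem} we have $\mathrm{rank}\,{\rm NS}(S_3(\lambda,\mu))=18=\mathrm{rank}\,L_3'$, so $\tilde{L_3'}\subset {\rm NS}(S_3(\lambda,\mu))$ is of rank $18$ and must be degenerate modulo $L_3'$, forcing $\det(\tilde{L_3'})=0$. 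This is the desired contradiction, and we conclude $[{\rm NS}(S_3(\lambda,\mu)):L_3']=2$.

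The main obstacle is the case enumeration: the $I_{10}$ fibre has $10$ components and the $I_2^*$ fibre has several non-identity components to which the $2$-torsion $O'$ and a prospective $R_1$ could specialize, so the specialization map must be computed carefully (in particular, the image of $O'$ in the component group, and the component groups $\mathbb{Z}/10\mathbb{Z}$ and $(\mathbb{Z}/2\mathbb{Z})^2$ of the two reducible fibres) in order to obtain a short and exhaustive list of possibilities, each of which is then ruled out by a direct determinant calculation.
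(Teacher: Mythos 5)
Your plan is correct and follows essentially the same route as the paper: assume the index is $6$, use Shioda's theorem and Lemma \ref{trivialP3} to produce a section $R_1$ with $3R_1\equiv Q$ (up to the $2$-torsion $O'$), pin down the components met by $R_1$ via the specialization homomorphisms at the $I_{10}$ and $I_2^*$ fibres, and then contradict ${\rm rank}\,{\rm NS}=18$ (which forces $\det\tilde{L_3'}=0$) by checking that the Gram determinant of $\langle L_3',R_1\rangle_\mathbb{Z}$ is nonzero in each admissible case. The only small divergence is bookkeeping: the paper allows $(R_1\cdot Q)\in\{0,1\}$ rather than forcing $(R_1\cdot Q)=0$, obtaining $\det\tilde{L_3'}=-16$ or $-112$, both nonzero, exactly as your case-by-case determinant check anticipates.
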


\begin{proof}
We shall show that $[{\rm NS}(S_3(\lambda,\mu)):L_3']=2$.
We assume $[{\rm NS}(S_3(\lambda,\mu)):L_3']=6$.
From Lemma \ref{trivialP3}, there exists $R_1 \in E(\mathbb{C}(x_1))$ such that $3R_1=Q$.
According to Remark \ref{sprem},  $(R_1 \cdot c_2)=1 $ and $(R_1 \cdot a_4)=1$. 
Also we have $(R_1 \cdot O)=0$, for $Q$ in (\ref{P3section}) does not intersect $O$. Moreover, we  assume that $(R_1 \cdot Q)=0$ or $1$, for the section $2P$   does not intersect $O$ at $x_1\not = \infty$.
Set $\tilde{L'_3}= \langle L'_3,R\rangle_\mathbb{Z}$. 
Calculating the intersection matrix, we have
\begin{eqnarray}\label{dettildeL3}
{\rm det}(\tilde{L_3'})=
\begin{cases}
-16& ({\rm if \enspace} (R_1\cdot Q)=0)\\
-112&({\rm if \enspace} (R_1\cdot Q)=1)
\end{cases}.
\end{eqnarray}
On the other hand, Theorem \ref{18theorem} implies that ${\rm rank}(\tilde{L'_3})=18$ and $
{\rm det }(\tilde{L'_3})=0.
$
This is a contradiction to (\ref{dettildeL3}).
\end{proof}

Due to the above lemma,
we have 
$$
|{\rm det}({\rm NS}(S_3(\lambda,\mu)))|=9
$$
for generic $(\lambda,\mu)\in \Lambda_3$.

To determine the explicit lattice structure for $\mathcal{F}_3$, we use another elliptic fibration  defined by (\ref{P3preKodairaanother}).
This fibration is illustrated in  Figure 6.

Let $Q_0$ and $R_0$  be the sections in (\ref{P3sectionanother}) for this elliptic fibration.
Recall
\begin{eqnarray*}
L_3 =\langle d_1,d_2,d_3,d_4,d_4',d_3',d_2',d_1',e_1,e_2,e_3,e_4,e_3',e_2',O,Q_0,R_0,F \rangle_\mathbb{Z}.
\end{eqnarray*}
 in (\ref{L3definition}).
 For generic $(\lambda,\mu)\in \Lambda_3$, since
 $$L_3\otimes_\mathbb{Z}\mathbb{Q} ={\rm NS }(S_3(\lambda,\mu))\otimes_\mathbb{Z} \mathbb{Q}$$ and ${\rm det}(L_3')=-9$,
we deduce that
$$
L_3 ={\rm NS}(S_3(\lambda,\mu)).
$$

\begin{lem}\label{thm:intersectionP_3}
The lattice $L_3$  is isomorphic to the lattice given by the intersection matrix
\begin{eqnarray*}\label{matrix:P3neronseveri}
 E_8(-1)\oplus E_8(-1) \oplus
 \begin{pmatrix}
 0&3 \\
 3&-2 
 \end{pmatrix} ,
\end{eqnarray*}
and its orthogonal complement is given by the intersection matrix
\begin{eqnarray*}\label{eq:P3transcendental}
A_3=
U\oplus
\begin{pmatrix}
0 &3 \\
3&2 \end{pmatrix} .
\end{eqnarray*}
\end{lem}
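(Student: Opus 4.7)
The strategy is to follow exactly the same pattern as in Lemmas \ref{thm:intersection}, \ref{thm:intersectionP_1} and \ref{thm:intersectionP_2}. The plan is to exhibit an explicit unimodular matrix $U_3 \in GL(18,\mathbb{Z})$ such that
\begin{equation*}
{}^t U_3\, M_3\, U_3 \;=\; E_8(-1)\oplus E_8(-1)\oplus \begin{pmatrix} 0 & 3 \\ 3 & -2 \end{pmatrix},
\end{equation*}
where $M_3$ is the intersection matrix of (\ref{intersection L3}). Since this is a $GL(18,\mathbb{Z})$-congruence, it proves the first half of the lemma directly.

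Constructing $U_3$ is the main step. Its columns will largely consist of permuted standard unit vectors $r_j$ from (\ref{unitv}), chosen so that the two $I_9$-chains $d_1,\dots,d_4,d_4',\dots,d_1'$ and $e_1,e_2,e_3,e_4,e_3',e_2'$ — glued by the zero section $O$, the two sections $Q_0, R_0$, and the fibre class $F$ — expose two disjoint $E_8(-1)$-sublattices. A handful of nonstandard integer columns $v_k^{(3)}$ are then needed to absorb the non-tridiagonal correction terms $(E_{3,16}+E_{16,3})$, $(E_{6,17}+E_{17,6})$, $(E_{11,16}+E_{16,11})$, $(E_{13,17}+E_{17,13})$, $(E_{15,16}+E_{16,15})$, $(E_{15,18}+E_{18,15})$, $(E_{16,17}+E_{17,16})$ and $(E_{16,18}+E_{18,16})$ appearing in (\ref{intersection L3}), and to isolate the residual rank-two block of determinant $-9$. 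Once the correct $v_k^{(3)}$ are written down, verifying the congruence reduces to a direct (if bulky) matrix multiplication, entirely parallel to the earlier cases.

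For the orthogonal complement $A_3$ in $H_2(S_3(\lambda,\mu),\mathbb{Z}) \cong E_8(-1)^{\oplus 2}\oplus U^{\oplus 3}$: we have already shown $L_3 = {\rm NS}(S_3(\lambda,\mu))$ for generic $(\lambda,\mu) \in \Lambda_3$, so the embedding $L_3 \hookrightarrow H_2(S_3(\lambda,\mu),\mathbb{Z})$ is primitive. The two $E_8(-1)$ summands of $L_3$ can be matched with those of the $K3$ lattice, reducing the problem to computing the orthogonal complement of a primitive embedding of $\left(\begin{smallmatrix} 0 & 3 \\ 3 & -2 \end{smallmatrix}\right)$ into $U^{\oplus 3}$. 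A short direct computation in the standard basis of $U^{\oplus 3}$ yields a rank-four sublattice whose Gram matrix is $U \oplus \left(\begin{smallmatrix} 0 & 3 \\ 3 & 2 \end{smallmatrix}\right)$; the absolute values of the discriminants agree ($9$ on both sides), which confirms the primitivity of the complement and the claimed isomorphism.

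The principal obstacle is the first step: one has to guess the right nonstandard columns $v_k^{(3)}$ so that $U_3$ is simultaneously integral, unimodular, and diagonalises $M_3$ to the target Gram matrix. Every other step — the matrix multiplication and the two-dimensional lattice complement computation — is purely routine bookkeeping of the same flavour as in Lemmas \ref{thm:intersection}--\ref{thm:intersectionP_2}.
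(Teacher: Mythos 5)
Your proposal is an outline of the same route the paper takes, but it stops short of the one step that carries essentially all the content of the lemma: you never actually produce the unimodular matrix $U_3$. The paper's proof consists precisely of writing down three explicit nonstandard columns $v_9^{(3)},v_{17}^{(3)},v_{18}^{(3)}$, together with a specific permutation of the unit vectors $r_j$ from (\ref{unitv}), and verifying ${}^tU_3M_3U_3=E_8(-1)\oplus E_8(-1)\oplus\bigl(\begin{smallmatrix}0&3\\3&-2\end{smallmatrix}\bigr)$. You yourself flag the construction of these columns as ``the principal obstacle'', and you supply neither the columns nor any substitute argument that would guarantee such a congruence exists --- for instance, computing the signature and discriminant quadratic form of $M_3$ and invoking Nikulin's uniqueness theorem for indefinite even lattices would be a legitimate alternative, but you do not carry that out either. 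As written, the first (and main) assertion of the lemma is described, not proved; this is a genuine gap, since there is no a priori reason short of the explicit computation (or a classification argument) that $M_3$ is $GL(18,\mathbb{Z})$-equivalent to the stated direct sum.

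On the orthogonal complement, your argument (split off the unimodular $E_8(-1)^{\oplus 2}$, so the rank-two block $\bigl(\begin{smallmatrix}0&3\\3&-2\end{smallmatrix}\bigr)$ sits primitively in a copy of $U^{\oplus 3}$, then compute its complement there) is in substance the step the paper leaves implicit, and is fine in outline. One caveat: exhibiting a single embedding of $\bigl(\begin{smallmatrix}0&3\\3&-2\end{smallmatrix}\bigr)$ into $U^{\oplus 3}$ whose complement is $U\oplus\bigl(\begin{smallmatrix}0&3\\3&2\end{smallmatrix}\bigr)$, and noting that discriminants agree, does not by itself identify the complement of the particular embedding coming from ${\rm NS}(S_3(\lambda,\mu))\subset H_2(S_3(\lambda,\mu),\mathbb{Z})$; you need uniqueness of the primitive embedding up to isometry (Nikulin's criterion applies, as the discriminant group is cyclic of order $9$ and the corank is $4\geq 2+1$), which you should state explicitly. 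In summary: same strategy as the paper, but the decisive matrix computation is missing and the complement step needs the uniqueness input made precise.
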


\begin{proof}
Let $M_3$ be the intersection matrix in (\ref{intersection L3}).
Set
\begin{align*}
\begin{cases}
&v_9^{(3)}={}^t (28,56,84,27,21,15,10,5,34,68,102,51,-1,-1,1,85,-1,-16),\\
&v_{17}^{(3)}={}^t (5,10,15,5,4,3,2,1,6,12,18,9,0,0,0,15,0,-3),\\
&v_{18}^{(3)}={}^t (468,936,1404,432,378,324,216,108,576,1152,1728,864,36,18,35,1440,54,-252).
\end{cases}
\end{align*}
Recall the vectors in (\ref{unitv}).
Set
$$
U_3=(r_1,r_2,r_3,r_{16},r_{11},r_{12},r_{10},r_9,v_9^{(3)},r_{14},r_{13},r_{17},r_6,r_5,r_7,r_8,v_{17}^{(3)},v_{18}^{(3)}).
$$
This is an unimodular matrix.
We have
$$
{}^t U_3 M_3 U_3 =E_8(-1)\oplus E_8(-1) \oplus \begin{pmatrix}0&3\\ 3&-2 \end{pmatrix}.
$$
\end{proof}

Therefore, we obtain Theorem \ref{latticeThm} for $P_3$.

\section{Period differential equations}

Recall $F_j$ $(j=0,1,2,3)$ in (\ref{equationsF_j}).
The unique holomorphic 2-form on $S_j(\lambda,\mu)$ $(j=0,1,2,3)$  is given by
\begin{eqnarray}\label{eq:2formP4}
\omega_0= \frac{z dz\wedge dx}{\partial F_0 /\partial y}, \quad\quad\omega_j=\frac{dz\wedge dx}{\partial F_j /\partial y}\quad(j=1,2,3),
\end{eqnarray}
up to a constant factor.

\begin{prop}\label{periodprop}
Let $j\in \{0,1,2,3\}$.
There is  a $2$-cycle $\Gamma_j$ on $S_j(\lambda,\mu)$ such that
the period integral
 $\displaystyle \iint_{\Gamma_j}\omega_j $ 
 has the following power series expansion, which is valid in a sufficiently small neighborhood of $(\lambda,\mu)=(0,0)$.

{\rm (0)}
{\rm (}Periods for $\mathcal{F}_0${\rm )}
\begin{eqnarray*}
\eta_0 (\lambda,\mu)=\iint_{\Gamma_0} \omega=(2\pi i)^2 \sum_{n,m=0}^\infty  (-1)^m \frac{(5m+2n)!}{n! (m!)^3 (2m+n)!}\lambda^n \mu^m.
\end{eqnarray*}

{\rm (1)} 
{\rm (}Periods for $\mathcal{F}_1${\rm )}
\begin{eqnarray*}\label{P1period} 
\eta _1(\lambda,\mu) =\iint_{\Gamma_1}\omega_1=(2\pi i)^2 \sum_{n,m=0}^\infty (-1)^{m+n} \frac{(3n+3m)!}{(n!)^2 (m!)^2 (n+m)!} \lambda^n \mu^m.
\end{eqnarray*}

{\rm (2)} 
{\rm (}Periods for $\mathcal{F}_2${\rm )}
\begin{eqnarray*} \label{P2period}
\eta_2 (\lambda ,\mu )=\iint_{\Gamma_2}\omega_2 =(2\pi i)^2 \sum_{n,m=0}^\infty (-1)^n \frac{(4m+3n)!}{(m!)^2 n! ((m+n)!)^2}\lambda^n \mu^m.
\end{eqnarray*}

{\rm (3)} 
{\rm (}Periods for $\mathcal{F}_3${\rm )}
\begin{eqnarray*}\label{P3period} 
\eta_3 (\lambda ,\mu) =\iint_{\Gamma_3}\omega_3=(2\pi i) ^2 \sum_{n,m=0}^\infty (-1)^{n} \frac{(3n+2m)!}{(n!)^3 (m!)^2 } \lambda^n \mu^m.
\end{eqnarray*}\end{prop}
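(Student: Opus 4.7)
The plan is to realize each $\Gamma_j$ as a Leray coboundary of the real $3$-torus in the open orbit $(\mathbb{C}^\ast)^3$ of the toric ambient threefold, and extract the series by a multinomial expansion of the residue integrand. The key step is to rewrite $F_j=M_j\cdot f_j$, where $M_j$ is a Laurent monomial and $f_j$ is a Laurent polynomial whose constant term is $1$ and whose remaining monomials correspond to the non-origin lattice points of $P_j$. Concretely,
\begin{align*}
f_0&=1+x+y+z+\lambda/z+\mu/(xyz^2), & M_0&=xyz^2,\\
f_1&=1+x+y+z+\lambda/(yz)+\mu/(xz), & M_1&=xyz,\\
f_2&=1+x+y+z+\lambda/(yz)+\mu/(xyz), & M_2&=xyz,\\
f_3&=1+x+y+z+\lambda/(xy)+\mu/z, & M_3&=xyz.
\end{align*}
In each case the $2$-form in (\ref{eq:2formP4}) coincides with the Poincar\'e residue
$\omega_j=\operatorname{Res}_{f_j=0}\!\bigl(\tfrac{dx\wedge dy\wedge dz}{xyz\cdot f_j}\bigr)$, as a direct check using $\tfrac{z\,dx\,dy\,dz}{F_0}=\tfrac{dx\,dy\,dz}{xyz\cdot f_0}$ and its analogues shows.

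Next, for $(\lambda,\mu)$ in a sufficiently small polydisc around $(0,0)$, I fix $\varepsilon>0$ so that $|f_j-1|<1$ on the $3$-torus $T_\varepsilon=\{|x|=|y|=|z|=\varepsilon\}$; in particular $T_\varepsilon\cap\{F_j=0\}=\emptyset$. I then take $\Gamma_j$ to be the $2$-cycle on $S_j(\lambda,\mu)$ whose Leray coboundary is $T_\varepsilon$. The residue formula gives
$$
\iint_{\Gamma_j}\omega_j=\frac{1}{2\pi i}\iiint_{T_\varepsilon}\frac{dx\wedge dy\wedge dz}{xyz\cdot f_j}=(2\pi i)^2\,[\,f_j^{-1}\,]_{0},
$$
where $[\,\cdot\,]_0$ denotes the constant coefficient in the Laurent expansion at the origin.

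Finally, I expand $f_j^{-1}=\sum_{k\geq 0}(-1)^k(f_j-1)^k$ and collect terms by the multinomial theorem. For instance, for $P_0$ the contribution to $(f_0-1)^k$ coming from a multi-index $(a,b,c,n,m)$ with $a+b+c+n+m=k$ yields $\tfrac{k!}{a!b!c!n!m!}\,\lambda^n\mu^m\,x^{a-m}y^{b-m}z^{c-n-2m}$; the constant-term condition forces $a=m,\ b=m,\ c=n+2m$, hence $k=2n+5m$, producing the coefficient $(-1)^m(5m+2n)!/[n!(m!)^3(2m+n)!]$. The remaining three cases are analogous: the constant-term equations are determined (respectively $a=m,\ b=n,\ c=n+m$; $a=m,\ b=n+m,\ c=n+m$; $a=m,\ b=m,\ c=n$), giving precisely the claimed series.

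The serious point is not the algebraic bookkeeping but the existence and uniqueness of the cycle $\Gamma_j$: one must justify that $T_\varepsilon$ is indeed a Leray coboundary of a $2$-cycle on $S_j(\lambda,\mu)$ that varies continuously with $(\lambda,\mu)$ in the polydisc, so that $\iint_{\Gamma_j}\omega_j$ is a single-valued holomorphic germ whose Taylor series agrees with the formally computed residue. This follows from the standard linking construction together with the absolute convergence of the multinomial sum on $T_\varepsilon$, ensured by the choice $|f_j-1|<1$.
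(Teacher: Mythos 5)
Your proposal is correct and follows essentially the same route as the paper: both reduce the period to a residue integral over the real $3$-torus (the paper via the explicit root-section $q(\gamma_1\times\gamma_3)$ of the double cover in $y$ and iterated one-variable residues, you via the constant term of $f_j^{-1}$) and then expand geometrically/multinomially to read off exactly the same coefficients. The only blemishes are cosmetic: in case (3) your constant-term equations should read $a=n$, $b=n$, $c=m$ under your convention $n\leftrightarrow\lambda$, $m\leftrightarrow\mu$ (which still yields the stated series), and $T_\varepsilon$ is a Leray coboundary of a $2$-cycle on $S_j(\lambda,\mu)$ only after enlarging $(\mathbb{C}^\ast)^3$ so that the residue variable may vanish -- equivalently, one argues fibrewise with the one-variable residue theorem over the $2$-torus base, which is precisely what the paper does.
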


\begin{proof}
Here, we state the detailed proof only for the case (0).

When $(\lambda,\mu)$ is sufficiently small, $S_0(\lambda,\mu)$ in (\ref{eq:P4}) is regarded  as a double cover by the projection
$$p: (x,y,z)\mapsto (x,z).$$
Let $\xi _1(x,z),\xi _2(x,z)$ be the two roots of $F_0(x,y,z)=0$ in $y$. Then, we have
$$
F_0(x,y,z)=xz^2(y-\xi _1(x,z))(y-\xi_2(x,z)).
$$
and
$$
\frac{\partial F_0}{\partial y}(x,y,z)=xz^2((y-\xi _1(x,z))+(y-\xi_2(x,z))).
$$
Therefore, at $(x,\xi_1(x,z),z)\in S_0(\lambda ,\mu)$, 
$$
\frac{\partial F_0}{\partial y}(x,\xi _1(x,z),z)=xz^2(\xi _1(x,z)-\xi _2(x,y)).
$$
We have a local inverse mapping of $p$
$$
q: (x,z)\mapsto (x,\xi_1(x,z),z).
$$
Let $\gamma _1$ ($\gamma _2$, $\gamma _3$, resp.) be a cycle in $x$-plane ($y$-plane, $z$-plane, resp.)  which goes around the origin once in the positive direction. We suppose that there exists $\delta> 0$ such that  it holds 
$$
|\xi_1(x,z)|-|\xi_2(x,z)|\geq \delta
$$
for any $(x,z)\in \gamma_1\times\gamma_3$.
We assume that  $x=-1$ stays outside of $\gamma_1$, $z=-1-x$ stays outside of $\gamma_3$ for any $x\in \gamma_1$, and that $y=\xi_1(x,z)$ stays inside of $\gamma_2$ and $y=\xi_2(x,z)$ and $-1-x-z$ stay outside of $\gamma_2$ for any $(x,z)\in \gamma_1\times\gamma_3$. Moreover, by taking a neighborhood $U$ of the origin sufficiently small, we  assume
$$
|\lambda xyz +\mu|\leq |xyz^2(x+y+z+1)|
$$
for any $(x,y,z)\in \gamma_1\times\gamma_2\times\gamma_3$ and $(\lambda,\mu)\in U$.
So, $q(\gamma_1 \times \gamma_3)$ is a $2$-cycle on $S_0(\lambda,\mu)$.

Let us calculate the period integral on the 2-cycle $q(\gamma_1\times\gamma_3)$ on $S_0(\lambda,\mu)$.
Let $\omega$ be the holomorphic 2-form given in (\ref{eq:2formP4}). By the residue theorem, 
\begin{align}\label{eq:integralP4}
&\iint_{q(\gamma_1\times\gamma_3)} \omega
=\iint_{\gamma_3 \times \gamma_1} \frac{z dz \wedge dx}{xz^2(\xi_1 (x,z) -\xi_2 (x,z))} \notag\\
&=\frac{1}{2\pi\sqrt{-1}}\iiint_{\gamma_3 \times \gamma_1\times\gamma_2}\frac{z dz \wedge dx \wedge dy}{xz^2(y-\xi_1 (x,z))(y-\xi_2(x,z))}\notag\\
&=\frac{1}{2\pi \sqrt{-1}}\iiint_{\gamma_3 \times \gamma_1\times\gamma_2}\frac{z dz \wedge dx \wedge dy}{xyz^2(x+y+z+1) + \lambda xyz +\mu}.
\end{align}
By the residue theorem and the binomial theorem, we have
\begin{align*}
&\frac{1}{2\pi \sqrt{-1}}\iiint_{\gamma_3 \times \gamma_1\times\gamma_2}\frac{z dz \wedge dx \wedge dy}{xyz^2(x+y+z+1) + \lambda xyz +\mu}\\
&=\frac{1}{2\pi \sqrt{-1}}\iiint_{\gamma_3 \times \gamma_1\times\gamma_2}\frac{1}{xyz^2(x+y+z+1)} \frac{z dz \wedge dx \wedge dy}{1+ \frac{\lambda xyz +\mu } {xyz^2(x+y+z+1)}}\\
&=\frac{1}{2\pi \sqrt{-1}}\sum_{l=0}^\infty \iiint _{\gamma_3\times\gamma_1\times\gamma_2} \frac{z (-\lambda x y z-\mu) ^l}{(xyz^2(x+y+z+1))^{l+1}} dz\wedge dx \wedge dy\\
&=\frac{1}{2\pi \sqrt{-1}}\sum_{m,n=0}^\infty \iiint _{\gamma_3\times\gamma_1\times\gamma_2} \begin{pmatrix}m+n\\m\end{pmatrix}\frac{x^n y^n z^{n+1} dz\wedge dx\wedge dy }{(xyz^2(x+y+z+1))^{m+n+1}}(-\lambda)^n (-\mu )^m\\
&=\frac{1}{2\pi \sqrt{-1}}\sum_{m,n=0}^\infty \iiint _{\gamma_3\times\gamma_1\times\gamma_2} \frac{(m+n)!}{n!m!}\frac{dz\wedge dx\wedge dy }{x^{m+1} y^{m+1} z^{2m+n+1}(x+y+z+1)^{m+n+1}}(-\lambda)^n (-\mu )^m\\
&=\sum_{n,m=0}^\infty \iint_{\gamma_3\times\gamma_1}\frac{(2m+n)!}{(m!)^2 n!}(-1)^m \frac{dz\wedge dx}{x^{m+1}z^{2m+n+1}(x+z+1)^{2m+n+1}}(-\lambda )^n(-\mu )^m\\
&=(2\pi \sqrt{-1})\sum_{n,m=0}^\infty \int_{\gamma_3} \frac{(3m+n)!}{(m!)^3 n!}\frac{dz}{z^{2m+n+1} (z+1)^{3m+n+1}} (-\lambda )^n (-\mu )^m\\
&=(2\pi \sqrt{-1})^2\sum_{n,m=0}^\infty (-1)^m \frac{(5m +2n)!}{(m!)^3 n! (2m+n)!} \lambda ^n \mu ^m.
\end{align*}
The above power series is holomorphic on $U$.
\end{proof}

\begin{rem}
In the case {\rm (1)}, our period reduces to the Appell $F_4$(see {\rm \cite{Koike}}
):
$$
\eta_1(\lambda ,\mu)=F_4\Big(\frac{1}{3},\frac{2}{3},1,1;27\lambda,27\mu\Big)=F\Big(\frac{1}{3},\frac{2}{3},1;x\Big)F\Big(\frac{1}{3},\frac{2}{3},1,;y\Big),
$$
where $F$ is the Gauss hypergeometric function and $x(1-y)=27\lambda, y(1-x)=27\mu$.
\end{rem}

In Section 1,  we obtain our families as a section of anti-canonical section of the toric varieties derived from the reflexive polytopes.
We  obtain the GKZ  system of equations  for the periods.

In the following, we use the notation 
$$
\theta _\lambda =\lambda\frac{\partial}{\partial \lambda}, \quad \theta _\mu =\mu \frac{\partial }{\partial \mu}.
$$

\begin{prop}
Let $\eta_j(\lambda,\mu)$ $(j=0,1,2,3)$ be the periods given in {\rm Proposition \ref{periodprop}}.
Then,
\begin{eqnarray*}
D^{(j)} _1 \eta_j (\lambda,\mu) = D^{(j)} _2 \eta_j (\lambda,\mu) =0   \quad\quad\quad(j=0,1,2,3),
\end{eqnarray*}
 where $D^{(j)}_1$ and $D^{(j)}_2$ are given as follows. 
 
{\rm (0)} 
{\rm (}The GKZ system of equations for $\mathcal{F}_0$ {\rm )}
\begin{eqnarray*}\label{system:GKZ}
&&
\begin{cases}
D^{(0)}_1&= \theta _\lambda (\theta  _\lambda +2\theta _\mu )-\lambda (2\theta _\lambda +5\theta _\mu +1)(2\theta _\lambda +5\theta _\mu +2),\cr
D^{(0)}_2&= \lambda ^2 \theta _{\mu }^3+\mu \theta _\lambda (\theta _\lambda -1)(2\theta _\lambda +5\theta _\mu +1).
\end{cases}
\end{eqnarray*}
 
{\rm (1)}
{\rm (}The GKZ  system of  equations for $\mathcal{F}_1${\rm )}
\begin{eqnarray*}
&&
\begin{cases}
D_1^{(1)}=\lambda \theta_\mu ^2-\mu \theta_\lambda^2,\\
D_2^{(1)}=\lambda(3\theta_\lambda+3\theta_\mu)(3\theta_\lambda+3\theta_\mu-1)(3\theta_\lambda+3\theta_\mu-2).
\end{cases}
\end{eqnarray*}

{\rm (2)} 
{\rm (}The GKZ  system of  equations for $\mathcal{F}_2${\rm )}
\begin{eqnarray*}
&&
\begin{cases}
D_1^{(2)}&=\lambda \theta _\mu ^2 +\mu \theta _\lambda  (3\theta _\lambda +4\theta _\mu +1),\cr
D_2^{(2)}&=\theta _\lambda (\theta _\lambda +\theta _\mu )^2+\lambda (3\theta _\lambda +4\theta _\mu +1)(3\theta _\lambda +4\theta _\mu +2)(3\theta _\lambda +4\theta _\mu +3).
\end{cases}
\end{eqnarray*}

{\rm (3)}
{\rm (}The GKZ  system of  equations for $\mathcal{F}_3${\rm )}
\begin{eqnarray*}
&&
\begin{cases}
D_1^{(3)}&=\theta _{\lambda}^2-\mu (3\theta _{\lambda }+2\theta _{\mu }+1)(3\theta _{\lambda }+2\theta _{\mu }+2),\cr
D_2^{(3)}&=\theta _{\lambda }^3+\lambda  (3\theta _{\lambda }+2\theta _{\mu }+1)(3\theta _{\lambda }+2\theta _{\mu }+2)(3\theta _{\lambda }+2\theta _{\mu }+3).
\end{cases}
\end{eqnarray*}
\end{prop}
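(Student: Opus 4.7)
The plan is to leverage the explicit power series for $\eta_j$ supplied by Proposition \ref{periodprop}. Writing $\eta_j(\lambda,\mu)=(2\pi\sqrt{-1})^2\sum_{n,m\geq 0}c^{(j)}_{n,m}\lambda^n\mu^m$ and using that $\theta_\lambda$ and $\theta_\mu$ act on the monomial $\lambda^n\mu^m$ by multiplication by $n$ and $m$ respectively, each of the operators $D_1^{(j)},D_2^{(j)}$ translates, after collecting the coefficient of $\lambda^n\mu^m$, into a linear recurrence among the $c^{(j)}_{n,m}$. The task is then to verify these recurrences directly from the factorial expressions for the coefficients.

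For instance, in case $j=0$, applying $D_1^{(0)}=\theta_\lambda(\theta_\lambda+2\theta_\mu)-\lambda(2\theta_\lambda+5\theta_\mu+1)(2\theta_\lambda+5\theta_\mu+2)$ to the series and extracting the coefficient of $\lambda^n\mu^m$ produces
\[
n(n+2m)\,c^{(0)}_{n,m}-(2n+5m-1)(2n+5m)\,c^{(0)}_{n-1,m}=0,
\]
which is immediate from the ratio
\[
\frac{c^{(0)}_{n,m}}{c^{(0)}_{n-1,m}}=\frac{(5m+2n)(5m+2n-1)}{n(2m+n)},
\]
obtained by cancelling factorials. The operator $D_2^{(0)}$ yields in the same manner a recurrence relating $c^{(0)}_{n-2,m}$ and $c^{(0)}_{n,m-1}$, which reduces to another identity between factorial ratios (with the sign $(-1)^m$ in $c^{(0)}_{n,m}$ cancelling the sign produced by the operator). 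The three remaining families $\mathcal{F}_1,\mathcal{F}_2,\mathcal{F}_3$ are treated in exactly the same way: one computes the two ratios $c^{(j)}_{n,m}/c^{(j)}_{n-1,m}$ and $c^{(j)}_{n,m}/c^{(j)}_{n,m-1}$ from the closed forms in Proposition \ref{periodprop} and checks that the recurrences predicted by $D_1^{(j)}$ and $D_2^{(j)}$ hold.

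Conceptually there is no obstacle, since each series $\eta_j$ is an $A$-hypergeometric series (in the sense of Gelfand--Kapranov--Zelevinsky) attached to the lattice points and exponent vectors of the five monomials appearing in $F_j$, and the operators $D_1^{(j)},D_2^{(j)}$ are precisely the toric annihilators coming from the two independent integer relations among those exponent vectors (the Mori-cone generators of the toric variety $X$ in Section 1). The only genuine labor is the careful bookkeeping of factorials and signs, which is the sole place where a slip could occur. I would therefore write out the $j=0$ case in detail as a template and state that the verifications for $j=1,2,3$ are entirely parallel, recording only the key ratios of consecutive coefficients in each case.
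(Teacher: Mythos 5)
Your verification strategy is sound, and your own closing remark already hints at the difference: the paper does not verify the operators against the series at all, but \emph{derives} them. Concretely, the paper's proof takes the extended matrices $\mathcal{A}_j$ and the vector $\beta$, observes via the residue computation that the period is (up to the factor $1/a_1$) a GKZ integral $\iiint_\Delta R_j^{-1}t_1^{-1}t_2^{-1}t_3^{-1}dt_1\wedge dt_2\wedge dt_3$ over a twisted cycle, writes down the homogeneity equations and the two box operators in the variables $a_1,\dots,a_6$, and then converts $\theta_{a_i}$ into $\theta_\lambda,\theta_\mu$ through the substitution (\ref{eq:para}); only the case $j=0$ is written out, the others being analogous. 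Your proposal instead checks annihilation directly by extracting the two-term recurrences for the coefficients $c^{(j)}_{n,m}$ and cancelling factorials; the $j=0$ recurrences you display are correct, and the sign bookkeeping works as you say. The trade-off is clear: your route is elementary and self-contained (it needs only Proposition \ref{periodprop}), but it is purely a verification and gives no account of where $D_1^{(j)},D_2^{(j)}$ come from, whereas the paper's GKZ derivation produces the operators from the toric data and sidesteps the case-by-case factorial manipulation.

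One concrete warning for the write-up: a literal coefficient check of the printed $D_2^{(1)}$ fails. As stated, $D_2^{(1)}=\lambda(3\theta_\lambda+3\theta_\mu)(3\theta_\lambda+3\theta_\mu-1)(3\theta_\lambda+3\theta_\mu-2)$ has no pure Euler-operator term, and applying it to $\eta_1$ leaves, e.g., a nonzero coefficient at $\lambda^2\mu^0$. The correct operator (the one the GKZ derivation yields, and the one consistent with the recurrence $n^2(n+m)c^{(1)}_{n,m}+(3n+3m)(3n+3m-1)(3n+3m-2)c^{(1)}_{n-1,m}=0$) is $\theta_\lambda^2(\theta_\lambda+\theta_\mu)+\lambda(3\theta_\lambda+3\theta_\mu+1)(3\theta_\lambda+3\theta_\mu+2)(3\theta_\lambda+3\theta_\mu+3)$; the statement evidently contains a typo, which your method would in fact detect, so you should carry out the $j=1$ check against the corrected operator rather than the printed one.
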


\begin{proof}
Extending the matrix $P_j$ $(j=0,1,2,3)$, set
\begin{align*}
&\mathcal{A}_0=\begin{pmatrix}
1&1&1&1&1&1\\
0&1&0 &0& 0&-1\\
0&0&1  & 0& 0 & -1\\
0&0& 0&1 & -1& -2\end{pmatrix},
\quad\quad
\mathcal{A}_1=\begin{pmatrix}
1&1&1&1&1&1\\
0&1&0&0&-1&0\\
0&0&1&0&0&-1\\
0&0&0&1&-1&-1
\end{pmatrix},\\
&\mathcal{A}_2=\begin{pmatrix}
1&1&1&1&1&1\\
0&1&0&0&0&-1\\
0&0&1&0&-1&-1\\
0&0&0&1&-1&-1
\end{pmatrix},
\quad\quad
\mathcal{A}_3=\begin{pmatrix}
1&1&1&1&1&1\\
0&1&0&0&-1&0\\
0&0&1&0&-1&0\\
0&0&0&1&0&-1
\end{pmatrix},
\end{align*}
and
$\displaystyle 
\beta=\begin{pmatrix}
-1\\
0\\
0\\
0\\
\end{pmatrix}.
$
From the matrix $\mathcal{A}_j$ $(j=0,1,2,3)$ and the vector $\beta$, we have the GKZ system for $\eta_j(\lambda,\mu)$  $(j=0,1,2,3)$.
In the following, we state the detailed proof only for $\mathcal{F}_0$.

The GKZ system of equations  defined by  $\mathcal{A}_0$ and $\beta$  has a solution 
\begin{align}\label{eq:inttori}
&\iiint _{\Delta} R_0^{-1} t_1 ^{-1}t_2^{-1} t_3^{-1} dt_1 \wedge dt_2 \wedge dt_3 \notag\\
&=\iiint_{\Delta} \frac{ t_3  dt_1 \wedge dt_2 \wedge dt_3}{(t_1t_2t_3^2(a_1+a_2t_1 +a_3t_2+a_4 t_3 )+a_5 t_1 t_2 t_3+a_6 )},
\end{align}
where
$$
R_0=a_1 +a_2 t_1 +a_3t_2+a_4 t_3 +a_5 \frac{1}{t_3}+a_6 \frac{1}{t_1 t_2 t_3^2},
$$
and
$\Delta$ is a twisted cycle.
By the parameter transformation (\ref{eq:para}), (\ref{eq:inttori}) is transformed to
$$
\frac{1}{a_1} 
\iiint_{\Delta} 
\frac{z dx\wedge dy \wedge dz}
{xyz^2(x+y+z+1)+\lambda xyz +\mu}
 =\frac{1}{a_1} \eta(\lambda , \mu).
$$

Set $\displaystyle \theta_j = a_j \frac{\partial}{\partial a_j}$.
The above mentioned  GKZ system  is given by the following equations (\ref{system:GKZ1}),(\ref{eq:partial1}) and (\ref{eq:partial2}) :
\begin{eqnarray}\label{system:GKZ1}
\begin{cases}
(\theta_1 +\theta_2+\theta _3+\theta _4+\theta _5+\theta _6)\eta =-\eta,\cr
(\theta _2 -\theta _6)\eta =0,\cr
(\theta _3 -\theta _6)\eta =0,\cr
(\theta _4-\theta _5-2\theta _6) \eta = 0,
\end{cases}
\end{eqnarray}
\begin{align}
&\frac{\partial ^2}{\partial a_4 \partial a_5}\eta=\frac{\partial ^2 }{\partial a_1 ^2}\eta,  \label{eq:partial1}\\
&\frac{\partial ^3}{\partial a_2 \partial a_3 \partial a_6} \eta=\frac{\partial ^3}{\partial a_1 \partial a_5 ^2}\eta. \label{eq:partial2}
\end{align}

By (\ref{eq:para}), we have
\begin{align*}
\theta _\lambda =\theta_5, \quad
\theta_\mu =
\theta_6.
\end{align*}
So, from (\ref{system:GKZ1}) we have
\begin{align*}
\begin{cases}
\theta _2 \eta =\theta _\mu \eta,\\
\theta _3 \eta=\theta _\mu \eta,\\
\theta _4 \eta=(\theta _\lambda +2\theta _\mu )\eta,\\
\theta_ 1 \eta=(-2\theta _\lambda -5\theta _\mu -1)\eta. 
\end{cases}
\end{align*}
From (\ref{eq:partial1}), we have
\begin{align*}
\begin{cases}
&\vspace*{0.2cm}
\displaystyle\frac{\partial ^2}{\partial a_4 \partial a_5} \eta =\frac{1}{a_4 a_5}\theta_4 \theta_5 \eta =\frac{1}{a_4 a_5} (\theta _\lambda +2\theta \_\mu )\theta _\lambda \eta ,\\
&\displaystyle \frac{\partial ^2}{\partial a_1^2}\eta=\frac{1}{a_1^2} \theta _1(\theta _1 -1)\eta=\frac{1}{a_1^2}(2\theta _\lambda +5\theta _\mu +1)(2\theta _\lambda +5\theta _\mu +2)\eta.
\end{cases}
\end{align*}
Hence, we obtain
\begin{align*}
(\theta _\lambda +2\theta _\mu) \eta =\lambda (2\theta _\lambda +5\theta_\mu+1 )(2\theta _\lambda +5\theta _\mu +2)\eta.
\end{align*}
Similarly, from (\ref{eq:partial2}), we have
\begin{align*}
\begin{cases}
&\vspace*{0.2cm}
\displaystyle \frac{\partial ^3}{\partial a_2 \partial a_3 \partial a_6}\eta=\frac{1}{a_2 a_3 a_6} \theta _2 \theta _3 \theta _6 \eta=\frac{1}{a_2 a_3 a_6}\theta _\mu ^3 \eta,\\
&\displaystyle \frac{\partial ^3}{\partial a_1 \partial a_5 ^2}\eta=\frac{1}{a_5 ^2}\theta _1 \theta _5(\theta _5 -1)\eta=\frac{1}{a_1 a_5^2} (-2\theta _\lambda -5 \theta _\mu -1)\theta _\lambda (\theta _\lambda -1)\eta,
\end{cases}
\end{align*}
hence
\begin{align*}
\lambda ^2 \theta _\mu ^3 \eta = - \mu (2 \theta _\lambda +5\theta _\mu +1)\theta _\lambda (\theta _\lambda -1)\eta.
\end{align*}
\end{proof}

We  obtain  $6\times 6$ Pfaffian systems from the above GKZ systems  $D_1^{(j)}u=D_2^{(j)}u=0$ $(j=0,1,2,3)$. 
These systems are integrable. Therefore, each system  has a 6-dimensional  space of solutions. However, as we remarked in \rm Corollary \ref{cor:dim}, we expect the  systems of differential equations  with 4-dimensional space of solutions. It suggests that  the above systems  are reducible.
So,
using the above $D^{(j)}_1$  $(j=0,1,2,3)$, 
 we determine the period differential equation for $\mathcal{F}_j$ $(j=0,1,2,3)$  with 4-dimensional spaces of solutions.

\begin{thm}\label{thm:periodDE!}
Let $j\in\{0,1,2,3\}$.
Set the  system of differential equations  $D^{(j)}_1 u= D^{(j)}_3 u =0$ as follows.
Then, 
$$
D^{(j)}_1 \eta_j(\lambda,\mu) = D^{(j)}_3 \eta_j (\lambda,\mu) =0,
$$
where $\eta_j (\lambda,\mu)$ is given in {\rm Proposition \ref{periodprop}}.
The space of solutions of this system is $4$-dimensional.

{\rm (0)}
{\rm (}The period differential equation for $\mathcal{F}_0${\rm)}
 \begin{eqnarray} \label{eq:systemP4}
&&
\begin{cases}
D^{(0)}_1&= \theta _\lambda (\theta  _\lambda +2\theta _\mu )-\lambda (2\theta _\lambda +5\theta _\mu +1)(2\theta _\lambda +5\theta _\mu +2),\cr
D^{(0)}_3&=\lambda ^2(4\theta _\lambda ^2-2\theta _\lambda \theta _\mu +5\theta _\mu ^2)
-8\lambda ^3(1+3\theta _\lambda +5\theta _\mu +2\theta _\lambda ^2+5\theta _\lambda \theta _\mu )+25\mu \theta _\lambda (\theta _\lambda -1).
\end{cases}
\end{eqnarray}

{\rm (1)}
{\rm (}The period differential equation for $\mathcal{F}_1${\rm )}
\begin{eqnarray} \label{eq:systemP1}
&&
\begin{cases}
D_1^{(1)}&=\lambda \theta _\mu ^2 -\mu \theta _\lambda^2  ,\vspace{2mm}\\
D_3^{(1)}&=\displaystyle \frac{1}{27} \theta_{\lambda}^2 u + \lambda \Big(\frac{2}{9}  + \theta_\lambda + \theta_\mu + \theta_\lambda^2
+ 2 \theta_{\lambda\mu} + \theta_\mu^2\Big)u = 0.
\end{cases}
\end{eqnarray}

{\rm (2)}
{\rm (}The period differential equation for $\mathcal{F}_2${\rm )}
 \begin{eqnarray} \label{eq:systemP2}
&&
\begin{cases}
D_1^{(2)}&=\lambda \theta _\mu ^2 +\mu \theta _\lambda  (3\theta _\lambda +4\theta _\mu +1),\cr
D_3^{(2)}&=\lambda \theta _\lambda (3\theta _\lambda +2 \theta _\mu )
+\mu \theta _\lambda (1-\theta _\lambda )+9\lambda ^2(3\theta _\lambda +4\theta _\mu +1)(3\theta _\lambda +4\theta _\mu +2).
\end{cases}
\end{eqnarray}

{\rm (3)}
{\rm (}The period differential equation for $\mathcal{F}_3${\rm )}
 \begin{eqnarray} \label{eq:systemP3}
&&
\begin{cases}
D_1^{(3)}&=\theta _{\lambda}^2-\mu (3\theta _{\lambda }+2\theta _{\mu }+1)(3\theta _{\lambda }+2\theta _{\mu }+2),\cr
D_3^{(3)}&=\theta _{\lambda }(3\theta _\lambda -2\theta _\mu )+9\lambda (3\theta _{\lambda }+2\theta _{\mu }+1)(3\theta _{\lambda }+2\theta _{\mu }+2)+4\mu \theta _{\lambda }(3\theta _{\lambda }+2\theta _{\mu }+1).
\end{cases}
\end{eqnarray}
\end{thm}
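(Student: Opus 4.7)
The annihilation $D_1^{(j)}\eta_j=0$ is immediate, since $D_1^{(j)}$ appears already in the GKZ system treated in the preceding proposition. Thus the two genuinely new assertions in the statement are: (i) the additional relation $D_3^{(j)}\eta_j=0$, and (ii) the four-dimensionality of the solution space of the pair $\{D_1^{(j)}, D_3^{(j)}\}$.

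For (i), the most direct route is a power-series computation. Writing the explicit expansion of Proposition \ref{periodprop} as $\eta_j=\sum c^{(j)}_{n,m}\lambda^n\mu^m$ and using $\theta_\lambda\lambda^n\mu^m=n\lambda^n\mu^m$, $\theta_\mu\lambda^n\mu^m=m\lambda^n\mu^m$, the condition $D_3^{(j)}\eta_j=0$ becomes a finite short recurrence among the coefficients $c^{(j)}_{n,m}$, which collapses to a standard telescoping identity between quotients of factorials. A more conceptual route is to exhibit operators $P_j, Q_j$ in the Weyl algebra $\mathbb{C}\langle\lambda,\mu,\partial_\lambda,\partial_\mu\rangle$ satisfying $D_3^{(j)}=P_jD_1^{(j)}+Q_jD_2^{(j)}$; such a decomposition is expected because $D_3^{(j)}$ is constructed precisely by using $D_1^{(j)}$ to eliminate the highest powers of $\theta_\mu$ (or $\theta_\lambda$) from $D_2^{(j)}$, thereby yielding a genuinely new order-two relation rather than the order-three relation $D_2^{(j)}$.

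For (ii), I would recast $\{D_1^{(j)}u=0, D_3^{(j)}u=0\}$ as a Pfaffian system on $\Lambda_j$. Both operators are of order two and, with respect to a suitable term order on $\mathbb{C}\langle\theta_\lambda,\theta_\mu\rangle$, their leading monomials are $\theta_\lambda^2$ and $\theta_\mu^2$ (in either order depending on the family), so that the set of standard monomials is $\{1,\theta_\lambda,\theta_\mu,\theta_\lambda\theta_\mu\}$. Using $D_1^{(j)}$ to solve for one leading monomial and $D_3^{(j)}$ for the other, and iterating, every derivative $\theta_\lambda^a\theta_\mu^b u$ is reduced to a $\mathbb{C}(\lambda,\mu)$-linear combination of the four standard monomials with coefficients regular on $\Lambda_j$. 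This produces a rank-four integrable Pfaffian connection, so the solution space has dimension at most four. The reverse inequality is furnished by Corollary \ref{cor:dim}, which guarantees four $\mathbb{C}$-linearly independent period integrals among the $\int_{\psi^{-1}(\gamma_k)}\omega_j$.

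The main obstacle is step (ii): one must verify that iterating the two reductions does not force any additional relation that would drop the rank below four, equivalently that the candidate $4\times 4$ connection matrices obtained from $D_1^{(j)}$ and $D_3^{(j)}$ are integrable. For each $j\in\{0,1,2,3\}$ this amounts to a lengthy but mechanical rational-function calculation, whose feasibility is ensured by the invertibility on $\Lambda_j$ of the leading coefficients of the two operators; once the integrability is confirmed, the combination of the Pfaffian upper bound and Corollary \ref{cor:dim} yields the desired equality of dimensions for all four families.
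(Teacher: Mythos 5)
Your main line coincides with the paper's. The paper also obtains $D_3^{(j)}$ by an indeterminate-coefficient ansatz $f_1+f_2\theta_\lambda+f_3\theta_\mu+f_4\theta_\lambda^2+f_5\theta_\lambda\theta_\mu+f_6\theta_\mu^2$ forced to annihilate the explicit series $\eta_j$ (your power-series route), and it proves $4$-dimensionality exactly as you propose: it rewrites $\{D_1^{(j)},D_3^{(j)}\}$ as a rank-four Pfaffian system $d\varphi=\Omega_j\varphi$ (with the basis $\varphi={}^t(1,\theta_\lambda,\theta_\mu,\theta_\lambda^2)$ rather than your $(1,\theta_\lambda,\theta_\mu,\theta_\lambda\theta_\mu)$) and checks $d\Omega_j=\Omega_j\wedge\Omega_j$. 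Note that once integrability is checked, Frobenius gives dimension exactly four, so your appeal to Corollary \ref{cor:dim} for the lower bound is unnecessary; moreover, as you use it, it has a small gap, since that corollary concerns the four periods $\int_{\psi^{-1}(\gamma_k)}\omega_j$ and you would still need to argue that all four of them, not only $\eta_j$, are annihilated by $D_1^{(j)}$ and $D_3^{(j)}$.

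The one genuinely incorrect point is your proposed ``more conceptual route'': there are no operators $P_j,Q_j$ with $D_3^{(j)}=P_jD_1^{(j)}+Q_jD_2^{(j)}$. If $D_3^{(j)}$ lay in the left ideal generated by $D_1^{(j)}$ and $D_2^{(j)}$, then every solution of the GKZ pair would satisfy $D_3^{(j)}$, so the solution space of $\{D_1^{(j)},D_3^{(j)}\}$ would contain the six-dimensional solution space of $\{D_1^{(j)},D_2^{(j)}\}$, contradicting the rank four you are trying to prove. The reducibility the paper invokes means precisely that the period satisfies an extra second-order relation that is \emph{not} a consequence of $D_1^{(j)},D_2^{(j)}$; hence the series verification is not merely the most direct of your two options but the only viable one. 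Two smaller cautions: for $j=0$ the recurrence produced by $D_3^{(0)}$ links $c_{n-2,m}$, $c_{n-3,m}$ and $c_{n,m-1}$, so it is a mechanical polynomial identity in $(n,m)$ after clearing factorials rather than a two-term telescoping check; and the denominators arising in your reduction (for $j=0$, $\lambda+16\lambda^2-80\lambda^3+125\mu$) vanish on part of $\Lambda_j$, so the Pfaffian form holds only off such apparent singularities --- harmless for the dimension count at a generic point, but ``invertibility on $\Lambda_j$'' is not literally correct, and the paper itself has to remark that such loci (e.g. $s=0$) are only apparent singularities of its chosen basis.
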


\begin{proof}
 We determine  $D_3^{(j)}$ $(j=0,1,2,3)$ by the method of indeterminate coefficients. Set 
$
D=f_1+f_2\theta_\lambda+ f_3 \theta_\mu +f_4 \theta^2_\lambda +f_5 \theta_\lambda \theta_\mu +f_6 \theta^2_\mu,
$
where $f_1\cdots f_6\in\mathbb{C} [\lambda,\mu]$.
Let $j\in \{0,1,2,3\}$.
We can determine the polynomials $f_1,\cdots,f_6$ so that $D$ satisfies $D \eta_j =0$ ($\eta_j$ is given in {\rm Proposition \ref{periodprop}}) and  is  independent of $D^{(j)}_1$. Thus, we obtain the above $D^{(j)}_3$.

In the following, we prove that the spaces of solutions are 4-dimensional.
Let $j\in\{0,1,2,3\}$.
 By making up the Pfaffian system of $D^{(j)}_1 u=D^{(j)}_3 u=0$, we shall show the required statement. 
Set $\varphi = {}^t(1,\theta _\lambda ,\theta _\mu ,\theta _\lambda ^2)$. We obtain the Pfaffian system $\Omega_j=\alpha_j d\lambda +\beta_j d\mu$ with $d\varphi=\Omega_j \varphi$ as follows.
We can check that 
$$
d\Omega_j = \Omega_j \wedge \Omega_j.
$$
Therefore, each system $D_1^{(j)} u = D_3^{(j)} u =0 $ has   the $4$-dimensional space of solution.\\
(0) (The Pfaffian system for $\mathcal{F}_0$)

Setting
\begin{align*}
\begin{cases}
t=\lambda ^2(4\lambda -1)^3-2(2+25\lambda(20\lambda -1))\mu -3125\mu ^2,
\cr
s=1-15\lambda -100\lambda ^2,
\end{cases}
\end{align*}
we have
$$
\alpha_0=\left(
\begin{array}{cccc}
0&1 &0& 0\\
0&0 & 0&1 \\
{a_{11}}/{s}& {a_{12}}/{(2\lambda s)}&{a_{13}}/{(2s)} &{a_{14}}/{(2\lambda s)}\\
{a_{21}}/{(st)}& {a_{22}}/{(2st)}&{a_{23}}/{(2st)} &a_{24}/{(2st)}\\
\end{array} 
\right)
$$
with
\begin{align*}
\begin{cases}
a_{11}=\lambda (1+20\lambda ),\hspace{3cm}
a_{12}=6\lambda ^2+120\lambda ^3+125\mu , \\
a_{13}=5\lambda (3+40\lambda ) ,\hspace{2.8cm}
a_{14}=-(\lambda +16\lambda ^2-80\lambda ^3+125\mu ),\\
a_{21}=-\lambda ^3(2+2125\mu +\lambda (-17+616\lambda -2320\lambda ^2+2500(9+80\lambda )\mu )),\\
a_{22}=-(-2\lambda ^3(-1+4\lambda )(8+5\lambda (-13+4\lambda (83+40\lambda )))\\
\quad \quad\quad+ (-16+5\lambda (94+5\lambda (59+10\lambda (-73+20\lambda (37+160\lambda )))))\mu +3125(-4+5\lambda(21+200\lambda))\mu^2), \\
a_{23}=-\lambda ^3 (22+26875\mu +\lambda (-47+300000\mu +100\lambda (51+4\lambda (-49+20\lambda ) +20000\mu  ))),\\
a_{24}=12ts+3s(15\lambda -2)+2t(-3(1-4\lambda )^2\lambda ^2(-1+10\lambda )+75\lambda (-1+40\lambda )\mu),\\
\end{cases}
\end{align*}
and
$$
\beta_0=\left(
\begin{array}{cccc}
0&0 &1& 0\\
b_{11}/s& b_{12}/(2\lambda s)&b_{13}/(2s) &b_{14}/(2\lambda s)\\
b_{21}/(s)& b_{22}/(\lambda ^2 s)&b_{23}/(s) &b_{24}/(\lambda ^2 s)\\
b_{31}/(ts)& b_{32}/(2\lambda ts)&b_{33}/(2ts) &b_{34}/(2\lambda ts)\\
\end{array} 
\right)
$$
with
\begin{align*}
\begin{cases}
b_{11}=\lambda (1+20\lambda ),\hspace{3.5cm}
b_{12}=6\lambda ^2+120\lambda ^3+125\mu , \\
b_{13}=5\lambda (3+40\lambda ),\hspace{3.3cm}
b_{14}=-(\lambda +16\lambda ^2-80\lambda ^3+125\mu ),\\
b_{21}=-2\lambda (-1+4\lambda ),\hspace{2.9cm}
b_{22}=-(6\lambda ^3(-1+4\lambda )-5\mu +50\lambda \mu) ,\\
b_{23}=-\lambda (-11+20\lambda ),\hspace{2.8cm}
b_{24}=-((1-4\lambda )^2\lambda ^2-(5-50\lambda )\mu) ,\\
b_{31}=-(4(1-4\lambda )^2\lambda ^4(7+20\lambda )\\
\quad\quad\quad -\lambda (-4+25\lambda (-3+2\lambda (-7+20\lambda (1+80\lambda ))))\mu 
+3125\lambda (1+20\lambda )\mu ^2),\\
b_{32}=-(24(1-4\lambda )^2\lambda ^5(7+20\lambda )-2\lambda(-4+5\lambda (8+\lambda (-43+10\lambda (-57+20\lambda (7+160\lambda )))))\mu \\
\quad\quad\quad -125(-4+25\lambda (-3+32\lambda (1+10\lambda )))\mu ^2+390625\mu ^3)),\\
b_{33}=-(4\lambda ^3(-1+4\lambda )(-1+2\lambda (-32+25\lambda (1+12\lambda )))+15625\lambda (3+40\lambda )\mu ^2\\
\quad\quad\quad-5\lambda (-12+5\lambda (-1+10\lambda )(33+20\lambda (23+160\lambda )))\mu) 
,\\
b_{34}=-(4\lambda ^4(-1+4\lambda )^3(7+20\lambda )+3\lambda (-4+\lambda (31-490\lambda +76000\lambda ^3))\mu  \\
\quad\quad\quad+250(-2+25\lambda (-2+\lambda (11+260\lambda )))\mu ^2-390625\mu ^3).
\end{cases}
\end{align*}\\
(1) (The Pfaffian system for $\mathcal{F}_1$)

 Setting
$$
t_1=729\lambda^2 - 54\lambda(27\mu-1) +(1+27\mu)^2,
$$
we have
$$
\alpha_1=\left( 
\begin{array}{cccc}
0&1 &0& 0\\
0&0 & 0&1 \\
-1/9 & -1/2& -1/2 & -(1+27\lambda +27\mu )/(54\lambda)\\
a_{11}/t_1& a_{12}/(2t_1)&a_{23}/(2t_1) &a_{24}/(2t_1)\\
\end{array} 
\right)
$$
with
\begin{eqnarray*}
\begin{cases}
a_{11}=3\lambda (1-27\lambda+27\mu),\quad\quad
a_{12}=3\lambda(5-351\lambda +135\mu), \\
a_{13}= 27\lambda (1-3\lambda +27\mu),\quad\quad
a_{14}=3(-729\lambda ^2 +(1+27\mu)^2),
\end{cases}
\end{eqnarray*}
and
$$
\beta_1=\left( 
\begin{array}{cccc}
0&0 &1& 0\\
-1/9& -1/2 &-1/2 &-(1+27\lambda +27\mu)/(54\lambda)\\
0& 0&0 & \mu/\lambda \\
b_{11}/t_1& b_{12}/(2 t_1)&b_{13}/(2t_1)&b_{14}/(2t_1 )\\
\end{array} 
\right)
$$
with
\begin{eqnarray*}
\begin{cases}
b_{11}=3\lambda(1+27\lambda -27\mu),\quad\quad
b_{12}=27\lambda(1+ 27\lambda -3\mu) ,\\
b_{13}=3\lambda (5+135\lambda -351\mu),\quad\quad
b_{14}=(1+27\lambda)^2 +108(27\lambda -1)\mu -3645\mu ^2.
\end{cases}
\end{eqnarray*}\\
(2) (The Pfaffian system for $\mathcal{F}_2$)

 Setting
\begin{align*}
\begin{cases}
t_2=\lambda ^2(1+27\lambda )^2-2\lambda \mu(1+189\lambda )+(1+576\lambda )\mu ^2-256\mu ^3,\\
s_2=1+108\lambda -288\mu,
\end{cases}
\end{align*}
we have
$$
\alpha_2=\left( 
\begin{array}{cccc}
0&1 &0& 0\\
0&0 & 0&1 \\
a_{11}/s_2& a_{12}/(2\lambda s_2)& a_{13}/(s_2) & a_{14}/(2\lambda s_2)\\
a_{21}/(t_2s_2)& a_{22}/(t_2 s_2)&a_{23}/(t_2 s_2) &a_{24}/(t_2 s_2)\\
\end{array} 
\right)
$$
with
\begin{eqnarray*}
\begin{cases}
a_{11}=-{9\lambda },\hspace{3.1cm}
a_{12}=-(81\lambda ^2+\mu -144\lambda \mu) , \\
a_{13}=-54\lambda ,\hspace{3cm}
a_{14}=-3\lambda (1+27\lambda -144\mu ) +\mu ,\\
a_{21}=-6\lambda ^3(1+1458\lambda ^2-2592\lambda \mu +6\mu (-55+4608\mu )),\\
a_{22}=-3\lambda ^2(11+54\lambda (5+351\lambda ))+\lambda (1+4\lambda (61+810\lambda (5+72\lambda )))\mu  +64(17+2808\lambda )\mu ^3\\
\quad\quad\quad -147456\mu ^4-2(1+9\lambda (53+32\lambda (131+864\lambda )))\mu ^2,\\
a_{23}=-8\lambda ^3((2-27\lambda )^2+9(-133+2160\lambda )\mu +82944\mu ^2),\\
a_{24}=3 r_2 s_2+162\lambda r_2-3\lambda s_2(\lambda +81\lambda ^2+1458\lambda ^3-378\lambda \mu +\mu (-1+288\mu )),
\end{cases}
\end{eqnarray*}
and
$$
\beta_2=\left( 
\begin{array}{cccc}
0&0 &1& 0\\
b_{11}/{s_2}& b_{12}/(2\lambda  s_2)&b_{13}/s_2 &b_{14}/(2\lambda s_2)\\
b_{21}/(s_2)& b_{22}/(\lambda ^2 s_2)&b_{23}/s_2 &b_{24}/(\lambda ^2 s_2)\\
b_{31}/(t_2 s_2)& b_{32}/(2\lambda  t_2 s_2)&b_{33}/(t_2 s_2) &b_{34}/(2\lambda t_2 s_2)\\
\end{array} 
\right)
$$
with
\begin{eqnarray*}
\begin{cases}
b_{11}=-9\lambda ,\hspace{3cm}
b_{12}=-(81\lambda ^2+\mu -144\lambda \mu ),\\
b_{13}=-54\lambda,\hspace{2.8cm}
b_{14}=-3\lambda (1+27\lambda -144\mu )+\mu  ,\\
b_{21}=36\mu ,\hspace{3cm}
b_{22}=\mu (\lambda (-1+54\lambda )+2\mu ), \\
b_{23}=216\mu,\hspace{2.9cm}
b_{24}=(3(1-54\lambda )\lambda -2\mu  )\mu ,\\
b_{31}=3\lambda (81\lambda ^3(1+27\lambda )+\lambda (-1+36\lambda )(-5+108\lambda )\mu +3(-1+32\lambda )(1+432\lambda )\mu ^2+768\mu ^3,\\
b_{32}=2187\lambda ^5(1+27\lambda )-(1+192\lambda (11+1164\lambda ))\mu ^3+256(1+864\lambda )\mu ^4 \\
\quad\quad\quad -\lambda ^2(2+27\lambda (4+9\lambda (77+864\lambda )))\mu 
 +\lambda (5+\lambda (1279+864\lambda (85+864\lambda )))\mu ^2,\\
b_{33}=2\lambda (3\lambda ^2(1+27\lambda )(-1+135\lambda )+2\lambda (23+54\lambda (-11+972\lambda ))\mu \\
\quad\quad\quad +9(-3+64\lambda )(1+432\lambda )\mu ^2+6912\mu ^3,\\
b_{34}=-(-81\lambda ^4(1+27\lambda )^2+\lambda ^2(-7+9\lambda (-58+27\lambda (-125+3456\lambda )))\mu \\
\quad\quad\quad +\lambda (8+9\lambda (425+24192\lambda ))\mu ^2
-(1+3456\lambda (1+162\lambda ))\mu ^3+256(1+1440\lambda )\mu ^4.
\end{cases}
\end{eqnarray*}\\
(3) (The Pfaffian system for $\mathcal{F}_3$)

Setting
\begin{align*}
\begin{cases}
t_3=729\lambda ^2-(4\mu -1)^3+54\lambda (1+12\mu ),\\
s_3=-54\lambda +(1-4\mu )^2,
\end{cases}
\end{align*}
we have
$$
\alpha_3=\left( 
\begin{array}{cccc}
0&1 &0& 0\\
0&0 & 0&1 \\
a_{11}/s_3& a_{12}/(2s_3)&a_{13}/s_3 &a_{14}/(2s_3)\\
a_{21}/(t_3s_3)& a_{22}/(t_3s_3)&a_{23}/(t_3s_3) &a_{24}/(t_3s_3)\\
\end{array} 
\right)
$$
with
\begin{eqnarray*}
\begin{cases}
a_{11}=9\lambda, \hspace{3cm}
a_{12}=81\lambda +4(1-4\mu )\mu,\\ 
a_{13}=27\lambda ,\hspace{2.9cm}
a_{14}=3+81\lambda -48\mu ^2,\\
a_{21}=-2\lambda (-2187\lambda ^2+27\lambda (4\mu -9)(4\mu -1)-(-1+4\mu )^3(3+8\mu )),\\
a_{22}=3\lambda (9477\lambda ^2+(1-4\mu )^2(-11+4\mu (-9+16\mu ))-27\lambda (25+4\mu (-31+40\mu ))),\\
a_{23}=2\lambda (729\lambda ^2+(-1+4\mu )^3(11+16\mu )+27\lambda (-1+4\mu )(19+20\mu )),\\
a_{24}=81\lambda (-2+27\lambda +8\mu )(1+27\lambda -16\mu ^2),
\end{cases}
\end{eqnarray*}
and
$$
\beta_3=\left( 
\begin{array}{cccc}
0&0 &1& 0\\
b_{11}/s_3& b_{12}/(2s_3)&b_{13}/s_3 &b_{14}/(2s_3)\\
b_{21}/s_3& b_{22}/s_3&b_{23}/s_3 &b_{24}/s_3\\
b_{31}/(t_3 s_3)& b_{32}/(2 t_3 s_3)&b_{33}/(t_3 s_3)&b_{34}/(2t_3 s_3 )\\
\end{array} 
\right)
$$
with
\begin{eqnarray*}
\begin{cases}
b_{11}=9\lambda , \hspace{4.4cm}
b_{12}=81\lambda +4(1-4\mu )\mu ,\\
b_{13}=27\lambda ,\hspace{4.3cm}
b_{14}=3+81\lambda -48\mu ^2,\\
b_{21}=-2\mu (-1+4\mu ), \hspace{2.7cm}
b_{22}=-3\mu (-3+4\mu ),\\
b_{23}=-6\mu (-1+4\mu ), \hspace{2.7cm}
b_{24}=9\mu(3+4\mu ),\\
b_{31}=-3\lambda (2187\lambda ^2+32(1-4\mu )^2\mu (1+\mu )+27\lambda (3+16\mu (2+\mu )))\\
b_{32}=-9\lambda (6561\lambda ^2-81\lambda (-3+4\mu )(1+8\mu )+4\mu (-1+4\mu )(-33+4\mu (-3+16\mu ))) ,\\
b_{33}=-3\lambda (3645\lambda ^2+2(1-4\mu )^2(1+16\mu (3+2\mu ))+27\lambda (7+16\mu (5+9\mu ))) ,\\
b_{34}=-r_3 s_3+r_3(-8+351\lambda +32\mu )+s_3(9(729\lambda ^2+(1-4\mu )^2+54\lambda (1+8\mu )).
\end{cases}
\end{eqnarray*}
\end{proof}

\begin{rem}
By changing the system $\varphi = ^t(1,\theta _\lambda ,\theta _\mu ,\theta _\lambda ^2)$ to other ones, we see that $s=0$ is not a singularity. Together with the singularities of $\theta_\lambda$ and  $\theta_\mu$, we obtain the singular locus of the system {\rm (\ref{eq:systemP4})}:
\begin{align}\label{eq:P4singlar}
\lambda=0, \quad \mu=0, \quad \lambda ^2(4\lambda -1)^3-2(2+25\lambda(20\lambda -1))\mu -3125\mu ^2=0.
\end{align}
This is the locus mentioned in {\rm Remark \ref{Lambda remark}}.

By the same way, from the Puffian systems in the above proof, we obtain 
the singular locus of the system {\rm (\ref{eq:systemP1})}:
$$
\lambda=0,\quad \mu=0,\quad 729\lambda^2 - 54\lambda(27\mu-1) +(1+27\mu)^2=0,
$$
the singular locus of the system {\rm (\ref{eq:systemP2})}:
$$
\lambda =0,\quad \mu =0, \quad
\lambda ^2(1+27\lambda )^2-2\lambda \mu(1+189\lambda )+(1+576\lambda )\mu ^2-256\mu ^3=0,
$$
and the singular locus of the system {\rm (\ref{eq:systemP3})}:
$$
\lambda =0,\quad\mu =0,\quad
729\lambda ^2-(4\mu -1)^3+54\lambda (1+12\mu )=0.
$$
Omitting these locus from $\mathbb{C}^2$ we have the domain $\Lambda_j\hspace{1mm}(j=1,2,3) $ in  {\rm (\ref{Lambda_1}), (\ref{Lambda_2})} and {\rm (\ref{Lambda_3})}. 
\end{rem}

\begin{rem}
 Takayama and  Nakayama {\rm  \cite{TakayamaNakayama}} determined the systems of differential equations for
 the  Fano polytopes with  $6$ vertices
 by their new approximation method, that is a  special use  of  $D$-module algorithm.
\end{rem}

\section{Monodromy groups}

We defined the projective monodromy groups of our period mappings in Section 2.
Those are nothing but the projective monodromy groups of the period differential equations determined by  the previous section.
We determine them in this section.
We make a precise argument  only for  the period mapping $\Phi:\Lambda_0 \rightarrow \mathcal{D}_0$ for $\mathcal{F}_0$. 
In this section, we set $\mathcal{F}:=\mathcal{F}_0, S(\lambda,\mu):=S_0(\lambda,\mu),\Lambda:=\Lambda_0, L:=L_0,A:=A_0$  and $\mathcal{D}:=\mathcal{D}_0$.

First, take a generic point $(\lambda_0,\mu_0)\in \Lambda$. Let $\check{S}=S(\lambda_0,\mu_0)$ be a reference surface. Set  $\check{L}={\rm NS}(\check{S})$ which is generated by the system (\ref{N-Sbasis}). 
Recalling the argument of Section 2 and 3, we have a $\mathbb{Z}$-basis 
$\{\gamma_1,\cdots,\gamma_{22}\}$ of $H_2(\check{S},\mathbb{Z})$ with $\langle\gamma_5,\cdots,\gamma_{22}\rangle_\mathbb{Z}=\check{L}$.
 
  $A(=A_0)$ is the intersection matrix  of  the transcendental lattice given in Theorem \ref{latticeThm}.
Set
\begin{eqnarray}\label{eq:O(L'')}
PO(A,\mathbb{Z})=\{g\in GL(4,\mathbb{Z}) | {}^tg Ag=A\}.
\end{eqnarray}
It acts on $\mathcal{D}$ by
$$
{}^t \xi \mapsto g{}^t\xi \quad (\xi\in \mathcal{D}, g\in PO(A,\mathbb{Z})).
$$

Recall that $\mathcal{D}$ is composed of two connected components:
$$
\mathcal{D} = \mathcal{D}_+ \cup \mathcal{D}_-.
$$

\begin{df}
Let $PO^+(A,\mathbb{Z})$ denote the subgroup of $PO(A,\mathbb{Z})$ given by
$$\{g\in PO(A,\mathbb{Z})| g(\mathcal{D}_{\pm })=\mathcal{D}_{\pm } \}.$$
\end{df}

\begin{rem}
$PO(A,\mathbb{Z})$ is generated by the system: 
\begin{align*}
\begin{cases}
&G_1=
\begin{pmatrix}
1&1&-1&2\\
0&1&0&0\\
0&0&1&0\\
0&1&0&1
\end{pmatrix},
G_2=
\begin{pmatrix}
1&-1&-2&-1\\
0&1&0&0\\
0&1&1&0\\
0&0&0&1
\end{pmatrix},
G_3=
\begin{pmatrix}
0&1&0&0\\
1&0&0&0\\
0&0&1&0\\
0&0&1&-1
\end{pmatrix},\\
&H_1=
\begin{pmatrix}
1&0&0&0\\
0&1&0&0\\
0&0&-1&0\\
0&0&-1&1
\end{pmatrix},
H_2=
\begin{pmatrix}
0&1&0&0\\
1&0&0&0\\
0&0&1&0\\
0&0&0&1
\end{pmatrix}.
\end{cases}
\end{align*}
$G_1, G_2, G_3, H_2$ generate $PO^+(A,\mathbb{Z})$
(see {\rm \cite{IshigeChiba}} or {\rm \cite{Matsumoto}}).
\end{rem}

In the following, we show that the projective monodromy group of our period mapping is isomorphic to the  group $PO^+(A,\mathbb{Z})$. To prove this, we apply the Torelli type theorem for polarized $K3$ surfaces.

\subsection{The Torelli theorem for P-marked $K3$ surfaces}
First, we state necessary properties of polarized $K3$ surfaces.

\begin{df}
Let $S$ be an algebraic $K3$ surface. An isomorphism $\psi:H_2(S,\mathbb{Z}) \rightarrow H_2(\check{S},\mathbb{Z})$ is said to be a P-marking if we have 
\par
{\rm (i)} $\psi ^{-1} (\check{L})\subset {\rm NS} (S)$,
\par
{\rm (ii)} $\psi^{-1} (F),\psi^{-1} (O), \psi^{-1} (Q),\psi^{-1} (R),\psi^{-1}(b_j)$ and $\psi^{-1}(b_j')$  $(1\leq j\leq 7)$ 
are all effective divisors,
\par
{\rm (iii)} $\psi^{-1} (F)$ is nef.  Namely, $(\psi^{-1} (F) \cdot C)\geq 0$ for any effective class $C$.
\end{df}

A pair $(S,\psi)$ of a $K3$ surface and a P-marking is called a P-marked $K3$ surface.
A S-marked $K3$ surface $(S(\lambda,\mu),\psi)$ is a P-marked $K3$ surface.

\begin{df}\label{df:P-marked}
Two P-marked $K3$ surfaces
$(S_1,\psi_1)$ and $(S_2,\psi_2)$ are said to be isomorphic if there is a biholomorphic mapping
$f:S_1\rightarrow S_2$ with 
$$
\psi_2 \circ f_* \circ \psi_1^{-1} = id _{H_2(\check{S},\mathbb{Z})}.
$$
Two P-marked $K3$ surfaces 
$(S_1,\psi_1)$ and $(S_2,\psi_2)$ are said to be equivalent if there is a biholomorphic mapping
$f:S_1\rightarrow S_2$ with 
$$
\psi_2 \circ f_* \circ \psi_1^{-1} |_{\check{L}}= id _{\check{L}}.
$$
\end{df}

The period of a P-marked $K3$ surface $(S,\psi)$ is defined by
\begin{eqnarray}\label{P-marked period map}
\Phi(S,\psi)= \Big(\int _{\psi^{-1}(\gamma_1)} \omega :\cdots :\int_{\psi^{-1}(\gamma_4)} \omega \Big).
\end{eqnarray}

We use some general facts. These are exposed in \cite{Koike-Shiga}.

\begin{prop}{\rm (Pjatecki\u{i}-\v{S}apiro and \v{S}afarevi\v{c} \cite{Pj-S})
} \label{prop:Pj-S}
Let $S$ be a $K3$ surface.
\par
{\rm (1)} Suppose $C\in{\rm NS}(S)$ satisfies $(C\cdot C)=0$ and $C\not=0$. Then there exists an isometry $\gamma $ of ${\rm NS}(S)$ such that $\gamma (C)$ becomes to be effective and nef.
\par
{\rm (2)} Suppose $C\in {\rm NS}(S)$ is effective, nef and $(C\cdot C)=0$. Then, for certain $m\in \mathbb{N}$ and an elliptic curve $E\in S$,  we have $C=m[E]$.
\par
{\rm (3)} A linear system of an elliptic curve $E$ on $S$ determines an elliptic fibration $S\rightarrow \mathbb{P}^1(\mathbb{C})$.
\end{prop}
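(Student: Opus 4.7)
The plan is to combine three classical tools: Riemann--Roch on K3 surfaces, the Weyl group generated by reflections in effective $(-2)$-classes, and the adjunction formula with $K_S = 0$. Each of the three parts follows from these ingredients together with standard structural results on linear systems of divisors.

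For (1) I would first apply Riemann--Roch on a K3 surface, which reads $\chi(\mathcal{O}_S(C)) = 2 + \tfrac{1}{2}(C \cdot C) = 2$, so by Serre duality $h^0(C) + h^0(-C) \geq 2$. After replacing $C$ by $-C$ if necessary, I may assume $C$ is effective. Let $W \subset O({\rm NS}(S))$ be the Weyl group generated by reflections $s_\delta$ in the classes $\delta$ of effective $(-2)$-curves; a standard fact is that the positive cone of ${\rm NS}(S) \otimes \mathbb{R}$ is tiled by the $W$-translates of the nef cone, which serves as a fundamental chamber. The element $\gamma \in W$ sending $C$ into the nef chamber is the required isometry, and effectivity is preserved because each elementary reflection is realized geometrically.

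For (2) I would analyze the linear system $|C|$. Riemann--Roch again gives $h^0(C) \geq 2$, so $|C|$ is a nontrivial pencil. Decomposing $C = M + Z$ into moving and fixed parts, nefness of $C$ combined with $(C \cdot C) = 0$ forces $(C \cdot M) = (C \cdot Z) = 0$, and the Hodge index theorem then makes $M$ and $Z$ numerically proportional to $C$. A short intersection-theoretic argument shows that $|M|$ is base-point-free and that $Z$ is numerically a multiple of $M$. By Bertini, a general member $E \in |M|$ is smooth and irreducible; since $K_S = 0$, the adjunction formula gives $2g(E) - 2 = (E \cdot E) = 0$, so $g(E) = 1$, i.e.\ $E$ is an elliptic curve, and $C = m[E]$ for some $m \in \mathbb{N}$.

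For (3) the base-point-free pencil $|E|$ produced by (2) defines a morphism $S \to \mathbb{P}^1$ whose general fibre is the smooth elliptic curve $E$; this is the desired elliptic fibration. The main obstacle will be the moving/fixed part analysis in (2): one must rule out that $|C|$ hides a rigid component with bad intersection behavior. This is where the nefness hypothesis (as opposed to mere effectivity) does the real work, via the Hodge index theorem controlling how the pieces of $C$ can sit inside ${\rm NS}(S)$.
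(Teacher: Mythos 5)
The paper itself gives no proof of this proposition: it is quoted from Pjatecki\u{i}-\v{S}apiro--\v{S}afarevi\v{c}, with \cite{Koike-Shiga} cited as an exposition, so there is no internal argument to compare yours against; your outline is the standard classical one. Parts (1) and (3) are essentially fine: Riemann--Roch with $\chi(\mathcal{O}_S(C))=2$ gives that $C$ or $-C$ is effective, and the chamber structure of the positive cone under the Weyl group moves the class to a nef one. Your parenthetical reason ``effectivity is preserved because each reflection is realized geometrically'' is not literally true (e.g. $s_\delta(\delta)=-\delta$), but it is also not needed: a nonzero nef class with square zero pairs positively with an ample class, so Riemann--Roch makes it effective again.

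In (2), however, the step ``by Bertini, a general member $E\in|M|$ is smooth and irreducible'' fails as stated, and this is precisely the delicate point. Take $C=2E_0$ for an elliptic fibre $E_0$: then $|C|$ has no fixed part and is base-point free, but its general member is a disjoint union of two fibres, not an irreducible curve --- a free pencil with self-intersection zero may be composed with a pencil. The repair is to apply Stein factorization to the morphism defined by $|M|$ (equivalently, to pass to the pencil with which $|M|$ is composed), obtaining an elliptic fibration whose irreducible general fibre $E$ satisfies $M\sim kE$; adjunction with $K_S=0$ then gives $g(E)=1$ as you say. You also stop short of the integral conclusion: Hodge index only shows the fixed part $Z$ is numerically a \emph{rational} multiple of $C$, and to get $C=m[E]$ with $m\in\mathbb{N}$ one must show $Z$ is an integral sum of full fibres (each component of $Z$ lies in a fibre since $Z\cdot E=0$; then Zariski's lemma, together with the absence of multiple fibres on an elliptic $K3$ --- a consequence of $K_S=0$ --- forces $Z$ to be a sum of whole fibres and $[E]$ to be the primitive class). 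These are standard repairs, but as written the argument for (2) contains a false intermediate claim and does not yet yield the stated integral equality.
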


\begin{prop}\label{ prop:generalfibre}
A P-marked $K3$ surface $(S,\psi)$ is realized as an elliptic $K3$ surface which has $\psi^{-1}(F)$ as a general fibre. Especially, if $S$ is realized as a $K3$ surface $S(\lambda,\mu)$ by the Kodaira normal form  for some $(\lambda,\mu)\in\Lambda$, it is a S-marked $K3$ surface.
\end{prop}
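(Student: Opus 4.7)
The plan is to transfer the elliptic structure of the reference surface $\check S$ onto $S$ through $\psi$ by applying Proposition \ref{prop:Pj-S}, and then, in the case $S = S(\lambda,\mu)$, identify the resulting fibration with the canonical Kodaira form fibration via the uniqueness Lemma \ref{lem:K3fibre}.

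For the first assertion, set $D := \psi^{-1}(F)$. Conditions (ii) and (iii) of the P-marking make $D$ effective and nef, while $(D \cdot D) = (F \cdot F) = 0$ because $\psi$ is an isometry and $F$ is the class of a general fibre on $\check S$. Hence Proposition \ref{prop:Pj-S}(2) gives $D = m[E]$ for some elliptic curve $E \subset S$ and $m \in \mathbb{N}$. Since $F$ is a basis element of $\check L$ in (\ref{N-Sbasis}) and $\check L$ is primitive in $H_2(\check S,\mathbb{Z})$ by Proposition \ref{prop:primitive}, $F$ is primitive in $H_2(\check S,\mathbb{Z})$; as $\psi$ is an isometric isomorphism, $D$ is primitive in $H_2(S,\mathbb{Z})$, forcing $m=1$. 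Proposition \ref{prop:Pj-S}(3) applied to $|D|$ then supplies the desired elliptic fibration $\pi : S \to \mathbb{P}^1(\mathbb{C})$ whose general fibre represents $\psi^{-1}(F)$.

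For the second assertion, suppose $S = S(\lambda,\mu)$ with $(\lambda,\mu) \in \Lambda$. The Kodaira normal form (\ref{eq:P4prekodaira}) equips $S$ with a canonical elliptic fibration $\pi_0$ whose general fibre class is the $F$ appearing in $L_0$. Apply Lemma \ref{lem:K3fibre} to $\pi$ and $\pi_0$: both have a general fibre in the class $\psi^{-1}(F)$, so $\pi = \pi_0$ up to $\mathrm{Aut}(\mathbb{P}^1(\mathbb{C}))$. In particular the singular fibre configuration on $S$ is the $I_3 + I_{15} + 6I_1$ of Proposition \ref{prop1.2}, realized by the same components and sections as in Section 1.1.

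The main obstacle is the final step of upgrading $\psi$ from a P-marking to an S-marking, namely verifying $\psi^{-1}(\gamma_j) = \gamma_j$ for $5 \leq j \leq 22$. The P-marking only guarantees that each of $F, O, Q, R, b_j, b_j'$ pulls back to an effective divisor, so a rigidity argument is needed. The component of $I_{15}$ meeting $O$ is uniquely singled out as $b_0$, and the two sections $Q$ and $R$ identify the components $b_5$ and $b_5'$, thereby fixing an orientation on each of the two arcs of the cycle of components; the remaining $b_j, b_j'$ and the $I_3$ components are then determined by the intersection pattern. Because $\psi$ is an isometry preserving effectivity, the matching is unique and coincides with the identity on the basis of $\check L$, proving that $(S,\psi)$ is an S-marked $K3$ surface in the sense of Definition \ref{Definition2.2}.
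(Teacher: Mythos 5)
Your proof of the first assertion is correct and follows the paper's route: set $C=\psi^{-1}(F)$, note it is effective, nef and of square zero, apply Proposition \ref{prop:Pj-S} (2) to write $C=m[E]$, force $m=1$, and conclude with Proposition \ref{prop:Pj-S} (3). The only divergence is how $m=1$ is obtained: the paper computes $m(E\cdot\psi^{-1}(O))=(C\cdot\psi^{-1}(O))=(F\cdot O)=1$ using the section class, while you invoke primitivity of $F$ as a basis vector of the primitive sublattice $\check{L}$ (Proposition \ref{prop:primitive}) and transport primitivity through the isometry $\psi$. Both arguments are valid; the paper's is a one-line intersection count, yours trades that for the square-freeness of $\det(L_0)=-5$ underlying Proposition \ref{prop:primitive}.

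The second half of your proposal, however, has genuine gaps. First, to invoke Lemma \ref{lem:K3fibre} you need the fibration $\pi$ built from $|\psi^{-1}(F)|$ and the canonical fibration $\pi_0$ of $S(\lambda,\mu)$ to share a general fibre; but the P-marking axioms only say that $\psi^{-1}(F)$ is effective and nef, not that it equals the class $F$ of the general fibre of the Kodaira form of $S(\lambda,\mu)$, so the identification $\pi=\pi_0$ up to ${\rm Aut}(\mathbb{P}^1(\mathbb{C}))$ is not yet justified. Second, and more seriously, the concluding "rigidity" step is only asserted, and as stated it cannot work: composing a genuine S-marking with the isometry induced by fibrewise inversion (the automorphism $y_1\mapsto -y_1$ of the Weierstrass model), which swaps $Q\leftrightarrow R$ and $b_j\leftrightarrow b_j'$ while fixing $O$ and $F$, still sends every one of the distinguished classes to an effective class and keeps $\psi^{-1}(F)$ nef, hence satisfies all conditions of a P-marking, yet violates $\psi^{-1}(\gamma_j)=\gamma_j$. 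So effectivity, nefness and the intersection pattern alone do not pin down the identity on $\check{L}$, and your claimed uniqueness of the matching fails. Note that the paper's own proof establishes only the first assertion and treats the "Especially" clause as immediate; what is actually used later (Lemma \ref{lem:key lemma}) is the existence of a fibration with general fibre $\psi^{-1}(F)$, together with equivalence (not equality) of markings, so if you want to keep your stronger statement you would need either to weaken it to equivalence of S-marked surfaces or to add an argument that quotients out precisely such symmetries.
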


\begin{proof}
Set $C=\psi^{-1}(F)\in \text{Div}(S)$. By Definition \ref{df:P-marked}, $C$ is effective, nef and $(C\cdot C)=0$. 
According to Proposition \ref{prop:Pj-S} (2), there exists a positive integer $m$ and an elliptic curve $E$ such that $C=m[E]$.
Since
$$
m(E\cdot \psi^{-1}(O))=(C\cdot \psi^{-1}(O))=(F\cdot O)=1,
$$
 we deduce that  $m=1$. Proposition \ref{prop:Pj-S} (3) says that there is an elliptic fibration $\pi: S \rightarrow \mathbb{P}^1(\mathbb{C})$ which has $C=\psi^{-1}(F)$ as a general fibre.
\end{proof}

Let $X$ be the isomorphic classes of P-marked $K3$ surfaces and set
$$
[X]= X/\text{ P-marked equivalence}.
$$
By (\ref{P-marked period map}), we obtain our period mapping $\Phi:X \rightarrow \mathbb{P}^3(\mathbb{C}).$

\begin{thm}\label{thm:p-marked Torelli}
{\rm (The Torelli theorem for polarized $K3$ surfaces)}
\par
{\rm (1)} $\Phi (X)\subset \mathcal{D}$.
\par
{\rm (2)} $\Phi : X\rightarrow \mathcal{D}$ is a bijective correspondence.
\par
{\rm (3)} Let $S_1$ and $S_2$ be algebraic $K3$ surfaces. Suppose an isometry $\varphi : H_2(S_1,\mathbb{Z})\rightarrow H_2(S_2,\mathbb{Z})$ preserves ample classes. Then there exists a biholomorphic map  $f:S_1\rightarrow S_2$ such that $\varphi=f_*$.
\end{thm}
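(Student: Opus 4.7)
The plan is to deduce this theorem from the classical strong Torelli theorem of Pjatecki\u{i}-\v{S}apiro and \v{S}afarevi\v{c} \cite{Pj-S}, treating statement (3) as the input and bootstrapping (1) and (2) from it using the P-marking conditions together with Proposition \ref{ prop:generalfibre}.

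For (1), I would argue directly from the Riemann--Hodge bilinear relations for the holomorphic $2$-form $\omega$ on $S$, namely $\eta' M\,{}^t\eta'=0$ and $\eta' M\,{}^t\bar\eta'>0$ where $M=(\gamma_j^\ast\cdot\gamma_k^\ast)_{1\le j,k\le 22}$. Because $\omega$ is of Hodge type $(2,0)$ and any class in $\mathrm{NS}(S)$ is of type $(1,1)$, we have $\int_\gamma\omega=0$ for every $\gamma\in\mathrm{NS}(S)$. The P-marking condition (i) gives $\psi^{-1}(\check L)\subset\mathrm{NS}(S)$, so the last $18$ coordinates of the period vector vanish, and the two relations collapse to ${}^t\eta A\,\eta=0$, ${}^t\eta A\,\bar\eta>0$, which are precisely the defining relations of $\mathcal{D}$.

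Part (3) I would simply cite from \cite{Pj-S}: the strong Torelli theorem says that any isometry of $H_2$ preserving the Hodge decomposition and mapping an ample class to an ample class is induced by a unique biholomorphism. For (2), injectivity will follow from (3) as follows. Suppose $\Phi(S_1,\psi_1)=\Phi(S_2,\psi_2)$. Then $\varphi:=\psi_2^{-1}\circ\psi_1$ is an isometry $H_2(S_1,\mathbb{Z})\to H_2(S_2,\mathbb{Z})$ that sends the $(2,0)$-line of $S_1$ onto that of $S_2$ and hence preserves the whole Hodge decomposition. The P-marking condition (iii) on both sides shows that $\varphi$ carries the nef class $\psi_1^{-1}(F)$ to the nef class $\psi_2^{-1}(F)$, and by Proposition \ref{prop:Pj-S}~(1) together with a Weyl-group adjustment supported on classes orthogonal to $\check L$, I may assume $\varphi$ preserves an ample class. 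Theorem \ref{thm:p-marked Torelli}~(3) then yields a biholomorphism $f:S_1\to S_2$ with $f_\ast=\varphi$, i.e.\ $\psi_2\circ f_\ast\circ\psi_1^{-1}|_{\check L}=\mathrm{id}_{\check L}$, proving $(S_1,\psi_1)\sim(S_2,\psi_2)$ in $[X]$. For surjectivity, given $\xi\in\mathcal{D}$ I would apply the surjectivity of the period mapping for algebraic $K3$ surfaces (Todorov, Looijenga) to produce a $K3$ surface $S$ with a marking realizing $\xi$; Proposition \ref{ prop:generalfibre} then lets me upgrade it to a P-marking by choosing the class $\psi^{-1}(F)$ to be effective and nef, which is possible thanks to Proposition \ref{prop:Pj-S}.

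The main obstacle will be arranging the P-marking conditions (ii) and (iii) in the surjectivity argument: once a Hodge-theoretic marking is produced, one must verify (possibly after modifying by an isometry of $\check L$ acting trivially on the transcendental part) that the prescribed divisors $F, O, Q, R, b_j, b_j'$ can be made simultaneously effective with $\psi^{-1}(F)$ nef. This is handled by running the Weyl-group reflections on $\mathrm{NS}(S)$ until $\psi^{-1}(F)$ lies in the closure of the K\"ahler cone, and then invoking Proposition \ref{prop:Pj-S}~(2)--(3) to realize it as the class of an elliptic fibration whose singular-fibre components and sections are identified with the prescribed basis of $\check L$ via Lemma \ref{lem:elliptic1-1}.
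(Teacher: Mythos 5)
First, a point of comparison: the paper does not prove this statement at all. It is quoted as a known general fact (the paragraph preceding it says these facts are exposed in \cite{Koike-Shiga}, and the key inputs are the classical results of \cite{Pj-S}), so there is no internal proof to match your sketch against. Your part (1) is fine and is exactly the reduction of the Riemann--Hodge relations already carried out in Section 2.2, and citing \cite{Pj-S} for (3) is what the paper implicitly does. The genuine issues are in your treatment of (2).

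For injectivity, your argument as written does not prove what the theorem asserts. The theorem claims $\Phi$ is injective on $X$, the set of \emph{isomorphism} classes of P-marked surfaces, which requires producing $f$ with $f_*=\varphi=\psi_2^{-1}\circ\psi_1$ exactly, i.e.\ $\psi_2\circ f_*\circ\psi_1^{-1}=\mathrm{id}$ on all of $H_2(\check S,\mathbb{Z})$ (Definition \ref{df:P-marked}). Once you compose $\varphi$ with a ``Weyl-group adjustment'' to reach an ample-preserving isometry, the biholomorphism you obtain induces the adjusted isometry, not $\varphi$, and you only conclude P-marked \emph{equivalence}; indeed you end by proving injectivity of the induced map on $[X]$, which is the weaker statement (that weaker statement is what Lemma \ref{lem:P-marked} is for). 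Moreover the adjustment you describe, ``supported on classes orthogonal to $\check L$,'' is not meaningful here: the orthogonal complement of $\check L$ is the transcendental lattice, reflections there are irrelevant to the K\"ahler chamber and would in general destroy the Hodge condition, while the relevant reflections are in effective $(-2)$-classes of $\mathrm{NS}(S)\supset\psi^{-1}(\check L)$, which typically move $\check L$. What the statement actually needs is that the original $\varphi$ already carries an ample class to an ample class, and this must be extracted from the full P-marking conditions (ii)+(iii): nefness of $\psi^{-1}(F)$ alone cannot pin down the K\"ahler chamber, since $F^2=0$ puts it on the boundary; one has to use the effectivity of the whole prescribed configuration $F,O,Q,R,b_j,b_j'$. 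This step is the real content hidden in the cited theorem and is missing from your sketch.

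For surjectivity, the route (surjectivity of the period map for marked $K3$'s, Lefschetz $(1,1)$ to get $\psi^{-1}(\check L)\subset\mathrm{NS}$, then Proposition \ref{prop:Pj-S} and reflections to arrange effectivity and nefness) is the standard one, but the part you defer --- making the eighteen prescribed classes simultaneously effective with the right configuration while keeping the marking a P-marking --- is precisely the nontrivial point, and your appeal to Lemma \ref{lem:elliptic1-1} there is misplaced: that lemma compares two members of the family $\mathcal{F}_0$ and says nothing about constructing effective representatives of prescribed classes on an abstract $K3$ surface. So the proposal is a reasonable outline of how such a polarized Torelli theorem is proved in the literature, but as a proof it has gaps at exactly the points the paper chooses to outsource to \cite{Pj-S} and \cite{Koike-Shiga}.
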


Here, we prove the following two key lemmas. 

\begin{lem}\label{lem:P-marked}
A P-marked $K3$ surface $(S,\psi)$ is equivalent to the P-marked reference surface $(\check{S},\check{\psi})$ if and only if  $\Phi(S,\psi)=g\circ \Phi(\check{S},\check{\psi})$
for some $g \in PO(A,\mathbb{Z})$.
\end{lem}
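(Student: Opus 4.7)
The plan is to deduce both directions from the Torelli theorem for $P$-marked $K3$ surfaces (Theorem~\ref{thm:p-marked Torelli}), the key intermediate step being to lift any element $g\in PO(A,\mathbb{Z})$ to an isometry $\tilde g$ of the full homology lattice $H_2(\check S,\mathbb{Z})$ that fixes $\check L$ pointwise.

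For the necessity, I would begin with a biholomorphism $f\colon S\to\check S$ realising the equivalence, so that $h:=\check\psi\circ f_\ast\circ\psi^{-1}$ is an isometry of $H_2(\check S,\mathbb{Z})$ with $h|_{\check L}=\mathrm{id}_{\check L}$. Since $h$ is an isometry, it preserves the orthogonal complement $\check L^\perp$, and its restriction $g:=h|_{\check L^\perp}$ therefore lies in $PO(A,\mathbb{Z})$. The relation $f^\ast\omega_{\check S}=c\,\omega_S$ for some $c\in\mathbb{C}^\ast$, together with the change-of-variable formula for integrals over $2$-cycles, then yields $\Phi(S,\psi)=g\circ\Phi(\check S,\check\psi)$ in $\mathbb{P}^3(\mathbb{C})$ once the contributions from cycles in $\check L$, which vanish by (\ref{neronseverivanish}), are discarded.

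For the sufficiency, suppose $\Phi(S,\psi)=g\circ\Phi(\check S,\check\psi)$ with $g\in PO(A,\mathbb{Z})$. First I would extend $g$ to an isometry $\tilde g$ of $H_2(\check S,\mathbb{Z})$ with $\tilde g|_{\check L}=\mathrm{id}_{\check L}$; such an extension exists provided $g$ acts as the identity on the discriminant form of $A$. Since $|\det A|=5$, the discriminant group $\mathrm{disc}(A)\simeq\mathbb{Z}/5\mathbb{Z}$ has isometry group $\{\pm 1\}$, and $-\mathrm{id}_A$ acts trivially on $\mathcal{D}\subset\mathbb{P}^3(\mathbb{C})$, so one may replace $g$ by $-g$ if necessary to arrange this. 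Set $\psi':=\tilde g\circ\psi$. Because conditions (i)--(iii) of a $P$-marking involve only classes in $\check L$, on which $\psi'$ agrees with $\psi$, the map $\psi'$ is again a $P$-marking of $S$, and $\Phi(S,\psi')=g^{-1}\circ\Phi(S,\psi)=\Phi(\check S,\check\psi)$. Applying Theorem~\ref{thm:p-marked Torelli}(2), the $P$-marked $K3$ surfaces $(S,\psi')$ and $(\check S,\check\psi)$ lie in the same isomorphism class, so there is a biholomorphism $f\colon S\to\check S$ with $\check\psi\circ f_\ast\circ(\psi')^{-1}=\mathrm{id}_{H_2(\check S,\mathbb{Z})}$. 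Consequently $\check\psi\circ f_\ast\circ\psi^{-1}=(\check\psi\circ f_\ast\circ(\psi')^{-1})\circ\tilde g=\tilde g$, whose restriction to $\check L$ is the identity, producing the required $P$-marked equivalence.

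The only nontrivial point in this plan is the lattice-extension step, where one must lift $g\in PO(A,\mathbb{Z})$ to an isometry of the even unimodular lattice $H_2(\check S,\mathbb{Z})$ acting trivially on the rank-$18$ primitive sublattice $\check L$. This is governed by Nikulin's theory of discriminant forms, and is feasible here only because $\mathrm{disc}(A)$ is cyclic of order $5$, so that its isometry group is small enough that the sign ambiguity $g\leftrightarrow -g$ absorbs any obstruction at the projective level.
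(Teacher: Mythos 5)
Your proof is correct, but in the sufficiency direction it is organized differently from the paper's. The paper takes $g\in PO(A,\mathbb{Z})$, uses the surjectivity statement of Theorem \ref{thm:p-marked Torelli} (2) to produce a second P-marked surface $(S_g,\psi_g)$ with period $g\circ\Phi(\check{S},\check{\psi})$, extends $g$ via Nikulin to an isometry $H_2(\check{S},\mathbb{Z})\rightarrow H_2(S_g,\mathbb{Z})$ preserving the N\'eron-Severi part, and then invokes part (3) of Theorem \ref{thm:p-marked Torelli} to realize that isometry by a biholomorphism. You instead stay on the given surface $S$, twist its marking by a Nikulin extension $\tilde{g}$ that is the identity on $\check{L}$, observe that $\tilde{g}\circ\psi$ is again a P-marking because the defining conditions only see $\check{L}$, and conclude from the bijectivity statement (2) alone. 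Your route buys two things: it avoids the ``preserves ample classes'' hypothesis of part (3), and it makes explicit the lattice-theoretic content of the appeal to Nikulin, namely that an extension fixing $\check{L}$ pointwise (which is what the definition of equivalence actually demands) exists exactly when $g$ acts trivially on the discriminant group $\mathbb{Z}/5\mathbb{Z}$, an obstruction you absorb by the projectively invisible replacement $g\mapsto -g$; the paper's proof leaves this verification implicit. One harmless bookkeeping point: with the paper's convention that elements of $PO(A,\mathbb{Z})$ are matrices of isometries of $L_t$ in the dual basis $\gamma_1^*,\dots,\gamma_4^*$ acting by ${}^t\xi\mapsto g\,{}^t\xi$, twisting the marking by an extension of $g$ multiplies the period by $g$ rather than $g^{-1}$, so you should extend $g^{-1}$ (or relabel); since the statement only asserts existence of some $g$ and $PO(A,\mathbb{Z})$ is a group, this does not affect the argument.
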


\begin{proof}
The necessity is clear.
We prove the sufficiency. Suppose $\Phi(\check{S},\check{\psi})=p\in \mathcal{D}$. Take $g\in  PO(A,\mathbb{Z})$. 
According to Theorem \ref{thm:p-marked Torelli} (2), we  take a P-marked $K3$ surface $(S_{g},\psi_g)$ such that  $\Phi(S_{g},\psi_g)=g\circ \Phi(\check{S},\check{\psi})$. Let $L_t$ be the transcendental lattice given by (\ref{df:L''}).
Note $g\in \text{Aut}(L_t)=PO(A,\mathbb{Z}).$
Due to   Nikulin \cite{Nikulin}, 
$g :L_t\rightarrow L_t$ is extended to an isomorphism $\hat{g}:H_2(\check{S},\mathbb{Z})\rightarrow H_2({S}_g,\mathbb{Z})$ which preserves the N\'{e}ron-Severi lattice $L$. Then, by Theorem \ref{thm:p-marked Torelli} (3), there is a biholomorphic mapping $f:\check{S}\rightarrow S_{g}$ such that $f_*=\hat{g}$. Therefore, two P-marked $K3$ surfaces $(\check{S},\check{\psi})$ and $(S_{g},\psi_g)$ are equivalent.
\end{proof}

\begin{rem}
$PO(A,\mathbb{Z})$ is a reflection group (see {\rm \cite{Matsumoto}}).
\end{rem}

According to the Torelli theorem and Lemma \ref{lem:P-marked}, we  identify $[X]$ with $\mathcal{D}/PO(A,\mathbb{Z})$.

\begin{lem}\label{lem:key lemma}
Let $(S,\psi)$ be a P-marked $K3$ surface which is equivalent to $(\check{S},\check{\psi})$. Then $(S,\psi)$ has a unique canonical elliptic fibration $(S,\pi,\mathbb{P}^1(\mathbb{C}))$ that is given by the Kodaira normal form of $\check{S}=S(\lambda_0,\mu_0)$ not coming from any other $(\lambda,\mu ) \in \Lambda$.
\end{lem}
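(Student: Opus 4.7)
The plan is to combine the general P-marked $K3$ fibration proposition preceding this lemma with the two uniqueness statements from Section~2, namely Lemma~\ref{lem:K3fibre} and Lemma~\ref{lem:elliptic1-1}.

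First, because $(S,\psi)$ is P-marked, the preceding proposition furnishes an elliptic fibration $\pi\colon S\to\mathbb{P}^1(\mathbb{C})$ whose general fibre has class $\psi^{-1}(F)$. Since $(S,\psi)$ is equivalent to the reference $(\check S,\check\psi)$, the definition yields a biholomorphism $f\colon S\to\check S$ with $\check\psi\circ f_\ast\circ\psi^{-1}|_{\check L}=\mathrm{id}_{\check L}$, and in particular $f_\ast(\psi^{-1}(F))=F$, which is a general fibre of the canonical Kodaira-form fibration $\check\pi$ on $\check S=S(\lambda_0,\mu_0)$. Lemma~\ref{lem:K3fibre} asserts that the elliptic fibration on a $K3$ surface with a fixed general fibre is unique up to $\mathrm{Aut}(\mathbb{P}^1(\mathbb{C}))$; applied to $\check S$, this forces $\check\pi=\varphi\circ\pi\circ f^{-1}$ for some $\varphi\in\mathrm{Aut}(\mathbb{P}^1(\mathbb{C}))$. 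Thus $(S,\pi,\mathbb{P}^1(\mathbb{C}))$ is isomorphic, as an elliptic surface, to the fibration of $\check S$ coming from the Kodaira normal form of $S(\lambda_0,\mu_0)$.

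For the non-redundancy claim I would argue by contradiction: if $(S,\pi,\mathbb{P}^1(\mathbb{C}))$ were also isomorphic as an elliptic surface to $(S(\lambda_1,\mu_1),\pi_{\lambda_1,\mu_1},\mathbb{P}^1(\mathbb{C}))$ for some other $(\lambda_1,\mu_1)\in\Lambda$, then composing with the isomorphism already established would produce an elliptic-surface isomorphism between $(S(\lambda_0,\mu_0),\check\pi,\mathbb{P}^1(\mathbb{C}))$ and $(S(\lambda_1,\mu_1),\pi_{\lambda_1,\mu_1},\mathbb{P}^1(\mathbb{C}))$, and Lemma~\ref{lem:elliptic1-1} would then force $(\lambda_1,\mu_1)=(\lambda_0,\mu_0)$. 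The main obstacle is bookkeeping rather than depth: the substantive point is that the P-marked equivalence sends $\psi^{-1}(F)$ to $\check\psi^{-1}(F)=F$, which holds because $F\in\check L$ and the equivalence is required to restrict to the identity on $\check L$; Lemma~\ref{lem:K3fibre} and Lemma~\ref{lem:elliptic1-1} then carry out the remaining work mechanically.
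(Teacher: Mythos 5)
Your argument is correct, and its first half is exactly the paper's: Proposition \ref{ prop:generalfibre} gives the fibration with general fibre class $\psi^{-1}(F)$, the P-marked equivalence fixes $F\in\check L$ so that $f$ carries general fibres of $\pi$ to curves in the class $F$ on $\check S$, and Lemma \ref{lem:K3fibre} then identifies $\pi\circ f^{-1}$ with $\check\pi$ up to ${\rm Aut}(\mathbb{P}^1(\mathbb{C}))$. Where you diverge is the decisive second half. The paper does not simply cite Lemma \ref{lem:elliptic1-1}: because $S$ is an abstract P-marked surface rather than a member of the family, it re-runs the rigidity argument explicitly, using Proposition \ref{prop:j-inv} to reduce $\varphi$ to $z\mapsto az$, then comparing the $j$-invariant and discriminant near $z=\infty$ to get $a^3=1$, and normalizing coordinates until $g_2=\check g_2$, $g_3=\check g_3$; this yields the stronger, explicit conclusion that the Kodaira normal form of $(S,\pi)$ is literally the one at $(\lambda_0,\mu_0)$. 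You instead handle the "not coming from any other $(\lambda,\mu)$" clause by a composition trick: if $(S,\pi)$ were also isomorphic as an elliptic surface to the canonical fibration of some $S(\lambda_1,\mu_1)$, composing isomorphisms gives an elliptic-surface isomorphism between two family members, and Lemma \ref{lem:elliptic1-1} forces $(\lambda_1,\mu_1)=(\lambda_0,\mu_0)$. That reduction is legitimate (elliptic-surface isomorphisms compose, and Lemma \ref{lem:elliptic1-1} uses only section-independent data), it is shorter, and it suffices both for the lemma as stated and for its role in Theorem \ref{thm:monodromy}, where one only needs that the endpoint of the lifted arc in $[X]$ can lie over no point of $\Lambda$ other than $(\lambda_0,\mu_0)$. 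What the paper's computation buys in exchange is the concrete normalization identifying $S$ with $S(\lambda_0,\mu_0)$ in Kodaira form, rather than merely excluding the other parameters; your version quotes that rigidity through Lemma \ref{lem:elliptic1-1} instead of reproving it.
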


\begin{proof}
From Proposition \ref{ prop:generalfibre},  $(S,\psi)$ ($(\check{S},\check{\psi})$, resp.) has an elliptic fibration $(S,\pi,\mathbb{P}^1(\mathbb{C}))$ ($(\check{S},\check{\pi},\mathbb{P}^1(\mathbb{C}))$, resp.) with a general fibre $\psi^{-1}(F)$ ($\check{\psi}^{-1}(F)$, resp.). 
Because $(S,\psi)$ and $(\check{S},\check{\psi})$ are equivalent as P-marked $K3$ surfaces, we have a biholomorphic mapping $f:S\rightarrow \check{S}$ such that 
$$
\check{\psi} \circ f_*=\psi  \quad (f_*:H_2(S,\mathbb{Z})\simeq  H_2(\check{S},\mathbb{Z})).
$$
So, we have
$$
f_* = \psi.
$$
It means that $f$ preserves general fibres of $S$ and $\check{S}$.
According to the uniqueness of the fibration (Lemma \ref{lem:K3fibre}), $(S,\pi,\mathbb{P}^1(\mathbb{C}))$ and $(\check{S},\check{\pi},\mathbb{P}^1(\mathbb{C}))$ are isomorphic as elliptic surfaces.
Therefore, there exists  $\varphi\in\text{Aut}(\mathbb{P}^1(\mathbb{C}))$
 such that $\varphi\circ \pi=\pi_0 \circ f$.
 
Let 
$
y^2 = 4 x^3 -g_2(z) x -g_3(z)
$
($
y^2 = 4 x^3 -\check{g}_{2}(z) x -\check{g}_{3}(z),
$ resp.)
be the Kodaira normal form of $(S,\pi,\mathbb{P}^1(\mathbb{C}))$ ($(\check{S},\check{\pi},\mathbb{P}^1(\mathbb{C}))$, resp.).
According to Proposition \ref{prop:j-inv}, we  assume $\pi^{-1}(0)=I_3$ and $\pi^{-1}(\infty)=I_{15}$. So as in the proof of Lemma \ref{lem:elliptic1-1}, $\varphi$ is given by $z\mapsto a z$ $(a\in \mathbb{C}-0)$. 
Let $j$ ($\check{j}$, resp.) be the $j$-invariant  and  $D$ ($\check{D}$, resp.) be 
the discriminant of $S$ ($\check{S}$, resp.).
By Proposition \ref{prop:j-inv}, we have $D=\check{D}\circ \varphi $ and $j=\check{j} \circ \varphi$.
Observing the expressions (\ref{eq:P4kodaira2}), (\ref{eq:P4discriminant}) around $z=\infty$ and the definition of $j$-function (\ref{eq:j-function}), we have $a^3=1$. By the transformation $z\mapsto \omega z$ or $z \mapsto \bar{\omega} z$ (where $\omega$ is a cubic root of unity), we  assume $a=1$.
Comparing $j$ with $\check{j}$ and $D$ with $\check{D}$, we have $g_2^3=\check{g}_{2}^3$ and $g_3^2=\check{g}_{3}^2$. By the transformations in the form $x\mapsto \omega x$ or $x\mapsto \bar{\omega} x$ or $y\mapsto - y$, we obtain $g_2=\check{g}_{2}$ and $g_3=\check{g}_{3}$. 
 Hence, as in the proof of Lemma \ref{lem:elliptic1-1}, we have the required statement.
\end{proof}

\begin{rem}
According to the above two lemmas, $\Lambda =\Lambda_0$ is embedded in $[X]$. 
\end{rem}

\subsection{Projective monodromy groups}

\begin{thm}\label{thm:monodromy}
The projective monodromy group of the period mapping $\Phi:\Lambda\rightarrow \mathcal{D}$ is isomorphic to $PO^+(A,\mathbb{Z})$.
\end{thm}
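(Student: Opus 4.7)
The plan is to prove the two inclusions $\mathrm{Mon}\subseteq PO^+(A,\mathbb{Z})$ and $PO^+(A,\mathbb{Z})\subseteq\mathrm{Mon}$ separately, using the global Torelli-type statements of the preceding subsection.

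For the inclusion $\mathrm{Mon}\subseteq PO^+(A,\mathbb{Z})$, I observe that every $\gamma\in\pi_1(\Lambda,(\lambda_0,\mu_0))$ acts on $H_2(\check{S},\mathbb{Z})$ by an isometry of the intersection form (\ref{K3intersection}). By Definition \ref{Definition2.2}, an S-marking identifies the generators $\gamma_5,\dots,\gamma_{22}$ of $\check{L}$ throughout the family, so the induced action fixes $\check{L}$ pointwise and restricts to an isometry of the transcendental lattice $L_t=\check{L}^\perp$, whose Gram matrix is $A$ by Theorem \ref{latticeThm}. Projectively this yields an element of $PO(A,\mathbb{Z})$, and the continuity of $\Phi$ along $\gamma$ prevents the period from switching between $\mathcal{D}_+$ and $\mathcal{D}_-$, so the class actually lies in $PO^+(A,\mathbb{Z})$.

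For the reverse inclusion I take $g\in PO^+(A,\mathbb{Z})$ and set $p=\Phi(\lambda_0,\mu_0)\in\mathcal{D}_+$; then $gp\in\mathcal{D}_+$. Theorem \ref{thm:p-marked Torelli}(2) produces a P-marked $K3$ surface $(S_g,\psi_g)$ with $\Phi(S_g,\psi_g)=gp$, and Lemma \ref{lem:P-marked} shows that $(S_g,\psi_g)$ is P-marked equivalent to $(\check{S},\check{\psi})$. Lemma \ref{lem:key lemma} then forces $S_g$ to carry the same canonical elliptic fibration as $\check{S}=S(\lambda_0,\mu_0)$, so $S_g\cong S(\lambda_0,\mu_0)$ as elliptic surfaces, and $\psi_g$ can be viewed as a second S-marking of this single underlying surface. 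Both markings agree on $\check{L}$ and their difference on $L_t$ is precisely the isometry $g$. Hence $gp$ is another determination of the multivalued mapping $\Phi$ at $(\lambda_0,\mu_0)$, and analytic continuation of $\Phi$ along a suitable loop in $\Lambda$ carries $p$ to $gp$, exhibiting $g$ as a monodromy element.

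The delicate step is this last one, and I would justify it by passing to the universal cover $\tilde\Lambda$. The local Torelli theorem (Theorem \ref{thm:Torelli}) shows that the lift $\tilde\Phi:\tilde\Lambda\to\mathcal{D}_+$ is a local biholomorphism, while Corollary \ref{prop:S-marked}, Lemma \ref{lem:P-marked} and Lemma \ref{lem:key lemma} together imply that two points of $\tilde\Lambda$ project to the same point of $\Lambda$ if and only if their $\tilde\Phi$-images lie in one and the same $PO^+(A,\mathbb{Z})$-orbit. The deck transformation group of $\tilde\Lambda\to\Lambda$, which is $\mathrm{Mon}$ by definition, therefore coincides with $PO^+(A,\mathbb{Z})$. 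I expect the main technical obstacle to be precisely this coincidence of $PO^+$-orbits with fibres of the universal cover, since it forces one to combine the abstract Torelli theorem with the concrete recognition result of Lemma \ref{lem:elliptic1-1} for the Kodaira normal form.
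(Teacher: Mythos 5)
Your inclusion $\mathrm{Mon}\subseteq PO^{+}(A,\mathbb{Z})$ and the first half of your reverse inclusion follow the paper's route, but there is a genuine gap at the decisive step. After producing $(S_g,\psi_g)$ with period $gp$ and showing via Lemma \ref{lem:P-marked} and Lemma \ref{lem:key lemma} that it is P-marked equivalent to $(\check{S},\check{\psi})$ and carries the same canonical elliptic fibration, you assert that $gp$ is therefore ``another determination of the multivalued mapping $\Phi$ at $(\lambda_0,\mu_0)$'' and that analytic continuation along a suitable loop in $\Lambda$ carries $p$ to $gp$. That is precisely what has to be proved: the existence of a second P-marking of $\check{S}$ agreeing with $\check{\psi}$ on $\check{L}$ does not by itself show that this marking is obtained from $\check{\psi}$ by parallel transport along a loop lying inside $\Lambda$. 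Your universal-cover reformulation does not repair this. The statement ``two points of $\tilde{\Lambda}$ lie over the same point of $\Lambda$ if and only if their $\tilde{\Phi}$-images lie in one $PO^{+}(A,\mathbb{Z})$-orbit'' only compares points that already lie in $\tilde{\Lambda}$; it yields injectivity of the induced map $\Lambda\rightarrow\mathcal{D}_{+}/PO^{+}(A,\mathbb{Z})$, hence again $\mathrm{Mon}\subseteq PO^{+}(A,\mathbb{Z})$, but it cannot produce a point of $\tilde{\Lambda}$ over $(\lambda_0,\mu_0)$ whose $\tilde{\Phi}$-image is $gp$, which is what surjectivity of the monodromy representation onto $PO^{+}(A,\mathbb{Z})$ requires. (A further slip: the deck transformation group of $\tilde{\Lambda}\rightarrow\Lambda$ is $\pi_1(\Lambda)$, not $\mathrm{Mon}$; $\mathrm{Mon}$ is only its image under the monodromy representation, so the identification ``deck group $=\mathrm{Mon}=PO^{+}$'' cannot be read off your fibre statement.)

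The missing ingredient, which is the actual content of the paper's proof, is a connectivity and genericity argument in the moduli space of P-marked surfaces. One identifies $[X]$ with $\mathcal{D}/PO(A,\mathbb{Z})$ via Theorem \ref{thm:p-marked Torelli} and Lemma \ref{lem:P-marked}, joins $p$ to $q=gp$ by an arc $\alpha$ inside $\mathcal{D}^{+}$, and must then show that $\alpha$ can be chosen so that $[\Phi^{-1}(\alpha)]$ stays inside $\Lambda$. The paper does this by observing that $\Lambda$ is Zariski open in a compactification $K$ of $[X]$: $K$ and the compactified $(\lambda,\mu)$-space $\mathbb{P}^2(\mathbb{C})$ are birationally equivalent and contain $\Lambda$ as a common open set, and $\Lambda$ is Zariski open in $\mathbb{P}^2(\mathbb{C})$, hence in $K$. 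A generic arc therefore avoids $K\setminus\Lambda$; its preimage is a path in $\Lambda$ whose endpoint is again $(\lambda_0,\mu_0)$ by Lemmas \ref{lem:P-marked} and \ref{lem:key lemma}, i.e.\ a loop, and continuation along this loop realizes $g$ as a monodromy element. Without an argument of this kind --- showing that the locus of $[X]$ not coming from $\Lambda$ can be avoided when transporting $p$ to $gp$ --- your proof of $PO^{+}(A,\mathbb{Z})\subseteq\mathrm{Mon}$ is incomplete.
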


\begin{proof}
Let $*=(\lambda_0,\mu_0)$ be a generic point of $\Lambda$.  
Set $\check{S}=S(\lambda_0,\mu_0)$.
Note that $\text{NS}(\check{S})\simeq {L}$.
Let $G$ be the projective monodromy group induced from the fundamental group $\pi_1(\Lambda, \ast)$ (see Definition \ref{Def:monodromy}).
We have clearly the inclusion $G \subset PO^+(A,\mathbb{Z})$.

Therefore, we prove the converse inclusion $PO^+(A,\mathbb{Z})\subset G$.
Take an element $g\in PO^+(A,\mathbb{Z})$, and let $p=\Phi(\check{S},\check{\psi}) \in \mathcal{D}$ and let $q= g(p)\in\mathcal{D}$. $p,q$ are in the same connected component of $\mathcal{D}$.
So we suppose that $p,q \in \mathcal{D}^+$. Let $\alpha$ be an arc connecting $p$ and $q$ in $\mathcal{D}^+$. 
By the Torelli theorem, we obtain $[\Phi^{-1}(\alpha)] \subset [X]$. 
By Lemma \ref{lem:P-marked} and Lemma  \ref{lem:key lemma}, we have $q=\Phi (\check{S},\psi)$ so that $(\check{S},\psi)$ is equivalent to $(\check{S},\check{\psi})$.
Hence, the end point of $[\Phi^{-1}(\alpha)]$ is $(\lambda_0,\mu_0)$.

Next, we show that there is  $\alpha$ such that $[\Phi^{-1}(\alpha)]\subset \Lambda$. 
For this purpose, it is enough to show that $\Lambda$ is a Zariski open set in some compactification $K$ of $[X]$.
Here, we note that the compact $(\lambda,\mu)$ space $\mathbb{P}^2(\mathbb{C})$ and $K$ are birationally equivalent and they contain $\Lambda$ 
as a common open set. $\Lambda$ is a Zariski open set in $\mathbb{P}^2(\mathbb{C})$. Hence, $\Lambda$  is Zariski open in $K$ also.
Therefore, we obtain the required inclusion.
\end{proof}

\vspace{5mm}

We have the elliptic fibration
(\ref{P1preKodaira}) ((\ref{P2preKodaira}), (\ref{P3preKodairaanother}), resp.) for $\mathcal{F}_1$ ($\mathcal{F}_2$,  $\mathcal{F}_3$, resp.).
Using these fibrations, we can define the P-markings for $\mathcal{F}_j$ $(j=1,2,3)$.
Moreover, 
as we prove  Lemma \ref{lem:key lemma},
so  we can prove the corresponding lemmas through  observations of the coefficients of the Kodaira normal forms of elliptic fibrations for $\mathcal{F}_j$ $(j=1,2,3)$.
Therefore, we have

\begin{thm}\label{monodromytheorem}
Let $j\in \{1,2,3\}.$
The projective monodromy group of the period mapping for the family $\mathcal{F}_j$ is equal to $PO^+(A_j,\mathbb{Z})$.
\end{thm}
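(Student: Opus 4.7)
The plan is to imitate the proof of Theorem \ref{thm:monodromy} verbatim for each $j\in\{1,2,3\}$, the only substantive replacements being the elliptic fibration used to define P-markings and the case-by-case verification that the fibration rigidifies the surface up to the parameters $(\lambda,\mu)$. Fix $j\in\{1,2,3\}$ and a generic reference point $(\lambda_0,\mu_0)\in\Lambda_j$, and let $\check{S}=S_j(\lambda_0,\mu_0)$ with $\mathrm{NS}(\check{S})\simeq L_j$ by Theorem \ref{latticeThm}. Using the elliptic fibration (\ref{P1preKodaira}) for $j=1$, (\ref{P2preKodaira}) for $j=2$, and (\ref{P3preKodairaanother}) for $j=3$, together with the explicit generators of $L_j$ in (\ref{L1definition}), (\ref{L2definition}), (\ref{L3definition}), I will define a P-marking of $S_j(\lambda,\mu)$ to be an isometry $\psi:H_2(S_j(\lambda,\mu),\mathbb{Z})\to H_2(\check{S},\mathbb{Z})$ satisfying the three conditions of the definition of P-marking in Section 5.1, where the r\^ole of $F$ is played by a general fibre of the chosen fibration, and the r\^ole of the sections $O,Q,R$, etc., is played by the sections appearing in (\ref{P1section}), (\ref{P2section}), (\ref{P3sectionanother}).

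Next I would establish the analogue of Lemma \ref{lem:P-marked}: if two P-marked surfaces have the same period then they are equivalent. The argument is unchanged, since it only uses (i) the surjectivity of the period mapping for polarized $K3$ surfaces (Theorem \ref{thm:p-marked Torelli} (2)), (ii) the extension theorem of Nikulin allowing an isometry of $\mathrm{Tr}$ to be lifted to an isometry of $H_2$ preserving $\mathrm{NS}$, which applies equally to the transcendental lattices $A_j$ determined in Theorem \ref{latticeThm}, and (iii) the last part of the Torelli theorem. The analogue of Proposition \ref{ prop:generalfibre} follows from Proposition \ref{prop:Pj-S} exactly as before, since the proof depends only on the intersection numbers $(F\cdot F)=0$ and $(F\cdot O)=1$, both of which are still valid.

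The heart of the work will be the analogue of Lemma \ref{lem:key lemma}: a P-marked surface equivalent to $(\check{S},\check{\psi})$ admits a unique canonical elliptic fibration, and that fibration cannot come from any other parameter point in $\Lambda_j$. The only input that is specific to $\mathcal{F}_0$ in the proof of Lemma \ref{lem:key lemma} is the comparison, via Proposition \ref{prop:j-inv}, of the $j$-invariant and the discriminant of two Kodaira normal forms that are assumed to give isomorphic elliptic surfaces. For $\mathcal{F}_j$, the types of singular fibres read from Table 1 force the automorphism $\varphi\in\mathrm{Aut}(\mathbb{P}^1(\mathbb{C}))$ intertwining the two fibrations to have the form $z\mapsto az$ (for $j=1,2,3$, the two special fibres at $0$ and $\infty$ are of different types, so $\varphi$ fixes them setwise). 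Matching coefficients of $g_2,g_3$ in the Kodaira normal forms written down in the proofs of Propositions \ref{propP1kodaira}, \ref{propP2kodaira}, \ref{propP3kodaira} pins down $a$ up to a root of unity that can be absorbed into the Weierstrass change of variable $x\mapsto \omega x$, $y\mapsto -y$, and then forces $(\lambda_1,\mu_1)=(\lambda_2,\mu_2)$. This is the same book-keeping as in Lemma \ref{lem:elliptic1-1}, just carried out with the larger polynomials $g_2,g_3,h_2,h_3$ of each family.

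Having these lemmas in hand, the final step reproduces the argument of Theorem \ref{thm:monodromy} verbatim: the inclusion of the projective monodromy group $G_j$ into $PO^+(A_j,\mathbb{Z})$ is automatic from the preservation of the transcendental lattice and of the connected component $\mathcal{D}_j^+$ under monodromy, while for the converse one takes $g\in PO^+(A_j,\mathbb{Z})$, connects $p=\Phi_j(\check{S},\check{\psi})$ to $g(p)$ by an arc $\alpha\subset\mathcal{D}_j^+$, lifts it via the Torelli correspondence to a path in $[X_j]$, and uses the lemmas above to conclude that the endpoint of $[\Phi_j^{-1}(\alpha)]$ is $(\lambda_0,\mu_0)$ and that a generic lift stays inside $\Lambda_j$, which is Zariski open in the natural compactification of $[X_j]$. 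The main obstacle is therefore entirely the coefficient comparison in the analogue of Lemma \ref{lem:key lemma}; once that is established, everything else is formal.
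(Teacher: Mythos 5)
Your overall strategy is exactly the paper's: the published proof of this theorem is itself only a sketch which defines P-markings via the fibrations (\ref{P1preKodaira}), (\ref{P2preKodaira}), (\ref{P3preKodairaanother}) and then asserts that the analogues of Lemma \ref{lem:key lemma} follow ``through observations of the coefficients of the Kodaira normal forms'', after which the argument of Theorem \ref{thm:monodromy} is repeated verbatim. Your reduction of everything to the analogues of Lemma \ref{lem:elliptic1-1} and Lemma \ref{lem:key lemma} is therefore the right skeleton, and your treatment of the analogues of Lemma \ref{lem:P-marked} and of the Nikulin extension step is unobjectionable.

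However, at the one point where you supply a concrete reason, the reason is false for $j=3$. You claim that for every $j\in\{1,2,3\}$ the two distinguished singular fibres over $0$ and $\infty$ have different types, so that the intertwining automorphism $\varphi\in{\rm Aut}(\mathbb{P}^1(\mathbb{C}))$ must fix both and hence be of the form $z\mapsto az$. This holds for $j=1$ ($I_9$ and $I_3^*$) and $j=2$ ($I_1^*$ and $I_{11}$), but the fibration actually attached to $L_3$, $A_3$ and the P-marking of $\mathcal{F}_3$ is (\ref{P3preKodairaanother}), whose fibres over $z=0$ and $z=\infty$ are \emph{both} of type $I_9$ (Proposition \ref{propP3kodaira}, Table 1). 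So for $j=3$ the map $\varphi$ may also interchange $0$ and $\infty$, i.e.\ be of the form $z\mapsto a/z$, and your coefficient comparison as described does not cover this case; this is precisely where the analogue of Lemma \ref{lem:elliptic1-1} for $\mathcal{F}_3$ could fail as written. The gap is repairable inside your framework: either carry out the matching of the Kodaira data also for $\varphi(z)=a/z$ (using that the discriminant factor at $z=\infty$ is the reciprocal polynomial of the one at $z=0$), or observe that $S_3(\lambda,\mu)$ admits an automorphism covering $z\mapsto\mu/z$ (explicitly $z\mapsto\mu/z$, $x_1\mapsto(\mu/z^2)^2x_1$, $y_1\mapsto(\mu/z^2)^3y_1$ leaves (\ref{P3preKodairaanother}) invariant and fixes the sections $Q_0$ and $R_0$), so that composing with it reduces the swapping case to the case $z\mapsto az$, after which your coefficient comparison forces $(\lambda_1,\mu_1)=(\lambda_2,\mu_2)$ as in the other families. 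Without this extra step the key lemma for $\mathcal{F}_3$, and hence the identification of its projective monodromy group with $PO^+(A_3,\mathbb{Z})$, is not yet established.
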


\begin{rem}
This   is essentially noticed  in the research    of  Ishige {\rm \cite{Ishige}} on the family of $K3$ surfaces coming from  the polytope $P_4$.
He found this result by a  computer-aided approximation of a generator system of the monodromy group.
However, it is not given an exact error estimation there.
So, for our cases $P_0,P_1,P_2$ and $P_3$, we give here a proof  based on the Torelli  theorem for  polarized  $K3$ surfaces. 
\end{rem}

\section{Period differential equation and the Hilbert modular  orbifold for the field $\mathbb{Q}(\sqrt{5})$}

Let $\mathcal{O}$ be the ring of integers in the real quadratic field $\mathbb{Q}(\sqrt{5})$.
Set $\mathbb{H}_{\pm }=\{z\in \mathbb{C}|\pm {\rm Im}( z) >0\}.$ The Hilbert modular group $PSL(2,\mathcal{O})$ acts on 
$(\mathbb{H}_+\times\mathbb{H}_+)\cup (\mathbb{H}_-\times\mathbb{H}_-)$ by 
\begin{eqnarray*}
\begin{pmatrix}
\alpha &\beta \\
\gamma &\delta 
\end{pmatrix}
:
(z _1, z_2)\mapsto 
\Big(\frac{\alpha z_1 +\beta}{\gamma z_1 +\delta} ,\frac{\alpha ' z_2+\beta'}{\gamma' z_2+\delta' }\Big),
\end{eqnarray*}
for 
$
g = \begin{pmatrix}\alpha&\beta\\ \gamma&\delta\end{pmatrix} \in PSL(2,\mathcal{O}),
$
where $'$ means the conjugate in $\mathbb{Q}(\sqrt{5})$.

Set 
$$
W = \begin{pmatrix}
1&1\\
(1-\sqrt{5})/2 & (1+\sqrt{5})/2
\end{pmatrix}.
$$ 
It holds 
$$
A =A_0= U \oplus \begin{pmatrix} 2&1\\ 1&-2 \end{pmatrix} = U \oplus WU{}^tW .
$$ 
The correspondence 
$$
(z_1,z_2) \rightarrow (I_2\oplus {}^tW^{-1})\begin{pmatrix}z_1 z_2\\ -1\\ z_1 \\ z_2 \end{pmatrix}
$$
defines a biholomorphic isomorphism
$$
\iota :(\mathbb{H}_+ \times \mathbb{H}_+) \cup (\mathbb{H}_- \times \mathbb{H}_-) \rightarrow \mathcal{D}.
$$
Setting
\begin{align*}
\begin{cases}\vspace*{0.1cm}&\tau: (z_1,z_2)\rightarrow (z_2,z_1),\\
&\tau':(z_1,z_2)\rightarrow \Big(\displaystyle  \frac{1}{z_1},\frac{1}{z_2}\Big),
\end{cases}
\end{align*}
we set
$$
\rho(g)=\iota \circ g \circ \iota^{-1}
$$
for $g\in \langle PSL(2,\mathcal{O}), \tau, \tau' \rangle.$
We have $\rho(\langle PSL(2,\mathcal{O}), \tau, \tau' \rangle)=PO(A,\mathbb{Z})$. 
Put $\mathbb{H}=\mathbb{H}_+$.
The pair $(\iota, \rho )$ gives a modular isomorphism 
\begin{eqnarray}\label{modulariso}
(\mathbb{H} \times \mathbb{H}, \langle PSL(2,\mathcal{O}),
 \tau \rangle)
\simeq
(\mathcal{D}_+, PO^+(A,\mathbb{Z})).
\end{eqnarray}

There are several researches on the  Hilbert modular   orbifolds  for the field $\mathbb{Q}(\sqrt{5})$. 
Hirzebruch \cite{Hirzebruch} studied the orbifold $(\mathbb{H}\times\mathbb{H})/ \langle  \Gamma ,\tau \rangle$  (the group $\Gamma$ is given in (\ref{eq:dfgamma})).
 There, he used Klein's icosahedral polynomials.  
  Kobayashi,  Kushibiki and  Naruki \cite{KobaNaru}  studied the orbifold  $(\mathbb{H} \times
 \mathbb{H}) / \langle PSL(2, \mathcal{O}),\tau \rangle $ and determined its branch divisor in terms of the icosahedral invariants.
   Sato  \cite{Sato} gave the uniformizing differential equation (see Definition \ref{df:UDE}) of the orbifold   $(\mathbb{H} \times
 \mathbb{H}) / \langle PSL(2, \mathcal{O}),\tau \rangle $.

Because of  the modular isomorphism (\ref{modulariso}) and Theorem \ref{thm:monodromy},  our period differential equation (\ref{eq:systemP4}) for the family 
$\mathcal{F}_0=\{S_0(\lambda,\mu)\}$
 should be connected  to the uniformizing differential equation of the orbifold $(\mathbb{H} \times \mathbb{H}) / \langle PSL(2, \mathcal{O}),\tau \rangle $.

 In this section, we realize the explicit relation between our period differential equation and the  uniformizing differential equation of the orbifold  $(\mathbb{H} \times \mathbb{H})/  \langle PSL(2,\mathcal{O}),
 \tau \rangle$. 
We give the exact  birational transformation (\ref{trans}) from our $(\lambda,\mu)$-space to $(x,y)$-space, where $(x,y)$ are  affine coordinates expressed by  Klein's icosahedral polynomials in (\ref{(x,y)}). Moreover, we show that  the uniformizing differential equation with the normalization factor (\ref{normal}) coincides with our period differential equation (\ref{eq:systemP4}).

\subsection{Linear differential equations in 2 variables of rank 4}

First, we survey the study of  Sasaki and  Yoshida \cite{SasakiYoshida}.  It supplies a fundamental tool for the research on  uniformizing differential equations of the  Hilbelt modular orbifolds.  

We consider a system of  linear differential equations
\begin{eqnarray}\label{linearDE}
\begin{cases}
& Z_{XX}= l Z_{XY} + a Z_X + b Z_Y +p Z,\\
& Z_{YY}=m Z_{XY} + c Z_X +d Z_Y+q Z,
\end{cases}
\end{eqnarray}
where $(X,Y)$ are independent variables and $Z$ is the unknown. We assume  its  space of solutions is  $4$-dimensional.

\begin{df}\label{df:conformal}
We call the  symmetric $2$-tensor 
\begin{eqnarray*}
 l (dX)^2 +2 (dX)(dY) +m (dY)^2
\end{eqnarray*}
the holomorphic conformal structure  of {\rm (\ref{linearDE})} .
\end{df}

\begin{rem}
The above symmetric $2$-tensor is equal to the holomorphic conformal structure of the complex surface patch 
 embedded in $\mathbb{P}^3(\mathbb{C})$ defined by the projective solution of  {\rm (\ref{linearDE})}.
\end{rem}

\begin{df}
Let $Z_0, Z_1,Z_2$ and $Z_3$ be linearly independent solutions of {\rm (\ref{linearDE})}. Put $Z={}^t(Z_0,Z_1,Z_2,Z_3)$. The  function 
\begin{eqnarray*}
e^{2\theta} ={\rm det} (Z,Z_X,Z_Y,Z_{XY})
\end{eqnarray*}
 is called the normalization factor of {\rm (\ref{linearDE})}.
 \end{df}

\begin{prop}{\rm (\cite{SasakiYoshida} Proposition 4.1,  see also \cite{Sato} p.181)}\label{prop:ABCD}
The surface patch by the projective solution of  {\rm (\ref{linearDE})} is a part of non degenerate quadratic surface in $\mathbb{P}^3(\mathbb{C})$ if and only if
\begin{align*}
\begin{cases}
\vspace*{0.2cm}
a=\displaystyle \frac{\partial}{\partial X}\Big(\frac{1}{4} \xi +\theta\Big) -\frac{l}{2}\frac{\partial}{\partial Y}\Big({\rm log} (l)-\frac{1}{4} \xi +\theta\Big),\\
\vspace*{0.2cm}
b=\displaystyle \frac{l}{2}\frac{\partial}{\partial X}\Big({\rm log} (l)-\frac{3}{4} \xi -\theta \Big),\\
\vspace*{0.2cm}
c=\displaystyle \frac{m}{2}\frac{\partial }{\partial Y}\Big({\rm log}(m)-\frac{3}{4} \xi -\theta \Big),\\
d=\displaystyle \frac{\partial}{\partial Y}\Big(\frac{1}{4} \xi +\theta  \Big)- \frac{m}{2}\frac{\partial}{\partial X}\Big({\rm log} (m) -\frac{1}{4} \xi +\theta \Big),
\end{cases}
\end{align*}
where $\xi ={\rm log}(1-l m)$. 
\end{prop}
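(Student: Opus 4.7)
The plan is to encode the condition ``the image of the projective solution lies on a non-degenerate quadric in $\mathbb{P}^3(\mathbb{C})$'' as algebraic relations among $a,b,c,d$ and the derivatives of $l,m,\theta,\xi$, then match these against the four claimed formulas. Geometrically, the two null directions of the conformal structure $l(dX)^2+2(dX)(dY)+m(dY)^2$ ought to correspond to the two rulings of the quadric, and the non-degeneracy condition $1-lm\ne 0$ is exactly the assumption implicit in $\xi=\log(1-lm)$.

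For the necessary direction, write $Z={}^t(Z_0,Z_1,Z_2,Z_3)$ and assume the image satisfies $Q(Z,Z)=0$ for some non-degenerate symmetric bilinear form $Q$ on $\mathbb{C}^4$. Differentiating repeatedly in $X$ and $Y$ yields
\[
Q(Z,Z_X)=Q(Z,Z_Y)=0,\quad Q(Z_X,Z_X)+Q(Z,Z_{XX})=0,
\]
and analogous identities for $Z_{YY}$ and the mixed derivative. Using \eqref{linearDE} to eliminate $Z_{XX}$ and $Z_{YY}$ in terms of the basis $\{Z,Z_X,Z_Y,Z_{XY}\}$, which is linearly independent since the solution space is $4$-dimensional, determines $Q$ uniquely up to scalar and imposes compatibility relations purely on $l,m,a,b,c,d,p,q$. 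Next I would compute the logarithmic derivatives of the normalization factor $e^{2\theta}=\det(Z,Z_X,Z_Y,Z_{XY})$ via Jacobi's formula together with \eqref{linearDE}; this expresses $2\theta_X$ and $2\theta_Y$ as affine combinations of $a,b,c,d$ and the logarithmic derivatives of $l,m$. Combining these identities with the compatibility relations extracted from the quadric condition, and using $\xi=\log(1-lm)$ to consolidate the contributions of the normalization of $Q$, one isolates precisely the four asserted formulas for $a,b,c,d$.

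For sufficiency, given $a,b,c,d$ in the claimed form, I would reverse the log-derivative computation to reconstruct a candidate $Q$, and then verify $Q(Z,Z)\equiv\mathrm{const}$ (hence normalizable to zero) by invoking the integrability of \eqref{linearDE}. The main obstacle is the bookkeeping of mixed partials: one must check that $\theta_{XY}=\theta_{YX}$ and the cross-integrability conditions for \eqref{linearDE} are compatible with the asymmetric appearance of $l$ and $m$ in the formulas, so that the four equations for $a,b,c,d$ genuinely package a single geometric condition on the surface patch rather than an over- or under-determined system. Since this identity of Sasaki--Yoshida is already established in \cite{SasakiYoshida}, it suffices to cite it, but the strategy above shows why the formulas take the particular form they do.
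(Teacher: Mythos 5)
The paper offers no proof of this proposition at all: it is quoted verbatim from Sasaki--Yoshida (\cite{SasakiYoshida}, Proposition 4.1; see also \cite{Sato}), so the citation you fall back on at the end is precisely the paper's own justification, and in that sense your proposal matches the paper. Your accompanying sketch (differentiating the quadric relation in the frame $Z,Z_X,Z_Y,Z_{XY}$, noting that the Gram determinant equals $(1-lm)\,Q(Z,Z_{XY})^4$ up to $\det Q\, e^{4\theta}$, which is where the combination $\tfrac14\xi+\theta$ enters) is a plausible outline of why the formulas have this shape, but it is only a plan rather than a worked derivation — which is acceptable here, since the statement is used as a quoted result.
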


\begin{prop}\label{LM} {\rm (\cite{SasakiYoshida}  Section 3)}
Perform a coordinate change of the equation  {\rm (\ref{linearDE})} from $(X,Y)$ to $(U,V)$ and denote the coefficients of the transformed equation by the same letter with bars. Then 
\begin{align*}
\begin{cases}
\vspace*{0.2cm}
\overline{l}=-\lambda/\nu ,\quad
\overline{m}=-\mu/\nu,\\
\vspace*{0.2cm}
\overline{a}=(R(U)\beta -S(U)\alpha)/\nu,\quad
\overline{b}= (R(V)\beta -S(V)\alpha)/\nu,\\
\vspace*{0.2cm}
\overline{c}= (S(U)\gamma -R(U)\delta))/\nu,\quad
\overline{d}= (S(V)\gamma -R(V)\delta)/\nu,\\
\overline{p}= (\alpha q -\beta p)/\nu,\quad
\overline{q}= (\delta p -\gamma q)/\nu ,
\end{cases}
\end{align*}
where
\begin{align*}
\begin{cases}
\Delta = U_X V_Y - U_Y V_X,\\
\lambda=l V_Y^2 - 2 V_X V_Y +m V_X^2,\\
\mu=l U_Y^2 -2 U_X U_Y + m U_X^2,\\
\nu=l U_Y V_Y - U_X V_Y -U_Y V_X +m U_X V_X,\\
\end{cases}
\end{align*}
and
\begin{align*}
\begin{cases}
\vspace*{0.1cm}
\alpha =(V_X^2 -l V_X  V_Y)/\Delta ,\quad
\beta=(V_Y^2 -m V_X  V_Y)/\Delta,\\
\vspace*{0.1cm}
\gamma=(U_X^2 - l U_X U_Y)/\Delta,\quad
\delta=(U_Y^2-m U_X U_Y)/\Delta,\\
\vspace*{0.1cm}
R(U)=U_{XX} -(l U_{X Y} +a U_X +b U_Y),\\
\vspace*{0.1cm}
S(U)=U_{YY} -(m U_{X Y} +c U_X +d U_Y),\\
\vspace*{0.1cm}
R(V)=V_{XX} - (l V_{X Y} + a V_X +b V_Y),\\
S(V) =V_{YY} -(m V_{X Y}+c V_X +d V_Y)  .
\end{cases}
\end{align*}
\end{prop}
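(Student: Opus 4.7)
The plan is to carry out a direct chain-rule substitution, viewing $U=U(X,Y)$ and $V=V(X,Y)$ and rewriting every derivative of $Z$ with respect to $X,Y$ in terms of derivatives with respect to $U,V$. First I would record
\begin{align*}
Z_X &= U_X Z_U + V_X Z_V, \quad Z_Y = U_Y Z_U + V_Y Z_V,
\end{align*}
and then expand $Z_{XX}, Z_{XY}, Z_{YY}$ as quadratic expressions in $(Z_{UU}, Z_{UV}, Z_{VV})$ with first-order correction terms involving $U_{XX}, V_{XX}, U_{XY}, V_{XY}, U_{YY}, V_{YY}$. Substituting into (\ref{linearDE}) gives two linear relations among the three second-order quantities $Z_{UU}, Z_{UV}, Z_{VV}$, plus first-order and $Z$-terms. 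Eliminating $Z_{UV}$ between them and then solving for $(Z_{UU}, Z_{VV})$ puts the system into the standard form and lets me read off $\overline{l}, \overline{m}, \overline{a}, \overline{b}, \overline{c}, \overline{d}, \overline{p}, \overline{q}$ by matching coefficients.

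The cleanest route to $\overline{l}, \overline{m}$ bypasses the brute force. By Definition \ref{df:conformal} the symmetric $2$-tensor $l\,dX^2 + 2\,dX\,dY + m\,dY^2$ is intrinsic up to a scalar factor. Writing $dX, dY$ in terms of $dU, dV$ via the inverse Jacobian pulls it back to $\Delta^{-2}\bigl(\lambda\,dU^2 - 2\nu\,dU\,dV + \mu\,dV^2\bigr)$, and normalizing the middle coefficient to $2$ forces
\begin{equation*}
\overline{l} = -\lambda/\nu, \qquad \overline{m} = -\mu/\nu,
\end{equation*}
matching the stated formulas. This also explains geometrically why the denominator $\nu$ is distinguished.

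For the remaining coefficients, the quantities $R(U), S(U), R(V), S(V)$ are precisely the residues produced when the two original operators $\partial_{XX} - l\partial_{XY} - a\partial_X - b\partial_Y$ and $\partial_{YY} - m\partial_{XY} - c\partial_X - d\partial_Y$ are applied to $U$ and $V$ respectively. The $2\times 2$ matrix relating $(Z_{XX}-l Z_{XY},\,Z_{YY}-m Z_{XY})$ to $(Z_{UU},\,Z_{VV})$, modulo the $Z_{UV}$ direction, has rows governed by the expressions $(V_X^2 - l V_X V_Y,\ V_Y^2 - m V_X V_Y)$ and $(U_X^2 - l U_X U_Y,\ U_Y^2 - m U_X U_Y)$; inverting it exhibits $(\alpha,\beta,\gamma,\delta)$ exactly as defined. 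Assembling the first-order parts produces the formulas for $\overline{a}, \overline{b}, \overline{c}, \overline{d}$, while the $Z$-coefficients $\overline{p} = (\alpha q - \beta p)/\nu$ and $\overline{q} = (\delta p - \gamma q)/\nu$ drop out from the same inversion applied to the $Z$-terms alone.

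The main obstacle is not conceptual but combinatorial: the chain-rule expansion generates many terms, and it is easy to pick up spurious sign errors or factors of $\Delta$. I would organize the bookkeeping in three stages, handling (i) the pure second-derivative part, which yields both $\overline{l}, \overline{m}$ and the inversion matrix with entries $\alpha,\beta,\gamma,\delta$; (ii) the first-derivative correction, whose contributions collapse onto $R(U), S(U), R(V), S(V)$ as above; and (iii) the $Z$-term, which is then immediate. This modular separation avoids the common pitfall of mixing orders during the elimination of $Z_{UV}$.
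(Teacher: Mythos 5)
The paper does not prove this proposition at all: it is imported verbatim from Sasaki--Yoshida (\cite{SasakiYoshida}, Section 3) as a known tool, so there is no internal proof to compare against. Your plan supplies the missing derivation, and it is the correct, standard one: writing $Z_X=U_XZ_U+V_XZ_V$, etc., the two equations of (\ref{linearDE}) become linear in $(Z_{UU},Z_{VV})$ with coefficient matrix $\Delta\begin{pmatrix}\gamma&\alpha\\ \delta&\beta\end{pmatrix}$ (since the $Z_{UU}$-, $Z_{VV}$-coefficients of $Z_{XX}-lZ_{XY}$ and $Z_{YY}-mZ_{XY}$ are exactly $\Delta\gamma,\Delta\alpha$ and $\Delta\delta,\Delta\beta$), whose determinant is $-\nu\Delta$; inverting with the adjugate $\begin{pmatrix}\beta&-\alpha\\ -\delta&\gamma\end{pmatrix}$ and collecting the first-order residues $R(U),S(U),R(V),S(V)$ and the $Z$-terms reproduces every stated formula, and the $Z_{UV}$-coefficients indeed collapse to $-\lambda$ and $-\mu$, confirming $\overline{l}=-\lambda/\nu$, $\overline{m}=-\mu/\nu$ (your alternative conformal-structure argument for these two is also fine, and has the merit of explaining why $\nu$ is the distinguished denominator). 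Two small imprecisions to fix when writing it up: one does not ``eliminate $Z_{UV}$ between the two equations'' (that would leave a single relation); rather one keeps $Z_{UV}$ on the right-hand side and solves the $2\times2$ system for $Z_{UU}$ and $Z_{VV}$, as you in fact do later. Also, $\alpha,\beta,\gamma,\delta$ are (up to the factor $\Delta$) the entries of the coefficient matrix itself, not of its inverse; the inverse contributes the combinations such as $R(U)\beta-S(U)\alpha$ over $\nu$. With those wordings corrected, the argument is complete in outline and matches the quoted formulas.
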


\subsection{Uniformizing differential equation of the Hilbert modular orbifold  $(\mathbb{H}\times \mathbb{H}) / \langle  PSL(2,\mathcal{O}),\tau \rangle$}

The quotient space $(\mathbb{H}\times\mathbb{H})/\langle PSL(2,\mathcal{O}), \tau\rangle$ carries the structure of an orbifold.Let us sum up the facts about the orbifold  $(\mathbb{H}\times \mathbb{H}) / \langle  PSL(2,\mathcal{O}),\tau\rangle$ and the result of  Sato \cite{Sato}  on the uniformizing differential equation.

Set
\begin{eqnarray}\label{eq:dfgamma}
\Gamma=\Big\{\begin{pmatrix}\alpha & \beta 
\\ \gamma & \delta \end{pmatrix}\in PSL(2,\mathcal{O})
\Big|  \hspace{0.1cm}\alpha \equiv \delta \equiv 1, \hspace{0.1cm}\beta \equiv \gamma \equiv 0  \quad(\text{mod}\sqrt{5})\Big\}.
\end{eqnarray}
$\Gamma$ is a normal subgroup of $PSL(2, \mathcal{O})$.
The quotient group $PSL(2,\mathcal{O})/\Gamma$ is isomorphic to the alternating group $\mathcal{A}_5$ of degree $5$. $\mathcal{A}_5$ is isomorphic to the icosahedral group $I$.
Let $\overline{M}$ be a compactification of an orbifold $M$. 
 Hirzebruch \cite{Hirzebruch} showed that  $\overline {\mathbb{H}\times\mathbb{H}/\langle \Gamma,\tau\rangle}$ is isomorphic to $\mathbb{P}^2(\mathbb{C})$. Therefore, $\mathbb{P}^2(\mathbb{C})$ admits an action of the alternating  group $\mathcal{A}_5$.   This action is equal to the action of the  icosahedral group $I$  on $\mathbb{P}^2(\mathbb{C})$  introduced by F. Klein.
We list  Klein's $I$-invariant polynomials on $\mathbb{P}^2(\mathbb{C})=\{(\zeta_0:\zeta_1:\zeta_2)\}$:
\begin{align*}
\begin{cases}
\vspace*{0.2cm}
\mathfrak{A}(\zeta_0:\zeta_1:\zeta_2)=\zeta_0^2 +\zeta_1 \zeta_2,\\
\vspace*{0.2cm}
\mathfrak{B}(\zeta_0:\zeta_1:\zeta_2)=8\zeta_0^4 \zeta_1 \zeta_2 -2 \zeta_0^2 \zeta_1^2 \zeta_2^2 +\zeta_1^3 \zeta_2^3 -\zeta_0 (\zeta_1^5 + \zeta_2 ^5),\\
\mathfrak{C}(\zeta_0:\zeta_1:\zeta_2)=320\zeta_0^6 \zeta_1^2 \zeta_2^2 -160 \zeta_0^4 \zeta_1^3 \zeta_2^3 +20 \zeta_0^2 \zeta_1^4 \zeta_2^4 +6 \zeta_1^5 \zeta_2^5\\
\vspace*{0.2cm}
\quad\quad\quad\quad \quad\quad\quad\quad -4\zeta_0(\zeta_1^5+\zeta_2^5) (32 \zeta_0^4-20 \zeta_0^2 \zeta_1 \zeta_5 +5\zeta_1^2 \zeta_2^2)+\zeta_1^{10}+\zeta_2^{10},\\
12\mathfrak{D}(\zeta_0:\zeta_1:\zeta_2)=(\zeta_1^5-\zeta_2^5)(-1024 \zeta_0^{10}+3840 \zeta_0^8 \zeta_1 \zeta_2 -3840 \zeta_0^6 \zeta_1^2 \zeta_2^2\\
\quad\quad\quad\quad\quad\quad\quad \quad\quad\quad\quad\quad\quad\quad\quad\quad\quad\quad+1200 \zeta_0^4 \zeta_1^3 \zeta_2^3 -100\zeta_0^2 \zeta_1^4 \zeta_2^4 +\zeta_1^5 \zeta_2^5)\\
\quad\quad\quad\quad\quad\quad\quad \quad\quad\quad\quad\quad+\zeta_0(\zeta_1^{10}-\zeta_2^{10})(352\zeta_0^4 -160 \zeta_0^2 \zeta_1 \zeta_2 +10 \zeta_1^2 \zeta_2^2)+(\zeta_1^{15}-\zeta_2^{15}).
\end{cases} 
\end{align*}
We have the following relation:
$$
144\mathfrak{D}^2=-1728\mathfrak{B}^5 +720\mathfrak{A}\mathfrak{C}\mathfrak{B}^3 -80 \mathfrak{A}^2 \mathfrak{C}^2 \mathfrak{B} +64\mathfrak{A}^3(5\mathfrak{B}^2-\mathfrak{A}\mathfrak{C})^2+\mathfrak{C}^3.
$$

 Kobayashi,  Kushibiki and  Naruki \cite{KobaNaru} showed that  
a compactification $\overline{(\mathbb{H}\times\mathbb{H})/\langle PSL(2,\mathcal{O}), \tau\rangle}$ is isomorphic to $\mathbb{P}^2(\mathbb{C})$.
Let  
$$
\varphi : \mathbb{P}^2(\mathbb{C})=\overline{(\mathbb{H}\times\mathbb{H})/\langle\Gamma,\tau \rangle}\rightarrow
\overline{(\mathbb{H}\times\mathbb{H})/\langle PSL(2,\mathcal{O}),\tau \rangle}=\mathbb{P}^2(\mathbb{C})
$$
be a rational mapping defined by
$$
(\zeta_0:\zeta_1:\zeta_2)\mapsto(\mathfrak{A}^5:\mathfrak{A}^2 \mathfrak{B}: \mathfrak{C}).
$$
$\varphi $ is  a holomorphic mapping of $\mathbb{P}^2(\mathbb{C})-\{A=0\}$ to $\mathbb{P}^2(\mathbb{C})-(\text{a line at infinity }\hspace{0.1cm} L_\infty)
\subset  \overline{(\mathbb{H}\times\mathbb{H})/ \langle PSL(2,\mathcal{O}),\tau \rangle}.$
Set
\begin{eqnarray}\label{(x,y)}
x=\frac{\mathfrak{B}}{\mathfrak{A}^3},   \quad y=\frac{\mathfrak{C}}{\mathfrak{A}^5}.
\end{eqnarray}
$x$ and $y$ are the affine coordinates identifying $(1:x:y)\in \mathbb{P}^2(\mathbb{C})-L_\infty$ with $(x,y) \in \mathbb{C}^2$.

\begin{prop} {\rm (\cite{KobaNaru})}\label{orbi}
The branch locus of the orbifold $(\mathbb{H}\times\mathbb{H})/\langle PSL(2,\mathcal{O}),\tau \rangle$ in $\mathbb{P}^2(\mathbb{C}) - L_{\infty}=\mathbb{C}^2$ is, using the affine coordinates {\rm (\ref{(x,y)})},
$$
D=y(1728 x^5 -720 x^3 y + 80 x y^2 - 64 (5x^2 -y)^2 -y^3)=0
$$
of index $2$.
The orbifold structure on $\overline{(\mathbb{H}\times\mathbb{H})/\langle PSL(2,\mathcal{O}),\tau \rangle}$ is given by $(\mathbb{P}^2(\mathbb{C}), 2 D+\infty L_\infty)$.
\end{prop}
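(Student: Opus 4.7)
The plan is to exploit the factorization
$(\mathbb{H}\times\mathbb{H})/\langle \Gamma,\tau\rangle \to (\mathbb{H}\times\mathbb{H})/\langle PSL(2,\mathcal{O}),\tau\rangle$
coming from $PSL(2,\mathcal{O})/\Gamma\cong \mathcal{A}_5\cong I$. Hirzebruch's result identifies the first quotient with $\mathbb{P}^2(\mathbb{C})$ carrying the classical Klein action of the icosahedral group $I$, so the larger quotient is birational to $\mathbb{P}^2/I$, with the residual ramification of $\tau$ added in. Concretely, I would compute separately (i) the branch divisor of the further quotient map $\varphi$ produced by the icosahedral reflections, and (ii) the image of the $\tau$-fixed diagonal, and then combine them.

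For step (i): the order-two elements of $I$ have a one-dimensional fixed locus on $\mathbb{P}^2$ cut out by the skew invariant $\mathfrak{D}$ of degree $15$. Using the syzygy
\begin{equation*}
144\mathfrak{D}^2=-1728\mathfrak{B}^5+720\mathfrak{A}\mathfrak{C}\mathfrak{B}^3-80\mathfrak{A}^2\mathfrak{C}^2\mathfrak{B}+64\mathfrak{A}^3(5\mathfrak{B}^2-\mathfrak{A}\mathfrak{C})^2+\mathfrak{C}^3,
\end{equation*}
dividing by $\mathfrak{A}^{15}$ and rewriting in the affine coordinates $x=\mathfrak{B}/\mathfrak{A}^3$, $y=\mathfrak{C}/\mathfrak{A}^5$ produces exactly the equation $1728x^5-720x^3y+80xy^2-64(5x^2-y)^2-y^3=0$, which is the quintic factor of $D$. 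The ramification index along this curve is $2$ because the generic stabilizers are order-two reflections.

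For step (ii): the linear factor $y$ must come from the fixed locus of $\tau$. The idea is to show that under Hirzebruch's identification, the $I$-orbit of the diagonal $\{z_1=z_2\}\subset\mathbb{H}\times\mathbb{H}$ descends to the curve $\mathfrak{C}=0$, which is $y=0$ in the affine chart. One approach is structural: the image is an $I$-invariant effective divisor on $\mathbb{P}^2$, and because the invariants of degree $<15$ are $\mathfrak{A}$ (degree 2), $\mathfrak{B}$ (degree 6) and $\mathfrak{C}$ (degree 10), a comparison of degrees forces the identification once one has ruled out $\mathfrak{A}$ and $\mathfrak{B}$ (which will cut out instead the image of the cusp cycle and the image of the elliptic-fixed-point locus). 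A complementary approach is analytic: evaluate the Hirzebruch embedding along $z_1=z_2$ using explicit symmetric Hilbert modular forms and check that $\mathfrak{C}$ pulls back to zero. This contributes a further index-$2$ component to $D$.

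Putting the two contributions together gives the stated expression for $D$ with uniform index $2$. The orbifold weight $\infty\,L_\infty$ along the line at infinity is then accounted for by standard cusp theory: in the chart $(x,y)=(\mathfrak{B}/\mathfrak{A}^3,\mathfrak{C}/\mathfrak{A}^5)$ the line $L_\infty$ is $\mathfrak{A}=0$, which is the image of the (unique, up to $PSL(2,\mathcal{O})$) cusp cycle; since cusp stabilizers are parabolic and hence infinite, the Baily--Borel/Hirzebruch cusp-resolution picture gives the orbifold weight $\infty$ along $L_\infty$. I expect the main obstacle to lie in step (ii): matching the $\tau$-diagonal with $\{\mathfrak{C}=0\}$ up to a scalar (rather than merely up to the $I$-action) requires concrete symmetric Hilbert modular form computations along $z_1=z_2$, which is presumably where the detailed analysis of \cite{KobaNaru} becomes indispensable.
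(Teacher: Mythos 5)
First, note that the paper does not prove this proposition at all: it is quoted verbatim from \cite{KobaNaru} (with the relevant background from Hirzebruch), so there is no internal proof to compare with. Judged on its own, your outline follows the right overall strategy — factor the uniformization through Hirzebruch's identification $\overline{(\mathbb{H}\times\mathbb{H})/\langle\Gamma,\tau\rangle}\simeq\mathbb{P}^2(\mathbb{C})$ with Klein's $\mathcal{A}_5$-action, and your step (i) is essentially complete: the ramification of $\varphi$ is the union of the $15$ lines $\{\mathfrak{D}=0\}$ fixed by the involutions of $\mathcal{A}_5$, and dividing the syzygy by $\mathfrak{A}^{15}$ does show that its image is exactly the quintic factor of $D$, with index $2$.

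The genuine gaps are in step (ii) and in completeness. (a) Your ``structural'' argument for identifying the image of the diagonal with $\{\mathfrak{C}=0\}$ does not work as stated: $\mathcal{A}_5$-invariant forms of degree $10$ form the three-dimensional space spanned by $\mathfrak{C},\ \mathfrak{A}^2\mathfrak{B},\ \mathfrak{A}^5$, so invariance plus a degree count cannot single out $\mathfrak{C}$; and your parenthetical that $\mathfrak{B}=0$ would be ``the image of the elliptic-fixed-point locus'' is dimensionally wrong, since elliptic elements of $PSL(2,\mathcal{O})$ have isolated fixed points and contribute no divisorial branching. The identification really does require the explicit analysis you defer to \cite{KobaNaru}, so as written the key component $y=0$ of $D$ is assumed rather than proved. (b) You never argue that these two contributions exhaust the branch divisor. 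The branching of $\mathbb{H}\times\mathbb{H}\to\mathbb{P}^2\setminus\{\mathfrak{A}=0\}$ comes from \emph{all} reflections $\tau g$ ($g\in PSL(2,\mathcal{O})$, $g'g=1$) lying in $\langle\Gamma,\tau\rangle$, whose fixed curves are twisted diagonals $\{z_2=gz_1\}$, not just the diagonal itself; one must show (e.g.\ via the classification of the fixed locus of the symmetric involution into the $F_1$- and $F_5$-type curves, and the observation that no reflection of the $F_5$-class can lie in $\tau\Gamma$ because Galois conjugation is trivial on $\mathcal{O}/\sqrt{5}$) that every such fixed curve maps onto the image of the diagonal, and that the remaining reflection classes account exactly for the $15$ lines. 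Without this, extra branch components are not excluded and the uniform index $2$ is not justified. These are precisely the points where the cited detailed computations of \cite{KobaNaru} (and Hirzebruch's identification of the boundary conic $\mathfrak{A}=0$ with $L_\infty$, which you use correctly for the weight-$\infty$ part) are indispensable.
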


We note that $\mathbb{H}\times\mathbb{H}$ is embedded in $\mathbb{P}^1(\mathbb{C})\times\mathbb{P}^1(\mathbb{C})$ which is isomorphic to a non-degenerate quadric surface in $ \mathbb{P}^3(\mathbb{C})$.
Let $\pi : \mathbb{H}\times \mathbb{H}\rightarrow (\mathbb{H}\times\mathbb{H})/\langle PSL(2,\mathcal{O}),\tau \rangle$ be the canonical projection. The multivalued inverse mapping $\pi^{-1}$ is called  the developing map of the orbifold $(\mathbb{H}\times\mathbb{H})/\langle PSL(2,\mathcal{O}),\tau \rangle$.

\begin{df}\label{df:UDE}
Let us  consider a system of linear differential equations on the orbifold $(\mathbb{H}\times\mathbb{H})/\langle PSL(2,\mathcal{O}),\tau \rangle$ with $4$-dimensional  space of solutions. Let   $z_0,z_1,z_2,z_3$ be linearly independent solutions of the system. 
If 
$$
(\mathbb{H}\times\mathbb{H})/\langle PSL(2,\mathcal{O}),\tau \rangle\rightarrow \mathbb{P}^3 (\mathbb{C}):p\mapsto (z_0(p):z_1(p):z_2(p):z_3(p))
$$
gives the developing map of the orbifold $(\mathbb{H}\times\mathbb{H})/\langle PSL(2,\mathcal{O}),\tau \rangle$, we call this system the uniformizing differential equation of the orbifold.
\end{df}

From Proposition \ref{orbi}, T. Sato obtained the following result.

\begin{thm}\label{prop:Sato}{\rm (\cite{Sato} Example. 4)}
The holomorphic  conformal structure of the  uniformizing differential equation of the orbifold $(\mathbb{H}\times\mathbb{H})/\langle PSL(2,\mathcal{O}), \tau\rangle$ 
is
\begin{align}\label{Sato's}
\displaystyle \frac{-20 (4 x^2+3 x y -4y)}{36 x^2 -32 x -y}(dx)^2
+2(dx)(dy)
+\displaystyle \frac{-2(54 x^3 -50 x^2 -3 x y +2y )}{5 y (36 x^2 -32 x -y)}(dy)^2,
\end{align}
where $(x,y)$ is the affine coordinates in {\rm (\ref{(x,y)}) }.
\end{thm}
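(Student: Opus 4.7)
The plan is to derive the stated conformal structure from the orbifold branching data via the Sasaki--Yoshida framework (Proposition \ref{prop:ABCD}), following the strategy of Sato.

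First, I would exploit the fact that $\mathbb{H}\times\mathbb{H}$ sits as an open subset of the nondegenerate quadric $\mathbb{P}^1(\mathbb{C})\times\mathbb{P}^1(\mathbb{C})\subset\mathbb{P}^3(\mathbb{C})$ (Segre embedding). Consequently, for any uniformizing differential equation of $(\mathbb{H}\times\mathbb{H})/\langle PSL(2,\mathcal{O}),\tau\rangle$, the projective solution parametrizes a piece of this nondegenerate quadric. By Proposition \ref{prop:ABCD}, the coefficients $a,b,c,d$ are then determined by $l, m$ and the normalization factor $e^{2\theta}$, so only the two coefficients $l$ and $m$ of the holomorphic conformal structure of Definition \ref{df:conformal} remain to be identified.

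Second, I would invoke Proposition \ref{orbi}: the branch locus of the orbifold in the affine chart $(x,y)$ is $D=y(1728x^5-720x^3y+80xy^2-64(5x^2-y)^2-y^3)=0$ with index $2$, together with the line $L_\infty$ at infinity with index $\infty$. Because the uniformizing differential equation must have local monodromy of order $2$ around each smooth point of $D=0$ and develop to $\mathbb{H}\times\mathbb{H}$, the discriminant $1-lm$ of the conformal structure must vanish exactly along $D=0$ to the appropriate order, while $l$ and $m$ may only acquire poles along the one-dimensional strata of the branch locus dictated by the orbifold chart structure.

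Third, I would make an ansatz for $l$ and $m$ as rational functions with numerators and denominators of degrees constrained by the factorization $D=y\cdot Q(x,y)$, where $Q$ is the irreducible quintic $1728x^5-720x^3y+80xy^2-64(5x^2-y)^2-y^3$, and solve simultaneously:
\begin{itemize}
\item the proportionality $1-lm\propto D$;
\item the symmetry under the exchange $\tau\colon(z_1,z_2)\mapsto(z_2,z_1)$, which is reflected in the quadric geometry;
\item compatibility at $L_\infty$, to be checked using the coordinate-change rules of Proposition \ref{LM} on a second affine chart.
\end{itemize}
The remaining normalization (sign and overall rescaling of $l$ and $m$) is fixed by behavior at a reference cusp of the orbifold, where $(x,y)$ approaches a known $PSL(2,\mathcal{O})$-fixed point on the boundary.

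The main obstacle will be the bookkeeping needed to pin down the ansatz: the branch divisor is the product of a line and an irreducible quintic with genuine cusps (at the elliptic points of the orbifold), and local analyticity at those singular points of $D$ imposes refined conditions on the admissible poles of $l$ and $m$ beyond the naive divisor-order count. Once these local conditions are reconciled with the global proportionality $1-lm\propto D$, the expressions displayed in (\ref{Sato's}) emerge essentially uniquely, and verification reduces to a finite polynomial identity in $(x,y)$ that can be checked directly.
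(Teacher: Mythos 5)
First, a point of comparison: the paper does not prove this statement at all. Theorem \ref{prop:Sato} is quoted from Sato \cite{Sato}, Example 4, the only ingredient supplied by the paper being the branch data of Proposition \ref{orbi}; so your attempt has to be judged on its own as a reconstruction of Sato's computation, not against an argument in the text.

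As such a reconstruction it has genuine gaps. The two constraints on which your ansatz rests are violated by the answer itself: with $l=\frac{-20(4x^2+3xy-4y)}{36x^2-32x-y}$ and $m=\frac{-2(54x^3-50x^2-3xy+2y)}{5y(36x^2-32x-y)}$ one computes $1-lm=-D/\bigl(y^{2}(36x^{2}-32x-y)^{2}\bigr)$, so $1-lm$ has a simple \emph{pole}, not a zero, along the branch component $y=0$, and $l,m$ have poles along the conic $36x^{2}-32x-y=0$, which is not a stratum of the branch locus at all (it is an apparent singularity created by normalizing the middle coefficient of the tensor to $1$). The divisor statement is only meaningful for the conformal class modulo squares, i.e.\ for the representative obtained by clearing denominators (multiplying by $5y(36x^{2}-32x-y)$), whose discriminant is exactly $-25D$; your ansatz as literally formulated would exclude the correct $l,m$. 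More importantly, even after this repair, ``discriminant proportional to $D$, pole bookkeeping, behavior at $L_\infty$ and at a cusp'' falls well short of determining the tensor, and your final claim that (\ref{Sato's}) ``emerges essentially uniquely'' is asserted rather than demonstrated: what actually pins the structure down is the existence of the developing map, i.e.\ either the integrability conditions behind Proposition \ref{prop:ABCD} together with the prescribed orbifold monodromy, or, as in Sato's derivation, the explicit expression of the invariant tensor $dz_1\,dz_2$ (a symmetric object of weight $(2,2)$) through the ring of symmetric Hilbert modular forms for $\mathbb{Q}(\sqrt{5})$ and the coordinates (\ref{(x,y)}). Finally, imposing ``symmetry under $\tau$'' is vacuous, since $(x,y)$ are already coordinates on the $\tau$-quotient. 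So the outline captures the right heuristic (degeneration of the conformal structure along the index-$2$ branch divisor) but does not yet constitute a proof.
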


Let 
\begin{eqnarray}
\begin{cases}\label{HUDE}
& z_{xx}= l z_{xy} + a z_x + b z_y +p z,\\
& z_{yy}=m z_{xy} + c z_x +d z_y +q z
\end{cases}
\end{eqnarray}
be the uniformizing differential equation of $(\mathbb{H}\times\mathbb{H})/\langle PSL(2,\mathcal{O}), \tau\rangle$,  where $(x,y)$ is the affine coordinates in {\rm (\ref{(x,y)})}.
We already  obtained the coefficients $l$ and $m$ (see Definition \ref{df:conformal} and Theorem \ref{prop:Sato}).
If the normalization factor of (\ref{HUDE}) is given, the coefficients $a, b, c$ and $d$ are determined by Proposition \ref{prop:ABCD}.
The other coefficients $p$ and $q$ are determined by the integrability  condition of (\ref{HUDE}).

\begin{rem}\label{rem:Sato}
Sato {\rm \cite{Sato}} determined  the uniformizing differential equation of $(\mathbb{H}\times\mathbb{H})/\langle PSL(2,\mathcal{O}), \tau\rangle$ 
\begin{eqnarray*}
\begin{cases}
& z_{xx}= l z_{xy} + a_s z_x + b_s z_y +p_s z,\\
& z_{yy}=m z_{xy} + c _s z_x +d_s z_y +q_s z
\end{cases}
\end{eqnarray*}
with
\begin{align*}
\begin{cases}
\vspace*{0.2cm}
a_s(x,y)=\displaystyle \frac{-20(3x-2)}{36x^2 -32x -y},\quad
b_s(x,y)=\displaystyle \frac{-10(8x+3y)}{36x^2 -32x -y},\\
\vspace*{0.2cm}
c_s(x,y)=\displaystyle \frac{3x-2}{5y(36x^2 -32x -y)},\quad
d_s(x,y)=\displaystyle \frac{-198x^2 +180x+7y}{5y(36x^2 -32x -y)},\\
\vspace*{0.2cm}
p_s(x,y)=\displaystyle \frac{-3}{(36x^2 -32x -y)},\quad
q_s(x,y)=\displaystyle \frac{3}{100y(36x^2 -32 x -y)}.
\end{cases}
\end{align*}
Here, the normalization factor 
\begin{align}\label{Satonf}
e^{2\theta} =\frac{-36 x^2 +32 x +y}{y^{1/2} (1728 x^5 -720 x^3 y +80 x y^2 -64 (5x^2 -y)^2 -y^3))^{3/2}}.
\end{align}
 exactly corresponds to the above data $a_s,b_s,c_s,d_s,p_s$ and $q_s$.
It should coincides with  the original normalization factor
 in {\rm \cite{Sato}} p.185,
because Sato used the above data.
However, it is not the case.
We suppose there would be  contained some typos in the original one.
\end{rem}

\subsection{Exact relation between period differential equation and  unifomizing differential equation}

The modular isomorphism (\ref{modulariso}) implies that  our period differential equation (\ref{eq:systemP4}) should be  related to  the uniformizing differential equation of the orbifold $(\mathbb{H}\times\mathbb{H})/\langle PSL(2,\mathcal{O}),\tau \rangle$.
In this subsection, we show that the holomorphic conformal structure of (\ref{eq:systemP4}) is transformed to (\ref{Sato's}) in Theorem \ref{prop:Sato} by an explicit birational transformation. 
Moreover, we determine a normalization factor which is different from that of Sato's (\ref{Satonf}).
The uniformizing differential equation of the orbifold $(\mathbb{H}\times\mathbb{H})/\langle PSL(2,\mathcal{O}),\tau \rangle$ with our normalizing factor corresponds  to the period differential equation  (\ref{eq:systemP4}).

\begin{prop}
The period differential equation {\rm (\ref{eq:systemP4})}  is represented in the form
\begin{eqnarray}
\begin{cases}\label{ours}
& z_{\lambda \lambda}= l _0 z_{\lambda \mu} + a_0 z_\lambda + b_0  z_\mu +p_0 z,\\
& z_{\mu \mu}=m _0 z_{\lambda \mu} + c _0 z_\lambda +d_0  z_\mu +q_0 z
\end{cases}
\end{eqnarray}
with
\begin{align*}
\begin{cases}
\vspace*{0.2cm}
l_0=\displaystyle \frac{2\mu (-1 + 15\lambda + 100\lambda ^2)}{\lambda + 16 \lambda^2 -80 \lambda ^3 +125 \mu},\quad
m_0=\displaystyle \frac{2( \lambda ^2 - 8\lambda ^3 + 16 \lambda ^4 + 5 \mu - 50 \lambda \mu)}{\mu(\lambda + 16 \lambda^2 -80 \lambda ^3 +125 \mu )},\\
\vspace*{0.2cm}
a_0=\displaystyle \frac{(-1+10\lambda)(1+20\lambda)}{\lambda + 16 \lambda^2 -80 \lambda ^3 +125 \mu},\quad
b_0=\displaystyle \frac{5\mu (3+40 \lambda)}{\lambda + 16 \lambda^2 -80 \lambda ^3 +125 \mu},\\
\vspace*{0.2cm}
c_0=-\displaystyle\frac{5(-1+10\lambda)}{\mu(\lambda + 16 \lambda^2 -80 \lambda ^3 +125 \mu)},\quad
d_0=\displaystyle\frac{-\lambda -20 \lambda ^2 +96 \lambda ^3 -200\mu}{\mu(\lambda + 16 \lambda^2 -80 \lambda ^3 +125 \mu)},\\
p_0=\displaystyle\frac{2(1+20\lambda)}{\lambda + 16 \lambda^2 -80 \lambda ^3 +125 \mu},\quad
q_0=-\displaystyle\frac{10}{\mu(\lambda + 16 \lambda^2 -80 \lambda ^3 +125 \mu)}.
\end{cases}
\end{align*}
\end{prop}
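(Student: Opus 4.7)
The statement is a direct reorganization of the system (\ref{eq:systemP4}), so the plan is purely computational: rewrite both operators in ordinary partial derivatives and then solve linearly for $z_{\lambda\lambda}$ and $z_{\mu\mu}$.

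First I would convert the Euler operators $\theta_{\lambda}=\lambda\partial_{\lambda}$ and $\theta_{\mu}=\mu\partial_{\mu}$ by means of the identities $\theta_{\lambda}^{2}z=\lambda z_{\lambda}+\lambda^{2}z_{\lambda\lambda}$, $\theta_{\mu}^{2}z=\mu z_{\mu}+\mu^{2}z_{\mu\mu}$ and $\theta_{\lambda}\theta_{\mu}z=\lambda\mu z_{\lambda\mu}$, together with $\theta_{\lambda}(\theta_{\lambda}-1)z=\lambda^{2}z_{\lambda\lambda}$, and then expand the products $(2\theta_{\lambda}+5\theta_{\mu}+1)(2\theta_{\lambda}+5\theta_{\mu}+2)$ in $D_{1}^{(0)}$. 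After dividing $D_{1}^{(0)}z=0$ by $\lambda$ and $D_{3}^{(0)}z=0$ by $\lambda^{2}$ one is left with two linear relations
\begin{align*}
&\lambda(1-4\lambda)\,z_{\lambda\lambda}+2\mu(1-10\lambda)\,z_{\lambda\mu}-25\mu^{2}z_{\mu\mu}+(1-10\lambda)z_{\lambda}-40\mu z_{\mu}-2z=0,\\
&(4\lambda^{2}-16\lambda^{3}+25\mu)\,z_{\lambda\lambda}-2\lambda\mu(1+20\lambda)\,z_{\lambda\mu}+5\mu^{2}z_{\mu\mu}+4\lambda(1-10\lambda)z_{\lambda}+5\mu(1-8\lambda)z_{\mu}-8\lambda z=0,
\end{align*}
which I plan to treat as a $2\times 2$ linear system for the pair $(z_{\lambda\lambda},z_{\mu\mu})$ with the remaining terms $z_{\lambda\mu},z_{\lambda},z_{\mu},z$ on the right-hand side.

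Next I would compute the determinant of the coefficient matrix:
\[
\det\begin{pmatrix}\lambda(1-4\lambda) & -25\mu^{2}\\ 4\lambda^{2}-16\lambda^{3}+25\mu & 5\mu^{2}\end{pmatrix}=5\mu^{2}\bigl(\lambda+16\lambda^{2}-80\lambda^{3}+125\mu\bigr),
\]
which is precisely the denominator appearing in every coefficient $l_{0},m_{0},a_{0},b_{0},c_{0},d_{0},p_{0},q_{0}$. To solve for $z_{\lambda\lambda}$ it suffices to take the combination $(\text{first})+5(\text{second})$, whose $z_{\mu\mu}$-contribution vanishes; identifying the coefficients of $z_{\lambda\mu},z_{\lambda},z_{\mu},z$ after dividing by the determinant produces the expressions for $l_{0},a_{0},b_{0},p_{0}$. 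For $z_{\mu\mu}$ I would form $\lambda(1-4\lambda)(\text{second})-(4\lambda^{2}-16\lambda^{3}+25\mu)(\text{first})$ and divide by the determinant, which gives $m_{0},c_{0},d_{0},q_{0}$.

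The only real work is bookkeeping: collecting the coefficients of $z_{\lambda\mu},z_{\lambda},z_{\mu}$ and $z$ in the two linear combinations above, and checking for instance that the $z_{\lambda}$-contribution in the first combination reduces to $(-1+10\lambda)(1+20\lambda)$, that the $z_{\mu}$-contribution in the second reduces to $-5\mu(\lambda+20\lambda^{2}-96\lambda^{3}+200\mu)$, and similarly for the remaining entries. The main obstacle is simply keeping track of signs and degrees in these expansions; there is no conceptual difficulty, and once each coefficient is verified termwise the proposition follows.
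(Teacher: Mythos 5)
Your proposal is correct: the two relations you obtain from $D_1^{(0)}z=0$ and $D_3^{(0)}z=0$ after rewriting the Euler operators, the determinant $5\mu^{2}(\lambda+16\lambda^{2}-80\lambda^{3}+125\mu)$, and the eliminating combinations all check out and reproduce exactly the stated $l_0,\dots,q_0$. The paper's own proof is just the phrase ``straightforward calculation,'' so your route is essentially the same computation, merely written out explicitly.
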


\begin{proof}
Straightforward calculation.
\end{proof}

Especially, the holomorphic conformal structure of the period differential equation (\ref{eq:systemP4}) is 
\begin{align} \label{PerHCS}
\displaystyle \frac{2\mu (-1 + 15\lambda + 100\lambda ^2)}{\lambda + 16 \lambda^2 -80 \lambda ^3 +125 \mu} (d\lambda)^2 + 2 (d\lambda)(d\mu) + \displaystyle \frac{2( \lambda ^2 - 8\lambda ^3 + 16 \lambda ^4 + 5 \mu - 50 \lambda \mu)}{\mu(\lambda + 16 \lambda^2 -80 \lambda ^3 +125 \mu )}(d\mu)^2.\end{align}

\begin{thm}
Set  a birational transformation
\begin{eqnarray}\label{trans}
f:(\lambda,\mu)\mapsto (x,y)=\Big(\frac{25 \mu}{2 (\lambda -1/4)^3}, -\frac{3125 \mu ^2}{(\lambda -1/4)^5}\Big)
\end{eqnarray}
 from $(\lambda,\mu)$-space to $(x,y)$-space.
The holomorphic conformal structure  {\rm (\ref{PerHCS})} is transformed to the holomorphic conformal structure {\rm (\ref{Sato's})} by $f$. 
\end{thm}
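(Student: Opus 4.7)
The plan is to apply directly the coordinate change formulas of Sasaki--Yoshida (Proposition \ref{LM}) with the identification $(X,Y) = (\lambda, \mu)$ and $(U,V) = (x,y)$, starting from the explicit coefficients $l_0, m_0$ of the conformal structure (\ref{PerHCS}). Nothing conceptual is needed beyond a careful symbolic computation; the content of the theorem is that the resulting $\bar l, \bar m$ coincide with the coefficients appearing in (\ref{Sato's}).

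First I would compute the Jacobian data of $f$: the partial derivatives
\[
x_\lambda = -\tfrac{75\mu}{2(\lambda - 1/4)^4},\quad x_\mu = \tfrac{25}{2(\lambda - 1/4)^3},\quad y_\lambda = \tfrac{15625\mu^2}{(\lambda - 1/4)^6},\quad y_\mu = -\tfrac{6250\mu}{(\lambda - 1/4)^5},
\]
and the Jacobian $\Delta = x_\lambda y_\mu - x_\mu y_\lambda$. Next I would substitute these, together with $l_0$ and $m_0$, into the definitions
\[
\lambda_* = l_0 V_Y^2 - 2 V_X V_Y + m_0 V_X^2,\quad \mu_* = l_0 U_Y^2 - 2 U_X U_Y + m_0 U_X^2,
\]
\[
\nu_* = l_0 U_Y V_Y - U_X V_Y - U_Y V_X + m_0 U_X V_X,
\]
(writing $\lambda_*, \mu_*, \nu_*$ to avoid clash with the coordinate names) and form $\bar l = -\lambda_*/\nu_*$ and $\bar m = -\mu_*/\nu_*$.

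The resulting rational functions in $(\lambda,\mu)$ must then be rewritten in terms of $(x,y)$. For this the inversion of $f$ is clean: from $y/x^2 = -20(\lambda - 1/4)$ one obtains
\[
\lambda - \tfrac{1}{4} = -\frac{y}{20 x^2},\qquad \mu = -\frac{y^3}{100000\, x^5},
\]
and substituting these into $\bar l$ and $\bar m$ should yield, after simplification,
\[
\bar l = \frac{-20(4x^2 + 3xy - 4y)}{36 x^2 - 32 x - y},\qquad \bar m = \frac{-2(54 x^3 - 50 x^2 - 3xy + 2y)}{5 y (36 x^2 - 32 x - y)},
\]
which is precisely the conformal structure (\ref{Sato's}). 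The coefficient of $2(dx)(dy)$ is automatic from the general shape of the transformation formulas, so it suffices to match $\bar l$ and $\bar m$.

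The main obstacle is purely algebraic bookkeeping: the intermediate expressions for $\lambda_*, \mu_*, \nu_*$ are dense rational functions, and factoring them so that the common factor $\lambda + 16\lambda^2 - 80\lambda^3 + 125\mu$ in the denominators of $l_0, m_0$ cancels cleanly against the Jacobian contributions requires care. A convenient strategy is to clear denominators at the outset by multiplying through by $(\lambda + 16\lambda^2 - 80\lambda^3 + 125\mu)$ and by suitable powers of $(\lambda - 1/4)$, then identify the factor $36 x^2 - 32 x - y$ in the numerator of $\nu_*$ after the substitution; this factor is the $(x,y)$-image of the discriminant-like polynomial parametrized in Remark \ref{Lambda remark}, so its appearance is expected and serves as a useful check. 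Once this cancellation is performed, matching the remaining numerators against $-20(4x^2 + 3xy - 4y)$ and $-2(54x^3 - 50x^2 - 3xy + 2y)/5$ is a routine polynomial identity which can be verified by expansion.
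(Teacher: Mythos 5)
Your proposal is correct and follows essentially the same route as the paper's own proof: both simply apply the Sasaki--Yoshida change-of-coordinates formulas of Proposition \ref{LM} to $l_0,m_0$, using the explicit inverse $\lambda=\tfrac14-\tfrac{y}{20x^2}$, $\mu=-\tfrac{y^3}{10^5x^5}$, and verify that $\bar l,\bar m$ coincide with the coefficients of (\ref{Sato's}); whether one writes the Jacobian data as functions of $(\lambda,\mu)$ (your way) or of $(x,y)$ (the paper's way) is immaterial. One caveat: your proposed ``check'' that the factor $36x^2-32x-y$ is the $(x,y)$-image of the curve of Remark \ref{Lambda remark} is false --- substituting the parametrization $\lambda=(a^2-1)/5$, $\mu=(2a-3)^3(a+1)^2/3125$ into $f$ gives $36x^2-32x-y=1024\,a^2(a+1)^2(4a^2-5)/(2a+3)^6\not\equiv 0$, so that curve maps to the quintic branch locus of Proposition \ref{orbi}, whereas $36x^2-32x-y=0$ is only an apparent singularity; this remark should be dropped or corrected, though it does not affect the computation itself.
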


\begin{proof}
The inverse $f^{-1}$ is given by
\begin{eqnarray}\label{invtrans}
\lambda ( x , y )= \frac{1}{4}-\frac{y}{20 x^2},\enspace\quad \mu (x,y )= -\frac{y^3}{10^5 x^5}.
\end{eqnarray}
We have
\begin{align}
\begin{cases}\label{L0M0cmpl}
\vspace*{0.2cm}
l_0(\lambda(x,y) ,\mu(x,y) )=\displaystyle \frac{-y^2(4 x^2 -y)(9 x^2 -y )}{250x^3 (240 x^4 -88x^2 y + 8 y^2 -x y^2)},\\
\vspace*{0.2cm}
m_0(\lambda(x,y) ,\mu(x,y) )=\displaystyle \frac{-4000 x^3 (100 x^4 -40 x^2 y + 3 x^3 y + 4 y^2 -x y^2)}{y^2 (240 x^4 -88x^2 y + 8 y^2 -x y^2)}.
\end{cases}
\end{align}
By (\ref{trans}) and (\ref{invtrans}), we have
\begin{align}
\label{biratiofirstdiff}
\vspace*{0.2cm}
x_\lambda =\displaystyle \frac{60 x^3}{y}, \quad  y_\lambda = 100 x^2,\quad
x_\mu =-\displaystyle\frac{10^5 x^6}{y^3},\quad y_\mu =-\displaystyle\frac{2\cdot 10^5 x^5}{y^2}.
\end{align}
From (\ref{L0M0cmpl}) and (\ref{biratiofirstdiff}) and Proposition \ref{LM}, by the birational transformation $f:(\lambda,\mu)\mapsto (x,y)$,  the coefficients $l_0$ and $m_0$ are transformed to
\begin{align*}
\overline{l_0}=\displaystyle \frac{-20 (4 x^2+3 x y -4y)}{36 x^2 -32 x -y},\quad
\overline{m_0}=\displaystyle \frac{-2(54 x^3 -50 x^2 -3 x y +2y )}{5 y (36 x^2 -32 x -y)}.
\end{align*}
These are equal to the coefficients of the holomorphic conformal structure (\ref{Sato's}).
Therefore, the holomorphic conformal structure (\ref{PerHCS})  is transformed to (\ref{Sato's}).
\end{proof}

\begin{rem}
The birational transformation {\rm (\ref{trans})} is obtained as the composition of certain birational transformations.
First, blow up at $(\lambda, \mu )=(1/4,0)\in \text{($(\lambda,\mu)$-space)}$
 three times:  $(\lambda,\mu)\mapsto(\lambda,u_1)=\Big(\lambda,\displaystyle \frac{\mu}{\lambda -1/4}\Big), \hspace{0.3cm} (\lambda,u_1)\mapsto(\lambda,u_2)=\Big(\lambda,\displaystyle \frac{u_1}{\lambda-1/4}\Big), \hspace{0.3cm} (\lambda,u_2)\mapsto(\lambda,u_3)=\Big(\lambda,\displaystyle\frac{u_2}{\lambda-1/4}\Big)$. 
 Cancel $\lambda$ by $\lambda=\displaystyle\frac{u_2}{u_3}+\displaystyle \frac{1}{4}$.
Then, we have the following birational transformation:
$$
\psi_0 :(\lambda,\mu)\mapsto(u_2,u_3)=\Big(\frac{\mu}{(\lambda-1/4)^2},\frac{\mu}{(\lambda-1/4)^3}\Big).
$$
 (Its inverse is given by
 $$
 \psi_0^{-1}:(u_2,u_3)\mapsto(\lambda,\mu)=\Big(\frac{u_2}{u_3}+\frac{1}{4},\frac{u_2^3}{u_3^2}\Big).)
 $$
On the other hand, blow up at  $(x,y)=(0,0)\in (\text{$(x,y)$-space})$:
$$
\psi_1:(x,y)\mapsto(x,s)=\Big(x,\frac{y}{x}\Big).
$$
(Its inverse is  given by
$$
\psi_1^{-1}: (x,s)\mapsto(x,y)=(x,xs).)
$$
Moreover, we define  the holomorphic mapping
$$
\chi :(u_2,u_3)\mapsto(x,s)=\Big(\frac{25}{2}u_3,-250u_2\Big).
$$
We have  $f=\psi_1^{-1}\circ \chi \circ \psi_0$.
\end{rem}

Instead the normalization factor (\ref{Satonf}) used by Sato, that is referred in Remark \ref{rem:Sato},
we need a new normalization factor (\ref{normal}).
Together with the conformal structure coming from $(l_1,m_1)=(l,m)$,
we obtain the new uniformizing differential equation which we are looking for.

\begin{prop}
The uniformizing differential equation of the orbifold $(\mathbb{H}\times\mathbb{H})/\langle PSL(2,\mathcal{O}), \tau\rangle$ 
with the normalization factor
\begin{eqnarray}\label{normal}
e^{2\theta}=\frac{x^4 (-36 x^2 +32 x +y)}{y^{5/2} (1728 x^5 -720 x^3 y +80 x y^2 -64 (5x^2 -y)^2 -y^3)^{3/2}}
\end{eqnarray}
is
\begin{eqnarray}
\begin{cases}\label{periodUDE}
& z_{xx}= l _1z_{xy} + a_1 z_x + b_1 z_y +p_1 z,\\
& z_{yy}=m _1z_{xy} + c _1z_x +d_1 z_y +q_1 z
\end{cases}
\end{eqnarray}
with
\begin{align*}
\begin{cases}
\vspace*{0.2cm}
l_1=\displaystyle \frac{-20 (4 x^2+3 x y -4y)}{36 x^2 -32 x -y},\quad
m_1=\displaystyle \frac{-2(54 x^3 -50 x^2 -3 x y +2y )}{5 y (36 x^2 -32 x -y)},\\
\vspace*{0.2cm}
a_1=\displaystyle \frac{-2(20 x^3 -8 x y   + 9x^2 y + y^2)}{x y (36x^2 -32 x - y)},\quad
b_1=\displaystyle \frac{10y(-8 +3x)}{x(36x^2 -32 x -y)},\\
\vspace*{0.2cm}
c_1=\displaystyle \frac{-2(-25x^2 + 27 x^3 + 2y -3 x y )}{5y^2(36x^2 -32 x -y)},\quad
d_1=\displaystyle \frac{-2(-120 x^2 +135 x^3 -2y - 3x y)}{5xy( 36x^2 -32x -y)}\\
\vspace*{0.2cm}
p_1=\displaystyle \frac{-2(8x-y)}{x^2 (36x^2 -32 x -y)},\quad
q_1=\displaystyle \frac{-2(-10+9x)}{25x y (36x^2 - 32 x -y)}.
\end{cases}
\end{align*}
\end{prop}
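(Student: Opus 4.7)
The plan is to derive the coefficients $a_1,b_1,c_1,d_1,p_1,q_1$ directly from the data already in place: the conformal structure coming from Sato's theorem and the prescribed normalization factor $e^{2\theta}$ in (\ref{normal}). Since the uniformizing equation is required to have a quadric as its projective image (the quadric $\mathbb{P}^1(\mathbb{C})\times\mathbb{P}^1(\mathbb{C})$ into which $\mathbb{H}\times\mathbb{H}$ embeds), the structural Proposition \ref{prop:ABCD} applies and forces $a_1,b_1,c_1,d_1$ to be determined by $l$, $m$, and $\theta$ alone.

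First I would set $l=l_1$ and $m=m_1$ as in Theorem \ref{prop:Sato}, then compute
\[
\xi=\log(1-l_1 m_1)
\]
together with $\log l_1$ and $\log m_1$; a short calculation gives
\[
1-l_1m_1=\frac{4(1728x^5-720x^3y+80xy^2-64(5x^2-y)^2-y^3)}{5y(36x^2-32x-y)^2},
\]
so that $\xi$ splits as a sum of logarithms of the branch polynomial in Proposition \ref{orbi}, of $y$, and of $36x^2-32x-y$. The function $\theta$ dictated by (\ref{normal}) is likewise a $\mathbb{Q}$-linear combination of logarithms of $x$, $y$, $36x^2-32x-y$ and the branch polynomial. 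Substituting these expressions into the formulas of Proposition \ref{prop:ABCD}, the partial derivatives $\partial_x\theta$, $\partial_y\theta$, $\partial_x\xi$, $\partial_y\xi$, $\partial_x\log l_1$, etc.\ are all rational in $(x,y)$, and the four coefficients $a_1,b_1,c_1,d_1$ drop out as the rational functions listed in the statement after routine simplification using the identity $144\mathfrak{D}^2=-1728\mathfrak{B}^5+\dots+\mathfrak{C}^3$ in affine form.

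Second, I would determine $p_1$ and $q_1$ from the integrability conditions of the rank-$4$ system (\ref{periodUDE}). Differentiating the first equation of (\ref{periodUDE}) in $y$ and the second in $x$, equating $z_{xxy}$ and $z_{yyx}$ modulo the relations of the system, produces two linear Pfaffian constraints which, given $l_1,m_1,a_1,b_1,c_1,d_1$, uniquely fix $p_1,q_1$ up to an additive scalar. Normalizing by the requirement that the developing map is the projective solution vector (equivalently, that $e^{2\theta}=\det(Z,Z_x,Z_y,Z_{xy})$ with the given $\theta$) pins down the scalar and yields precisely the formulas for $p_1$ and $q_1$ written in the statement.

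Finally I would cross-check the output by verifying that the birational change of variables $f$ from (\ref{trans}) carries the period differential equation (\ref{ours}) (with its explicit $l_0,m_0,a_0,\dots,q_0$) to the system (\ref{periodUDE}); this is automatic for $(l_1,m_1)$ by the previous theorem, and for $(a_1,\dots,q_1)$ it follows from applying Proposition \ref{LM} to the transformation $f$ together with the fact that, under $f$, the normalization factor $e^{2\theta_0}=\det(Z,Z_\lambda,Z_\mu,Z_{\lambda\mu})$ of (\ref{ours}) transforms into $e^{2\theta}$ in (\ref{normal}) up to the Jacobian of $f$. The main obstacle is purely computational: the expressions for $\partial\theta$ and $\partial\xi$ are bulky and the simplification of $a_1,b_1,c_1,d_1$ requires careful bookkeeping with the branch polynomial of Proposition \ref{orbi}; once this is handled, the remaining verifications of $p_1,q_1$ and of the covariance under $f$ are mechanical.
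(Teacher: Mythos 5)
Your overall route is the paper's own: take $l_1,m_1$ from Theorem \ref{prop:Sato}, obtain $a_1,b_1,c_1,d_1$ from Proposition \ref{prop:ABCD} using the normalization factor (\ref{normal}), get $p_1,q_1$ from the integrability of the rank-four system, and confirm the result by transporting the period equation (\ref{ours}) through $f$ via Proposition \ref{LM} (this last check is exactly Theorem \ref{periodUDEThm}). However, the mechanism you propose for fixing $p_1,q_1$ fails as stated. You claim integrability determines $(p_1,q_1)$ only ``up to an additive scalar'' which is then pinned down by requiring $e^{2\theta}=\det(Z,Z_x,Z_y,Z_{xy})$ with the $\theta$ of (\ref{normal}). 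But this determinant is blind to $p$ and $q$: differentiating it columnwise and eliminating $Z_{xx}$, $Z_{yy}$, $Z_{xxy}$, $Z_{xyy}$ by means of the system gives
\begin{equation*}
\partial_x(2\theta)=a+\frac{l_y+a+bm+l\,(m_x+d+cl)}{1-lm},\qquad
\partial_y(2\theta)=d+\frac{m_x+d+cl+m\,(l_y+a+bm)}{1-lm},
\end{equation*}
in which $p,q$ never appear. Hence the normalization factor fixes the gauge (and thereby $a_1,b_1,c_1,d_1$ through Proposition \ref{prop:ABCD}), but it cannot remove any residual ambiguity in $(p_1,q_1)$; if the integrability conditions really left a free constant, your normalization step could not resolve it.

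The paper's position (stated just before Remark \ref{rem:Sato}) is that, once $l,m,a,b,c,d$ are in place, $p$ and $q$ are determined by the integrability condition itself, and the listed $p_1,q_1$ are ultimately vindicated by the direct coefficient computation of Theorem \ref{periodUDEThm}: transforming (\ref{ours}) by $f$ with Proposition \ref{LM} reproduces precisely $a_1,\dots,q_1$. So you should either carry the integrability computation through to an explicit determination of $p_1,q_1$, or let the cross-check under $f$ --- which you already propose --- serve as the actual source of $p_1,q_1$, rather than the normalization factor. A smaller slip: your displayed identity for $1-l_1m_1$ is off by a constant factor (at $(x,y)=(1,1)$ one has $1-l_1m_1=-7$, whereas your right-hand side gives $252/45$); the correct identity is $1-l_1m_1=-\dfrac{1728x^5-720x^3y+80xy^2-64(5x^2-y)^2-y^3}{y\,(36x^2-32x-y)^2}$. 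This is harmless for Proposition \ref{prop:ABCD}, which only uses $d\log(1-l_1m_1)$, but it should be corrected.
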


\begin{proof}
$l_1$ and $m_1$ are given in Theorem \ref{prop:Sato}. 
According to Proposition \ref{prop:ABCD}, the other coefficients are determined by $l_1, m_1$ and $\theta$ in (\ref{normal}).
\end{proof}

\begin{thm} \label{periodUDEThm}
By the birational transformation $f$ in {\rm (\ref{trans})},
our period differential equation {\rm (\ref{ours})} is transformed to the uniformizing differential  equation {\rm (\ref{periodUDE})} of the orbifold $(\mathbb{H}\times\mathbb{H})/\langle PSL(2,\mathcal{O}),\tau \rangle$.  
\end{thm}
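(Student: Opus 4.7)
The strategy is a direct application of the coordinate-change formulas of Proposition \ref{LM} with $(X,Y)=(\lambda,\mu)$ and $(U,V)=(x,y)$, together with the transformation $f$ of (\ref{trans}). The theorem preceding Theorem \ref{periodUDEThm} already establishes that the holomorphic conformal structure (the $l_0, m_0$ coefficients) of (\ref{ours}) is carried by $f$ to the holomorphic conformal structure of (\ref{periodUDE}). What remains is to show that the remaining six coefficients $a_0, b_0, c_0, d_0, p_0, q_0$ of (\ref{ours}) are transformed by $f$ into $a_1, b_1, c_1, d_1, p_1, q_1$ of (\ref{periodUDE}).

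The first step is to compute the auxiliary rational functions $\Delta, \lambda, \mu, \nu, \alpha, \beta, \gamma, \delta, R(U), S(U), R(V), S(V)$ from Proposition \ref{LM}. The first-order derivatives $x_\lambda, x_\mu, y_\lambda, y_\mu$ are already recorded in (\ref{biratiofirstdiff}), and the second-order derivatives $x_{\lambda\lambda}, x_{\lambda\mu}, x_{\mu\mu}, y_{\lambda\lambda}, y_{\lambda\mu}, y_{\mu\mu}$ follow by one further differentiation and can be rewritten as rational functions of $(x,y)$ via the inverse (\ref{invtrans}). Combining these with the expressions for $l_0, m_0, a_0, b_0, c_0, d_0, p_0, q_0$ expressed in the variables $(x,y)$ (these are the analogues of (\ref{L0M0cmpl})) feeds directly into the formulas of Proposition \ref{LM}.

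The second step is to substitute everything into the transformation formulas for $\overline{a}, \overline{b}, \overline{c}, \overline{d}, \overline{p}, \overline{q}$ and simplify. After cancelling the common rational factors (in particular $\nu$, which is essentially the pullback of the conformal structure), the expressions should collapse to the explicit $a_1, b_1, c_1, d_1, p_1, q_1$ listed under (\ref{periodUDE}).

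The main obstacle is not conceptual but computational: the intermediate rational expressions are large, so a convenient shortcut is to invoke Proposition \ref{prop:ABCD}. Since $(l_1, m_1)$ already match the transformed $(l_0, m_0)$, the coefficients $a_1, b_1, c_1, d_1$ of (\ref{periodUDE}) are uniquely determined from the conformal structure and the normalization factor $e^{2\theta}$ in (\ref{normal}). Hence it suffices to verify the single identity that the normalization factor of (\ref{ours}) is transformed by $f$ into (\ref{normal}); this reduces the problem to checking one equality of rational functions (using $x_\lambda y_\mu - x_\mu y_\lambda$ for the Jacobian contribution). Once $a,b,c,d$ agree, the remaining coefficients $p$ and $q$ are pinned down by the integrability condition of the system, which holds automatically because (\ref{ours}) is integrable and $f$ is a local biholomorphism away from its exceptional locus. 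This completes the identification of (\ref{ours}) with (\ref{periodUDE}) under $f$.
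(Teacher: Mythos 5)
Your first route---pulling back all the coefficients of (\ref{ours}) through Proposition \ref{LM}, using the first derivatives (\ref{biratiofirstdiff}), the second derivatives, and the $(x,y)$-expressions of $a_0,\dots,q_0$---is exactly the paper's proof: the paper records the compositions $a_0(\lambda(x,y),\mu(x,y)),\dots,q_0(\lambda(x,y),\mu(x,y))$ in (\ref{theotherscmpl}) and the second derivatives in (\ref{biratioseconddiff}), and then verifies coefficient by coefficient that Proposition \ref{LM} yields $a_1,\dots,q_1$ (the case of $l_0,m_0$ having been settled in the preceding theorem). Had you carried that computation out, nothing more would be needed.

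The shortcut you actually lean on, however, has concrete gaps. (i) It needs the normalization factor of (\ref{ours}) as input, and neither you nor the paper ever computes it; obtaining it (e.g.\ by integrating the traces of the Pfaffian matrices of (\ref{eq:systemP4}), or by inverting the formulas of Proposition \ref{prop:ABCD} for $b_0,c_0$, which is legitimate because the period map lands on the non-degenerate quadric $\xi A\,{}^t\xi=0$) is itself a computation of the same order as the direct route, and without it the ``single identity'' you propose cannot even be written down. (ii) The transformation law you state is not correct: under the coordinate change the determinant $\det(Z,Z_x,Z_y,Z_{xy})$ equals $\det(Z,Z_\lambda,Z_\mu,Z_{\lambda\mu})$ multiplied not only by the Jacobian $\lambda_x\mu_y-\lambda_y\mu_x$ but also by the factor $l_0\lambda_x\lambda_y+\lambda_x\mu_y+\lambda_y\mu_x+m_0\mu_x\mu_y$, which appears when $Z_{\lambda\lambda}$ and $Z_{\mu\mu}$ are eliminated via (\ref{ours}); with the Jacobian alone the rational-function identity you would check is the wrong one. (iii) ``$p$ and $q$ are pinned down by the integrability condition'' is a uniqueness assertion that is not automatic---for $l=m=a=b=c=d=0$ every constant pair $(p,q)$ gives an integrable rank-four system---so it must either be verified in this non-degenerate situation or taken from the paper's unproved statement after (\ref{HUDE}); and to apply Proposition \ref{prop:ABCD} to the transformed system you must also observe that its projective solution lies on a non-degenerate quadric, which again comes from the Riemann--Hodge relation. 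In short, the plan that actually closes the proof is the direct computation the paper performs; the shortcut, as formulated, breaks down at the normalization-factor step.
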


\begin{proof}
We have 
\begin{align}
\begin{cases}\label{theotherscmpl}
\vspace*{0.2cm}
a_0(\lambda(x,y) ,\mu(x,y) )=\displaystyle \frac{400 x^2 (3 x^2 -y) (6x^2 -y)}{y(240 x^4 -88 x^2 y +8 y^2 -x y^2)},\\
\vspace*{0.2cm}
b_0(\lambda(x,y) ,\mu(x,y) )=\displaystyle \frac{-y^2 (13 x^2 - 2 y)}{25 x (240 x^4 -88 x^2 y+8 y^2 - x y^2)},\\
\vspace*{0.2cm}
c_0(\lambda(x,y) ,\mu(x,y) )=\displaystyle \frac{2 \cdot10^8 x^9 (3 x^2 -y)}{y^4(240 x^4 -88 x^2 y+8 y^2 - x y^2),}\\
\vspace*{0.2cm}
d_0(\lambda(x,y) ,\mu(x,y) )=\displaystyle \frac{160000 x^5 (175 x^4 - 65 x^2 y + 6 y^2 - x y^2)}{y^3 (240 x^4 -88 x^2 y+8 y^2 - x y^2)},\\
\vspace*{0.2cm}
p_0(\lambda(x,y) ,\mu(x,y) )=\displaystyle \frac{1600 x^4 (6 x^2 -y)}{y (240 x^4 -88 x^2 y+8 y^2 - x y^2)},\\
\vspace*{0.2cm}
q_0(\lambda(x,y) ,\mu(x,y) )=\displaystyle \frac{8\cdot 10^{8} x^{11}}{y^4 (240 x^4 -88 x^2 y+8 y^2 - x y^2)}.
\end{cases}
\end{align}
By (\ref{trans}) and (\ref{invtrans}),
 we have
\begin{align}
\label{biratioseconddiff}
\begin{cases}
\vspace*{0.2cm}
x_{\lambda\lambda}=\displaystyle \frac{4800 x^5}{y^2},\quad y_{\lambda\lambda}=\frac{12000 x^4}{y},\quad
x_{\mu\mu}=0,\\
 y_{\mu\mu}=\displaystyle \frac{2\cdot 10^{10} x^{10}}{y^5},\quad
x_{\lambda\mu}=\displaystyle\frac{- 6\cdot 10^6 x^8}{y^4},\quad y_{\lambda \mu}=\frac{-2\cdot 10^7 x^7}{y^3}.
\end{cases}
\end{align}
From (\ref{L0M0cmpl}), (\ref{biratiofirstdiff}), (\ref{theotherscmpl}) and (\ref{biratioseconddiff}) and Proposition \ref{LM}, by the birational transformation $f:(\lambda,\mu)\mapsto (x,y)$,
 the coefficients $a_0,b_0,c_0,d_0,p_0$ and $q_0$ are transformed to
\begin{align*}
\begin{cases}
\vspace*{0.2cm}
\overline{a_0}=\displaystyle \frac{-2(20 x^3 -8 x y   + 9x^2 y + y^2)}{x y (36x^2 -32 x - y)},\quad
\overline{b_0}=\displaystyle \frac{10y(-8 +3x)}{x(36x^2 -32 x -y)},\\
\vspace*{0.2cm}
\overline{c_0}=\displaystyle \frac{-2(-25x^2 + 27 x^3 + 2y -3 x y )}{5y^2(36x^2 -32 x -y)},\quad
\overline{d_0}=\displaystyle \frac{-2(-120 x^2 +135 x^3 -2y - 3x y)}{5xy( 36x^2 -32x -y)},\\
\vspace*{0.2cm}
\overline{p_0}=\displaystyle \frac{-2(8x-y)}{x^2 (36x^2 -32 x -y)},\quad
\overline{q_0}=\displaystyle \frac{-2(-10+9x)}{25x y (36x^2 - 32 x -y)}.
\end{cases}
\end{align*}
These are equal to the coefficients of (\ref{periodUDE}).
\end{proof}

Therefore, the uniformizing differential equation of the orbifold $(\mathbb{H}\times\mathbb{H})/\langle PSL(2,\mathcal{O}),\tau \rangle$ with the normalization factor {\rm (\ref{normal})} 
is
connected to our family $\mathcal{F}_0=\{S_0(\lambda,\mu)\}$ of $K3$ surfaces.

\section*{Acknowledgment}
The author would like to express his gratitude to Professor  Hironori Shiga for various advices and suggestions. He is  grateful to Professor Kimio Ueno for his comments and encouragement.  
He is thankful to Professor Takeshi Sasaki for the suggestion about  uniformizing differential equations.
He is also indebted   to Toshimasa Ishige  for his useful comments  based on  his pioneering and eager research. 
The author also would like to express his gratitude to Professor Kazushi Ueda for information about these errors.

{\small

}

\begin{center}
\hspace{7.7cm}\textit{Atsuhira  Nagano}\\
\hspace{7.7cm}\textit{ c.o. Prof. Kimio Ueno, Department of Mathematics}\\
\hspace{7.7cm}\textit{ Waseda University}\\
\hspace{7.7cm}\textit{Okubo 3-4-1, Shinjuku-ku, Tokyo, 169-8555}\\
\hspace{7.7cm}\textit{Japan}\\
 \hspace{7.7cm}\textit{(E-mail: atsuhira.nagano@gmail.com)}
  \end{center}

\end{document}